\newtheorem{theorem}{Theorem}[section]
\newtheorem{corollary}[theorem]{Corollary}
\newtheorem{lemma}{Lemma}[section]
\newtheorem{proposition}{Proposition}[section]
\theoremstyle{remark}
\newtheorem{remark}{Remark}[section]
\newtheorem{question}{Question}[section]
\theoremstyle{definition}
\newtheorem{definition}{Definition}[section]
\def\N{{\mathbb{N}}}
\def\Z{{\mathbb{Z}}}
\def\MN{{\mathbb{N}}}
\def\MA{{\mathbb{A}}}
\def\MB{{\mathbb{B}}}
\def\MC{{\mathbb{C}}}
\newcommand{\nsN}{\widetilde{\mathbb N}}
\newcommand{\nsZ}{\widetilde{\mathbb Z}}
\newcommand{\nsB}{\widetilde{\mathbb B}}
\newcommand{\nstdN}{\widetilde{\mathbb N}}
\newcommand{\nstdM}{\widetilde{\mathbb N}}
\newcommand{\ndots}{\ldots}
\newcommand{\List}{\mathrm{List}}
\newcommand{\Irr}{{\mathrm{Irr}}}
\newcommand{\Inv}{{\mathrm{Inv}}}
\newcommand{\Th}{{\mathop{\mathrm{Th}}}}
\newcommand{\nonstandardmodel}[2]{#1\circledast #2}
\def\N{{\mathbb{N}}}
\def\Z{{\mathbb{Z}}}
\def\MA{{\mathbb{A}}}
\def\MB{{\mathbb{B}}}
\def\MC{{\mathbb{C}}}
\begin{document}

\title[Nonstandard polynomials]{Nonstandard polynomials: algebraic properties and elementary equivalence}

\author{Alexei Myasnikov}
\email{amiasnik@stevens.edu}
\author{Andrey Nikolaev}
\email{anikolae@stevens.edu}


\begin{abstract}
We solve the first-order classification problem for rings $R$ of polynomials $F[x_1, \ldots,x_n]$ and Laurent polynomials $F[x_1,x_1^{-1}, \ldots,x_n,x_n^{-1}]$  with coefficients in an infinite field $F$ or the ring of integers $\mathbb Z$, that is, we describe the algebraic structure of all rings $S$ that are first-order equivalent to $R$. Our approach is based on a new and very powerful method of regular bi-interpretations, or more precisely, regular invertible interpretations. Namely, we prove that $F[x_1, \ldots,x_n]$ and $F[x_1,x_1^{-1}, \ldots,x_n,x_n^{-1}]$ are regularly bi-interpretable with the list superstructure $\mathbb S(F,\mathbb N)$ of $F$, which is equivalent to regular bi-interpretation with the superstructure $HF(F)$ of hereditary finite sets over $F$. The expressive power of $\mathbb S(F,\mathbb N)$ is the same as that of the weak second-order logic over $F$. Hence, the first-order logic in $R = F[x_1, \ldots,x_n]$ or $R = F[x_1,x_1^{-1}, \ldots,x_n,x_n^{-1}]$ is equivalent to the weak second-order logic in $F$ (following the terminology of~\cite{KharlampovichMyasnikovSohrabi:2021}, such structures are necessarily \emph{rich}), which allows one to describe the algebraic structure of all rings $S$ with $S\equiv R$. In fact, these rings $S$ are precisely the ``non-standard'' models of $R$, like in non-standard arithmetic or non-standard analysis. This is particularly straightforward when $F$ is regularly bi-interpretable with $\mathbb N$, in this case the ring $R$ is also bi-interpretable with $\mathbb N$. Using our approach, we describe various, sometimes rather surprising, algebraic and model-theoretic properties of the non-standard models of $R$.
\end{abstract}

\keywords{Bi-interpretability, rich structures, nonstandard model, Peano arithmetic, polynomial}

\maketitle
\tableofcontents
\section{Introduction}

In this paper we solve the first-order classification problem for rings $R$ of polynomials $F[x_1, \ldots,x_n]$ (Laurent polynomials $F[x_1,x_1^{-1}, \ldots,x_n,x_n^{-1}]$ ) with coefficients in a field $F$ or the ring of integers $\mathbb Z$, that is, we describe the algebraic structure of all rings $S$ that are first-order equivalent to $R$.
Our approach is based on a new and very powerful method of regular bi-interpretations (or more generally, regular invertible interpretations).
Namely, we prove that $F[x_1, \ldots,x_n]$ and $F[x_1,x_1^{-1}, \ldots,x_n,x_n^{-1}]$ are regularly bi-interpretable with the list superstructure $\mathbb S(F,\mathbb N)$ of $F$, which is equivalent to regular bi-interpretation with the superstructure $HF(F)$ of hereditary finite sets over $F$.
The expressive power of $\mathbb S(F,\mathbb N)$, as well as $HF(F)$, is very strong, it is the same as the weak second-order logic over $F$. Hence, the first-order logic in $R = F[x_1, \ldots,x_n]$ or $R = F[x_1,x_1^{-1}, \ldots,x_n,x_n^{-1}]$ is equivalent to the weak second-order logic in $F$.
Following~\cite{KharlampovichMyasnikovSohrabi:2021} such structures are termed \emph{rich}.
This allows one to describe the algebraic structure of all rings $S$ with $S\equiv R$.
In fact, these rings $S$ are precisely the ``non-standard'' models of $R$, like in non-standard arithmetic or non-standard analysis.
This is particularly straightforward when $F$ is regularly bi-interpretable with $\mathbb N$, in this case the ring $R$ is also bi-interpretable with $\mathbb N$.
If $R = \Gamma(\mathbb N)$ is an interpretation of $R$ in $\N$ then any ring $\widetilde{R} \equiv R$ is of the form $\Gamma(\nstdN)$ for some $\nstdN \equiv \N$ and the interpretation $\Gamma$ gives the structure of $\widetilde{R}$ over $\nstdN$.
The ring $R \simeq \Gamma(\N)$ is an elementary submodel of $\widetilde{R} = \Gamma(\nstdN)$, in fact, it is the set of standard elements of $\widetilde{R}$.
Following non-standard arithmetic one can develop the theory of non-standard polynomials, which is interesting in its own right.
Using this approach, we describe various, sometimes rather surprising, algebraic and model-theoretic properties of non-standard polynomial rings $\widetilde{R}$.

In fact, interpretation and bi-interpretation offer a general approach to studying structures elementarily equivalent to a given one. Namely, suppose that a structure $\mathbb A$ is regularly (in particular, absolutely) interpretable in a structure $\mathbb B$ as $\mathbb A\cong\Gamma(\mathbb B)$.
In this case, if a structure $\mathbb B'$ is elementarily equivalent to $\mathbb B$, then $\mathbb A'\cong \Gamma(\mathbb B')$ is elementarily equivalent to $\mathbb A$. 
Moreover, if a regular interpetation is \emph{regularly invertible} (which is a certain weak form of regular bi-interpretability, see Definition~\ref{de:reg_invert}), the converse to the latter statement is also true.
That is, a structure $\mathbb A'$ is elementarily equivalent to $\mathbb A$ if and only if $\mathbb A'\cong\Gamma(\mathbb B')$ for some $\mathbb B'$ elementarily equivalent to $\mathbb B$ (Theorem~\ref{th:complete-scheme}).
We refer to Section~\ref{se:prelims} for definitions, and to papers ~\cite{KharlampovichMyasnikovSohrabi:2021,Daniyarova-Myasnikov:I} for a more detailed exposition.

Note that the decidability of the elementary theory $Th(R)$ of rings of polynomials $R = F[x_1, \ldots,x_n]$ has been well studied.
In \cite{Robinson:1951} Robinson showed how to interpret $\N$ in $R$, thus proving that $Th(R)$ is undecidable.
In \cite{Denef:1978} Denef proved that the Diophantine problem (the generalized 10th Hilbert problem) in $R$ is undecidable for fields $F$ of characteristic zero.
Later Bauval showed in \cite{Bauval:1985} that $HF(F)$ is bi-interpretable with $R$.
In \cite{Kharlampovich_Myasnikov:1998(1)} Kharlampovich and Myasnikov studied model theory of free associative algebras over fields, they proved that the rings $R = F[X]$, and also the free associative algebras $A_F(X)$ over an infinite field $F$ and finite basis $X$, are regulalrly bi-interpretable with the structure $HF(F)$, as well as $\mathbb S(F,\mathbb N)$.
In particular, this implies that for fields $F$ and $L$ and finite sets $X$ and $Y$ one has $F[X] \equiv L[Y]$ if and only if $|X| = |Y|$ and $HF(F) \equiv HF(L)$.
The weak second-order logic is quite powerful, so for many fields $HF(F) \equiv HF(L)$ implies $F \simeq L$.
This is true for ${\mathbb Q}$, finitely generated algebraic extensions of ${\mathbb Q}$, algebraically closed fields of finite transcendence degree over their prime subfields, pure transcendental finite extensions of a prime field, the field of algebraic real numbers, etc.
The proofs are based on the Gandy theorem on fixed points of $\Sigma$-definable operators (see, for example,~\cite{Ershov:1996} and~\cite{Barwise:1992}).
The list superstructures as a convenient alternative of HF-superstructures were introduced by S.~Goncharov and D.~Sviridenko~\cite{Goncharov-Sviridenko:1985,Goncharov-Sviridenko:1989} for the purposes of $\Sigma$-programming (see also \cite{Ashaev-Belyaev-Myasnikov:1993}).
However, they became useful tools in studying the weak second order logic in algebra.
For better understanding of non-standard models of the rings of polynomials $R$ one needs to understand the non-standard models of the list superstructures $\mathbb S(F,\mathbb N)$, that is the structures $\mathcal M$ such that $\mathcal M \equiv \mathbb S(F,\mathbb N)$.
Equivalently, it would suffice to understand structures $\mathcal M$ such that $\mathcal M \equiv HF(F)$.
Notice that in both multi-sorted structures $\mathbb S(F,\mathbb N)$ and $HF(F)$ one sort is the field $F$, so the same sort in $\mathcal M$ defines a field $F_\mathcal M$ such that $F \equiv F_\mathcal M$.
However, not every field $F'$ such that $F' \equiv F$ can be realized as $F_\mathcal M$ (see Remark~\ref{re:any_field}).
It seems that equivalence  $\mathcal M \equiv \mathbb S(F,\mathbb N)$ (or $\mathcal M \equiv HF(F)$) gives an equivalence of fields $F$ and $F_\mathcal M$ in some stronger logic than the first-order logic of fields. It is an interesting question what this logic is.

\subsection{Organization of this paper} In Section~\ref{se:prelims}, we recall the necessary model-theoretic notions and introduce key ideas of our study. In particular, we review the notion of bi-interpretation and explicate its role in investigating questions related to first-order equivalence (Section~\ref{se:inv_int_and_bi-int}).
Further, we recall the organization of nonstandard models of arithmetic (Section~\ref{se:nonstandard_arithmetic}), and the notion of a list superstructure (Section~\ref{se:superstructure}).
This naturally leads us to introduce nonstandard list superstructure and its elements, called nonstandard tuples, on which we expound in Section~\ref{se:tuples}.

In Sections~\ref{se:polynomials} and~\ref{se:laurent}, we exploit the introduced techniques to study nonstandard models of the ring of polynomials and Laurent polynomials (respectively) over a field.
For each of the above rings, the exposition follows the same general layout.
We establish a bi-interpretation of the ring in question with the list superstructure over the respective field $\mathbb S(F,\mathbb N)$ (for example, Theorem~\ref{th:Fx_SFN_biint}), which leads to a Tarski-type result describing elementarily equivalent rings (for example, Corollary~\ref{co:Fx_elem_equiv_list}).
Under the additional assumption that the field is bi-interpretable with $\mathbb N$, we additionally produce a bi-interpretation of the ring in question with $\mathbb N$ and the respective Tarski-type result (for example, Corollaries~\ref{co:bi-int_Fx_N},~\ref{co:Fx_elem_equiv}).
Then we establish an algebraic description of the elementarily equivalent rings (for example, Section~\ref{se:single_var}, in particular, formulas~\eqref{eq:nstd_poly_desc}, \eqref{eq:nstd_poly_sum_product}); finally, we list notable algebraic properties (for example, Lemmas and Propositions of Section~\ref{se:single_var}).

In Section~\ref{se:over_Z} we study the case of polynomials and Laurent polynomials over integers.

\section{Interpretability}\label{se:prelims}
In this section, we discuss various types of interpretability of structures; some of them are well-known, others are new. Each of them serves different purposes. Our focus is mostly on the regular interpretability, which suits well to the classical first-order classification problem. 

Let $L$ be a language (signature) with a set of functional (operational) symbols $\{f, \ldots\}$ together with their arities $n_f \in \N$, a set of constant symbols $\{c, \ldots\}$ and a set of relation (or predicate) symbols $\{R, \ldots \}$ coming together with their arities $n_R \in \N$. We write $f(x_1, \ldots,x_n)$ or $R(x_1,\ldots,x_n)$ to show that $n_f = n$ or $n_R=n$. The standard language of groups $\{\cdot\,,\,^{-1},e\}$ includes the symbol $\cdot$ for binary operation of multiplication, the symbol $^{-1}$ for unary operation of inversion, and the symbol $e$ for the group unit; the standard language of unitary rings is $\{+,\,\cdot\,,0,1\}$. An interpretation of a constant symbol $c$ in a set $A$ is an element $c^A\in A$. For a functional symbol $f$ an interpretation in $A$ is a function $f^A\colon A^{n_f}\to A$, and for a predicate $R$ it is a set $R^A\subseteq A^{n_R}$.

An algebraic structure in the language $L$ (an $L$-structure) with the base set $A$ is denoted by $\MA = \langle A; L\rangle$, or simply by $\MA = \langle A; f, \ldots,R,\ldots,c, \ldots \rangle$, or by $\MA = \langle A; f^A, \ldots,R^A,\ldots,c^A, \ldots \rangle$. For a given structure $\MA$ by $L(\MA)$ we denote the language of $\MA$. 

We usually denote variables by small letters $x,y,z, a,b, u,v, \ldots$, while the same symbols with bars $\bar x, \bar y, \ldots$ denote tuples of the corresponding variables, say $\bar x = (x_1, \ldots,x_n)$, and furthermore, $\bar{\bar x}$ is a tuple of tuples $\bar{\bar x}=(\bar x_1,\ldots,\bar x_m)$, where $|\bar x_i|=n$.

\subsection{Definable sets and interpretability}\label{se:interpretation}

Let $\MB = \langle B; L(\MB)\rangle$ be an algebraic structure. A subset $X \subseteq B^n$ is called {\em $0$-definable} (or {\em absolutely definable}, or {\em definable without parameters}) in $\MB$ if there is a first-order formula $\phi(x_1,\ldots,x_n)$ in the language $L(\MB)$ such that
\[
X = \{(b_1,\ldots,b_n) \in B^n \mid \MB \models \phi(b_1, \ldots,b_n)\}.
\] 
The set $X$ is denoted by $\phi(\MB)$. Fix a set of elements $P\subseteq B$. If $\psi(x_1,\ldots,x_n,y_1,\ldots,y_k)$ is a formula in $L(\MB)$ and $\bar{p}=(p_1,\ldots,p_k)$ is a tuple of elements from $P$, then the set 
\[
\{(b_1,\ldots,b_n) \in B^n \mid \MB \models \psi(b_1,\ldots,b_n, p_1,\ldots,p_k)\}
\] 
is called \emph{definable} in $\MB$ (\emph{with parameters} $\bar{p}$). It is denoted by $\psi(\MB,\bar p)$ or $\psi(B^n,\bar p)$.

Equivalently, $X\subseteq B^n$ is definable in $\MB$ if and only if there exists a finite set of parameters $P\subseteq B$ and a formula $\psi(x_1,\ldots,x_n)$ in the language $L(\MB)\cup P$ such that $X=\psi(\MB_P)$, here the algebraic structure $\MB_P=\langle B;L(\MB)\cup P\rangle$ is obtained from $\MB$ by adding new constant symbols from $P$ into the language.

Let $X\subseteq B^n$, $Y\subseteq B^m$ be sets and $\sim_X$, $\sim_Y$ be equivalence relations on $X$, $Y$, respectively.
A map $g\colon X/{\sim_X}\to Y/{\sim_Y}$ is called {\em definable} in $\MB$, if the following set (termed the {\em preimage in $\mathbb B$ of the graph of $g$})
\[
\begin{split}
\{(b_1,\ldots,b_n,c_1,\ldots,c_m) \in B^{n+m}\mid\ &(b_1,\ldots,b_n)\in X, (c_1,\ldots,c_m)\in Y, \\ 
&g((b_1,\ldots,b_n)/{\sim_X})=(c_1,\ldots,c_m)/{
\sim_Y}
\}
\end{split}
\]
is definable in $\MB$. It is obvious that the preimage in $\mathbb B$ of the graph $g$ coincides with preimage in $\mathbb B$ of the graph of $\tilde g\colon X\to Y/{\sim_Y}$, where $\tilde g(\bar x)= g(\bar x/{\sim_X})$, $\bar x\in X$. Note that here $g$ may also be a function of arity $0$ (i.e., a constant or a point $\bar y/{\sim_Y}\in Y/{\sim_Y}$).
Similarly, a relation $Q$ of arity $m$ on the quotient-set set $X/{\sim_X}$ is {\em definable} in $\mathbb B$ if the {\em preimage in $\mathbb B$ of its graph}, i.e., the full preimage of the set $Q$ in $B^{nm}$, is definable in $\mathbb B$. Correspondingly, $g$ and $Q$ are $0$-{\em definable} in $\mathbb B$ if preimages of their graphs in $\MB$ are $0$-definable.

By default, we consider the sets $\mathbb N$, $\mathbb Z$, $\mathbb Q$, $\mathbb R$ and $\mathbb C$ in the language $\{+,\cdot,0,1\}$.
Note that definable sets in arithmetic $\mathbb N$, the so-called arithmetic sets, are well-studied.
In particular, it is known that every computably enumerable set in $\mathbb N$ is definable. The same holds for the ring of integers $\mathbb Z$. Furthermore, every element of $\mathbb N$ (or $\mathbb Z$, or $\mathbb Q$) is absolutely definable in 
$\mathbb N$ ($\mathbb Z$, $\mathbb Q$), so every definable set in $\mathbb N$ ($\mathbb Z$, $\mathbb Q$) is absolutely definable.

\begin{definition} \label{de:interpretation}\label{de:interpretable} 
An algebraic structure $\MA = \langle A;f,\ldots,R,\ldots,c,\ldots\rangle$ is $0$-{\em interpretable} (or {\em absolutely interpretable}, or {\em interpretable without parameters}) in an algebraic structure $\MB=\langle B;L(\MB)\rangle$ if the following conditions hold:
\begin{enumerate}[1)]
\item there is a subset $A^\ast \subseteq B^n$ $0$-definable in $\MB$, 
\item there is an equivalence relation $\sim$ on $A^\ast$ $0$-definable in $\MB$, 
\item there are interpretations $f^A, \ldots$, $R^A,\ldots$, $c^A, \ldots$ of the symbols $f, \ldots,$ $R,\ldots,$ $c, \ldots$ on the quotient set $A^\ast /{\sim}$, all $0$-definable in $\MB$, 
\item the structure $\MA^\ast=\langle A^\ast /{\sim}; f^A, \ldots,R^A,\ldots, c^A, \ldots \rangle$ is $L(\MA)$-isomorphic to~$\MA$.
\end{enumerate}
\end{definition}

A more general case is the interpretation with parameters.

\begin{definition}\label{de:interpretation_P}
An algebraic structure $\MA=\langle A;L(\MA)\rangle$ is {\em interpretable} (with parameters) in an algebraic structure $\MB=\langle B;L(\MB)\rangle$ if there is a finite subset of parameters $P\subseteq B$, such that $\MA$ is absolutely interpretable in $\MB_P$. It this case, we write $\MA\rightsquigarrow\MB$.
\end{definition}

In the case when the equivalence relation $\sim$ from Definition~\ref{de:interpretation} is the identity relation, it is usually said that $\mathbb A$ is {\em definable} in $\mathbb B$ (with or without parameters correspondingly).

The structure $\mathbb A^\ast$ from Definitions~\ref{de:interpretation}, \ref{de:interpretation_P} is completely described by the tuple of parameters $\bar p$ (all the parameters used in the formulas from 1), 2), and 3) above) and the following set of formulas in the language $L(\mathbb B)$
\begin{equation} \label{eq:code}
\Gamma = \{U_\Gamma(\bar x,\bar y), E_\Gamma(\bar x, \bar x^\prime,\bar y), Q_\Gamma(\bar x_1, \ldots,\bar x_{t_Q},\bar y) \mid Q \in L(\mathbb A)\},
\end{equation}
where $\bar x$, $\bar x^\prime$ and $\bar x_i$ are $n$-tuples of variables and $\bar y = (y_1, \ldots,y_k)$ is a tuple of extra variables for parameters $\bar p$.
Namely, $U_\Gamma$ defines in $\mathbb B$ a set $A_\Gamma = U_\Gamma(B^n,\bar p) \subseteq B^n$ (the set $A^\ast$ in Definition~\ref{de:interpretation}), $E_\Gamma$ defines an equivalence relation $\sim_\Gamma$ on $A_\Gamma$ (the equivalence relation $\sim$ in Definition~\ref{de:interpretation}), and the formulas $Q_\Gamma$ define the preimages of the graphs for constants, functions, and predicates $Q\in L(\mathbb A)$ on the quotient set $A_\Gamma/{\sim_\Gamma}$ in such a way that the structure $\Gamma(\mathbb B,\bar p) = \langle A_\Gamma/{\sim_\Gamma}; L(\mathbb A) \rangle $ is isomorphic to $\mathbb A$.
Here $t_c=1$ for a constant $c\in L(\mathbb A)$, $t_f=n_f+1$ for a function $f\in L(\mathbb A)$ and $t_R=n_R$ for a predicate $R\in L(\mathbb A)$.
Note that we interpret a constant $c \in L(\mathbb A)$ in the structure $\Gamma(\mathbb B,\bar p)$ by the $\sim_\Gamma$-equivalence class of some tuple $\bar b_c \in A_\Gamma$ defined in $\mathbb B$ by the formula $c_\Gamma(\bar x,\bar p)$.
The number $n$ is called the {\em dimension} of $\Gamma$, denoted $n = \dim \Gamma$.
We refer to $k$ as the {\em parameter dimension} of $\Gamma$ and denote it by $\dim_{par}\Gamma$. 

We refer to $\Gamma$ as the {\em interpretation code} or just the {\em code} of the interpretation $\mathbb A\rightsquigarrow\mathbb B$.
Sometimes we identify the interpretation $\mathbb A\rightsquigarrow\mathbb B$ with its code $\Gamma$ or $(\Gamma,\bar p)$ (if the interpretation has parameters $\bar p$).
We may also write $\mathbb A\stackrel{\Gamma}{\rightsquigarrow}\mathbb B$, $\mathbb A\stackrel{\Gamma, \bar p}{\rightsquigarrow}\mathbb B$, $\mathbb A\cong\Gamma(\mathbb B)$ or $\mathbb A \cong \Gamma(\mathbb B,\bar p)$ to say that $\mathbb A$ is interpretable in $\mathbb B$ by means of a pair $(\Gamma,\bar p)$.
If the interpretation is absolute, we write $\mathbb A\cong\Gamma(\mathbb B)$ or $\mathbb A\cong\Gamma(\MB,\emptyset)$.

By $\mu_\Gamma$ we denote a surjective map $A_\Gamma \to A$ that gives rise to an isomorphism $\bar \mu_\Gamma\colon \Gamma(\MB,\bar p) \to \MA$. We refer to $\mu_\Gamma$ as the {\em coordinate map} of the interpretation $(\Gamma,\bar p)$. Note that such $\mu_\Gamma$ may not be unique because $\bar\mu_\Gamma$ is defined up to an automorphism of $\MA$. For this reason, the coordinate map $\mu_\Gamma$ is sometimes added to the interpretation notation: $(\Gamma,\bar p, \mu_\Gamma)$. A coordinate map $\mu_\Gamma\colon A_\Gamma \to A$ gives rise to a map $\mu_\Gamma^m\colon A^m_\Gamma\to A^m$ on the corresponding Cartesian powers, which we often denote by $\mu_\Gamma$.

When the formula $E_\Gamma$ defines the identity relation $(x_1=x^\prime_1)\wedge\ldots\wedge(x_n=x^\prime_n)$, the surjection $\mu_\Gamma$ is injective. In this case, $(\Gamma,\bar p, \mu_\Gamma)$ is called an {\em injective interpretation}.

\begin{definition}\label{def:regular_int}
We say that $\mathbb A$ is {\em regularly interpretable} in $\mathbb B$ if there exists a code $\Gamma$~\eqref{eq:code} and an $L(\mathbb B)$-formula $\phi(y_1,\ldots,y_k)$, such that $\phi(\mathbb B)\ne\emptyset$ and for each $\bar p=(p_1,\ldots,p_k)\in \phi(\mathbb B)$ the structure $\Gamma(\mathbb B,\bar p)$ provides an interpretation $\mathbb A\cong\Gamma(\mathbb B,\bar p)$ of~$\mathbb A$ in~$\mathbb B$.
\end{definition}

We denote by $(\Gamma,\phi)$ the regular interpretation with the code $\Gamma$ and the formula $\phi$ for the parameters.
If for all $\bar p\in \phi(\mathbb B)$ the interpretation $(\Gamma,\bar p)$ is injective, then the regular interpretation $(\Gamma,\phi)$ is called {\em injective}.
We write $\mathbb A\cong \Gamma(\mathbb B,\phi)$ or $\mathbb A\stackrel{\Gamma, \phi}{\rightsquigarrow}\mathbb B$ if $\mathbb A$ is regularly interpreted in $\mathbb B$ by a code $\Gamma$ and a formula $\phi$.

Note that every absolute interpretation $\Gamma$ is a particular case of a regular interpretation.
Indeed, one can add some fictitious variable $y$ in all the formulas of the code $\Gamma$ and put $\phi(y)=(y=y)$.

Let $\Gamma$ be the code described in~\eqref{eq:code}.
The \emph{admissibility conditions $\mathcal{AC}_\Gamma(\bar y)$} (see~\cite{Daniyarova-Myasnikov:I}) for the code $\Gamma$ is a set of formulas in the language $L(\mathbb B)$ with free variables $\bar y=(y_1, \ldots,y_k)$, where $k=\dim_{par}\Gamma$, such that for an arbitrary $L(\mathbb B)$-structure $\widetilde {\mathbb B}$ and an arbitrary tuple $\bar q$ in $\widetilde {\mathbb B}$, the construction of $\Gamma(\widetilde {\mathbb B},\bar q)$ described in Definitions~\ref{de:interpretation}, \ref{de:interpretation_P} indeed gives an $L(\mathbb A)$-structure if and only if $\widetilde {\mathbb B}\models \mathcal{AC}_\Gamma(\bar q)$.
Observe that the admissibility conditions $\mathcal{AC}_\Gamma(\bar q)$ state that the $L(\MA)$-structure $\Gamma(\nsB,\bar q)$ is well-defined, but they do not claim that $\MA\cong\Gamma(\nsB,\bar q)$.

Suppose that $\phi(\bar y)$ is an $L(\mathbb B)$-formula and $\widetilde {\mathbb B}$ is an $L(\MB)$-structure as before.
We say that an algebraic structure $\Gamma(\nsB,\phi)$ is {\em well-defined} if $\phi(\widetilde {\mathbb B})\ne \emptyset$ and for every $\bar q\in \phi(\widetilde {\mathbb B})$, $\Gamma(\widetilde {\mathbb B},\bar q)$ is an $L(\mathbb A)$-structure and all these structures $\Gamma(\widetilde {\mathbb B},\bar q)$, $\bar q\in \phi(\widetilde {\mathbb B})$, are isomorphic to each other (again, they need not be isomorphic to $\mathbb A$).

\subsection{The composition of interpretations}
It is known that the relation $\mathbb A\rightsquigarrow\mathbb B$ is transitive on algebraic structures (see, for example,~\cite{Hodges,Daniyarova-Myasnikov:I}).
The proof of this fact is based on the notion of $\Gamma$-translation and composition of codes, which we present now.

Let 
\[
\Gamma = \{U_\Gamma(\bar x,\bar y), E_\Gamma(\bar x, \bar x^\prime,\bar y), Q_\Gamma(\bar x_1, \ldots,\bar x_{t_Q},\bar y) \mid Q \in L(\MA)\}
\]
be a code as above, consisting of $L(\MB)$-formulas.
Then for any formula $\varphi(x_1,\ldots,x_m)$ in the language $L(\mathbb A)$ there is a formula $\varphi_\Gamma(\bar x_1,\ldots,\bar x_m, \bar y)$ in the language $L(\mathbb B)$ such that if $\mathbb A\cong\Gamma(\mathbb B,\bar p)$, then for any coordinate map $\mu_\Gamma\colon A_\Gamma\to A$ one has
\[
\mathbb A\models \varphi(a_1,\ldots, a_m) \iff \mathbb B\models\varphi_\Gamma(\mu_\Gamma^{-1}(a_1),\ldots,\mu^{-1}_\Gamma(a_m),\bar p)
\]
for any elements $a_i\in A$ (see~\cite{Daniyarova-Myasnikov:I}).
Here $\mu_\Gamma^{-1}(a_i)$ means an arbitrary preimage of $a_i$ under $\mu_\Gamma$. Furthermore, for any elements $\bar b_i\in B^n$ if $\mathbb B\models\varphi_\Gamma(\bar b_1,\ldots,\bar b_m,\bar p)$ then $\bar b_i\in \mu_\Gamma^{-1}(a_i)$ for some $a_i\in A$ with $\mathbb A\models \varphi(a_1,\ldots, a_m)$.

Note that if the language $L(\mathbb A)$ is computable then there is an algorithm that given $\varphi$ computes $\varphi_\Gamma$.

\begin{definition} \label{def:code-composition}
Let $L(\mathbb A)$, $L(\mathbb B)$, $L(\mathbb C)$ be languages. Consider codes 
\[
\Gamma = \{U_\Gamma(\bar x,\bar y), E_\Gamma(\bar x, \bar x^\prime,\bar y), Q_\Gamma(\bar x_1, \ldots,\bar x_{t_Q},\bar y) \mid Q \in L(\MA)\}
\]
as above and 
\[
\Delta=\{U_\Delta(\bar u,\bar z), E_\Delta(\bar u, \bar u^\prime,\bar z), Q_\Delta(\bar u_1, \ldots,\bar u_{t_Q},\bar z) \mid Q \in L(\MB)\},
\]
which consists of $L(\MC)$-formulas, where $|\bar u|=|\bar u^\prime|=|\bar u_i|=\dim \Delta$, $|\bar z|=\dim_{par}\Delta$. Then the {\em composition} of the codes $\Gamma$ and $\Delta$ is the code
\[
\Gamma \circ \Delta = \{U_{\Gamma\circ\Delta}, E_{\Gamma\circ\Delta}, Q_{\Gamma\circ\Delta} \mid Q \in L(\MA)\}=\{(U_\Gamma)_\Delta, (E_\Gamma)_\Delta, (Q_\Gamma)_\Delta \mid Q \in L(\MA)\},
\]
where $\dim \Gamma\circ \Delta =\dim \Gamma\cdot\dim \Delta$ and $\dim_{par}\Gamma\circ\Delta = \dim_{par}\Gamma\cdot \dim\Delta+\dim_{par}\Delta$.
\end{definition}

The following is an important technical result on the transitivity of interpretations.

\begin{lemma}[\cite{Daniyarova-Myasnikov:I}]\label{le:int-transitivity}
Let $\mathbb A=\langle A;L(\mathbb A)\rangle, \mathbb B=\langle B;L(\mathbb B)\rangle$ and $\mathbb C=\langle C;L(\mathbb C)\rangle$ be algebraic structures and $\Gamma,\Delta$ be codes as above. If $\mathbb A\stackrel{\Gamma}{\rightsquigarrow}\mathbb B$ and 
$\mathbb B\stackrel{\Delta}{\rightsquigarrow}\mathbb C$ then $\mathbb A\stackrel{\Gamma\circ\Delta}{\rightsquigarrow}\mathbb C$.

Furthermore, the following conditions hold:
\begin{enumerate}[1)]
    \item If $\bar p,\bar q$ are parameters and $\mu_\Gamma, \mu_\Delta$ are coordinate maps of interpretations $\Gamma,\Delta$ then $(\bar{\bar p},\bar q)$, where $\bar{\bar p} \in \mu_\Delta^{-1}(\bar p)$, are parameters for $\Gamma\circ\Delta$; 
    \item\label{le:int2} For any coordinate map $\mu_\Delta$ of the interpretation $\mathbb B\cong\Delta(\mathbb C,\bar q)$ and any tuple $\bar{\bar p} \in \mu_\Delta^{-1}(\bar p)$ the $L(\mathbb A)$-structure $\Gamma\circ\Delta(\mathbb C,(\bar{\bar p},\bar q))$ is well-defined and isomorphic to~$\mathbb A$;
    \item The $L(\mathbb A)$-structure $\Gamma\circ\Delta (\mathbb C, (\bar{\bar p},\bar q))$ does not depend on the choice of $\bar{\bar p} \in \mu_\Delta^{-1}(\bar p)$ when $\mu_\Delta$ is fixed; 
    \item $\mu_\Gamma\circ\mu_\Delta=\mu_{\Gamma}\circ\mu^n_{\Delta}\big|_{U_{\Gamma\circ\Delta}(\mathbb C, (\bar{\bar p},\bar q))}$ is a coordinate map of the interpretation $\mathbb A\cong \Gamma\circ\Delta(\mathbb C,(\bar{\bar p},\bar q))$ and any coordinate map $\mu_{\Gamma\circ\Delta}\colon U_{\Gamma\circ\Delta}(\mathbb C, (\bar{\bar p},\bar q))\to A$ has a form $\mu_{\Gamma1}\circ\mu_\Delta$ for a suitable coordinate map $\mu_{\Gamma1}$ of the interpretation $\mathbb A\cong\Gamma(\mathbb B,\bar p)$, provided $\mu_\Delta$ is fixed;
    \item If $\Gamma,\Delta$ are absolute, then $\Gamma\circ\Delta$ is absolute; 
    \item If $\Gamma,\Delta$ are injective, then $\Gamma\circ\Delta$ is injective; 
    \item\label{le:int7} If $\Gamma,\Delta$ are regular with the corresponding formulas $\varphi,\psi$, then $\Gamma\circ\Delta$ is regular with the formula $\varphi_\Delta\wedge\psi$. 
\end{enumerate}
\end{lemma}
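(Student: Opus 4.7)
The main tool is the $\Gamma$-translation alluded to in the paragraph preceding the lemma. First I would formalize this translation: by induction on the structure of an $L(\MA)$-formula $\varphi(x_1,\ldots,x_m)$, construct an $L(\MB)$-formula $\varphi_\Gamma(\bar x_1,\ldots,\bar x_m,\bar y)$, with $|\bar x_i|=\dim\Gamma$ and $|\bar y|=\dim_{par}\Gamma$, such that whenever $\MA\cong\Gamma(\MB,\bar p)$ via a coordinate map $\mu_\Gamma$,
\[
\MA\models \varphi(\mu_\Gamma(\bar b_1),\ldots,\mu_\Gamma(\bar b_m)) \iff \MB\models \varphi_\Gamma(\bar b_1,\ldots,\bar b_m,\bar p)
\]
for all $\bar b_i\in A_\Gamma$. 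Atomic formulas translate via the formulas $Q_\Gamma$ (with $E_\Gamma$ handling the equivalence), Boolean connectives are preserved, and a quantifier $\exists x\,\psi$ translates to $\exists \bar x\,(U_\Gamma(\bar x,\bar y)\wedge \psi_\Gamma)$, dually for $\forall$. A key lemma of the induction is that truth is invariant under substitution of $E_\Gamma$-equivalent tuples for any free variable; I would prove this simultaneously.

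Given interpretations $\MA\stackrel{\Gamma,\bar p}{\rightsquigarrow}\MB$ and $\MB\stackrel{\Delta,\bar q}{\rightsquigarrow}\MC$ with coordinate maps $\mu_\Gamma,\mu_\Delta$, I would then apply the $\Delta$-translation to each formula of the code $\Gamma$. The resulting $L(\MC)$-formulas are exactly the constituents of $\Gamma\circ\Delta$ by Definition~\ref{def:code-composition}. Pick any tuple $\bar{\bar p}\in \mu_\Delta^{-1}(\bar p)$ in $C^{\dim\Gamma\cdot\dim_{par}\Gamma}$, interpreted componentwise. Applied to $U_\Gamma$, $E_\Gamma$, and to each $Q_\Gamma$, the $\Delta$-translation property shows that $\Gamma\circ\Delta(\MC,(\bar{\bar p},\bar q))$ is well-defined as an $L(\MA)$-structure and that the composite map $\mu_\Gamma\circ\mu_\Delta$ (with $\mu_\Delta$ acting componentwise on long tuples) induces the desired isomorphism with $\MA$. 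This takes care of items 1, 2, and 4 at once; the ``every coordinate map factors'' part of item 4 uses that any two coordinate maps of $\MA\cong\Gamma(\MB,\bar p)$ differ by an automorphism of $\MA$.

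Items 3, 5, 6, 7 then follow by direct verification. Independence from the choice of $\bar{\bar p}\in \mu_\Delta^{-1}(\bar p)$ (item 3) holds because any two such preimages are componentwise $E_\Delta$-equivalent, and the substitution invariance established in the translation induction guarantees that the $\Delta$-translated formulas have the same truth values under either choice. Preservation of absoluteness (item 5) is immediate since no parameters occur anywhere. Preservation of injectivity (item 6) follows because when both $E_\Gamma$ and $E_\Delta$ are identity relations, $(E_\Gamma)_\Delta$ reduces to componentwise identity on $\Delta$-representatives. For regularity (item 7), the required parameter formula is $\varphi_\Delta\wedge\psi$: the conjunct $\psi(\bar z)$ asserts $\bar q\in\psi(\MC)$, while $\varphi_\Delta(\bar y,\bar z)$ asserts, via the translation, that $\mu_\Delta(\bar{\bar p})\in \varphi(\MB)$, which is precisely the admissibility condition for $\Gamma$.

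\textbf{Main obstacle.} The substantive step is the translation induction together with its substitution-invariance addendum, since every other clause reduces to it. The second, mostly bookkeeping, difficulty is keeping track of parameters across the composition: the length-$\dim\Gamma\cdot\dim_{par}\Gamma$ tuple $\bar{\bar p}$, the componentwise action of $\mu_\Delta$, and the identification of $\varphi_\Delta\wedge\psi$ as the correct regular parameter formula all have to be stated and manipulated carefully, even though each is conceptually routine.
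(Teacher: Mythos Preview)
The paper does not supply its own proof of this lemma; it is quoted with a citation to~\cite{Daniyarova-Myasnikov:I}. Your outline is the standard argument for transitivity of interpretations via formula translation, and it is correct. The reduction of every item to the translation induction plus substitution invariance under $E_\Delta$-equivalence is exactly the right organizing principle, and your handling of the parameter bookkeeping for item~7) matches what the paper itself spells out in Remark~\ref{remark2}.
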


\begin{remark}
 Note that by construction the structure $\Gamma\circ\Delta (\mathbb C, (\bar{\bar p},\bar q))$ depends on the choice of $\mu_\Delta$, i.\,e., the set $U_{\Gamma\circ\Delta}(\mathbb C,(\bar{\bar p},\bar q))\subseteq C^{\dim \Gamma\circ\Delta}$, the relation $\sim_{\Gamma\circ\Delta}$, the constants $c_{\Gamma\circ\Delta}$, and the functions $f_{\Gamma\circ\Delta}$ in general depend on $\mu_\Delta$. On the other hand, the structure $\Gamma\circ\Delta (\mathbb C, (\bar{\bar p},\bar q))$ does not depend on the choice of $\mu_\Gamma$.
\end{remark}

\begin{remark}\label{remark2}
Let $(\Gamma,\varphi), (\Delta,\psi)$ be regular interpretations as in Item~\ref{le:int7} of Lemma~\ref{le:int-transitivity}.
Then for any tuples of parameters $\bar p\in \varphi(\mathbb B)$ and $\bar q\in \psi(\mathbb C)$, any coordinate map $\mu_\Delta$ of the interpretation $\mathbb B\cong\Delta(\mathbb C,\bar q)$, and any preimages $\bar{\bar p} \in \mu_\Delta^{-1}(\bar p)$, we have that the tuple $(\bar {\bar p},\bar q)$ is in $\varphi_\Delta\wedge\psi(\mathbb C)$.
Also, the $L(\mathbb A)$-structure $\Gamma\circ\Delta(\mathbb C,(\bar {\bar p},\bar q))$ is well-defined and isomorphic to $\mathbb A$, by item~\ref{le:int2}.
Conversely, every tuple $\bar r\in \varphi_\Delta\wedge\psi(\mathbb C)$ has a form $\bar r=(\bar{\bar p},\bar q)$, where $\bar q\in \psi(\mathbb C)$, $\bar{\bar p}\in \varphi_\Delta(\mathbb C,\bar q)$.
For any coordinate map $\mu_\Delta$ of the interpretation $\mathbb B\cong\Delta(\mathbb C,\bar q)$, the tuple $\bar p=\mu_\Delta(\bar{\bar p})$ is in $\varphi(\mathbb B)$.
So, the $L(\mathbb A)$-structure $\Gamma\circ\Delta(\mathbb C,\bar r)=\Gamma\circ\Delta(\mathbb C,(\bar {\bar p},\bar q))$ is well-defined and isomorphic to $\mathbb A$, again due to Item~\ref{le:int2} of Lemma~\ref{le:int-transitivity}. 
\end{remark}

\subsection{Bi-interpretations and elementary equivalence}
\label{se:bi-int}

In this section we discuss a very strong version of mutual interpretability of two structures, so-called {\em bi-interpretability}. The following definition uses notation from Lemma~\ref{le:int-transitivity}.

\begin{definition}\label{def:bi}
Algebraic structures $\mathbb A$ and $\mathbb B$ are called {\em strongly bi-interpretable} (with parameters) in each other if 
there exists an interpretation $(\Gamma,\bar p,\mu_\Gamma)$ of $\mathbb A$ in $\mathbb B$ and an interpretation $(\Delta,\bar q,\mu_\Delta)$ of $\mathbb B$ in $\mathbb A$ (so the algebraic structures $\Gamma\circ\Delta(\mathbb A,(\bar{\bar p},\bar q))$ and $\Delta\circ\Gamma(\mathbb B,(\bar{\bar q},\bar p))$ are well-defined and $\Gamma\circ\Delta(\mathbb A,(\bar{\bar p},\bar q))$ is isomorphic to $\mathbb A$, while $\Delta\circ\Gamma(\mathbb B,(\bar{\bar q},\bar p))$ is isomorphic to $\mathbb B$), such that the coordinate maps $\mu_\Gamma\circ\mu_\Delta\colon A_{\Gamma \circ \Delta} \to A$ and $\mu_\Delta\circ\mu_\Gamma\colon B_{\Delta \circ \Gamma} \to B$ are definable in $\mathbb A$ and $\mathbb B$, respectively.
\end{definition}

Note that there is another slightly different notion of {\em bi-interpretation}, which we sometimes call a {\em weak bi-interpretation} for contrast, where in the above definition the condition of definability of maps $\mu_\Gamma\circ\mu_\Delta$ and $\mu_\Delta\circ\mu_\Gamma$ is replaced by a weaker one that requires definability of some coordinate maps $A_{\Gamma \circ \Delta} \to A$ and $B_{\Delta \circ \Gamma} \to B$.

In a number of papers, authors do not even mention the difference between these two notions of bi-interpretation, implicitly assuming one or the other. To keep the exposition precise, throughout this papers we are explicit in our use of this terminology. Observe that the bi-interpretation defined in the books~\cite{Hodges} and~\cite{KharlampovichMyasnikovSohrabi:2021} is weak, but in the paper~\cite{Aschenbrenner-etal:2020} it is strong. There are many interesting applications of strong bi-interpretations which we cannot derive from the weak ones. However, it is important to mention that right now we do not have examples of weak interpretations of algebraic structures that are not strong.

\begin{definition}\label{def:reg}
Algebraic structures $\mathbb A$ and $\mathbb B$ are called {\em regularly bi-interpretable} if 
\begin{enumerate}[1)]
\item there exist a regular interpretation $(\Gamma,\varphi)$ of $\mathbb A$ in $\mathbb B$ and a regular interpretation $(\Delta,\psi)$ of $\mathbb B$ in $\mathbb A$;
\item there exists formula $\theta_\mathbb A(\bar u, x, \bar r)$ in $L(\mathbb A)$, where $|\bar u|={\dim\Gamma\cdot\dim\Delta}$, $|\bar r|={\dim_{par}\Gamma\circ\Delta}$, such that for any tuple $\bar r_0\in \varphi_\Delta\wedge \psi(\mathbb A)$, the formula $\theta_\mathbb A(\bar u, x, \bar r_0)$ defines some coordinate map $U_{\Gamma\circ\Delta}(\mathbb A,\bar r_0)\to A$;
\item there exists formula $\theta_\mathbb B(\bar u, x, \bar t)$ in $L(\mathbb B)$, where $|\bar u|={\dim\Gamma\cdot\dim\Delta}$, $|\bar t|={\dim_{par}\Delta\circ\Gamma}$, such that for any tuple $\bar t_0\in \psi_\Gamma\wedge \varphi(\mathbb B)$, the formula $\theta_\mathbb B(\bar u, x, \bar t_0)$ defines some coordinate map $U_{\Delta\circ\Gamma}(\mathbb B,\bar t_0)\to B$. 
\end{enumerate}
\end{definition}

Regular bi-interpretability allows us to describe non-standard models of one algebraic structure by means of another one (see Theorem~\ref{th:equiv1} below).

\begin{definition}\label{def:st_reg}
We say that $\mathbb A$ and $\mathbb B$ are {\em strongly regularly bi-interpretable}, if they are regularly bi-interpretable, i.\,e., 1)--3) hold, and additionally 
\begin{enumerate}
\item[4)] for any 
pair of parameters $(\bar p, \bar q)$, $\bar p\in \varphi(\mathbb B)$, $\bar q\in \psi(\mathbb A)$, there exists a pair of coordinate maps $(\mu_\Gamma,\mu_\Delta)$ for interpretations $(\Gamma,\bar p)$ and $(\Delta,\bar q)$ such that for any $\bar r_0=(\bar{\bar p},\bar q)$, $\bar{\bar p}\in \mu^{-1}_\Delta(\bar p)$, and $\bar t_0=(\bar{\bar q},\bar p)$, $\bar{\bar q}\in \mu^{-1}_\Gamma(\bar q)$, the coordinate maps ${\mu_\Gamma\circ\mu_\Delta\colon} U_{\Gamma\circ\Delta}(\mathbb A,\bar r_0)\to A$ and ${\mu_\Delta\circ\mu_\Gamma\colon} U_{\Delta\circ\Gamma}(\mathbb B,\bar t_0)\to B$ are defined in $\mathbb A$ and $\mathbb B$, correspondingly, by the formulas $\theta_\mathbb A(\bar u, x, \bar r_0)$ and $\theta_\mathbb B(\bar u, x, \bar t_0)$.
\end{enumerate}
\end{definition}

\begin{remark}
Note that Item~4) in Definition~\ref{def:st_reg} does not imply items~2), 3) in Definition~\ref{def:reg}.
The statement in item~2) must be true for all tuples of parameters from the set $X=\varphi_\Delta\wedge\psi(\mathbb A)$. From Remark~\ref{remark2} we know that 
\[
X=\{(\bar {\bar p},\bar q)\:\mid\: \bar q\in\psi(\mathbb A), \:\bar p \in \varphi(\mathbb B), \: \bar{\bar p}\in \mu_\Delta^{-1}(\bar p)\},
\]
where $\mu_\Delta$ runs through all coordinate maps of the interpretation $\mathbb B\cong\Delta(\mathbb A,\bar q)$.
Denote by $\mu_\Delta^{\bar p,\bar q}$ the coordinate map of the interpretation $\mathbb B\cong\Delta(\mathbb A,\bar q)$ from item~4) that corresponds to the pair of parameters $(\bar p,\bar q)$. Thus the statement in item~4) must be true for all tuples of parameters from the set 
\[
Y=\{(\bar {\bar p},\bar q)\:\mid\: \bar q\in\psi(\mathbb A), \:\bar p \in \varphi(\mathbb B),\: \bar{\bar p}\in (\mu_\Delta^{\bar p,\bar q})^{-1}(\bar p)\}.
\]
So, $Y\subseteq X$, but it may be $Y\ne X$. 
\end{remark}

If algebraic structures $\mathbb A$ and $\mathbb B$ are strongly regularly bi-interpretable then we may use all conclusions from both regular bi-interpretability and strong bi-interpretation with parameters.

Recall that the \emph{first-order classification problem} for a structure $\mathbb A$ asks one to describe ``algebraically'' all structures $\widetilde{\mathbb A}$ such that $\mathbb A \equiv \widetilde{\mathbb A}$. 
In other words, the first-order classification problem for $\mathbb A$ requires algebraically to describe all models of the complete first-order theory $\Th(\mathbb A)$ of $\mathbb A$. Here, by ``algebraically'' we mean that the description has to reveal the algebraic structure of every model of $\Th(\mathbb A)$.

The following result, in the case of a regular bi-interpretation $\mathbb A \cong \Gamma(\mathbb B,\varphi)$, describes the algebraic structure of the models of $\Th(\mathbb A)$ via the interpretation $\Gamma$ and the models of $\Th(\mathbb B)$.
The efficacy of this description depends on the understanding of the models $\widetilde{\mathbb B}$ of $\Th(\mathbb B)$.
If $\Gamma(\widetilde{\mathbb B},\varphi)$ is well-defined, that is, the structures $\Gamma(\widetilde{\mathbb B},\bar p)$, $\bar p \in \varphi(\widetilde{\mathbb B})$, are well-defined and pairwise isomorphic to each other, then we can choose an arbitrary representative $\bar p_0 \in \varphi(\widetilde{\mathbb B})$ and view $\Gamma(\widetilde{\mathbb B},\varphi)$ as $\Gamma(\widetilde{\mathbb B},\bar p_0)$.

\begin{theorem}[\cite{Daniyarova-Myasnikov:I}]\label{th:equiv1}
Let $\mathbb A$ and $\mathbb B$ be regularly bi-interpretable in each other, so $\mathbb A \cong \Gamma(\mathbb B,\varphi)$ and $\mathbb B\cong\Delta(\mathbb A,\psi)$. Then
\begin{enumerate}[1)]
\item For any $\widetilde{\mathbb B}\equiv\mathbb B$ the algebraic structure $\Gamma(\widetilde{\mathbb B},\varphi)$ is well-defined and $\mathbb A\equiv \Gamma(\widetilde{\mathbb B},\varphi)$;
\item Every $L(\mathbb A)$-structure $\widetilde{\mathbb A}$ elementarily equivalent to $\mathbb A$ is isomorphic to $\Gamma(\widetilde{\mathbb B},\varphi)$ for some $\widetilde{\mathbb B}\equiv\mathbb B$;
\item For any $\mathbb B_1\equiv\mathbb B\equiv\mathbb B_2$ one has 
\[
\Gamma(\mathbb B_1,\varphi) \cong \Gamma(\mathbb B_2,\varphi) \iff \mathbb B_1 \cong \mathbb B_2.
\]
\end{enumerate}
\end{theorem}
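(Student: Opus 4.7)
The plan is to leverage two ingredients present throughout: first, the relevant properties of $(\Gamma,\varphi)$ and $(\Delta,\psi)$ (non-emptiness of parameter sets, admissibility, and agreement of theories modulo $\Gamma$-translation) are first-order and transfer to elementarily equivalent structures; second, by Definition~\ref{def:reg} the coordinate maps of the compositions $\Gamma\circ\Delta$ and $\Delta\circ\Gamma$ are definable by $\theta_\mathbb A$ and $\theta_\mathbb B$, and this definability is itself a first-order property that likewise transfers. Combined with the translation calculus recalled before Lemma~\ref{le:int-transitivity}, these ingredients yield every part.

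For part 1), well-definedness of $\Gamma(\widetilde{\mathbb B},\varphi)$ breaks into three pieces. Non-emptiness of $\varphi(\widetilde{\mathbb B})$ follows from $\mathbb B\models\exists\bar y\,\varphi(\bar y)$. Admissibility follows from $\mathbb B\models\forall\bar y(\varphi(\bar y)\to\mathcal{AC}_\Gamma(\bar y))$. The pairwise isomorphism of $\Gamma(\widetilde{\mathbb B},\bar p_1)$ and $\Gamma(\widetilde{\mathbb B},\bar p_2)$ I would derive by first producing, from $\theta_\mathbb B$, a uniformly $L(\mathbb B)$-definable isomorphism $\Gamma(\mathbb B,\bar p_1)\cong\Gamma(\mathbb B,\bar p_2)$, obtained by round-tripping through $\mathbb B$ via the bi-interpretation data, and then transferring this first-order assertion to $\widetilde{\mathbb B}$. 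The elementary equivalence $\Gamma(\widetilde{\mathbb B},\varphi)\equiv\mathbb A$ is then the direct translation: $\mathbb A\models\sigma$ iff $\mathbb B\models\forall\bar y(\varphi(\bar y)\to\sigma_\Gamma(\bar y))$, and the right-hand side transfers to $\widetilde{\mathbb B}$.

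For part 2), given $\widetilde{\mathbb A}\equiv\mathbb A$, set $\widetilde{\mathbb B}:=\Delta(\widetilde{\mathbb A},\psi)$; by part 1) applied to the bi-interpretation $\mathbb B\cong\Delta(\mathbb A,\psi)$, we have $\widetilde{\mathbb B}\equiv\mathbb B$. Since $\mathbb A$ satisfies the first-order statement ``for every admissible tuple $\bar r_0$, the formula $\theta_\mathbb A(\bar u,x,\bar r_0)$ defines a bijective $L(\mathbb A)$-isomorphism from the ambient $\Gamma\circ\Delta$-structure onto the ambient structure,'' the same holds in $\widetilde{\mathbb A}$, giving $\Gamma\circ\Delta(\widetilde{\mathbb A},\bar r_0)\cong\widetilde{\mathbb A}$ for suitable $\bar r_0$. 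By Lemma~\ref{le:int-transitivity} this composition is isomorphic to $\Gamma(\Delta(\widetilde{\mathbb A},\psi),\varphi)\cong\Gamma(\widetilde{\mathbb B},\varphi)$, producing the required isomorphism.

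Part 3)'s direction $(\Leftarrow)$ is immediate from the uniformity of $\Gamma$; for $(\Rightarrow)$, from $\Gamma(\mathbb B_1,\varphi)\cong\Gamma(\mathbb B_2,\varphi)=:\widetilde{\mathbb A}$ the argument of part 2) applied symmetrically to each $\mathbb B_i$ yields $\mathbb B_i\cong\Delta(\Gamma(\mathbb B_i,\varphi),\psi)\cong\Delta(\widetilde{\mathbb A},\psi)$, so $\mathbb B_1\cong\mathbb B_2$. The main obstacle throughout is the pairwise-isomorphism clause of well-definedness in part 1): unlike theory-level statements, isomorphism is not a priori first-order expressible, so it cannot be transferred naively from $\mathbb B$ to $\widetilde{\mathbb B}$ and must be extracted from the uniformly definable coordinate data supplied by $\theta_\mathbb A$ and $\theta_\mathbb B$ in the bi-interpretation.
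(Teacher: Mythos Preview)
The paper does not supply its own proof of this theorem: it is stated with a citation to \cite{Daniyarova-Myasnikov:I} and no \texttt{proof} environment follows. So there is no in-paper argument to compare against.

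That said, your outline is the standard one and is sound. The transfer of non-emptiness, admissibility, and $\Gamma$-translated sentences in part~1) is routine; your handling of part~2) via $\widetilde{\mathbb B}:=\Delta(\widetilde{\mathbb A},\psi)$ together with the first-order transfer of ``$\theta_{\mathbb A}$ defines an isomorphism $\Gamma\circ\Delta(\cdot)\to(\cdot)$'' is exactly right, and part~3) follows by the symmetric use of $\theta_{\mathbb B}$ as you indicate. You also correctly flag the genuine subtlety: the pairwise-isomorphism clause in the well-definedness of $\Gamma(\widetilde{\mathbb B},\varphi)$ is not a bare first-order assertion and must be manufactured from the uniformly definable coordinate data. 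Your ``round-tripping'' sketch is a bit terse; to make it explicit, one composes the $\Gamma$-translation $(\theta_{\mathbb A})_\Gamma$ (which, with parameter $\bar p_1$, yields a definable isomorphism from a $\Gamma\circ\Delta$-copy inside $\Gamma(\mathbb B,\bar p_1)$ onto $\Gamma(\mathbb B,\bar p_1)$) with the isomorphism $\Delta\circ\Gamma(\mathbb B,\cdot)\to\mathbb B$ supplied by $\theta_{\mathbb B}$, pulling $\bar p_2$ back through the latter. This produces an $L(\mathbb B)$-formula $\chi(\bar x_1,\bar x_2,\bar p_1,\bar p_2)$ that uniformly defines an isomorphism $\Gamma(\mathbb B,\bar p_1)\to\Gamma(\mathbb B,\bar p_2)$, and the assertion that $\chi$ does so is first-order in $\mathbb B$ and hence transfers to $\widetilde{\mathbb B}$.
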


Following the practice in non-standard arithmetic and non-standard analysis, we term the structures of the form $\Gamma(\widetilde{\mathbb B},\bar p)$ for some $\widetilde{\mathbb B}\equiv\mathbb B$ and $\bar p \in \varphi(\widetilde{\mathbb B})$ above as \emph{non-standard models} of $\mathbb A$ with respect to interpretation $\Gamma$. In fact, the result below shows that quite often the non-standard models in $\widetilde{\mathbb B}$ do not depend on the choice of interpretation $\Gamma$, i.\,e., for every $\widetilde{\mathbb B}\equiv\mathbb B$, there is only one non-standard model of $\mathbb A$, up to isomorphism.

\begin{theorem} [Uniqueness of non-standard models, \cite{Daniyarova-Myasnikov:I}]\label{th:equiv2}
Let a finitely generated structure $\mathbb A$ in a finite signature be regularly interpretable in $\mathbb Z$ in two ways, as $\Gamma_1(\mathbb Z,\varphi_1)$ and $\Gamma_2(\mathbb Z,\varphi_2)$. Then there exists a formula $\theta(\bar x_1,\bar x_2, \bar y_1,\bar y_2)$, $|\bar x_i|=\dim\Gamma_i$, $|\bar y_i|=\dim_{par}\Gamma_i$, such that for any $\bar p_i\in\phi_i(\mathbb Z)$ the formula $\theta(\bar x_1,\bar x_2, \bar p_1,\bar p_2)$ defines an isomorphism $\Gamma_1(\mathbb Z,\bar p_1)\to\Gamma_2(\mathbb Z,\bar p_2)$. Moreover, if $\widetilde{\mathbb Z} \equiv \mathbb Z$, then algebraic structures $\Gamma_1(\widetilde{\mathbb Z},\varphi_1), \Gamma_2(\widetilde{\mathbb Z},\varphi_2)$ are well-defined and $\theta(\bar x_1,\bar x_2, \bar p_1,\bar p_2)$ defines an isomorphism $\Gamma_1(\widetilde{\mathbb Z},\bar p_1)\to\Gamma_2(\widetilde{\mathbb Z},\bar p_2)$ for any $\bar p_1\in\varphi_1(\widetilde{\mathbb Z})$, $\bar p_2\in\varphi_2(\widetilde{\mathbb Z})$.
\end{theorem}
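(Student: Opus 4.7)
The plan exploits two features: finite generation (providing a canonical arity $m$ of generating tuples of $\mathbb A$) together with finiteness of $L(\mathbb A)$ (allowing Gödel-coding of $L(\mathbb A)$-terms and atomic formulas as elements of $\mathbb N\subseteq\mathbb Z$), and the arithmetical strength of $\mathbb Z$ (allowing evaluation and satisfaction predicates to be expressed uniformly in interpretation parameters). Fix generators $\bar a=(a_1,\dots,a_m)$ of $\mathbb A$. Using the interpretation codes $\Gamma_i$, I build in $L(\mathbb Z)$, uniformly in $\bar y_i$, two recursive predicates: $\mathrm{Eval}_i(\bar x,t,\bar g,\bar y_i)$, saying that $\bar x\in A_{\Gamma_i}(\mathbb Z,\bar y_i)$ represents the evaluation of the term coded by $t$ on the tuple $\bar g\in A_{\Gamma_i}(\mathbb Z,\bar y_i)^m$ inside $\Gamma_i(\mathbb Z,\bar y_i)$; and $\mathrm{Sat}_i(\phi,\bar g,\bar y_i)$, saying the atomic $L(\mathbb A)$-formula coded by $\phi$ holds of $\bar g$ there. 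These are obtained by the standard arithmetized recursion on term/formula complexity via bounded quantification over codes of finite sequences.

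With these predicates I express, as $L(\mathbb Z)$-formulas, the conditions ``$\bar g$ generates $\Gamma_i(\mathbb Z,\bar y_i)$'' (as $\forall\bar x\in A_{\Gamma_i}\,\exists t\,\mathrm{Eval}_i(\bar x,t,\bar g,\bar y_i)$) and ``$\bar g_1,\bar g_2$ are matched'' (as $\forall\phi\,\bigl(\mathrm{Sat}_1(\phi,\bar g_1,\bar y_1)\leftrightarrow\mathrm{Sat}_2(\phi,\bar g_2,\bar y_2)\bigr)$). Fixing a definable well-ordering of finite tuples in $\mathbb Z$ (derived from the well-ordering of $\mathbb N$), let $\bar g_1=\bar g_1(\bar y_1)$ be the lex-least generating tuple of $\Gamma_1(\mathbb Z,\bar y_1)$, and let $\bar g_2=\bar g_2(\bar y_1,\bar y_2)$ be the lex-least generating tuple of $\Gamma_2(\mathbb Z,\bar y_2)$ matched with $\bar g_1$. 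Then set
\[
\theta(\bar x_1,\bar x_2,\bar y_1,\bar y_2)\,:=\,\exists t\,\bigl(\mathrm{Eval}_1(\bar x_1,t,\bar g_1,\bar y_1)\wedge\mathrm{Eval}_2(\bar x_2,t,\bar g_2,\bar y_2)\bigr).
\]

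For any $\bar p_i\in\varphi_i(\mathbb Z)$, choosing coordinate maps $\mu_i$ and taking preimages under $\mu_i$ of a fixed generating tuple of $\mathbb A$ produces matched generating tuples, so $\bar g_1,\bar g_2$ are well-defined. The matched condition ensures that for all terms $s,s'$, $s(\bar g_1)\sim_{\Gamma_1}s'(\bar g_1)$ iff $s(\bar g_2)\sim_{\Gamma_2}s'(\bar g_2)$, so $\theta$ is functional and injective modulo $\sim$; surjectivity follows from generation of $\bar g_2$, and preservation of $L(\mathbb A)$-operations and relations is automatic because atomic predicates are also covered by the matched condition and term evaluation respects composition. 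Hence $\theta(-,-,\bar p_1,\bar p_2)$ defines an isomorphism $\Gamma_1(\mathbb Z,\bar p_1)\to\Gamma_2(\mathbb Z,\bar p_2)$.

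The non-standard conclusion is immediate: ``for every $\bar p_1\in\varphi_1(\mathbb Z)$ and $\bar p_2\in\varphi_2(\mathbb Z)$ the formula $\theta(-,-,\bar p_1,\bar p_2)$ defines an isomorphism $\Gamma_1\to\Gamma_2$'' is an $L(\mathbb Z)$-sentence (functionality, $\sim$-respect, bijectivity, and preservation of each symbol of the finite signature $L(\mathbb A)$ are all first-order expressible once $\mathrm{Eval}_i$ and $\mathrm{Sat}_i$ are in place), so it transfers to every $\widetilde{\mathbb Z}\equiv\mathbb Z$; well-definedness of $\Gamma_i(\widetilde{\mathbb Z},\varphi_i)$ is recovered by specializing to $\Gamma_1=\Gamma_2$. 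The main technical obstacle is the careful uniform arithmetization of $\mathrm{Eval}_i$ and $\mathrm{Sat}_i$ relative to the interpretation codes; one must also verify that ``lex-least'' behaves soundly in $\widetilde{\mathbb Z}$, where the required generating and matched tuples exist by transfer but may be non-standard.
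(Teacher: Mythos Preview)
The paper does not actually prove this theorem: it is stated as a citation from \cite{Daniyarova-Myasnikov:I} with no proof given. So there is no proof in the present paper to compare against directly.

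That said, your approach is correct and is essentially the canonical one. The paper does prove a closely analogous statement, Theorem~\ref{th:well-def} (uniqueness of non-standard models over a list superstructure), and its proof follows the same skeleton you outline: enumerate terms $\tau_j(\bar x,\bar y)$ in the finite signature, build formulas $\sigma_i$ expressing ``$z$ represents the value of $\tau_j$ on the chosen generators inside $\Gamma_i$'', and define the isomorphism by $\exists j\,[\sigma_1(z_1,j,\dots)\wedge\sigma_2(z_2,j,\dots)]$. Your $\mathrm{Eval}_i$ is exactly this device, and your $\mathrm{Sat}_i$ over atomic formulas is what the paper handles implicitly by requiring that the restriction of the isomorphism to the definable substructure $\mathbb B$ be definable (in your $\mathbb Z$ setting there is no such $\mathbb B$, so matching atomic diagrams replaces it). The one genuine addition in your write-up is the lex-least selection of generating tuples, which is needed to make $\theta$ depend only on $\bar y_1,\bar y_2$ rather than on auxiliary generator-parameters; the paper's proof of Theorem~\ref{th:well-def} carries the generator representatives $\bar x_i^*$ as extra parameters of $\sigma$, which is fine for its purposes but would not literally yield the parameter dimensions asserted in the statement of Theorem~\ref{th:equiv2}. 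Your transfer argument for $\widetilde{\mathbb Z}$ is also the right one, and your closing caveat about lex-least in $\widetilde{\mathbb Z}$ is harmless: the entire assertion ``$\theta$ defines an isomorphism'' is a single $L(\mathbb Z)$-sentence quantifying over $\bar p_1,\bar p_2$, so whatever (possibly non-standard) tuples the lex-least clauses pick out in $\widetilde{\mathbb Z}$, the conclusion holds by elementary equivalence.
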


\begin{corollary} \label{co:unique-non-stand}
 Let $\mathbb A$ be a finitely generated structure in a finite signature regularly bi-interpretable with $\mathbb Z$. Then for every $\widetilde{\mathbb Z} \equiv \mathbb Z$ there is a unique up to isomorphism non-standard model $\mathbb A(\widetilde{\mathbb Z})$ of $\mathbb A$ and for any structure $\nsB$ one has $\nsB \equiv \mathbb A$ if and only if $\nsB \cong \mathbb A(\widetilde{\mathbb Z})$ for a suitable $\widetilde{\mathbb Z} \equiv \mathbb Z$.
\end{corollary}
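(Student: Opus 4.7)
The plan is to derive this corollary directly from Theorems~\ref{th:equiv1} and~\ref{th:equiv2}. Fix a regular bi-interpretation witnessing the hypothesis, say $\mathbb A\cong \Gamma(\mathbb Z,\varphi)$ and $\mathbb Z\cong \Delta(\mathbb A,\psi)$. For each $\widetilde{\mathbb Z}\equiv \mathbb Z$, I would define the candidate non-standard model $\mathbb A(\widetilde{\mathbb Z})$ to be $\Gamma(\widetilde{\mathbb Z},\varphi)$. Theorem~\ref{th:equiv1}(1) does double duty here: on one hand it asserts that $\Gamma(\widetilde{\mathbb Z},\varphi)$ is well-defined, so all structures $\Gamma(\widetilde{\mathbb Z},\bar p)$ for $\bar p\in \varphi(\widetilde{\mathbb Z})$ are pairwise isomorphic and the choice of parameter tuple is immaterial; on the other hand it yields $\mathbb A(\widetilde{\mathbb Z})\equiv \mathbb A$, which is already the easy direction of the biconditional.

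The next step is to show that the construction does not even depend on the chosen interpretation $\Gamma$. Since $\mathbb A$ is finitely generated in a finite signature, Theorem~\ref{th:equiv2} applies to any pair of regular interpretations $\Gamma_1,\Gamma_2$ of $\mathbb A$ in $\mathbb Z$: it supplies a formula $\theta$ which, evaluated at any admissible parameter tuples $\bar p_1\in \varphi_1(\widetilde{\mathbb Z})$ and $\bar p_2\in \varphi_2(\widetilde{\mathbb Z})$, defines an isomorphism $\Gamma_1(\widetilde{\mathbb Z},\bar p_1)\to \Gamma_2(\widetilde{\mathbb Z},\bar p_2)$. Thus $\mathbb A(\widetilde{\mathbb Z})$ is a well-defined isomorphism class depending only on $\widetilde{\mathbb Z}$.

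For the nontrivial direction of the biconditional, the plan is to invoke Theorem~\ref{th:equiv1}(2): every $\nsB\equiv \mathbb A$ is isomorphic to $\Gamma(\widetilde{\mathbb Z},\varphi)$ for some $\widetilde{\mathbb Z}\equiv \mathbb Z$, and by construction this is exactly $\mathbb A(\widetilde{\mathbb Z})$.

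I do not expect a genuine obstacle: the statement is essentially an assembly of the preceding two theorems, and the finite-generation/finite-signature assumption is there precisely to license Theorem~\ref{th:equiv2}. The only bookkeeping point to be careful about is that ``regularly bi-interpretable with $\mathbb Z$'' supplies both a regular interpretation of $\mathbb A$ in $\mathbb Z$ (needed to apply Theorem~\ref{th:equiv2} to arbitrary choices $\Gamma_1,\Gamma_2$) and the full bi-interpretation (needed to apply Theorem~\ref{th:equiv1}(2) to realize every $\nsB\equiv \mathbb A$).
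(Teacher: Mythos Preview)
Your proposal is correct and is precisely the intended derivation: the paper states this as an immediate corollary of the two preceding theorems and gives no separate proof, and your argument—combining Theorem~\ref{th:equiv1}(1),(2) with Theorem~\ref{th:equiv2}—is exactly how one reads off the statement.
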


If $\MA$ is as in Corollary~\ref{co:unique-non-stand} then the structure $\MA(\widetilde{\mathbb Z})$ is called the {\em non-standard model of $\MA$ with respect to $\widetilde{\mathbb Z}$}. If $\MA \cong \Gamma(\Z)$ is an interpretation of $\MA$ in $\Z$ then $\MA(\widetilde{\mathbb Z}) \cong \Gamma(\widetilde{\mathbb Z})$ and the algebraic structure of $\MA(\widetilde{\mathbb Z})$ is revealed via the interpretation~$\Gamma$.

\subsection{Invertible interpretations and elementary equivalence} \label{se:inv_int_and_bi-int}

In this section we discuss \emph{invertible interpretations} of $\mathbb A$ in $\mathbb B$, which can be also viewed as \emph{half-bi-interpretations} of $\mathbb A$ in $\mathbb B$.
The point is that in various applications, like the first-order classification, one does not need the full strength of bi-interpretability of $\mathbb A$ in $\mathbb B$ to characterize algebraically all structures $\widetilde{\mathbb A}$ with $\widetilde{\mathbb A} \equiv \mathbb A$.
Rather, it suffices to have only ``half of bi-interpretability'' of $\mathbb A$ in $\mathbb B$ to describe all models of the theory $\Th(\mathbb A)$ in terms of the models of the theory $\Th(\mathbb B)$.

\begin{definition} \label{de:invert}
An interpretation $(\Gamma,\bar p)$ of $\mathbb A$ in $\mathbb B$ is called {\em invertible} if there exists an interpretation $(\Delta,\bar q)$ of $\mathbb B$ in $\mathbb A$ and $L(\mathbb A)$-isomorphism $\mu_\mathbb A\colon \Gamma \circ \Delta(\mathbb A) \to \mathbb A$ definable in $\mathbb A$. In this case we refer to $(\Delta,\bar q)$ as a {\em right inverse} of $(\Gamma,\bar p)$.
\end{definition}

Observe that the right inverse of $(\Gamma,\bar p)$ is not unique.
Also note that for any coordinate map $\mu_\Delta\colon B_\Delta\to B$ there exists a coordinate map $\mu_\Gamma\colon A_\Gamma\to A$ such that the composition $\mu_\Gamma\circ\mu_\Delta$ is definable in $\mathbb A$.
Indeed, if $\mu_\Gamma\colon A_\Gamma\to A$ is some kind of coordinate map, then the composition $\mu_{\Gamma\circ\Delta}=\mu_\Gamma\circ\mu_\Delta$ gives rise to an $L(\mathbb A)$-isomorphism $\bar\mu_{\Gamma\circ\Delta}$.
Therefore, $\alpha=\mu_{\mathbb A}\circ \bar \mu^{-1}_{\Gamma\circ \Delta}$, where $\mu_{\mathbb A}$ is as in Definition~\ref{de:invert}, is an automorphism of $\mathbb A$.
So, $\mu_{\Gamma 0}=\alpha\circ \mu_\Gamma\colon A_\Gamma\to A$ is also a coordinate map.
For the coordinate map $\mu_{\Gamma 0\circ\Delta}=\mu_{\Gamma 0}\circ\mu_\Delta\colon A_{\Gamma\circ\Delta}\to A$ one has $\bar\mu_{\Gamma 0\circ\Delta}=\mu_{\mathbb A}$. The latter is definable in $\mathbb A$.

\begin{definition}\label{de:reg_invert}
A regular interpretation $(\Gamma,\varphi)$ of $\mathbb A$ in $\mathbb B$ is called {\em regularly invertible} if there exist a regular interpretation $(\Delta,\psi)$ of $\mathbb B$ in $\mathbb A$ (a {\em right inverse for $(\Gamma,\varphi)$}) and a formula $\theta(\bar u,x,\bar z)$ in $L(\mathbb A)$, $|\bar u|=\dim\Gamma\cdot\dim\Delta$, such that for every tuple of parameters $\bar r\in \varphi_\Delta\wedge\psi(\mathbb A)$, the formula $\theta(\bar u, x, \bar r)$ defines an isomorphism $\Gamma \circ \Delta(\mathbb A,\bar r) \cong \mathbb A$.
Moreover, $(\Gamma,\varphi)$ is called {\em regularly injectively invertible} if $(\Delta,\psi)$ is injective. 
\end{definition}

As before, in this case we may always assume that the composition $\mu_\Gamma\circ\mu_\Delta$ of coordinate maps $\mu_\Gamma\colon U_\Gamma(\mathbb B,\bar p)\to A$ and $\mu_\Delta\colon U_\Delta(\mathbb A,\bar q)\to B$ is definable for every $\bar p\in \varphi(\mathbb B)$ and $\bar q\in\psi(\mathbb A)$.

The following result gives a first step in the general approach to the first-order classification problem.

\begin{proposition}[{\cite[Lemma 4.4(2)]{KharlampovichMyasnikovSohrabi:2021}}]\label{pr:elem_equiv} \label{pr:reg-int-elem-equiv}
Let $\mathbb A$ be regularly interpretable in $\mathbb B$, so $\mathbb A \cong \Gamma(\mathbb B,\phi)$ for some interpretation $\Gamma$ and a formula $\phi$. Then for every structure $\widetilde{\mathbb B}$ with $\widetilde{\mathbb B} \equiv \mathbb B$ and for every tuple $\bar u \in \phi(\widetilde{\mathbb B})$ the algebraic structure $\Gamma(\widetilde{\mathbb B},\bar u)$ is well-defined and one has $\Gamma(\widetilde{\mathbb B},\bar u) \equiv \mathbb A$.
\end{proposition}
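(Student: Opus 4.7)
The plan is to reduce everything to a transfer between $\MB$ and $\nsB$ via elementary equivalence, using the $\Gamma$-translation of $L(\MA)$-formulas into $L(\MB)$-formulas introduced in Section~\ref{se:interpretation}. The upshot is that all properties of the interpretation that are encoded by the code $\Gamma$ together with the formula $\phi$ are expressible by (sets of) $L(\MB)$-sentences, which transfer verbatim to $\nsB$.

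First I would handle well-definedness. The requirement that $\Gamma(\MB,\bar p)$ be a well-formed $L(\MA)$-structure is exactly the content of the admissibility conditions $\mathcal{AC}_\Gamma(\bar y)$, a set of $L(\MB)$-formulas in the free variables $\bar y$. Regularity of the interpretation $\MA\cong\Gamma(\MB,\phi)$ gives that $\Gamma(\MB,\bar p)$ is well-defined for every $\bar p\in\phi(\MB)$, so $\MB\models \forall\bar y\,(\phi(\bar y)\to\sigma(\bar y))$ for every $\sigma\in\mathcal{AC}_\Gamma$. By $\nsB\equiv\MB$, each such sentence holds in $\nsB$ as well, so for every $\bar u\in\phi(\nsB)$ the admissibility conditions are met and $\Gamma(\nsB,\bar u)$ is a well-defined $L(\MA)$-structure.

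Next I would establish the elementary equivalence. Fix an arbitrary $L(\MA)$-sentence $\psi$ and its $\Gamma$-translation $\psi_\Gamma(\bar y)$. The translation property applied to $\MA\cong\Gamma(\MB,\bar p)$ for each $\bar p\in\phi(\MB)$ gives
\[
\MA\models\psi\iff\MB\models\psi_\Gamma(\bar p),
\]
so $\MB\models\forall\bar y\,(\phi(\bar y)\to\psi_\Gamma(\bar y))$ if $\MA\models\psi$ and $\MB\models\forall\bar y\,(\phi(\bar y)\to\neg\psi_\Gamma(\bar y))$ otherwise. Both are first-order $L(\MB)$-sentences and therefore transfer to $\nsB$. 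Applying the $\Gamma$-translation now to the well-defined structure $\Gamma(\nsB,\bar u)$ yields
\[
\Gamma(\nsB,\bar u)\models\psi\iff\nsB\models\psi_\Gamma(\bar u),
\]
and combining the two equivalences shows $\Gamma(\nsB,\bar u)\models\psi\iff\MA\models\psi$. Since $\psi$ was arbitrary, $\Gamma(\nsB,\bar u)\equiv\MA$.

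The delicate point is that the $\Gamma$-translation property is formally stated under the hypothesis $\MA\cong\Gamma(\MB,\bar p)$, while I invoke it also for $\Gamma(\nsB,\bar u)$. This is really just bookkeeping: the translation $\psi\mapsto\psi_\Gamma$ is produced by a routine induction on formula complexity that depends only on the syntactic code $\Gamma$ and on admissibility holding in the ambient structure, so it applies to any $L(\MB)$-structure in which the admissibility conditions are met, and in particular to $\nsB$ with parameters $\bar u\in\phi(\nsB)$. Apart from this verification, the argument uses no model-theoretic input beyond elementary equivalence and the first-order expressibility of admissibility.
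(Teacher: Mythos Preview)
The paper does not give its own proof of this proposition; it is stated with a citation to \cite[Lemma~4.4(2)]{KharlampovichMyasnikovSohrabi:2021} and left unproved in the text. Your argument is the standard and correct one: transfer the admissibility conditions $\mathcal{AC}_\Gamma$ and the $\Gamma$-translations $\psi_\Gamma$ of $L(\MA)$-sentences from $\MB$ to $\nsB$ via elementary equivalence, exactly as one would expect the cited proof to run. One small addition you could make explicit is that $\phi(\nsB)\neq\emptyset$, which follows from $\MB\models\exists\bar y\,\phi(\bar y)$ and $\nsB\equiv\MB$; otherwise the argument is complete.
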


\begin{corollary}
If $\mathbb A$ is absolutely interpretable in $\mathbb B$, so $\mathbb A\cong\Gamma(\mathbb B,\emptyset)$, and $\widetilde{\mathbb B}\equiv\mathbb B$, then the algebraic structure $
\Gamma(\widetilde{\mathbb B},\emptyset)$ is well-defined and one has $\Gamma(\widetilde{\mathbb B},\emptyset)\equiv\mathbb A$.
\end{corollary}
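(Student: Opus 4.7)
The plan is to deduce this corollary immediately from Proposition~\ref{pr:elem_equiv} by exhibiting the absolute interpretation as a special case of a regular one. Recall that right after Definition~\ref{def:regular_int} it was observed that every absolute interpretation $\Gamma$ can be turned into a regular interpretation by introducing a fictitious parameter variable $y$ into all formulas of the code and setting $\phi(y) = (y=y)$. So the first step is to pass from the given absolute interpretation $\mathbb A\cong\Gamma(\mathbb B,\emptyset)$ to the regular interpretation $(\Gamma',\phi)$ obtained in this way, noting that $\Gamma'(\mathbb B,\bar p)\cong \Gamma(\mathbb B,\emptyset)\cong\mathbb A$ for every $\bar p\in\phi(\mathbb B)=B$ (nonempty since $\mathbb B$ is a nonempty structure).

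Next, given $\widetilde{\mathbb B}\equiv\mathbb B$, pick any tuple $\bar u\in\phi(\widetilde{\mathbb B})$ (again available because $\widetilde{\mathbb B}$ is nonempty and $\phi$ is trivially satisfied). Proposition~\ref{pr:elem_equiv} applied to $(\Gamma',\phi)$ then yields that $\Gamma'(\widetilde{\mathbb B},\bar u)$ is well-defined and $\Gamma'(\widetilde{\mathbb B},\bar u)\equiv\mathbb A$. Since the fictitious variable $y$ never occurs meaningfully in the formulas of $\Gamma'$, the structure $\Gamma'(\widetilde{\mathbb B},\bar u)$ coincides with $\Gamma(\widetilde{\mathbb B},\emptyset)$, so well-definedness and the elementary equivalence transfer verbatim.

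There is essentially no obstacle here: the whole argument is a bookkeeping exercise reducing the parameter-free case to the parametrized statement already proved. The only thing to be mildly careful about is ensuring that $\phi(\widetilde{\mathbb B})\ne\emptyset$, which is automatic for the tautological formula $(y=y)$, so that $\Gamma(\widetilde{\mathbb B},\emptyset)$ is genuinely well-defined as an $L(\mathbb A)$-structure.
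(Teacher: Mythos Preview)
Your proposal is correct and matches the paper's intended reasoning: the corollary is stated without proof immediately after Proposition~\ref{pr:elem_equiv}, and your reduction via the fictitious-parameter trick (noted after Definition~\ref{def:regular_int}) is exactly the implicit argument. There is nothing to add.
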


Note that in Proposition~\ref{pr:reg-int-elem-equiv} the structures $\Gamma(\widetilde{\mathbb B},\bar u)$ are first-order equivalent for different 
$\bar u \in \phi(\widetilde{\mathbb B})$, but not necessary isomorphic to each other.

\begin{proposition}\label{pr:iso1}
Let $\mathbb A \cong \Gamma(\mathbb B,\phi)$ be a regularly invertible interpretation of $\mathbb A$ in $\mathbb B$ with right inverse $\mathbb B\cong\Delta(\mathbb A,\psi)$. Then for every $\widetilde{\mathbb A} \equiv \mathbb A$, an algebraic structure $\Gamma\circ\Delta(\widetilde{\mathbb A},\phi_\Delta\wedge\psi)$ is well-defined and one has
\[
\Gamma\circ\Delta(\widetilde{\mathbb A},\phi_\Delta\wedge\psi) \cong \widetilde{\mathbb A}.
\]
\end{proposition}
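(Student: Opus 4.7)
The plan is to combine the regular transitivity from Lemma~\ref{le:int-transitivity} with the definable isomorphism formula $\theta$ furnished by regular invertibility, and then transfer the relevant first-order information from $\mathbb A$ to $\widetilde{\mathbb A}$ via elementary equivalence.

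First, by Lemma~\ref{le:int-transitivity}(\ref{le:int7}), the pair $(\Gamma\circ\Delta,\phi_\Delta\wedge\psi)$ is a regular interpretation of $\mathbb A$ in $\mathbb A$: for every $\bar r\in\phi_\Delta\wedge\psi(\mathbb A)$, the $L(\mathbb A)$-structure $\Gamma\circ\Delta(\mathbb A,\bar r)$ is well-defined and isomorphic to $\mathbb A$. Apply Proposition~\ref{pr:reg-int-elem-equiv} to this regular self-interpretation with the parameter $\widetilde{\mathbb A}\equiv\mathbb A$: this already gives that $\phi_\Delta\wedge\psi(\widetilde{\mathbb A})\neq\emptyset$ and that for every $\bar r\in\phi_\Delta\wedge\psi(\widetilde{\mathbb A})$ the structure $\Gamma\circ\Delta(\widetilde{\mathbb A},\bar r)$ is a well-formed $L(\mathbb A)$-structure, since the admissibility conditions $\mathcal{AC}_{\Gamma\circ\Delta}$ are expressed by first-order sentences true in $\mathbb A$.

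The central step is to express ``$\theta(\bar u,x,\bar r)$ defines an isomorphism $\Gamma\circ\Delta(\mathbb A,\bar r)\to\mathbb A$ for every $\bar r$ satisfying $\phi_\Delta\wedge\psi$'' as a (recursive) set of first-order $L(\mathbb A)$-sentences. For each such $\bar r$, the required assertions are: that $\theta(\bar u,x,\bar r)$ is a total functional relation from $U_{\Gamma\circ\Delta}(\mathbb A,\bar r)$ to $A$; that it is surjective onto $A$; that it is compatible with $E_{\Gamma\circ\Delta}$ in both directions (so that it descends to a bijection on the quotient); and, for each symbol $Q\in L(\mathbb A)$, that the induced map intertwines the interpretation of $Q$ given by $Q_{\Gamma\circ\Delta}$ with the interpretation of $Q$ in $\mathbb A$ itself. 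Each of these is first-order over $L(\mathbb A)$ (one sentence per symbol of $L(\mathbb A)$), and each holds in $\mathbb A$ by the definition of regular invertibility applied to the right inverse $(\Delta,\psi)$ of $(\Gamma,\phi)$.

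Since $\widetilde{\mathbb A}\equiv\mathbb A$, each such sentence holds in $\widetilde{\mathbb A}$. Therefore, for every $\bar r\in\phi_\Delta\wedge\psi(\widetilde{\mathbb A})$, the formula $\theta(\bar u,x,\bar r)$ defines an isomorphism $\Gamma\circ\Delta(\widetilde{\mathbb A},\bar r)\cong\widetilde{\mathbb A}$. In particular all these structures are mutually isomorphic, which combined with the admissibility established in the first step shows that $\Gamma\circ\Delta(\widetilde{\mathbb A},\phi_\Delta\wedge\psi)$ is well-defined in the sense of Section~\ref{se:interpretation} and is isomorphic to $\widetilde{\mathbb A}$. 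The main obstacle is the careful bookkeeping in the central step: one must write out the ``is an isomorphism'' predicate through the codes $\Gamma\circ\Delta$ and $\theta$, respecting the equivalence relation on the domain and the preservation of every symbol in $L(\mathbb A)$. This is mechanical but must be done symbol-by-symbol, and one must verify that each clause uses only first-order machinery in the language $L(\mathbb A)$, so that elementary equivalence applies uniformly.
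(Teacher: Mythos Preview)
Your argument is correct and is exactly the natural one: use Lemma~\ref{le:int-transitivity}(\ref{le:int7}) to view $(\Gamma\circ\Delta,\phi_\Delta\wedge\psi)$ as a regular self-interpretation of $\mathbb A$, invoke Proposition~\ref{pr:reg-int-elem-equiv} to get admissibility over $\widetilde{\mathbb A}$, and then transfer the first-order scheme asserting that $\theta$ defines an isomorphism via $\widetilde{\mathbb A}\equiv\mathbb A$. The paper itself does not supply a proof for this proposition (it is stated without argument and without citation, presumably as an immediate consequence of Definition~\ref{de:reg_invert} and the surrounding machinery, or left to the companion paper~\cite{Daniyarova-Myasnikov:I}), so there is nothing substantive to compare against; your write-up is precisely the verification the authors leave implicit.
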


Similar to the case of  regular bi-interpretations the following result describes the algebraic structure of models of $\Th(\mathbb A)$. 

\begin{theorem}[\cite{Daniyarova-Myasnikov:I}]\label{th:complete-scheme}
Let $\mathbb A \cong \Gamma(\mathbb B,\phi)$ be a regularly invertible interpretation of $\mathbb A$ in $\mathbb B$ and $\mathbb B\cong\Delta(\mathbb A,\psi)$ be its right inverse regular interpretation. Then for any algebraic structure $\widetilde{\mathbb A}$ of language $L(\mathbb A)$ the following conditions are equivalent:
\begin{itemize}
\item[(1)] $\widetilde{\mathbb A}\equiv\mathbb A$;
\item[(2)] there exist an algebraic structure $\widetilde{\mathbb B}\equiv \mathbb B$ and a tuple $\bar b\in \phi (\widetilde{\mathbb B})$, such that
\[
\widetilde{\mathbb A}\cong \Gamma(\widetilde{\mathbb B}, \bar b);
\]
\item[(3)] for every tuple $\bar a \in \psi(\widetilde{\mathbb A})$ an algebraic structure $\widetilde{\mathbb B}= \Delta(\widetilde{\mathbb A},\bar a)$ is well-defined and $\widetilde{\mathbb B}\equiv \mathbb B$, furthermore, $\Gamma(\widetilde{\mathbb B},\phi)$ is well-defined and
\[
\widetilde{\mathbb A}\cong \Gamma(\widetilde{\mathbb B}, \phi).
\]
\end{itemize}
\end{theorem}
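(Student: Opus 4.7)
The plan is to establish the cycle $(3) \Rightarrow (2) \Rightarrow (1) \Rightarrow (3)$. The first two implications are short consequences of what has already been set up; essentially all of the work lies in $(1) \Rightarrow (3)$.

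The implication $(3) \Rightarrow (2)$ is immediate: since $\psi(\mathbb A)\neq\emptyset$ is a first-order sentence, the hypothesis (and later (1)) forces $\psi(\widetilde{\mathbb A})\neq\emptyset$, so fixing any $\bar a\in\psi(\widetilde{\mathbb A})$ and picking any $\bar b\in\phi(\widetilde{\mathbb B})$ (which is nonempty because $\widetilde{\mathbb B}\equiv\mathbb B$) yields the required witness. For $(2) \Rightarrow (1)$, apply Proposition~\ref{pr:reg-int-elem-equiv} to the regular interpretation $\mathbb A\cong\Gamma(\mathbb B,\phi)$: since $\widetilde{\mathbb B}\equiv\mathbb B$ and $\bar b\in\phi(\widetilde{\mathbb B})$, the structure $\Gamma(\widetilde{\mathbb B},\bar b)$ is well-defined and $\Gamma(\widetilde{\mathbb B},\bar b)\equiv\mathbb A$, so $\widetilde{\mathbb A}\equiv\mathbb A$.

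For $(1)\Rightarrow(3)$, assume $\widetilde{\mathbb A}\equiv\mathbb A$ and fix $\bar a\in\psi(\widetilde{\mathbb A})$. The admissibility conditions $\mathcal{AC}_\Delta$ form a first-order scheme satisfied by every tuple of $\psi(\mathbb A)$, so by elementary equivalence $\widetilde{\mathbb A}\models\mathcal{AC}_\Delta(\bar a)$ and $\widetilde{\mathbb B}:=\Delta(\widetilde{\mathbb A},\bar a)$ is a well-defined $L(\mathbb B)$-structure. Applying Proposition~\ref{pr:reg-int-elem-equiv} to the regular interpretation $\mathbb B\cong\Delta(\mathbb A,\psi)$, but in the reverse direction using $\widetilde{\mathbb A}\equiv\mathbb A$, gives $\widetilde{\mathbb B}\equiv\mathbb B$. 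Consequently $\phi(\widetilde{\mathbb B})\neq\emptyset$, and for every $\bar b\in\phi(\widetilde{\mathbb B})$ the structure $\Gamma(\widetilde{\mathbb B},\bar b)$ is well-defined.

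It remains to produce the isomorphism $\widetilde{\mathbb A}\cong\Gamma(\widetilde{\mathbb B},\bar b)$ uniformly in $\bar b\in\phi(\widetilde{\mathbb B})$. Fix a coordinate map $\mu_\Delta$ for $\widetilde{\mathbb B}\cong\Delta(\widetilde{\mathbb A},\bar a)$, pick any $\bar{\bar b}\in\mu_\Delta^{-1}(\bar b)$, and set $\bar r:=(\bar{\bar b},\bar a)$. By Remark~\ref{remark2}, $\bar r\in\phi_\Delta\wedge\psi(\widetilde{\mathbb A})$; by Lemma~\ref{le:int-transitivity}.\ref{le:int2}, the composition $\Gamma\circ\Delta(\widetilde{\mathbb A},\bar r)$ is well-defined and coincides with $\Gamma(\widetilde{\mathbb B},\bar b)$ via the coordinate map $\mu_\Gamma\circ\mu_\Delta$. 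The invertibility formula $\theta(\bar u,x,\bar z)$ from Definition~\ref{de:reg_invert} has the property that ``for every $\bar r\in\phi_\Delta\wedge\psi$, the formula $\theta(\bar u,x,\bar r)$ defines an $L(\mathbb A)$-isomorphism $\Gamma\circ\Delta(\cdot,\bar r)\to(\cdot)$''. This is expressible as a first-order scheme in $L(\mathbb A)$, with one sentence stating that $\theta$ defines a total bijective map respecting $\sim_{\Gamma\circ\Delta}$, and one sentence per symbol of $L(\mathbb A)$ asserting that the map preserves that symbol. The scheme holds in $\mathbb A$ by hypothesis, hence holds in $\widetilde{\mathbb A}$ by elementary equivalence, and instantiating at our $\bar r$ yields $\widetilde{\mathbb A}\cong\Gamma\circ\Delta(\widetilde{\mathbb A},\bar r)=\Gamma(\widetilde{\mathbb B},\bar b)$. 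Since this holds for every $\bar b\in\phi(\widetilde{\mathbb B})$, all the structures $\Gamma(\widetilde{\mathbb B},\bar b)$ are pairwise isomorphic (all isomorphic to $\widetilde{\mathbb A}$), so $\Gamma(\widetilde{\mathbb B},\phi)$ is well-defined and isomorphic to $\widetilde{\mathbb A}$, as required.

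The main obstacle is the last step, namely phrasing ``$\theta$ defines an isomorphism'' so that it transfers cleanly via elementary equivalence. When $L(\mathbb A)$ is finite this is a single first-order sentence; in general one uses a scheme, which still transfers because elementary equivalence preserves each formula individually. A secondary bookkeeping issue is the conversion between parameters $(\bar{\bar b},\bar a)$ for $\Gamma\circ\Delta$ over $\widetilde{\mathbb A}$ and the parameter $\bar b$ for $\Gamma$ over $\widetilde{\mathbb B}$, which is exactly what Lemma~\ref{le:int-transitivity} and Remark~\ref{remark2} are designed to handle.
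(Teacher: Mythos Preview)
The paper does not supply its own proof of this theorem; it is quoted from~\cite{Daniyarova-Myasnikov:I} and stated without argument (the surrounding Propositions~\ref{pr:reg-int-elem-equiv} and~\ref{pr:iso1} are likewise cited without proof). So there is no in-paper argument to compare against. Your approach---transferring the isomorphism-defining formula $\theta$ of Definition~\ref{de:reg_invert} across $\widetilde{\mathbb A}\equiv\mathbb A$ as a first-order scheme, and using Lemma~\ref{le:int-transitivity}/Remark~\ref{remark2} to identify $\Gamma(\widetilde{\mathbb B},\bar b)$ with $\Gamma\circ\Delta(\widetilde{\mathbb A},\bar r)$---is exactly the standard route and is essentially a proof of Proposition~\ref{pr:iso1} en route. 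The substantive implication $(1)\Rightarrow(3)$ is handled correctly.

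One genuine wrinkle: your implication $(3)\Rightarrow(2)$ is not actually justified. Condition~(3) as stated is a universal statement over $\bar a\in\psi(\widetilde{\mathbb A})$ and is vacuously true when $\psi(\widetilde{\mathbb A})=\emptyset$; you invoke ``the hypothesis (and later (1))'' to get $\psi(\widetilde{\mathbb A})\neq\emptyset$, but at this point you have only (3), not (1), so this is circular. This is arguably a defect of the theorem's phrasing rather than of your strategy---the intended reading of (3) presumably includes $\psi(\widetilde{\mathbb A})\neq\emptyset$, or else one proves the equivalence $(1)\Leftrightarrow(2)$ first and treats (3) as an elaboration of (2) under the standing assumption (1). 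Either fix is easy; just make the logical dependence explicit rather than sliding past it.
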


\begin{corollary}[\cite{Daniyarova-Myasnikov:I}]
Let $\mathbb A \cong \Gamma(\mathbb B,\phi)$ be a regularly invertible interpretation of $\mathbb A$ in $\mathbb B$. Then for any structure $\widetilde{\mathbb A}$ of language $L(\mathbb A)$ one has 
\[
\widetilde{\mathbb A} \equiv \mathbb A \iff \widetilde{\mathbb A} \cong \Gamma(\widetilde{\mathbb B},\phi) \ \mbox{for some} \ \widetilde{\mathbb B} \equiv\mathbb B.
\]
\end{corollary}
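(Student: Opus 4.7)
The plan is to derive this corollary as an immediate consequence of Theorem~\ref{th:complete-scheme}, by simply shedding the auxiliary information it provides. Concretely, Theorem~\ref{th:complete-scheme} gives the equivalence of three conditions $(1)$--$(3)$, and our corollary is essentially just $(1)\Leftrightarrow (2)$, with the parameter tuple $\bar b\in\phi(\widetilde{\mathbb B})$ hidden inside the notation $\Gamma(\widetilde{\mathbb B},\phi)$.

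For the direction $(\Rightarrow)$, I would assume $\widetilde{\mathbb A}\equiv \mathbb A$ and invoke part~$(3)$ of Theorem~\ref{th:complete-scheme}. Picking any $\bar a\in \psi(\widetilde{\mathbb A})$ (which is nonempty, since $\psi(\mathbb A)\ne\emptyset$ is first-order expressible and $\widetilde{\mathbb A}\equiv \mathbb A$), the theorem yields a structure $\widetilde{\mathbb B}:=\Delta(\widetilde{\mathbb A},\bar a)$ with $\widetilde{\mathbb B}\equiv\mathbb B$ and a well-defined isomorphism $\widetilde{\mathbb A}\cong \Gamma(\widetilde{\mathbb B},\phi)$. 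This $\widetilde{\mathbb B}$ is precisely the witness required by the corollary.

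For the direction $(\Leftarrow)$, I would assume $\widetilde{\mathbb A}\cong\Gamma(\widetilde{\mathbb B},\phi)$ for some $\widetilde{\mathbb B}\equiv\mathbb B$. Since $(\Gamma,\phi)$ is a regular interpretation of $\mathbb A$ in $\mathbb B$, Proposition~\ref{pr:reg-int-elem-equiv} applies to $\widetilde{\mathbb B}$: the structure $\Gamma(\widetilde{\mathbb B},\bar b)$ is well-defined for every $\bar b\in\phi(\widetilde{\mathbb B})$ (which is nonempty by elementary equivalence), all such structures are isomorphic, and each of them is elementarily equivalent to $\mathbb A$. So $\Gamma(\widetilde{\mathbb B},\phi)$ is well-defined up to isomorphism and elementarily equivalent to $\mathbb A$, giving $\widetilde{\mathbb A}\equiv \mathbb A$.

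There is no real obstacle here; the only subtlety is bookkeeping between the notations $\Gamma(\widetilde{\mathbb B},\phi)$ and $\Gamma(\widetilde{\mathbb B},\bar b)$, which collapse to the same isomorphism type precisely because of the well-definedness guaranteed by Proposition~\ref{pr:reg-int-elem-equiv} on the one side and by Theorem~\ref{th:complete-scheme}$(3)$ on the other. The role of regular invertibility is consumed entirely inside Theorem~\ref{th:complete-scheme}, so no new use of the right inverse $(\Delta,\psi)$ or of the defining formula $\theta$ is needed beyond what that theorem supplies.
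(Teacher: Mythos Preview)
Your approach is correct and matches the paper's intent: the corollary is stated immediately after Theorem~\ref{th:complete-scheme} with no separate proof, so it is meant to be read off from the equivalence $(1)\Leftrightarrow(2)\Leftrightarrow(3)$ there, exactly as you do.

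One small slip in your $(\Leftarrow)$ direction: you write that Proposition~\ref{pr:reg-int-elem-equiv} gives that ``all such structures are isomorphic''. It does not --- the paper explicitly remarks right after that proposition that the structures $\Gamma(\widetilde{\mathbb B},\bar u)$ for varying $\bar u\in\phi(\widetilde{\mathbb B})$ need not be isomorphic, only elementarily equivalent. This does not damage your argument, since the hypothesis $\widetilde{\mathbb A}\cong\Gamma(\widetilde{\mathbb B},\phi)$ already presupposes well-definedness (or, more simply, you only need a single $\bar b$ to invoke $(2)\Rightarrow(1)$ of Theorem~\ref{th:complete-scheme}). Just drop that clause and the argument is clean.
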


\begin{proposition}[\cite{Daniyarova-Myasnikov:I}]\label{pr:iso2}
Let $\mathbb A \cong \Gamma(\mathbb B,\phi)$ be a regularly invertible interpretation of $\mathbb A$ in $\mathbb B$ with right inverse $\mathbb B\cong\Delta(\mathbb A,\psi)$. Then for any $\mathbb A_1 \equiv \mathbb A_2 \equiv \mathbb A$ and $\bar a_1\in \psi(\mathbb A_1)$, $\bar a_2\in \psi(\mathbb A_2)$ algebraic structures $\Delta(\mathbb A_1,\bar a_1)$, $\Delta(\mathbb A_2,\bar a_2)$ are well-defined and one has
\[
\Delta(\mathbb A_1,\bar a_1) \cong \Delta(\mathbb A_2,\bar a_2) \Longrightarrow \mathbb A_1 \cong \mathbb A_2.
\]
\end{proposition}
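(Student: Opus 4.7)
The plan is to assemble the conclusion from two previously stated facts: Proposition~\ref{pr:reg-int-elem-equiv} (which guarantees well-definedness and elementary equivalence of the $\Delta$-interpretations) and the reconstruction clause~(3) of Theorem~\ref{th:complete-scheme} (which recovers $\widetilde{\mathbb A}$ from $\Delta(\widetilde{\mathbb A},\bar a)$ via $\Gamma$). Given these, the proof becomes essentially a diagram chase.

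First, observe that $\mathbb B\cong\Delta(\mathbb A,\psi)$ is a regular interpretation and $\mathbb A_1 \equiv \mathbb A_2 \equiv \mathbb A$. Applying Proposition~\ref{pr:reg-int-elem-equiv} to the regular interpretation $(\Delta,\psi)$ immediately gives that for any $\bar a_i \in \psi(\mathbb A_i)$ the structure $\widetilde{\mathbb B}_i := \Delta(\mathbb A_i,\bar a_i)$ is well-defined and $\widetilde{\mathbb B}_i \equiv \mathbb B$. This disposes of the well-definedness claim.

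Next, since $(\Gamma,\phi)$ is a regularly invertible interpretation of $\mathbb A$ in $\mathbb B$ with right inverse $(\Delta,\psi)$, we may apply clause~(3) of Theorem~\ref{th:complete-scheme} to each $\mathbb A_i$ (using that $\mathbb A_i \equiv \mathbb A$): the algebraic structure $\Gamma(\widetilde{\mathbb B}_i,\phi)$ is well-defined and
\[
\mathbb A_i \cong \Gamma(\widetilde{\mathbb B}_i,\phi), \qquad i=1,2.
\]
Now suppose the hypothesis $\Delta(\mathbb A_1,\bar a_1) \cong \Delta(\mathbb A_2,\bar a_2)$ holds, i.e., $\widetilde{\mathbb B}_1 \cong \widetilde{\mathbb B}_2$ via some $L(\mathbb B)$-isomorphism $f$. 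Since interpretation codes are preserved under isomorphism of the base structure, $f$ maps $\phi(\widetilde{\mathbb B}_1)$ bijectively onto $\phi(\widetilde{\mathbb B}_2)$, and for any $\bar b \in \phi(\widetilde{\mathbb B}_1)$ the isomorphism $f$ lifts componentwise to an $L(\mathbb A)$-isomorphism $\Gamma(\widetilde{\mathbb B}_1,\bar b) \cong \Gamma(\widetilde{\mathbb B}_2,f(\bar b))$. Because $\Gamma(\widetilde{\mathbb B}_2,\phi)$ is well-defined, all choices of parameters in $\phi(\widetilde{\mathbb B}_2)$ yield isomorphic $L(\mathbb A)$-structures, so $\Gamma(\widetilde{\mathbb B}_1,\phi) \cong \Gamma(\widetilde{\mathbb B}_2,\phi)$. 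Combining this with the displayed isomorphisms above yields $\mathbb A_1 \cong \mathbb A_2$.

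I expect no serious obstacle here; the heart of the argument is already done in Theorem~\ref{th:complete-scheme}, and the only slightly delicate point is the functoriality statement that an isomorphism of base structures induces an isomorphism between their $\Gamma$-interpretations. This is essentially a matter of unwinding the definitions of $U_\Gamma, E_\Gamma$, and $Q_\Gamma$ as first-order formulas and noting that an $L(\mathbb B)$-isomorphism transports formula-definable sets and relations. Care is only needed to ensure that the chosen parameter tuples lie in $\phi$ on both sides, which is handled by the well-definedness of the regular interpretation.
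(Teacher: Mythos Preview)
Your argument is correct. Note, however, that the paper does not supply its own proof of this proposition: it is stated with a citation to~\cite{Daniyarova-Myasnikov:I} and no proof environment follows. So there is no in-paper proof to compare against.

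That said, your route is the natural one and almost certainly what the cited source does: reduce the claim to Theorem~\ref{th:complete-scheme}(3), which gives $\mathbb A_i \cong \Gamma(\Delta(\mathbb A_i,\bar a_i),\phi)$, and then transport the isomorphism $\Delta(\mathbb A_1,\bar a_1)\cong\Delta(\mathbb A_2,\bar a_2)$ through $\Gamma$. The functoriality step you flag (an $L(\mathbb B)$-isomorphism carries $\phi$-parameters to $\phi$-parameters and induces an $L(\mathbb A)$-isomorphism of the $\Gamma$-interpretations) is indeed routine and the well-definedness of $\Gamma(\widetilde{\mathbb B}_2,\phi)$ from clause~(3) handles the parameter mismatch. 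No gaps.
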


\begin{proposition}[\cite{Daniyarova-Myasnikov:I}]
If $\mathbb A \cong \Gamma(\mathbb B,\phi)$ is a regular bi-interpretation then for any $\widetilde{\mathbb B}\equiv \mathbb B$ the algebraic structure $\Gamma(\widetilde{\mathbb B},\phi)$ is well-defined.
\end{proposition}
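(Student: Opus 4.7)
The plan is to reduce the statement to two applications of Theorem~\ref{th:complete-scheme}, exploiting the symmetry built into a regular bi-interpretation. Since $\mathbb A$ and $\mathbb B$ are regularly bi-interpretable, each of $(\Gamma, \phi)$ and $(\Delta, \psi)$ is a regularly invertible interpretation whose right inverse is the other, so Theorem~\ref{th:complete-scheme} is available in both orientations.

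First I would apply Theorem~\ref{th:complete-scheme} to $\mathbb B \cong \Delta(\mathbb A, \psi)$ with right inverse $\mathbb A \cong \Gamma(\mathbb B, \phi)$. The implication $(1) \Rightarrow (2)$, applied to the given $\widetilde{\mathbb B} \equiv \mathbb B$, produces a structure $\widetilde{\mathbb A} \equiv \mathbb A$ and a tuple $\bar a \in \psi(\widetilde{\mathbb A})$ with $\widetilde{\mathbb B} \cong \Delta(\widetilde{\mathbb A}, \bar a)$. In other words, every nonstandard model of $\mathbb B$ arises, up to isomorphism, as $\Delta$ evaluated at some nonstandard model of $\mathbb A$.

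Next I would apply Theorem~\ref{th:complete-scheme} in its original orientation, to $\mathbb A \cong \Gamma(\mathbb B, \phi)$ with right inverse $\mathbb B \cong \Delta(\mathbb A, \psi)$, with the $\widetilde{\mathbb A}$ found above playing the role of the elementarily equivalent structure. The implication $(1) \Rightarrow (3)$ yields, for the tuple $\bar a \in \psi(\widetilde{\mathbb A})$ from the previous step, that $\Delta(\widetilde{\mathbb A}, \bar a)$ is well-defined and, crucially, that $\Gamma(\Delta(\widetilde{\mathbb A}, \bar a), \phi)$ is well-defined. Combined with the isomorphism $\widetilde{\mathbb B} \cong \Delta(\widetilde{\mathbb A}, \bar a)$ from the first step, only one routine verification remains.

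The final observation is that well-definedness of $\Gamma(-, \phi)$ is invariant under isomorphism of the interpreting structure. Any isomorphism $f\colon \widetilde{\mathbb B} \to \Delta(\widetilde{\mathbb A}, \bar a)$ restricts to a bijection $\phi(\widetilde{\mathbb B}) \to \phi(\Delta(\widetilde{\mathbb A}, \bar a))$ and induces $L(\mathbb A)$-isomorphisms $\Gamma(\widetilde{\mathbb B}, \bar p) \cong \Gamma(\Delta(\widetilde{\mathbb A}, \bar a), f(\bar p))$, so each of the three clauses constituting well-definedness (non-empty parameter set, each parameter yielding an $L(\mathbb A)$-structure, and mutual isomorphism of these structures) transfers from $\Delta(\widetilde{\mathbb A}, \bar a)$ to $\widetilde{\mathbb B}$. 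I do not expect a serious obstacle here: all the substantive work is concentrated in Theorem~\ref{th:complete-scheme}, and the present proof is essentially a symmetry argument packaged as a short diagram chase.
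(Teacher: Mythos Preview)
Your argument is correct. The paper does not actually prove this proposition; it is stated with a citation to \cite{Daniyarova-Myasnikov:I} and no proof is given inline, so there is nothing to compare against directly. Your reduction to two applications of Theorem~\ref{th:complete-scheme} via the symmetry of regular bi-interpretation is sound, and the final transfer of well-definedness along the isomorphism $\widetilde{\mathbb B}\cong\Delta(\widetilde{\mathbb A},\bar a)$ is routine, as you note. One could also observe that the statement is already contained in item~1) of Theorem~\ref{th:equiv1}, which is likewise cited from the same source; your argument in effect re-derives that item from Theorem~\ref{th:complete-scheme}.
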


\begin{corollary}[\cite{Daniyarova-Myasnikov:I}]
If $\mathbb A \cong \Gamma(\mathbb B,\phi)$ is a regular bi-interpretation then for any $\mathbb B_1 \equiv \mathbb B_2 \equiv \mathbb B$ algebraic structures $\Gamma(\mathbb B_1,\phi), \Gamma(\mathbb B_2,\phi)$ are well-defined and one has 
\[
\Gamma(\mathbb B_1,\phi) \cong \Gamma(\mathbb B_2,\phi) \iff \mathbb B_1 \cong \mathbb B_2.
\]
\end{corollary}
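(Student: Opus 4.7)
The plan is to reduce the corollary to two results already established: the preceding proposition (well-definedness of $\Gamma(\widetilde{\mathbb B},\phi)$ for any $\widetilde{\mathbb B}\equiv \mathbb B$) and Proposition~\ref{pr:iso2} (the $\Delta$-version of the iff statement). First, applying the preceding proposition separately to $\mathbb B_1\equiv \mathbb B$ and $\mathbb B_2\equiv \mathbb B$ yields that both $\Gamma(\mathbb B_1,\phi)$ and $\Gamma(\mathbb B_2,\phi)$ are well-defined. In particular, for any choice $\bar p_i\in \phi(\mathbb B_i)$, the structure $\Gamma(\mathbb B_i,\bar p_i)$ represents $\Gamma(\mathbb B_i,\phi)$ up to isomorphism, so the question ``$\Gamma(\mathbb B_1,\phi)\cong \Gamma(\mathbb B_2,\phi)$?'' reduces to the question ``$\Gamma(\mathbb B_1,\bar p_1)\cong \Gamma(\mathbb B_2,\bar p_2)$?'' for arbitrary such representatives.

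For the direction $\mathbb B_1\cong \mathbb B_2 \Rightarrow \Gamma(\mathbb B_1,\phi)\cong \Gamma(\mathbb B_2,\phi)$, take any $L(\mathbb B)$-isomorphism $f\colon \mathbb B_1\to \mathbb B_2$. Since the formula $\phi$ is preserved by isomorphism, $f$ sends $\phi(\mathbb B_1)$ onto $\phi(\mathbb B_2)$. Fix $\bar p_1\in \phi(\mathbb B_1)$ and put $\bar p_2=f(\bar p_1)$. Applying $f$ coordinatewise to $n$-tuples (where $n=\dim \Gamma$) produces a bijection $U_\Gamma(\mathbb B_1,\bar p_1)\to U_\Gamma(\mathbb B_2,\bar p_2)$ that respects the equivalence relation $E_\Gamma$ and the defining formulas for every symbol of $L(\mathbb A)$; passing to quotients gives an $L(\mathbb A)$-isomorphism $\Gamma(\mathbb B_1,\bar p_1)\to \Gamma(\mathbb B_2,\bar p_2)$.

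For the converse direction, I would invoke Proposition~\ref{pr:iso2} after swapping the roles of $\mathbb A$ and $\mathbb B$. The regular bi-interpretation hypothesis is symmetric (see Definition~\ref{def:reg}): the pair $(\Delta,\psi)$ is a regularly invertible interpretation of $\mathbb B$ in $\mathbb A$ whose right inverse is $(\Gamma,\phi)$, with the formula $\theta_\mathbb B$ playing the role of the definable coordinate map. Set $\mathbb A_i=\Gamma(\mathbb B_i,\phi)$; by Proposition~\ref{pr:elem_equiv} we have $\mathbb A_1\equiv \mathbb A_2\equiv \mathbb A$, and the hypothesis gives $\mathbb A_1\cong \mathbb A_2$. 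Choosing $\bar p_i\in \phi(\mathbb B_i)$, Proposition~\ref{pr:iso2} applied with $(\Delta,\psi)$ as the ambient regularly invertible interpretation and $(\Gamma,\phi)$ as its right inverse yields exactly $\Gamma(\mathbb B_1,\bar p_1)\cong \Gamma(\mathbb B_2,\bar p_2) \Rightarrow \mathbb B_1\cong \mathbb B_2$, which combined with the well-definedness established above completes the proof.

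No step presents a substantial obstacle; the argument is essentially a bookkeeping exercise once the preceding proposition and Proposition~\ref{pr:iso2} are in hand. The only mildly delicate point is confirming that in a regular bi-interpretation the roles of $\mathbb A$ and $\mathbb B$ are genuinely symmetric, so that Proposition~\ref{pr:iso2} is applicable in the swapped form; this is immediate from the symmetric form of Definition~\ref{def:reg}.
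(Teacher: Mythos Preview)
Your argument is correct. The paper does not include its own proof of this corollary (it is cited from \cite{Daniyarova-Myasnikov:I}), but your derivation is exactly the intended one: well-definedness comes from the immediately preceding proposition, the forward direction is the routine transport of an interpretation along an isomorphism, and the backward direction is Proposition~\ref{pr:iso2} applied with the roles of $\mathbb A$ and $\mathbb B$ exchanged, which is legitimate because Definition~\ref{def:reg} is symmetric (both $\theta_{\mathbb A}$ and $\theta_{\mathbb B}$ are assumed).

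One minor remark on presentation: in the converse direction, introducing $\mathbb A_i = \Gamma(\mathbb B_i,\phi)$ and checking $\mathbb A_1\equiv\mathbb A_2\equiv\mathbb A$ is superfluous. Once you swap the roles in Proposition~\ref{pr:iso2}, its hypothesis requires only $\mathbb B_1\equiv\mathbb B_2\equiv\mathbb B$ (given) and $\Gamma(\mathbb B_1,\bar p_1)\cong\Gamma(\mathbb B_2,\bar p_2)$ (the assumed isomorphism together with well-definedness); the conclusion $\mathbb B_1\cong\mathbb B_2$ then follows directly. Dropping that detour would streamline the paragraph without changing anything of substance.
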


The result above gives a general approach to the first-order classification problem for $\mathbb A$. This description of models $\Th(\mathbb A)$ in terms of the models of $\Th(\mathbb B)$ makes sense if we know algebraic structure of models of $\Th(\mathbb B)$.

The following corollary is useful when studying first-order classification problem. 

Another notable property here is that for regular interpretations of a finitely generated structure $\mathbb A$ in $\mathbb{N}$, the choice of formulas $\Gamma$ that give such an interpretation $\mathbb A\cong \Gamma(\mathbb N)$ does not matter.

\begin{theorem}[Uniqueness of non-standard models, \cite{Daniyarova-Myasnikov:I}]\label{th:well-def-inN} 
Let a finitely generated structure $\mathbb A$ in a finite signature be regularly interpretable in $\N$ in two ways, as $\Gamma_1(\N,\phi_1)$ and $\Gamma_2(\N,\phi_2)$. If for a structure $\nsN$ one has $\nsN \equiv \N$, then algebraic structures $\Gamma_1(\nsN,\phi_1), \Gamma_2(\nsN,\phi_2)$ are well-defined and $\Gamma_1(\nsN,\phi_1)\cong\Gamma_2(\nsN,\phi_2)$.
\end{theorem}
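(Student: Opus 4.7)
My overall plan is to imitate the $\mathbb Z$-version, Theorem~\ref{th:equiv2}, and produce a single $L(\mathbb N)$-formula $\theta(\bar x_1, \bar x_2, \bar y_1, \bar y_2)$ such that for every $\bar p_1 \in \phi_1(\mathbb N)$ and every $\bar p_2 \in \phi_2(\mathbb N)$ the instance $\theta(\bar x_1, \bar x_2, \bar p_1, \bar p_2)$ defines an isomorphism $\Gamma_1(\mathbb N, \bar p_1) \to \Gamma_2(\mathbb N, \bar p_2)$. Once such a $\theta$ is in hand, the conjunction of the conditions ``$\theta$ defines the graph of a well-defined bijection that preserves each symbol of $L(\mathbb A)$'' is a first-order $L(\mathbb N)$-sentence (functionality, totality, injectivity, surjectivity, and a clause per function, constant, and predicate symbol of the finite signature $L(\mathbb A)$), so it transfers to every $\nsN \equiv \mathbb N$, yielding $\Gamma_1(\nsN, \phi_1) \cong \Gamma_2(\nsN, \phi_2)$. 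Well-definedness of each $\Gamma_i(\nsN, \phi_i)$ then follows by specializing the same argument to $\Gamma_1 = \Gamma_2 = \Gamma_i$, $\phi_1 = \phi_2 = \phi_i$, combined with Proposition~\ref{pr:reg-int-elem-equiv}, which already guarantees that each $\Gamma_i(\nsN, \bar p)$ is an $L(\mathbb A)$-structure and that $\phi_i(\nsN) \ne \emptyset$.

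To construct $\theta$, fix a finite generating tuple $(a_1, \ldots, a_k)$ of $\mathbb A$. Since $L(\mathbb A)$ is finite, $L(\mathbb A)$-terms in $x_1, \ldots, x_k$ are G\"odel-coded by natural numbers, and the evaluation map sending a pair (term code $t$, tuple $\bar b = (\bar b_1, \ldots, \bar b_k) \in A_{\Gamma_i}^k$) to $t^{\Gamma_i}(\bar b) \in A_{\Gamma_i}$ is definable in $\mathbb N$ with parameters $\bar p_i$ by primitive recursion on term length. Introduce $L(\mathbb N)$-formulas
\[
\mathrm{Gen}_i(\bar b, \bar y_i), \qquad \mathrm{Compat}(\bar b, \bar c, \bar y_1, \bar y_2),
\]
the first asserting that $\bar b$ generates $\Gamma_i(\mathbb N, \bar y_i)$ (``for every $\bar u \in A_{\Gamma_i}$ there is a term code $t$ with $\bar u \sim_i t^{\Gamma_i}(\bar b)$''), the second that $\bar b$ and $\bar c$ satisfy the same equations (``for all term codes $t_1, t_2$: $t_1^{\Gamma_1}(\bar b) \sim_1 t_2^{\Gamma_1}(\bar b) \Longleftrightarrow t_1^{\Gamma_2}(\bar c) \sim_2 t_2^{\Gamma_2}(\bar c)$''). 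Using the definable well-ordering of $\mathbb N$, let $(\bar b^*, \bar c^*)$ be the lex-least pair satisfying $\mathrm{Gen}_1 \wedge \mathrm{Gen}_2 \wedge \mathrm{Compat}$, which exists since $\mu_{\Gamma_1}^{-1}(a_1, \ldots, a_k)$ and $\mu_{\Gamma_2}^{-1}(a_1, \ldots, a_k)$ witness the matrix. Define $\theta$ to say ``there exists a term code $t$ with $\bar x_1 \sim_1 t^{\Gamma_1}(\bar b^*)$ and $\bar x_2 \sim_2 t^{\Gamma_2}(\bar c^*)$''. The $\mathrm{Compat}$ clause forces the induced relation on $\sim_1$/$\sim_2$-classes to be well-defined and injective, $\mathrm{Gen}_i$ gives totality and surjectivity, and compatibility of term evaluation with operations yields preservation of all symbols of $L(\mathbb A)$.

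The main obstacle is the arithmetization of term coding, together with the verification that the compatibility clause actually forces the relation defined by $\theta$ to be an $L(\mathbb A)$-isomorphism, not merely a correspondence. A subtle secondary point arises in the nonstandard setting $\nsN$, where ``term codes'' now include nonstandard elements: the sentence $\exists \bar b \, \exists \bar c \, (\mathrm{Gen}_1 \wedge \mathrm{Gen}_2 \wedge \mathrm{Compat})$ is first-order and holds in $\mathbb N$, hence in $\nsN$, and the nonstandard pseudo-terms still realize the required evaluations by transfer of the first-order description of the evaluation map. Combined with Proposition~\ref{pr:reg-int-elem-equiv} for admissibility, this completes the argument.
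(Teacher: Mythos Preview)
The paper does not actually prove this theorem here; it is cited from~\cite{Daniyarova-Myasnikov:I}. The closest in-paper proof is that of Theorem~\ref{th:well-def} (the list-superstructure version), and your approach matches that one in spirit: arithmetize the terms of $L(\mathbb A)$, build a formula expressing the map induced by a pair of generating tuples, record ``this map is an $L(\mathbb A)$-isomorphism'' as a first-order sentence with the parameter tuples universally quantified, and transfer by $\nsN\equiv\mathbb N$. Your handling of well-definedness by specializing to $\Gamma_1=\Gamma_2$ is also correct.

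There is, however, one genuine gap. Your $\mathrm{Compat}$ clause only requires that $\bar b$ and $\bar c$ satisfy the same \emph{equations} between terms. That is enough for $t^{\Gamma_1}(\bar b^*)\mapsto t^{\Gamma_2}(\bar c^*)$ to be a well-defined bijection preserving all function and constant symbols, but it says nothing about predicate symbols of $L(\mathbb A)$. If $L(\mathbb A)$ contains a relation $R$, the lex-least pair satisfying your $\mathrm{Compat}$ need not yield an $R$-preserving map, so the sentence ``$\theta$ defines an isomorphism'' can already fail in $\mathbb N$ and there is nothing to transfer. (Toy example: $\mathbb A=\langle\{a,b\};R\rangle$ with $R=\{a\}$ and empty functional signature; the pair $(a,b),(b,a)$ is $\mathrm{Compat}$-equivalent but the induced swap is not an isomorphism.) The fix is immediate: either strengthen $\mathrm{Compat}$ to require agreement on the full atomic type, i.e.\ add, for each relation symbol $R$ and all term codes $t_1,\dots,t_{n_R}$, the clause $R^{\Gamma_1}(t_1^{\Gamma_1}(\bar b),\dots)\leftrightarrow R^{\Gamma_2}(t_1^{\Gamma_2}(\bar c),\dots)$; or, as the paper's proof of Theorem~\ref{th:well-def} effectively does, drop the lex-least selection and work directly with the sentence $\forall\bar p_1\in\phi_1\,\forall\bar p_2\in\phi_2\,\exists\bar b\,\exists\bar c\,(\text{the induced map is an }L(\mathbb A)\text{-isomorphism})$, which is witnessed in $\mathbb N$ by the preimages $\mu_{\Gamma_i}^{-1}(a_1,\dots,a_k)$ of the fixed generating tuple.
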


\begin{theorem}[Uniqueness of non-standard models, \cite{Daniyarova-Myasnikov:I}]
Let a finitely generated structure $\mathbb A$ in a finite signature be regularly interpretable in $\Z$ in two ways, as $\Gamma_1(\Z,\phi_1)$ and $\Gamma_2(\Z,\phi_2)$. If for a structure $\nsZ$ one has $\nsZ \equiv \Z$, then algebraic structures $\Gamma_1(\nsZ,\phi_1), \Gamma_2(\nsZ,\phi_2)$ are well-defined and $\Gamma_1(\nsZ,\phi_1)\cong\Gamma_2(\nsZ,\phi_2)$.
\end{theorem}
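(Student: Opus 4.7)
The plan is to reduce the statement to its analogue for $\mathbb N$ (Theorem~\ref{th:well-def-inN}) via the classical regular bi-interpretation between $\mathbb N$ and $\mathbb Z$. Recall that $\mathbb N$ is absolutely interpretable in $\mathbb Z$ as the set of sums of four squares (Lagrange's theorem), giving an interpretation $\Sigma_1\colon \mathbb N\rightsquigarrow\mathbb Z$; conversely, $\mathbb Z$ is absolutely interpretable in $\mathbb N$ as the quotient of $\mathbb N^2$ by $(a,b)\sim(c,d)\iff a+d=b+c$, giving $\Sigma_2\colon \mathbb Z\rightsquigarrow\mathbb N$. A direct verification shows that $(\Sigma_1,\Sigma_2)$ constitutes a regular bi-interpretation: the coordinate maps of $\Sigma_1\circ\Sigma_2$ and $\Sigma_2\circ\Sigma_1$ are $0$-definable in $\mathbb N$ and $\mathbb Z$, respectively.

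Given $\nsZ\equiv\mathbb Z$, the first step is to set $\nstdN:=\Sigma_1(\nsZ)$; this is well-defined because $\Sigma_1$ is absolute, and $\nstdN\equiv\mathbb N$ by Proposition~\ref{pr:reg-int-elem-equiv}. Next, using Lemma~\ref{le:int-transitivity}, form the regular interpretations $\Gamma_i\circ\Sigma_2$ of $\mathbb A$ in $\mathbb N$ with parameter formulas $(\phi_i)_{\Sigma_2}$ (for $i=1,2$). Applying Theorem~\ref{th:well-def-inN} to these two regular interpretations of the finitely generated structure $\mathbb A$ in $\mathbb N$ gives that both $\Gamma_i\circ\Sigma_2(\nstdN, (\phi_i)_{\Sigma_2})$ are well-defined and $\Gamma_1\circ\Sigma_2(\nstdN)\cong\Gamma_2\circ\Sigma_2(\nstdN)$.

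To finish, one must identify these structures with $\Gamma_i(\nsZ)$: since $\Sigma_2(\nstdN)=\Sigma_2\circ\Sigma_1(\nsZ)$ is isomorphic to $\nsZ$ via the definable coordinate map inherited from the bi-interpretation (a first-order property that transfers to $\nsZ$ by elementary equivalence with $\mathbb Z$), functoriality of the interpretation $\Gamma_i$ produces $\Gamma_i\circ\Sigma_2(\nstdN)\cong\Gamma_i(\nsZ)$. Chaining the isomorphisms then yields $\Gamma_1(\nsZ,\phi_1)\cong\Gamma_2(\nsZ,\phi_2)$; well-definedness of each $\Gamma_i(\nsZ,\phi_i)$ follows by specializing the argument to $\Gamma_1=\Gamma_2$, $\phi_1=\phi_2$.

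The main obstacle is purely technical: bookkeeping the admissibility conditions, coordinate maps, and nested compositions of interpretations in the nonstandard setting. The key nontrivial step is lifting the definable isomorphism $\Sigma_2\circ\Sigma_1(\mathbb Z)\cong\mathbb Z$ to the nonstandard model $\nsZ$ and then functorially through $\Gamma_i$; Lemma~\ref{le:int-transitivity} and Proposition~\ref{pr:reg-int-elem-equiv} supply the machinery needed to handle this cleanly.
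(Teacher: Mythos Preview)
The paper does not supply its own proof of this theorem: it is stated with a citation to \cite{Daniyarova-Myasnikov:I} and no argument is given in the text. Your reduction to the $\mathbb N$ case via the absolute bi-interpretation $(\Sigma_1,\Sigma_2)$ between $\mathbb N$ and $\mathbb Z$ is the natural strategy and is essentially correct. The key ingredients---transitivity of regular interpretations (Lemma~\ref{le:int-transitivity}), transfer of elementary equivalence (Proposition~\ref{pr:reg-int-elem-equiv}), and the fact that the definable coordinate map of $\Sigma_2\circ\Sigma_1$ survives passage to $\nsZ\equiv\mathbb Z$---are all available in the paper and are invoked appropriately.

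One point deserves tightening. Your final sentence derives well-definedness of $\Gamma_i(\nsZ,\phi_i)$ by ``specializing the argument to $\Gamma_1=\Gamma_2$.'' As written this is a little circular, since the conclusion of the theorem already presupposes that $\Gamma_i(\nsZ,\phi_i)$ denotes a single structure up to isomorphism. The clean route is to observe directly that Theorem~\ref{th:well-def-inN} gives well-definedness of $\Gamma_i\circ\Sigma_2(\nstdN,(\phi_i)_{\Sigma_2})$, and then the isomorphism $\sigma\colon\Sigma_2(\nstdN)\to\nsZ$ carries parameter tuples in $\phi_i(\Sigma_2(\nstdN))$ bijectively to parameter tuples in $\phi_i(\nsZ)$, with $\Gamma_i(\Sigma_2(\nstdN),\bar p)\cong\Gamma_i(\nsZ,\sigma(\bar p))$ by functoriality. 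Well-definedness of $\Gamma_i(\nsZ,\phi_i)$ then follows immediately, without any self-referential specialization. With that adjustment the argument is complete.
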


Concerning interpretation in $\mathbb Z$, we further note the following.

\begin{proposition}[\cite{Daniyarova-Myasnikov:I}]
Let $G$ be a countable group with an arithmetic multiplication table. Then $G$ is absolutely interpretable in $\Z$.
\end{proposition}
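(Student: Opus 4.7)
My plan is to construct an explicit absolute, injective, one-dimensional interpretation of $G$ in $\Z$. Two facts set the stage: $\N$ is $0$-definable in $\Z$ via Lagrange's four-square theorem ($x\in\N$ iff $\exists a,b,c,d\,(x=a^2+b^2+c^2+d^2)$), and every arithmetic subset of $\N^k$, being $0$-definable in $\N$, becomes $0$-definable in $\Z$ once its quantifiers are relativized to this definition of $\N$. Both points are in the spirit of the observations made right after Definition~\ref{de:interpretable} in Section~\ref{se:interpretation}.

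First I would unpack the hypothesis: that $G$ has an \emph{arithmetic multiplication table} means that there is a bijection $\phi\colon\N\to G$ under which the ternary relation $M=\{(i,j,k)\in\N^3 : \phi(i)\phi(j)=\phi(k)\}$ is arithmetic. Fix such a $\phi$ and a defining $L(\N)$-formula $\mu(x,y,z)$, and let $\hat\mu$ denote its $\Z$-relativization. Then I would assemble the interpretation code $\Gamma$ of dimension $1$: take $U_\Gamma(x)$ to be the Lagrange formula for ``$x\in\N$'', take $E_\Gamma$ to be equality, define the graph of group multiplication to be $\hat\mu$ conjoined with $U_\Gamma$ on each variable, define the identity $e$ as the unique element of $U_\Gamma$ satisfying $\forall j\,(U_\Gamma(j)\to\hat\mu(x,j,j))$ (uniqueness following from the group axioms), and define inversion via the formula returning the unique $y\in U_\Gamma$ with $\hat\mu(x,y,e)$. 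Each piece is a $0$-definable $L(\Z)$-formula; using $\phi$ itself as the coordinate map, a routine check shows $\Gamma(\Z,\emptyset)\cong G$, so $G$ is absolutely interpretable in $\Z$.

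The main (and essentially only) obstacle is being precise about the hypothesis: ``arithmetic multiplication table'' must be read as asserting the existence of \emph{some} enumeration $\phi$ of $G$ by $\N$ under which the multiplication relation is arithmetic. Once such a $\phi$ is fixed, the remainder is bookkeeping, because the identity and inversion are first-order definable from multiplication inside any group, so no additional arithmeticity assumptions are needed for them; and the passage of arithmetic formulas from $\N$ to $\Z$ by quantifier relativization is entirely standard.
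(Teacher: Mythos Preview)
The paper does not prove this proposition itself; it is stated with a citation to \cite{Daniyarova-Myasnikov:I} and no proof is supplied. So there is no ``paper's own proof'' to compare against.

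That said, your argument is correct and is the standard one. You have correctly unpacked the hypothesis, used the $0$-definability of $\N$ in $\Z$ (which the paper explicitly notes just before Definition~\ref{de:interpretation}), and observed that identity and inversion are first-order definable from the multiplication relation alone, so no further arithmeticity assumptions are needed beyond that on $M$. The resulting code is parameter-free, giving an absolute interpretation as required.
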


\begin{proposition}[\cite{Daniyarova-Myasnikov:I}]
Let $\mathbb A$ be a finitely generated algebraic structure in a finite language and $(\Gamma,\phi)$ be a regular interpretation of $\mathbb A$ in $\Z$. Then for any $\nsZ\equiv\Z$ algebraic structure $\Gamma(\nsZ,\phi)$ is well-defined.
\end{proposition}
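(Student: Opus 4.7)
The plan is to verify the three conditions comprising the claim that $\Gamma(\nsZ,\phi)$ is well-defined: (a) $\phi(\nsZ)\ne\emptyset$; (b) for every $\bar q\in\phi(\nsZ)$ the structure $\Gamma(\nsZ,\bar q)$ is a genuine $L(\MA)$-structure; and (c) any two such structures $\Gamma(\nsZ,\bar q_1)$ and $\Gamma(\nsZ,\bar q_2)$ are isomorphic. Conditions (a) and (b) are routine first-order transfer arguments, while (c) is the heart of the matter and is essentially supplied by Theorem~\ref{th:equiv2}.

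For (a), since $(\Gamma,\phi)$ is a regular interpretation in $\Z$, we have $\Z\models\exists\bar y\,\phi(\bar y)$, and by $\nsZ\equiv\Z$ this transfers. For (b), the key observation is that $L(\MA)$ is finite, so the admissibility condition $\mathcal{AC}_\Gamma(\bar y)$ is a finite conjunction of $L(\Z)$-formulas (expressing that $E_\Gamma$ is an equivalence relation on $U_\Gamma$, that each function symbol is interpreted by a total map well-defined on $\sim_\Gamma$-classes, and analogously for predicates and constants). Regularity yields $\Gamma(\Z,\bar p)\cong\MA$ for every $\bar p\in\phi(\Z)$, so in particular $\Z\models\forall\bar y\,(\phi(\bar y)\to\mathcal{AC}_\Gamma(\bar y))$; this single sentence transfers to $\nsZ$, giving (b).

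For the key condition (c), I would invoke Theorem~\ref{th:equiv2} in the degenerate form in which both ``ways'' of interpreting $\MA$ in $\Z$ are taken to be the same pair $(\Gamma,\phi)$. That theorem produces a formula $\theta(\bar x_1,\bar x_2,\bar y_1,\bar y_2)$ of $L(\Z)$ with the property that, for any $\bar p_1,\bar p_2\in\phi(\nsZ)$, the set defined by $\theta(\bar x_1,\bar x_2,\bar p_1,\bar p_2)$ is the graph of an isomorphism $\Gamma(\nsZ,\bar p_1)\to\Gamma(\nsZ,\bar p_2)$. Together with (a) and (b), this yields exactly the three ingredients in the definition of well-definedness for $\Gamma(\nsZ,\phi)$.

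The main obstacle is therefore concentrated inside the cited Theorem~\ref{th:equiv2}: its proof is where the hypotheses of finite generation of $\MA$ (which provides a bounded ``recipe'' for an isomorphism between two $\Gamma$-interpretations) and of the full arithmetic strength of $\Z$ (needed to code canonical preimages of generators and to perform uniform term evaluation) are genuinely used. Were one to avoid treating Theorem~\ref{th:equiv2} as a black box, the natural direct strategy would be to fix generators $a_1,\ldots,a_n$ of $\MA$ and, uniformly in $\bar p\in\phi(\Z)$, use G\"odel coding in $\Z$ to select a lex-smallest tuple of preimages of the generators in $\Gamma(\Z,\bar p)$, extend this choice to an isomorphism via term evaluation, and verify first-order definability; elementary equivalence would then transfer this uniform definition to $\nsZ$.
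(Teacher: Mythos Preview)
The paper does not supply its own proof of this proposition: it is stated with attribution to~\cite{Daniyarova-Myasnikov:I} and left unproved, as are Theorem~\ref{th:equiv2} and the neighboring propositions. So there is no in-paper argument to compare against.

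Your argument is correct. The breakdown into (a)--(c) matches the definition of well-definedness, and the transfer arguments for (a) and (b) are sound (you are right that finiteness of $L(\MA)$ is what makes $\mathcal{AC}_\Gamma$ a single sentence). For (c), invoking Theorem~\ref{th:equiv2} with $\Gamma_1=\Gamma_2=\Gamma$ is legitimate. Note, though, that this makes the proposition an immediate corollary: the ``Moreover'' clause of Theorem~\ref{th:equiv2} already asserts that $\Gamma(\nsZ,\phi)$ is well-defined, so your separate verification of (a) and (b) is, strictly speaking, redundant once you cite that theorem. The paper appears to present the proposition separately precisely because it isolates the well-definedness conclusion, but logically it is subsumed by Theorem~\ref{th:equiv2}.

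Your final paragraph, sketching the direct argument (definably selecting lex-least preimages of a fixed generating tuple and coding term evaluation in $\Z$), is the substantive content behind both results and is the right idea. That is where finite generation and the arithmetic strength of $\Z$ are genuinely used, exactly as you say.
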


\begin{proposition}[\cite{Daniyarova-Myasnikov:I}]
Let $\mathbb A$ be a finitely generated algebraic structure in a finite language and $(\Gamma,\phi)$ be a regular interpretation of $\mathbb A$ in $\N$. Then for any $\nsN\equiv\N$ algebraic structure $\Gamma(\nsN,\phi)$ is well-defined.
\end{proposition}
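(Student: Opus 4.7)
The proposition is a special case of Theorem~\ref{th:well-def-inN}: applying that theorem with $\Gamma_1 = \Gamma_2 = \Gamma$ and $\phi_1 = \phi_2 = \phi$ gives well-definedness of $\Gamma(\nsN, \phi)$ at once. This one-line reduction is the cleanest plan; for completeness, I sketch the underlying argument, which verifies the three clauses of the definition of well-definedness.

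Clauses (i) $\phi(\nsN) \neq \emptyset$ and (ii) each $\Gamma(\nsN, \bar p)$ is an $L(\mathbb A)$-structure follow by first-order transfer. The existential sentence $\exists \bar y\,\phi(\bar y)$ holds in $\N$ since $(\Gamma,\phi)$ is a regular interpretation; and because $L(\mathbb A)$ is finite, the admissibility conditions $\mathcal{AC}_\Gamma(\bar y)$ form a single $L(\N)$-formula, for which $\N \models \forall \bar y\,(\phi(\bar y) \to \mathcal{AC}_\Gamma(\bar y))$, since every $\Gamma(\N, \bar p)$ is isomorphic to $\mathbb A$. Both sentences transfer to $\nsN$.

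The substantive step is clause (iii), the pairwise isomorphism $\Gamma(\nsN, \bar p_1) \cong \Gamma(\nsN, \bar p_2)$. The strategy is to produce a single $L(\N)$-formula $\theta(\bar u, \bar u', \bar y_1, \bar y_2)$ that uniformly defines an isomorphism $\Gamma(\N, \bar p_1) \to \Gamma(\N, \bar p_2)$ for every $\bar p_1, \bar p_2 \in \phi(\N)$; the first-order assertion that $\theta$ does so is then a sentence of $\N$ and transfers to $\nsN$. To construct $\theta$: fix a finite generating tuple $\bar a = (a_1,\ldots,a_m)$ of $\mathbb A$, a $0$-definable $\bar p_0 \in \phi(\N)$, and a $0$-definable tuple $\bar c^0$ of representatives of $\bar a$ in $U_\Gamma(\N, \bar p_0)$ (possible since all naturals are $0$-definable). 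Arithmetize the countably many $L(\mathbb A)$-terms in $m$ variables (available by finiteness of the signature) and pull back $\sim_\Gamma$ and the interpreted relations via term evaluation at $\bar c^0$ inside $\Gamma(\N, \bar p_0)$ to obtain a $0$-definable equality/relation diagram of $\mathbb A$ over $\bar a$. For each $\bar p \in \phi(\N)$, let $\bar c^*(\bar p) \in U_\Gamma(\N, \bar p)^m$ be the $<$-minimal $m$-tuple realizing this diagram whose $\sim_\Gamma$-classes generate the quotient; it is definable in $\bar p$ and non-empty because $\Gamma(\N, \bar p) \cong \mathbb A$. Finally, $\theta(\bar u, \bar u', \bar p_1, \bar p_2)$ holds precisely when some term code $t$ satisfies $t[\bar c^*(\bar p_1)]_\Gamma \sim_\Gamma \bar u$ and $t[\bar c^*(\bar p_2)]_\Gamma \sim_\Gamma \bar u'$.

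The main obstacle in the direct approach is precisely this construction: one must arithmetically define the diagram of $\mathbb A$ over its generators and verify that $\theta$ indeed defines an isomorphism for every pair of parameters. Both finite generation and the finite signature of $\mathbb A$ are essential: the former reduces $\mathbb A$ to a term quotient over a finite tuple, the latter makes terms arithmetizable and the admissibility conditions a finite conjunction.
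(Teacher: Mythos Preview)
The paper gives no proof of this proposition: it is stated with a citation to \cite{Daniyarova-Myasnikov:I} and no argument follows. Your one-line reduction to Theorem~\ref{th:well-def-inN} (also cited from the same source) is correct, since that theorem's conclusion explicitly includes the well-definedness of $\Gamma_i(\nsN,\phi_i)$; taking $\Gamma_1=\Gamma_2=\Gamma$, $\phi_1=\phi_2=\phi$ gives the statement immediately.

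Your supplementary sketch is also sound and captures the essential mechanism: transfer handles non-emptiness of $\phi(\nsN)$ and admissibility (finiteness of the signature making $\mathcal{AC}_\Gamma$ a single formula), while the uniform isomorphism formula $\theta$ is built by arithmetizing terms over a fixed finite generating tuple, anchoring the diagram of $\mathbb A$ at a $0$-definable base parameter $\bar p_0$, and selecting $<$-minimal representatives of the generators in each $\Gamma(\N,\bar p)$. This is the standard argument one expects in the cited reference, and you have correctly identified where both hypotheses (finite generation, finite signature) are used.
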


Now we extend Theorem~\ref{th:well-def-inN} to the case of interpretation in the list superstructure.
Suppose $\mathbb A=\langle A, L\rangle$ be a structure in a finite signature finitely generated over its definable substructure $\mathbb B=\langle B, L\rangle$.
By this we mean that there are $x_1,\ldots,x_k\in\mathbb A$ s.t. $\mathbb A$ is generated by $x_1, \ldots, x_n$ and $B$, that is, for every $x\in A$ there is a term $\tau(\bar x, \bar y)$ in $L$ with constants $x_1,\ldots,x_k$ and variables $y_1,\ldots,y_m$ such that $x = \tau(\bar x, \bar b)$ for some values $y_i = b_i \in B$.
Note that there is an effective enumeration $\tau_i(x_1,\ldots,x_k, \bar y)$ of all such terms, treated as terms of $\mathbb S(\mathbb A,\mathbb N)$.

\begin{theorem}[Uniqueness of non-standard models]\label{th:well-def}
Let $\mathbb A$ be a structure in a finite signature, and let $\mathbb B$ be a definable substructure of $\mathbb A$. Let $\mathbb A$ be finitely generated over $\mathbb B$. Suppose $\mathbb A$ is regularly interpreted in $\mathbb S(\mathbb B,\mathbb N)$ in two ways, as $\Gamma_1(\mathbb S(\mathbb B,\mathbb N),\phi_1)$ and $\Gamma_2(\mathbb S(\mathbb B,\mathbb N),\phi_2)$. Suppose there is an isomorphism between $\Gamma_1(\mathbb S(\mathbb B,\mathbb N),\phi_1)$ and $\Gamma_2(\mathbb S(\mathbb B,\mathbb N),\phi_2)$ whose restriction to the interpretation of $\mathbb B$ is definable in $\mathbb S(\mathbb B,\mathbb N)$. Then for every structure $\mathcal M\equiv \mathbb S(\mathbb B,\mathbb N)$, we have $\Gamma_1(\mathcal M,\phi_1)\cong \Gamma_2(\mathcal M,\phi_2)$. 
\end{theorem}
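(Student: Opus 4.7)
The plan is to encode the alleged isomorphism $\beta$ between the two interpretations as the graph of a single $L(\mathbb S(\mathbb B,\mathbb N))$-formula, true in $\mathbb S(\mathbb B,\mathbb N)$, whose truth in any $\mathcal M\equiv\mathbb S(\mathbb B,\mathbb N)$ then yields an isomorphism between the corresponding interpretations in $\mathcal M$. Fix parameters $\bar p_j\in\phi_j(\mathbb S(\mathbb B,\mathbb N))$ so that $\alpha_j\colon\mathbb A\to\Gamma_j(\mathbb S(\mathbb B,\mathbb N),\bar p_j)$ are the interpretation isomorphisms. Pick generators $x_1,\ldots,x_k\in A$ of $\mathbb A$ over $\mathbb B$ and let $\tilde x^{(j)}:=\alpha_j(x_1,\ldots,x_k)$ be their images. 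The hypothesis provides an isomorphism $\beta\colon\Gamma_1\to\Gamma_2$ whose restriction to the interpretation of $\mathbb B$ is defined by some $L(\mathbb S(\mathbb B,\mathbb N))$-formula $\delta(\bar u,\bar v;\bar d)$.

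The heart of the argument invokes the weak second-order expressive power of $\mathbb S(\mathbb B,\mathbb N)$ to express uniform term evaluation. Enumerate the $L$-terms $\tau_i(x_1,\ldots,x_k,y_1,\ldots,y_{m_i})$ with $i\in\mathbb N$ and $y_s$ ranging over $\mathbb B$, as in the paragraph preceding the statement. The relation \emph{``$y$ is the value inside $\Gamma_j(\cdot,\bar p_j)$ of $\tau_i$ at substitution $(\tilde x^{(j)},\bar b)$''} is first-order definable in $\mathbb S(\mathbb B,\mathbb N)$ by a formula $\mathrm{Ev}_j(i,\tilde x^{(j)},\bar b,y;\bar p_j)$ obtained by recursively unfolding the syntactic tree of $\tau_i$ (itself coded as a natural number) using the formulas $Q_{\Gamma_j}$ of the code; such bounded recursive computations are precisely what the $\Sigma$-programming available in $\mathbb S(\mathbb B,\mathbb N)$ expresses. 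Set
\[
\Theta(y,z)\;:=\;\exists i\in\mathbb N\;\exists \bar b\in\List(\mathbb B)\;\bigl(\mathrm{Ev}_1(i,\tilde x^{(1)},\bar b,y;\bar p_1)\wedge\mathrm{Ev}_2\bigl(i,\tilde x^{(2)},\delta_\ast(\bar b;\bar d),z;\bar p_2\bigr)\bigr),
\]
where $\delta_\ast(\bar b;\bar d)$ denotes the list produced by applying $\delta(\cdot,\cdot;\bar d)$ componentwise to $\bar b$.

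The sentence $\Phi$ asserting ``there exist $\bar p_1\in\phi_1$, $\bar p_2\in\phi_2$, $\bar d$, $\tilde x^{(1)}$, $\tilde x^{(2)}$ such that $\Theta$ defines the graph of an $L(\mathbb A)$-isomorphism from $\Gamma_1(\cdot,\bar p_1)$ onto $\Gamma_2(\cdot,\bar p_2)$'' is first-order in $L(\mathbb S(\mathbb B,\mathbb N))$ and is true in $\mathbb S(\mathbb B,\mathbb N)$ by construction: totality and surjectivity follow from $\mathbb A$ being generated over $\mathbb B$ by $x_1,\ldots,x_k$ (every element of $\Gamma_j$ appears as $\tau_i^{\Gamma_j}(\tilde x^{(j)},\bar b)$ for some $i,\bar b$), while single-valuedness and the homomorphism conditions follow from $\beta$ being an isomorphism. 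Since $\mathcal M\equiv\mathbb S(\mathbb B,\mathbb N)$, $\Phi$ holds in $\mathcal M$, producing witnesses $\bar p_j'\in\phi_j(\mathcal M)$ and an $L(\mathbb A)$-isomorphism $\Gamma_1(\mathcal M,\bar p_1')\cong\Gamma_2(\mathcal M,\bar p_2')$. Coupling this with the analogous first-order well-definedness sentence — that any two tuples $\bar p,\bar p'\in\phi_j$ yield isomorphic $\Gamma_j$-structures via a uniformly definable isomorphism given by the same generator-and-term trick with $\delta$ replaced by the identity on $\mathbb B$ — gives $\Gamma_1(\mathcal M,\phi_1)\cong\Gamma_2(\mathcal M,\phi_2)$ independently of the chosen parameters. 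The main technical obstacle is the correctness and uniformity of the evaluation formula $\mathrm{Ev}_j$: this step fails over the pure first-order theory of $\mathbb B$ but succeeds over the list superstructure thanks to the $\Sigma$-programming reviewed in the introduction, and parallels the standard term-algebra coding in $HF(\mathbb B)$.
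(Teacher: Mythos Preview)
Your proposal is correct and follows essentially the same approach as the paper: both arguments encode the isomorphism as a first-order formula over $\mathbb S(\mathbb B,\mathbb N)$ by combining (i) a uniform term-evaluation relation $\mathrm{Ev}_j$ (the paper's $\sigma_j$) exploiting finite generation over $\mathbb B$, with (ii) the definable restriction of $\beta$ to $\mathbb B$, and then transfer the sentence ``this formula defines an isomorphism'' to $\mathcal M$ by elementary equivalence. Your write-up is in fact slightly more careful than the paper's, in that you existentially quantify the parameters to obtain a genuine sentence $\Phi$ and you explicitly address parameter-independence of $\Gamma_j(\mathcal M,\phi_j)$ via the same generator-and-term device with $\delta$ replaced by the identity---points the paper's proof leaves implicit.
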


\begin{proof}

For book-keeping purposes, we introduce two copies of the structure $\mathbb A$, $\mathbb A_1\cong \mathbb A_2\cong \mathbb A$, with $\mathbb A_i=\langle A_i\mid L\rangle$ and we set $\Gamma_i$ to interpret $\mathbb A_i$. The corresponding copies of the substructure $\mathbb B$ we denote by $\mathbb B_1,\mathbb B_2$, with underlying sets $B_1, B_2$, respectively.
Let $\bar p_i$ be the parameter tuple of the interpretation $\Gamma_i$. Let $x=(x_1,\ldots,x_k)$ be a fixed generating tuple of $\mathbb A$ over $\mathbb B$.

Let $Q^*_i=A^*_i/{\sim_i}$ be the underlying set of $\Gamma_i(\mathbb S(\mathbb B,\mathbb N),\bar p_i)$, where $A^*_i$ and $\sim_i$ are as in items 1) and 2) of Definition~\ref{de:interpretation}, respectively.
For an element $a\in A_i$, denote by $a^*_i\subseteq A^*_i$ the equivalence class corresponding to $a$ under $\Gamma_i$. Let $R_i^*\subseteq Q^*_i$ be the underlying set of the interpretation of $\mathbb B_i$ via $\Gamma_i$, and $B_i^*\subseteq A_i^*$ the union of the corresponding equivalence classes, $R_i^*=B_i^*/{\sim_i}$.

Let $\psi:Q^*_1\to Q^*_2$ be an isomorphism s.t. the restriction of $\psi$ to $R_1^*$, that is, the set
\[
\bigcup_{b\in B_1} b_1^*\times \psi(b_1^*)
\]
is definable in $\mathbb S(\mathbb B,\mathbb N)$. Let $\sigma_B$ be a first order formula of $\mathbb S(\mathbb B,\mathbb N)$ that delivers this definability. That is, $\mathbb S(\mathbb B,\mathbb N)\models \sigma_B(y_1, y_2, \bar p_1, \bar p_2)$ if and only if $y_i\in B_i^*$ and $\psi([y_1^*]_{\sim_1})=[y_2^*]_{\sim_2}$, where $[y_i^*]_{\sim_i}$ denote the corresponding equivalence classes.

Let $\sigma_1(z,j,\bar y,\bar x^*_1,\bar p_1)$ be a first order formula of $\mathbb S(\mathbb B,\mathbb N)$ that records that $z\in A^*_1$, $j\in\mathbb N$, $\bar y$ is a tuple of elements of $B_1^*$ with $[\bar y]_{\sim_1}=\bar b^*$, and that $[z]_{\sim_1}=a^*_1\subseteq A_1^*$ interprets $\tau_j(x_1,\ldots,x_k,\bar b)$ under $\Gamma_1$; similarly for $\sigma_2$ and $\Gamma_2$.
Then the formula $\sigma(z_1,z_2,\bar x_1^*,\bar x_2^*,\bar p_1,\bar p_2)$ given by
\[
\exists j,\bar y_1,\bar y_2\ \sigma_B(y_1,y_2,\bar p_1,\bar p_2)\wedge \sigma_1(z_1,j,\bar y_1,\bar x_1^*,\bar p_1)\wedge \sigma_2(z_2,j,\bar y_2,\bar x_2^*,\bar p_2)
\]
defines an isomorphism $\varphi:[z_1]_{\sim_1}\mapsto [z_2]_{\sim_2}$ of $\Gamma_1(\mathbb S(\mathbb B,\mathbb N),\phi_1)$ and $\Gamma_2(\mathbb S(\mathbb B,\mathbb N),\phi_2)$. It is straightforward to record the fact that $\varphi$ is an isomorphism by a first order formula of $\mathbb S(\mathbb B,\mathbb N)$. Since $\mathcal M\equiv \mathbb S(\mathbb B,\mathbb N)$, then the same formula exhibits an isomorphism of $\Gamma_1(\mathcal M,\phi_1)\cong \Gamma_2(\mathcal M,\phi_2)$.
\end{proof}

\section{Nonstandard tuples}\label{se:tuples}

\subsection{Nonstandard models of arithmetic}\label{se:nonstandard_arithmetic}

Here we introduce some well-known definitions and facts on non-standard arithmetic. As a general reference we follow the book \cite{Kaye:1991}.

Let $\mathcal{L}_r = \{+,\cdot, 0,1\}$ be the language (signature) of rings with unity 1. By $\N = \langle N; +,\cdot,0,1\rangle$ we denote the \emph{standard arithmetic}, i.e., the set on non-negative integers $N$ with the standard addition $+$, standard multiplication $\cdot$, and constants $0, 1$. Sometimes, following common practice, we abuse the notation and denote the set $N$ as $\N$. Observe that $\langle N; +,\cdot,0,1\rangle$ is a semiring and $\langle N; +, 0\rangle$ is a commutative semigroup.

Usually by an arithmetic one understands any structure $\mathcal M$ in the signature $\mathcal{L}_r$ that satisfies Peano axioms.
However, for the purposes of the present paper, by a \emph{ model of arithmetic} we mean a structure $\nstdN$ in the signature $\mathcal{L}_r$ that is elementarily equivalent to $\mathbb N$, $\nstdN \equiv \N$. A model $\nstdM$ of arithmetic is called \emph{nonstandard} if it is not isomorphic to $\mathbb N$.
Further, notice that $\mathbb N$ is the only model of arithmetic finitely generated as an additive semigroup or as a semiring. In an equivalent approach one may use the ring of integers $\mathbb Z$ as the arithmetic, in this case non-standard models of $\mathbb{Z}$ are exactly the rings $\widetilde{\mathbb{Z}}$ which are elementarily equivalent to $\mathbb{Z}$, but not isomorphic to $\Z$. Again, $\Z$ is the only (up to isomorphism) finitely generated ring which is elementarily equivalent to $\Z$.
These both approaches are indeed equivalent from the model theory viewpoint since $\N$ and $\Z$ are absolutely bi-interpretable in each other (in fact, $\mathbb{N}$ is definable in $\mathbb{Z}$ by Lagrange's four-square theorem). 

Every nonstandard model $\nstdM$ of $\mathbb N$, in terms of order, can be described as follows. Recall that for ordered disjoint sets $A$ and $B$ by $A + B$ we denote the set $A \cup B$ with the given orders on $A$ and $B$ and $a < b$ for every $a \in A$ and $b \in B$. Also by $A\cdot B$ we denote the Cartesian product $A\times B = \{(a,b) \mid a \in A, b \in B\}$ with the left lexicographical order, i.e., $(a_1,b_1) < (a_2,b_2)$ if and only if either $a_1 < a_2$ or $a_1 = a_2$ and $b_1 < b_2$. Then $\nstdM$, as a linear order, has the form $\mathbb N+Q_\lambda \mathbb Z$, for some dense linear order $Q_\lambda$ without endpoints of cardinality $\lambda = |\nstdN|$, that is $\mathbb N$ followed by $Q_\lambda$ copies of $\mathbb Z$, as shown in Fig.~\ref{fig:hairbrush}. Note that the ordered set $Q_\lambda$ is uniquely determined by $\nstdN$.
\begin{figure}[thb]
 \centering
 \includegraphics[width=0.5\linewidth]{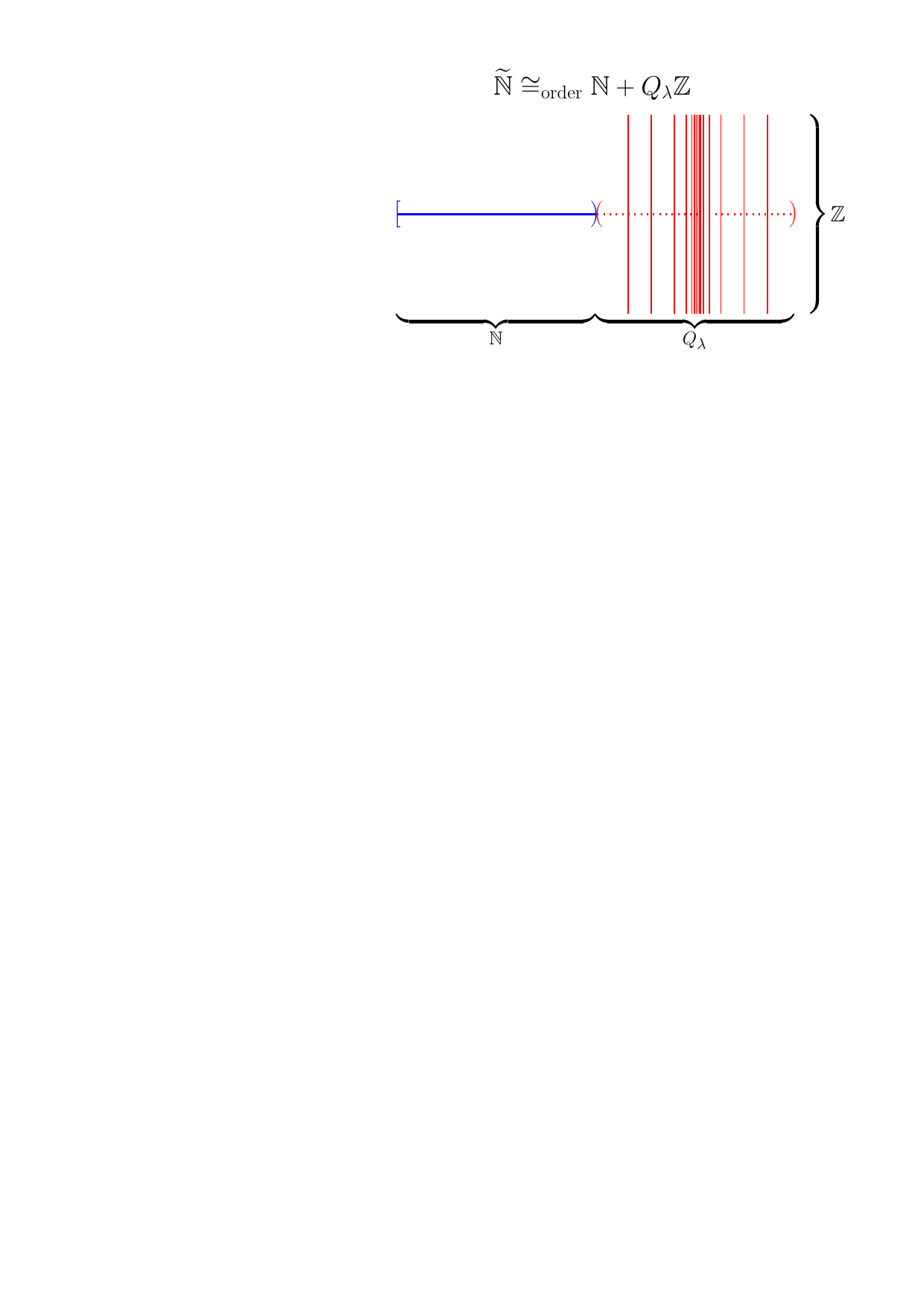}
 \caption{Nonstandard model of arithmetic.}
 \label{fig:hairbrush}
\end{figure}
In particular, any nonstandard countable model of arithmetic has the form $\mathbb N+\mathbb Q\mathbb Z$, since all countable dense linear ordered sets without endpoints are isomorphic.

In the countable case, while the order structure of $\nstdN$ is straightforward, each of the operations $+$ and $\cdot$ on $\nstdN$ is necessarily non-recursive~\cite{Tennenbaum}, so countable non-standard models of arithmetic $\nstdN$ are non-constructible or non-recursive in the sense of Malcev~\cite{Maltsev:61} or Rabin~\cite{Rabin:60}, respectively.

One can identify a natural number $n \in \N$ with a non-standard number $\tilde n = \tilde 1 + \ldots + \tilde 1$, which is the sum of $n$ non-standard units $\tilde 1$ in $\nstdN$. The map $n \to \tilde n$ gives an elementary embedding $\N \to \nstdN$, i.e., with respect to this identification $\N$ is an elementary substructure of $\nstdN$. In other words, $\mathbb N$ is a prime model of the the theory $\Th(\N)$. In the sequel we always assume that $\mathbb N\subseteq \nstdM$ via this embedding. Elements of $\N$ in $\nstdN$ are called the \emph{standard natural numbers} in $\nstdN$. They form an initial segment of $\nstdN$ with respect to the order $<$, i.e., if $x\in\nstdM$ and $x<y$ for some $y\in \mathbb N$ then $x\in\mathbb N$. It follows that in every nonstandard model $\nstdM$, there is an element $x\in\nstdM$ s.t. $x>\mathbb N$ (the latter is a shorthand for $\forall y\in\mathbb N\ x>y$).

It is a crucial fact that $\mathbb N$ is not definable in $\nstdN$ (even by a formula with parameters from $\nstdN$). It follows from the Peano induction axiom (in fact, a scheme of axioms).

\subsubsection{Nonstandard induction and overspill}
Induction in $\nstdN$ is powered by the elementary equivalence $\nstdN\equiv\mathbb N$ and works as follows: let $A\subseteq\nstdM$ be a definable with parameters subset in $\nstdM$. If $0\in A$, and $n\in A\implies n+1\in A$, then $A=\nstdM$. Equivalently, every definable subset of $\nstdM$ has a least element.

An alternative technical approach to induction is Overspill Lemma, originally due to Robinson. We do not explicitly use it in the present work, but we provide it here for context. The definition below and the following lemma are presented here after~\cite{Kaye:1991}.

\begin{definition}
 For a model $\nstdM$ of arithmetic, we say that a non-empty $I\subseteq \nstdM$ is a \emph{cut} of $\nstdM$ if and only if $x<y\in I\implies x\in I$ and $I$ is closed under successor, that is, $x\in I\implies x+1\in I$.
\end{definition}
Notice that a proper cut cannot be a definable subset. Indeed, if $I$ is definable, non-empty, and closed under successor, by induction it follows that $I=\nstdM$. This is convenient to record as the so-called Overspill Lemma.
\begin{lemma}[Overspill~\cite{Kaye:1991}]\label{le:overspill} Let $\nstdM$ be a model of arithmetic, and let $I$ be a proper cut of $\nstdM$. Suppose $\bar a\in \nstdM$ and $\varphi(x,\bar a)$ is a first-order formula with parameters $\bar a$ from $\nstdM$, s.t. $\nstdM \models \varphi(b,\bar a)$ for all $b\in I$. Then there is $c>I$ in $\nstdM$ s.t. $\nstdM\models \forall x\le c\ \varphi(x,\bar a)$.
\end{lemma}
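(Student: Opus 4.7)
The plan is to derive the conclusion by contradiction, leveraging the observation recorded just before the statement: a proper cut of $\nstdM$ cannot be a definable subset, because any non-empty definable subset of $\nstdM$ that is closed under successor must exhaust $\nstdM$ by induction (which is available since $\nstdM \equiv \mathbb N$).

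First I would introduce the auxiliary set
\[
K \;=\; \{\,c \in \nstdM \mid \nstdM \models \forall x \le c\ \varphi(x,\bar a)\,\},
\]
which is definable in $\nstdM$ with the same parameters $\bar a$. Two features of $K$ are immediate. On one hand $I \subseteq K$: for any $b \in I$ and any $x \le b$, downward closure of the cut places $x$ in $I$, so $\varphi(x,\bar a)$ holds by hypothesis, and hence $b \in K$. On the other hand $K$ is itself downward closed: if $c \in K$ and $d \le c$, then every $x \le d$ also satisfies $x \le c$, so $\varphi(x,\bar a)$ is inherited, giving $d \in K$.

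Next, assuming for contradiction that no element $c > I$ belongs to $K$, I would argue that $K = I$. Indeed, take any $c \in K$; either $c$ lies above every element of $I$, which is excluded by the contradiction hypothesis, or $c \le b$ for some $b \in I$, in which case $c \in I$ by downward closure of the cut. Combined with $I \subseteq K$ this forces $K = I$, making $I$ a definable set. But $I$ is non-empty, proper, and closed under successor, contradicting the non-definability of proper cuts recorded just before the lemma. Hence some $c \in K$ must satisfy $c > I$, and this $c$ is exactly the element demanded.

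The only delicate step in the plan is the trichotomy forcing $K = I$ under the contradiction hypothesis; the rest is bookkeeping, and no tool beyond induction in $\nstdM$ (equivalently, $\nstdM \equiv \mathbb N$) is required.
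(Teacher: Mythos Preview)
Your argument is correct and is precisely the paper's approach spelled out in full: the paper's one-line proof ``If not, $I$ is defined by $\forall y\le x\ \varphi(y,\bar a)$ in $\nstdM$'' amounts exactly to your observation that, under the contradiction hypothesis, the definable set $K$ coincides with $I$.
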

\begin{proof}
If not, $I$ is defined by $\forall y\le x\ \varphi(x,\bar a)$ in $\nstdM$.
\end{proof}

\subsection{List superstructure}\label{se:superstructure} Recall the definition of a list superstructure of a structure $\mathbb A = \langle A;L \rangle$ (see \cite{Ashaev-Belyaev-Myasnikov:1993},\cite{Bauval:1985} \cite{KharlampovichMyasnikov:2018a}) with some small, mostly notational, changes. Let $O$ be a linear ordering with a fixed element $1 \in O$. For any $k \in O$ with $k \geq 1$ put $[1,k] = \{x \in O \mid 1\leq x \leq k\}$. Let $\List(A,O)$ be the set of all functions $s:[1,k]\to A$, where $k \in O, k \geq 1$. We call such functions $O$-\emph{tuples}, or $O$-\emph{lists}, of elements of $\mathbb A$, and write sometimes $s=(s_1,\ldots,s_k)$. If $O = \N$ then $\List(A,\N)$ is precisely the set of all finite tuples of elements of $A$. In this case we sometimes omit $\mathbb N$ from the notation and write $\List(A)$.

Now, the list superstructure $\mathbb S(\MA,\MN)$ is defined as the three-sorted structure 
\[
\mathbb S(\mathbb A,\mathbb N) = \langle \mathbb A, \List(A),\mathbb N; t(s,a,i), l(s)\rangle,
\]
where $\mathbb N$
is the standard arithmetic, $\List(A)$ is as above, $l\colon \List(A) \to \mathbb N$ is the length function, i.e., $l(s)$ is the length $k$ of a tuple $s=(s_1, \ldots,s_{k})\in \List(A)$, and $t(x,y,z)$ is a predicate on $\List(A) \times A\times \mathbb N$ such that $t(s,a,i)$ holds in $\mathbb S(\mathbb A,\mathbb N)$ if and only if $s = (s_1, \ldots,s_{k})\in \List(A), i \in \mathbb N, 1\leq i \leq k$, and $a = s_i \in A$.

Note that usually the list superstructure $\mathbb S(\mathbb A,\mathbb N)$ comes equipped (see the papers \cite{Ashaev-Belyaev-Myasnikov:1993},\cite{Bauval:1985} \cite{KharlampovichMyasnikov:2018a} mentioned above) with one more binary operation $\frown$ in the language, which is interpreted in $\mathbb S(\mathbb A,\mathbb N)$ as the concatenation of tuples, and an extra binary predicate $\in$ on $A\times \List(A)$ such that for $a \in A$ and $s \in \List(A)$, $a \in s$ holds in $\mathbb S(\mathbb A,\mathbb N)$ if and only if $a$ is the component $s_i$ of $s$ for some $i$. However, the predicate $\in$ and the concatenation $\frown$ are $0$-definable in $\mathbb S(\mathbb A,\mathbb N)$ (with the use of $t(s,i,a)$ and $\ell(s)$), so we omit them from the language, but sometimes use them in formulas, as convenient.

\begin{remark}\label{re:two_sorts}
In the case when $\mathbb A=\mathbb N$ one can simplify things by identifying $\mathbb A$ with $\mathbb N$ in $\mathbb S(\mathbb A,\mathbb N) = \langle \mathbb A, \List(A),\mathbb N; t(s,a,i), l(s)\rangle$ and use the two sorted structure $\mathbb S(\mathbb N,\mathbb N) = \langle \mathbb N, \List(\mathbb N); t(s,a,i), l(s)\rangle$ instead of the usual three-sorted one. Note that these structures are absolutely bi-interpretable in each other, so we can interchangeably use both structures. 
\end{remark}

\subsection{Interpretation of the list superstructure in arithmetic}\label{se:list_interpretation}

To define the nonstandard list superstructure in Section~\ref{se:nonstd_list} we need to describe an interpretation of~$\mathbb S(\mathbb N,\mathbb N)$ in~$\mathbb N$.

To that end, we fix a computable enumeration of all finite tuples $\tau: \mathbb N\to \cup_{k=1}^\infty \mathbb N^k$. Denote the $i$-th component of $\tau(n)$ by $\tau(n)_i$:
\[
\tau(n)_i=a\iff \mathbb S(\mathbb N,\mathbb N)\models t(\tau(n),a,i).
\]
Then the partial function $f(n,i)=\tau(n)_i$ is computable, and therefore the set $\{(n,a,i)\mid a= f(n,i)\}\subseteq \mathbb N^3$ is recursively enumerable. Hence, there is a first order formula (in fact, an $\exists$-formula) $T_\tau(n,a,i)$ in $\mathbb N$ s.t.
\[
\tau(n)_i=a\iff \mathbb N\models T_\tau(n,a,i).
\]
Further, the length $k=\ell(\tau(n))$ of the tuple $\tau(n)$ is also computable, and therefore is given by a formula $L_\tau(n,k)$:
\[
\ell(\tau(n))=k\iff \mathbb N\models L_\tau(n,k).
\]
Intuitively, the formulas $T_\tau$ and $L_\tau$ allow one to think of each $n\in\mathbb N$ as corresponding to a tuple. In particular, $T_\tau$, $L_\tau$ have the property that in $\mathbb N$ the following take place:
\begin{align}
\tag{S1} &\label{eq:tuple1}\forall n\ \exists k\ L_\tau(n,k), \\
\tag{S2} &\label{eq:tuple2}\forall n\ \forall k\ \forall i \ [(L_\tau(n,k)\wedge i>k)\to \forall a\ \neg T_\tau(n,a,i)],\\
\tag{S3} &\label{eq:tuple3}\forall n\ \forall k\ \forall i\ [ (L_\tau(n,k)\wedge i\le k)\to \exists! a\ T_\tau(n,a,i)],\\
\tag{S4} &\label{eq:tuple4}\forall n,m\ [\forall i\ \forall a\ T_\tau(n,a,i)\leftrightarrow T_\tau(m,a,i)]\to m=n
\end{align}
With these formulas in mind, in the sequel we use expressions with $\ell$ and $t$ in first-order formulas with understanding that these expressions are to be replaced with formulas involving $L_\tau$ and $T_\tau$, respectively. For example, in place of~\eqref{eq:tuple3} we may write $\forall n\ \forall i\le \ell(\tau(n))\ \exists! a\ t(\tau(n),a,i)$.
As we mentioned above, concatenation $^\frown$ can be defined through $t$ and $\ell$, and therefore through $T$ and $L$. Namely, the following first-order formula states that for every $x,y\in\mathbb N$ there exists $z\in\mathbb N$ s.t. $\tau(z)=\tau(x)^\frown\tau(y)$:
\begin{equation}\label{eq:concat}
\begin{split}
 &\forall x,y\ \exists z\\
 &\ell(\tau(z))=\ell(\tau(x))+\ell(\tau(y))\\
 &\wedge [\forall a\ \forall i \le \ell(\tau(x))\ T_\tau(z,a,i)\leftrightarrow T_\tau(x,a,i) ]\\
 &\wedge [\forall a\ \forall \ell(\tau(x))< i \le \ell(\tau(x))+\ell(\tau(y))\\
 &T_\tau(z,a,i)\leftrightarrow T_\tau(y,a,i-\ell(\tau(x))) ].
\end{split}
\end{equation}

The next formula holds when and only when $a\in \tau(x)$:
\begin{equation}\label{eq:membership}
 \exists i\ 1\le i\le \ell(x) \wedge T(x,a,i).
\end{equation}

Notice that for our purposes it suffices to consider arithmetic enumerations (not necessarily computable ones), that is, the ones given by formulas $T$ and $L$ that satisfy~\eqref{eq:tuple1}--\eqref{eq:tuple4}. In that event, for every $n\in\mathbb N$, the truth set of $T(n,a,i)$ defines a tuple corresponding to $n$. To make sure that every tuple in $\List(N,\mathbb N)$ corresponds to some tuple defined by $T(n,a,i)$ we add an infinite system of axioms. Namely, for each $k\in\mathbb N,$ we write
\begin{equation}\label{eq:tuple7}\tag{S5}
\forall a_1,\ldots,a_k\ \exists n\ L(n,k) \wedge \bigwedge_{i=1}^k T(n,a_i,i).
\end{equation}

Note that these formulas are satisfied whenever $L=L_\tau$ and $T=T_\tau$ for some computable enumeration $\tau$ as above.

\begin{definition}\label{de:arithmetic_enumeration}
The pair of formulas $\mathcal E=(T,L)$ that satisfy~\eqref{eq:tuple1}--\eqref{eq:tuple7} are called an \emph{arithmetical enumeration}
of tuples in $\mathbb N$. For an arithmetical enumeration~$\mathcal E$, by~$\mathcal E(n)$ we denote the tuple corresponding to $n\in\mathbb N$ under~$\mathcal E$.
\end{definition}

\begin{lemma}\label{le:arithmetical_enum_welldef}
Let $\mathcal E$ and $\mathcal D$ be two arithmetical enumerations of tuples in $\mathbb N$.
There exists a definable bijection $f:\mathbb N\to\mathbb N$ s.t. for all $n_1,n_2\in\mathbb N$, $\mathcal E(n_1)=\mathcal D(n_2)$ if and only if $f(n_1)=n_2$.
\end{lemma}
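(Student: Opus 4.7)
The plan is to write down an explicit first-order formula that says ``$n_1$ and $n_2$ code the same tuple'' and then verify, using only the axioms \eqref{eq:tuple1}--\eqref{eq:tuple7}, that this formula defines a total bijection. Concretely, I would take
\[
\phi(n_1,n_2) \;\equiv\; \exists k \Bigl[ L_{\mathcal E}(n_1,k)\wedge L_{\mathcal D}(n_2,k)\wedge \forall i\,\forall a\bigl(1\le i\le k \to (T_{\mathcal E}(n_1,a,i)\leftrightarrow T_{\mathcal D}(n_2,a,i))\bigr)\Bigr],
\]
which is a first-order formula in the language of $\mathbb N$, and set $f(n_1)=n_2$ iff $\mathbb N\models\phi(n_1,n_2)$.

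The verification proceeds in three short steps. First, totality: given $n_1\in\mathbb N$, axiom \eqref{eq:tuple1} for $\mathcal E$ produces a unique $k$ with $L_{\mathcal E}(n_1,k)$ (uniqueness from \eqref{eq:tuple3}), and \eqref{eq:tuple3} gives, for each $i\le k$, a unique $a_i$ with $T_{\mathcal E}(n_1,a_i,i)$; then axiom \eqref{eq:tuple7} for $\mathcal D$ produces some $n_2$ with $L_{\mathcal D}(n_2,k)$ and $T_{\mathcal D}(n_2,a_i,i)$ for all $i\le k$, which together with \eqref{eq:tuple2} makes $\phi(n_1,n_2)$ true. Second, well-definedness (i.e., functionality of the graph): if $\phi(n_1,n_2)$ and $\phi(n_1,n_2')$ both hold, then $n_2$ and $n_2'$ determine exactly the same triples $T_{\mathcal D}(\cdot,a,i)$ for all $a,i$ (using \eqref{eq:tuple2} to kill indices beyond the common length $k$), so axiom \eqref{eq:tuple4} for $\mathcal D$ forces $n_2=n_2'$. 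Third, bijectivity: by symmetry the same construction with the roles of $\mathcal E$ and $\mathcal D$ swapped yields the inverse function, which is definable by $\phi(n_2,n_1)$ read as a function of $n_2$; this shows $f$ is a definable bijection. Finally, the equivalence $\mathcal E(n_1)=\mathcal D(n_2) \Leftrightarrow f(n_1)=n_2$ is immediate from the definition of $\phi$, since the body of $\phi$ asserts exactly the componentwise equality of the two tuples.

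There is essentially no obstacle here beyond bookkeeping: all of the work has already been done by the axioms \eqref{eq:tuple3}, \eqref{eq:tuple4}, and \eqref{eq:tuple7}, which respectively supply uniqueness of components, injectivity of the coding, and surjectivity onto all finite tuples. The only subtle point worth flagging is that one should use \eqref{eq:tuple2} explicitly to justify quantifying $\forall i$ rather than $\forall i\le k$ in the functionality argument, so that ``agreement on indices $\le k$'' upgrades to ``agreement on all $i$'' as required by \eqref{eq:tuple4}.
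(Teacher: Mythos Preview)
Your proof is correct and follows the same approach as the paper's: the paper simply observes that the set $\{(n_1,n_2)\in\mathbb N^2\mid \mathcal E(n_1)=\mathcal D(n_2)\}$ is definable in $\mathbb N$, while you write out the defining formula explicitly and verify the bijection axioms using \eqref{eq:tuple1}--\eqref{eq:tuple7}. One tiny citation quibble: uniqueness of the length $k$ comes from \eqref{eq:tuple2} and \eqref{eq:tuple3} together (two distinct lengths would force a component to both exist and not exist at some index), not from \eqref{eq:tuple3} alone.
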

\begin{proof} To prove the lemma it suffices to note that the set $\{(n_1,n_2)\in\mathbb N^2\mid \mathcal E(n_1)=\mathcal D(n_2)\}$ is definable in $\mathbb{N}$. 
\end{proof}
Now we explain how every arithmetical enumeration $\mathcal E$ defines an absolute interpretation of $\mathbb S(\mathbb N,\mathbb N)$ in $\mathbb N$,
\[
\mathbb S(\mathbb N,\mathbb N)\cong \Gamma_\mathcal E(\mathbb N).
\]

For simplicity we can represent the two-sorted structure $\mathbb S(\mathbb N,\mathbb N)$ (see Remark~\ref{re:two_sorts}) by the usual one-sorted one where the base set is a disjoint union of $\N$ and $\List(\N)$, equipped with the two unary predicates defining the subsets $\N$ and $\List(\N)$, two predicates representing the operations $+$ and $\cdot$ on $\N$, the predicate $t(s,a,i)$, and a predicate representing the length function $l(s)$. 

Now we follow Definition~\ref{de:interpretable} and notation there to describe $\Gamma_\varepsilon$. We interpret $\N \cup \List(\N)$ as subset of $\mathbb N^2$, where $\mathbb N^*=\mathbb N\times \{1\}$ and $\List(\mathbb N)^*=\mathbb N\times\{2\}$ (both are definable subsets of $\N^2$), the equality as the equivalence relation on $\mathbb N^* \cup \List(\N)^*$. The operations $+$ and $\cdot$ are interpreted naturally via the first component of $\N^*$. To interpret the predicate $t(s,a,i)$ on 
$\mathbb N^* \cup \List(\N)^*$ one uses the formula $T(n,a,i)$, and to interpret the function $l(s)$ - the formula $L(n,k)$.

Formulas~\eqref{eq:tuple1}--\eqref{eq:tuple7} guarantee that the interpretation $\langle \mathbb N^*,\List(\mathbb N)^*; t^\mathbb N,\ell^\mathbb N\rangle$ is isomorphic to $\mathbb S(\mathbb N,\mathbb N)$. Indeed, the morphism is delivered by formulas \eqref{eq:tuple1}--\eqref{eq:tuple3}, and the bijectivity---by \eqref{eq:tuple4},\eqref{eq:tuple7}. 

If we were so inclined, it would be easy to define $\mathbb N^*$ and $\List(\mathbb N)^*$ in $\mathbb N$ instead of $\mathbb N^2$, say by placing the first sort in even integers, and the second sort in odd integers.

Note that the reverse absolute interpretation $\mathbb N\cong\Delta(\mathbb S(\mathbb N,\mathbb N))$ is constructed straightforwardly by interpreting $\mathbb N$ as the sort $\mathbb N$ of $\mathbb S(\mathbb N,\mathbb N)$.
\begin{proposition}[\cite{Cooper:2017,Rogers:1967}]\label{le:list_bi-int}
In the above notation the following holds:
\begin{itemize} \item [1)] $\mathbb S(\mathbb N,\mathbb N)\cong \Gamma_\mathcal E(\mathbb N)$ and the reverse interpretation $\mathbb N\cong \Delta(\mathbb S(\mathbb N,\mathbb N))$ give an absolute bi-interpretation of $\mathbb S(\mathbb N,\mathbb N)$ and $\mathbb N$ in each other.
\item [2)] Moreover, for two arithmetical enumerations $\mathcal E,\mathcal D$ the definable function $f$ from Lemma~\ref{le:arithmetical_enum_welldef} gives an isomorphism of interpretations $\Gamma_\mathcal E$ and $\Gamma_\mathcal D$.

\end{itemize}

\end{proposition}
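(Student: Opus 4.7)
The plan is to verify the two assertions in order, leveraging the constructions and axioms already laid out. For part (1), the interpretations $\Gamma_\mathcal E\colon \mathbb S(\mathbb N,\mathbb N)\cong\Gamma_\mathcal E(\mathbb N)$ and $\Delta\colon \mathbb N\cong \Delta(\mathbb S(\mathbb N,\mathbb N))$ are already explicit, and axioms \eqref{eq:tuple1}--\eqref{eq:tuple7} guarantee that $\Gamma_\mathcal E(\mathbb N)$ is a well-defined $L(\mathbb S(\mathbb N,\mathbb N))$-structure isomorphic to $\mathbb S(\mathbb N,\mathbb N)$. To obtain an absolute bi-interpretation in the sense of Definition~\ref{def:bi}, the remaining task is to verify that the two composed coordinate maps $\mu_\Delta\circ\mu_{\Gamma_\mathcal E}$ and $\mu_{\Gamma_\mathcal E}\circ\mu_\Delta$ are definable in $\mathbb N$ and $\mathbb S(\mathbb N,\mathbb N)$, respectively.

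The map $\mu_\Delta\circ\mu_{\Gamma_\mathcal E}$ simply strips the sort-indicating second coordinate from a pair $(n,1)\in\mathbb N^*$, returning $n$; its graph is quantifier-free definable in $\mathbb N$. The reverse composition $\mu_{\Gamma_\mathcal E}\circ\mu_\Delta$ is the main obstacle. It sends a coded pair $(n,k)$ from the $\mathbb N$-sort of $\mathbb S(\mathbb N,\mathbb N)$ either to $n$ in the $\mathbb N$-sort (when $k=1$) or to the tuple $\mathcal E(n)\in\List(\mathbb N)$ (when $k=2$). To treat the tuple case, one observes that the $L(\mathbb N)$-formulas $T$ and $L$ can be reread as $L(\mathbb S(\mathbb N,\mathbb N))$-formulas applied to the $\mathbb N$-sort; then the relation $s=\mathcal E(n)$ becomes
\[
\exists k\,\bigl[L(n,k)\wedge \ell(s)=k\wedge \forall i\,\forall a\,((1\le i\le k)\to (t(s,a,i)\leftrightarrow T(n,a,i)))\bigr],
\]
which by axioms \eqref{eq:tuple3} and \eqref{eq:tuple7} is functional and total in $s$. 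Hence the graph of $\mu_{\Gamma_\mathcal E}\circ\mu_\Delta$ is first-order definable in $\mathbb S(\mathbb N,\mathbb N)$, finishing the bi-interpretation.

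For part (2), Lemma~\ref{le:arithmetical_enum_welldef} provides a definable bijection $f\colon \mathbb N\to\mathbb N$ with $\mathcal E(n_1)=\mathcal D(n_2)\Leftrightarrow f(n_1)=n_2$. Both $\Gamma_\mathcal E(\mathbb N)$ and $\Gamma_\mathcal D(\mathbb N)$ share the underlying subset $\mathbb N^*\cup\List(\mathbb N)^*\subseteq\mathbb N^2$, so the plan is to check that the identity on $\mathbb N^*$ together with $(n,2)\mapsto(f(n),2)$ on $\List(\mathbb N)^*$ gives an $L(\mathbb S(\mathbb N,\mathbb N))$-isomorphism $\Gamma_\mathcal E(\mathbb N)\to \Gamma_\mathcal D(\mathbb N)$. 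The defining property of $f$ forces the length function and tuple predicate interpreted through the $(T,L)$-pair of $\mathcal E$ to pull back to those interpreted through the corresponding pair of $\mathcal D$, while axiom \eqref{eq:tuple4} ensures that this correspondence is unambiguous. Since $f$ is itself definable in $\mathbb N$, the resulting map is definable, and therefore supplies the required isomorphism of interpretations.
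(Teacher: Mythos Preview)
Your proof is correct and fills in exactly the verification that the paper omits: the proposition is stated with citations to \cite{Cooper:2017,Rogers:1967} and no proof is given in the paper itself, beyond the preceding paragraphs that set up $\Gamma_\mathcal E$, $\Delta$, and note that axioms \eqref{eq:tuple1}--\eqref{eq:tuple7} ensure $\Gamma_\mathcal E(\mathbb N)\cong\mathbb S(\mathbb N,\mathbb N)$. Your argument is the natural one consistent with that setup---identifying the two composed coordinate maps explicitly and writing down the formula that expresses $s=\mathcal E(n)$ inside $\mathbb S(\mathbb N,\mathbb N)$ by reading $T,L$ as formulas on the $\mathbb N$-sort.

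One small imprecision: when you justify that the displayed formula is total and functional in $s$, the relevant fact is not really axiom \eqref{eq:tuple7} (which asserts surjectivity of $n\mapsto\mathcal E(n)$) but rather the definition of $\List(\mathbb N)$ itself---every finite function $[1,k]\to\mathbb N$ is an element of $\List(\mathbb N)$, and two tuples with the same length and componentwise equal entries coincide. Axiom \eqref{eq:tuple3} is what you need to know that $\mathcal E(n)$ prescribes a unique value at each $i\le k$, so that such an $s$ exists and is unique. This does not affect the correctness of the argument.
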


In the sequel we will need tuples of tuples in $\mathbb N$, that is functions $[1,k]\to \List(\mathbb N,\mathbb N)$. An arithmetical enumeration $\mathcal E$ suffices to interpret $\mathbb S(\mathbb S(\mathbb N,\mathbb N),\mathbb N)$ in $\mathbb N$. Indeed, for each $n\in\mathbb N$, we consider the corresponding tuple $\mathcal E(n)=(n_1,\ldots,n_k)\in\mathbb S(\mathbb N,\mathbb N)$. To the same $n$, we then associate the tuple of tuples $(\mathcal E(n_i),\ldots,\mathcal E(n_k))$. We note that the length of $i$th tuple $\ell(\mathcal E(n_1))$ and $j$the component of $i$th tuple $t(\mathcal E(n_i),a,j)$ are definable using the original formulas $T$ and $L$. The following statement follows from this observation.

\begin{proposition}\label{le:tuples_tuples}
Let $\mathcal E$ be an arithmetical enumeration of tuples in $\mathbb{N}$. Then 
\begin{enumerate}[(a)]
    \item For every regular interpretation $\Delta$ of a structure $\mathbb A$ in $\mathbb N$, there is a corresponding regular interpretation $\Delta_\mathcal E$ of $\mathbb S(\mathbb A,\mathbb N)$ in $\mathbb N$.
    \item If, additionally, $\Delta$ gives rise to a regular bi-interpretation, then so does $\Delta_\mathcal E$.
    \item If, additionally, $\mathcal D$ is an arithmetical enumeration of tuples, then $\Delta_\mathcal E$ and $\Delta_\mathcal D$ are definably isomorphic.
\end{enumerate}
\end{proposition}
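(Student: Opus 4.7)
The plan is to assemble $\Delta_\mathcal E$ from two ingredients: the code of $\Delta$, which realises elements of $\mathbb A$ as $\sim_\Delta$-classes of $n$-tuples in $\mathbb N$ for $n=\dim\Delta$, and the enumeration $\mathcal E=(T,L)$, which codes arbitrary finite tuples of natural numbers by single natural numbers. A list in $\List(A)$ of length $k$ is naturally an $nk$-tuple in $\mathbb N$, hence representable by a single $m\in\mathbb N$ with $L(m)=nk$. For part (a), following the pattern of Section~\ref{se:list_interpretation}, I would build $\Delta_\mathcal E$ on $\mathbb N^2$ with three disjoint pieces: a marked copy of $\mathbb N$, the admissible $n$-tuples modulo $\sim_\Delta$, and the set of $m\in\mathbb N$ with $n\mid L(m)$, modulo the equivalence identifying codes whose successive $n$-blocks are $\sim_\Delta$-equivalent. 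The length function is $L(m)/n$; the predicate $t(s,a,i)$ is defined by stipulating, via $T$, that the entries of $\mathcal E(m)$ at positions $(i-1)n+1,\ldots,in$ form an $n$-tuple $\sim_\Delta$-equivalent to the code of $a$. Axioms~\eqref{eq:tuple1}--\eqref{eq:tuple7} ensure that every list in $\List(A)$ has a code, and the parameter formula for $\Delta_\mathcal E$ is inherited from the one for $\Delta$.

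For part (b), assume $\Delta$ is half of a regular bi-interpretation, with a regular inverse $\mathbb N\cong\Lambda(\mathbb A,\psi)$ and definable coordinate maps for both compositions. The inverse interpretation of $\mathbb N$ in $\mathbb S(\mathbb A,\mathbb N)$ is essentially tautological: the sort $\mathbb N$ of $\mathbb S(\mathbb A,\mathbb N)$ is already an absolute interpretation of $\mathbb N$, so I would take this as the right inverse of $\Delta_\mathcal E$. One composition is then literally the identity on $\mathbb N$. For the other composition $\mathbb S(\mathbb A,\mathbb N)\to\mathbb N\to\mathbb S(\mathbb A,\mathbb N)$, the definable coordinate map is identity on the $\mathbb N$ sort, the coordinate map coming from the bi-interpretation on the $\mathbb A$ sort, and on the list sort the block-wise extension of the latter to $\mathcal E(m)$, which is first-order definable using $T$ and $L$.

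For part (c), I would invoke Lemma~\ref{le:arithmetical_enum_welldef}: the definable bijection $f\colon\mathbb N\to\mathbb N$ with $\mathcal E(n_1)=\mathcal D(n_2)\iff f(n_1)=n_2$ extends to a definable isomorphism $\Delta_\mathcal E(\mathbb N)\to\Delta_\mathcal D(\mathbb N)$ that is the identity on the $\mathbb N$ and $\mathbb A$ sorts and sends each list-code $m$ to $f(m)$; all relations then match by the defining property of $f$. The main obstacle I expect is in part (b), where one must verify that the block-wise coordinate map on the list sort is genuinely first-order definable and respects the admissibility and regularity conditions; parts (a) and (c) are largely bookkeeping once the encoding is set up.
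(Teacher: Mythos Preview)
The paper does not give a detailed proof of this proposition; it only says the statement ``follows from this observation,'' where the observation is the two-level encoding of tuples of tuples: to $n\in\mathbb N$ associate $\mathcal E(n)=(n_1,\ldots,n_k)$, then to the same $n$ associate the tuple of tuples $(\mathcal E(n_1),\ldots,\mathcal E(n_k))$. Your flattening encoding (one long tuple of length $nk$, partitioned into $n$-blocks) is a perfectly good alternative for part~(a); either encoding works. One small slip: you say you build $\Delta_\mathcal E$ on $\mathbb N^2$ but then speak of ``admissible $n$-tuples modulo $\sim_\Delta$''---for $n>1$ these do not sit in $\mathbb N^2$ unless you first collapse each $n$-tuple to a single natural via $\mathcal E$, or move to $\mathbb N^{n+1}$ with a marking coordinate. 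Also, your domain for the list sort should require not only $n\mid L(m)$ but that each $n$-block lies in $U_\Delta$.

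Part~(c) is fine, exactly as you say.

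There is, however, a genuine gap in part~(b). You write that on the $\mathbb A$ sort the coordinate map is ``the coordinate map coming from the bi-interpretation,'' but the bi-interpretation only hands you definable maps $\theta_{\mathbb A}\colon A^{nm}\to A$ and $\theta_{\mathbb N}\colon \mathbb N^{mn}\to\mathbb N$ for the \emph{compositions}; neither of these is the map $\mu_\Delta\colon A_\Delta\subseteq\mathbb N^n\to A$ that you need. In a bare two-sorted structure $(\mathbb A,\mathbb N)$ with no cross-sort relations, $\mu_\Delta$ is not first-order definable in general (any automorphism of $\mathbb A$ permutes the candidate isomorphisms). What saves you here is precisely the list sort: in $\mathbb S(\mathbb A,\mathbb N)$ you can definably identify $\Lambda(\mathbb A)$ with the number sort $\mathbb N$ by a counting argument---$\bar a\in N_\Lambda$ corresponds to $k\in\mathbb N$ iff there is a list of $\mathbb A$-elements of length $m(k+1)$ whose successive $m$-blocks run $0_\Lambda,1_\Lambda,\ldots,\bar a$---and then combine this with $\theta_{\mathbb A}$ to pin down $\mu_\Delta$. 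So the real obstacle in~(b) arises already at the $\mathbb A$ sort, not at the list sort as you suggest; once $\mu_\Delta$ is definable, the block-wise extension to the list sort is routine.
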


\subsection{Nonstandard list superstructure}\label{se:nonstd_list}
Let $\mathcal E=(T,L)$ be an arithmetical enumeration of tuples in $\mathbb N$. Let $\nstdM\equiv\mathbb N$ be a nonstandard model of arithmetic. 

\begin{definition} Let $\mathcal E=(T,L)$ be an arithmetical enumeration of tuples. A function $s:[1,k]\to\nstdM$, $k\in\nstdM$, is called a \emph{nonstandard} (or \emph{definable}) tuple (list) if its graph is the truth set of $T(n,a,i)$ for some $n\in\nstdM$. By $\List_\mathcal E(\nstdM)$ we denote the the set of all such nonstandard tuples in $\nstdN$. 
\end{definition}

Note that $\List_\mathcal E(\mathbb N)=\List(\N,\mathbb N)$ for every arithmetical enumeration~$\mathcal E$. However, at least out of cardinality considerations, $\List_\mathcal E(\nstdN) \neq \List(\nstdN,\nstdN)$.

\begin{definition}
By Theorem~\ref{th:complete-scheme} a structure is elementarily equivalent to $\mathbb S(\mathbb N,\mathbb N)$ when and only when it is of the form $\Gamma_\mathcal E(\nstdM)$ for some $\nstdM \equiv \mathbb N$, where $\Gamma_\mathcal E$ is as in Section~\ref{se:list_interpretation}. We denote $\Gamma_\mathcal E(\nstdN)$ by $\mathbb S(\nstdN,\nstdN)$ and call it the \emph{nonstandard list superstructure of arithmetic $\nstdN$}.
\end{definition}
We will exploit the interpretation $\Gamma_\mathcal E$
and elementary equivalence to understand $\mathbb S(\nstdN,\nstdN)$ in more specific terms.

The structure $\mathbb S(\nstdN,\nstdN)$ is two-sorted with two underlying sets: $\nstdN$ and $\List_\mathcal E(\nstdM)$.
According to the interpretation $\Gamma_\mathcal E$, the set $\nstdN$ is interpreted as $\nstdN^* = \nstdN \times \{1\}$ and $\List_\mathcal E(\nstdM)$ as $\List_\mathcal E(\nstdM)^* = \nstdN \times \{2\}$. We can naturally identify $\nstdN^*$ and $\List_\mathcal E(\nstdM)^*$ with two disjoint copies of $\nstdN$. 

After this identification each fixed $n \in \List_\mathcal E(\nstdM)^* = \nstdN$ defines uniquely the nonstandard tuple $s:[1,k] \to \nstdN$ where $s(i)=a$ if and only if $\nstdN\models L_\mathcal E(n,k)\wedge T_\mathcal E(n,a,i)$, which is equivalent to 
\[
\mathbb S(\nstdN,\nstdN)\models [\ell(s)=k]\wedge [\forall 1\le i\le k\ t(s,s(i),i)].
\]
In this event, we denote $s=\mathcal E(n)$ and refer to it as the \emph{$n$th nonstandard tuple in $\nstdN$}. Notice that if $\nstdM$ is countable, then so is $\List_\mathcal E(\nstdM)$, unlike the set of \emph{all} tuples of elements of $\nstdN$ indexed by $1\le i\le n$, $n\in \nstdN$.

As usual, the element $a$ such that $\mathbb S(\nstdN,\nstdN)\models t(s,a,i)$ is called the $i$th term, or $i$th component, of a nonstandard tuple $s$. To indicate a nonstandard tuple with the first element $a_1$ and the last element $a_k$, $k\in\nstdN$, we write $(a_1,\ndots,a_k)$.
Concatenation of nonstandard tuples is described by formula~\eqref{eq:concat}; membership $a\in (a_1,\ldots,a_k)$ is given by formula \eqref{eq:membership}.

\begin{proposition} Let $\mathcal E$ and $\mathcal F$ be arithmetical enumerations of tuples in $\N$ and $\nstdN \equiv \N$. Then the following holds:
\begin{itemize}
\item [1)] $\List_\mathcal E(\nstdN) = \List_\mathcal F(\nstdN)$ and there is a formula $\phi(x,y)$ in the language of rings $\mathcal L_r$ such that for every $m,n \in \nstdN$ 
\[
\mathcal E(m) = \mathcal F(n) \Longleftrightarrow 
\nstdN \models \phi(m,n)
\]

\item [2)] the interpretations $\Gamma_\mathcal E (\nstdN)$ and $\Gamma_\mathcal F (\nstdN)$ are isomorphic. Moreover, there is a formula $\psi(\bar x, \bar y)$ in the language of rings $\mathcal L_r$ that defines in $\nstdN$ an isomorphism $\Gamma_\mathcal E (\nstdN) \to \Gamma_\mathcal F (\nstdN)$.
\end{itemize}
\end{proposition}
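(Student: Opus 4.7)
The plan is to leverage Lemma~\ref{le:arithmetical_enum_welldef} and transfer the definable bijection between enumerations from $\mathbb N$ to $\nstdN$ via elementary equivalence. First, I would write down an explicit formula that defines in $\mathbb N$ the graph of the bijection $f\colon \mathbb N \to \mathbb N$ promised by Lemma~\ref{le:arithmetical_enum_welldef}. A natural choice is
\[
\phi(x,y) \;\equiv\; \forall k\,[L_\mathcal{E}(x,k)\leftrightarrow L_\mathcal{F}(y,k)]\wedge \forall i\,\forall a\,[T_\mathcal{E}(x,a,i)\leftrightarrow T_\mathcal{F}(y,a,i)],
\]
which expresses that the tuples $\mathcal{E}(x)$ and $\mathcal F(y)$ have the same length and the same components. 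By Lemma~\ref{le:arithmetical_enum_welldef}, the first-order sentence $\sigma$ stating ``$\phi$ defines a bijection'' (i.e., $\forall x\exists!y\,\phi(x,y)\wedge \forall y\exists!x\,\phi(x,y)$) holds in $\mathbb N$, hence in $\nstdN$ by $\nstdN\equiv \mathbb N$.

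For part 1), given any $n\in\nstdN$, by surjectivity of this bijection in $\nstdN$ there is a unique $m\in\nstdN$ with $\nstdN\models \phi(m,n)$. By the form of $\phi$, the nonstandard tuples $\mathcal{E}(m)$ and $\mathcal F(n)$ have the same length and agree pointwise as functions $[1,k]\to\nstdN$, so they are equal; this yields $\List_\mathcal F(\nstdN)\subseteq \List_\mathcal{E}(\nstdN)$, and symmetry gives the other inclusion. The formula $\phi$ constructed above is exactly the formula whose truth in $\nstdN$ is equivalent to $\mathcal{E}(m)=\mathcal F(n)$, as required.

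For part 2), I would define $\psi$ to act as the identity on the arithmetic sort $\nstdN^* = \nstdN\times\{1\}$ and as the bijection $\phi$ on the list sort $\List(\nstdN)^*=\nstdN\times\{2\}$. Writing elements of the interpretation as pairs $(x_1,x_2)\in\nstdN^2$ with $x_2\in\{1,2\}$ selecting the sort, this is
\[
\psi(x_1,x_2,y_1,y_2) \;\equiv\; \bigl[x_2{=}1\wedge y_2{=}1\wedge x_1{=}y_1\bigr] \,\vee\, \bigl[x_2{=}2\wedge y_2{=}2\wedge \phi(x_1,y_1)\bigr].
\]
The statement ``$\psi$ defines an isomorphism $\Gamma_\mathcal{E}(\cdot)\to\Gamma_\mathcal F(\cdot)$'' is expressible as a first-order sentence in $\mathcal L_r$ (it asserts preservation of the arithmetic operations $+,\cdot,0,1$, of the predicate $t$, and of the length function $\ell$, all stated in terms of $T_\mathcal{E},L_\mathcal{E},T_\mathcal F,L_\mathcal F$). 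This sentence holds in $\mathbb N$ by Proposition~\ref{le:list_bi-int}(2), and therefore in $\nstdN$ by elementary equivalence.

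The only real work is bookkeeping: making sure the sort-selector component and the coding $\N^*, \List(\N)^*\subseteq \N^2$ used in the construction of $\Gamma_\mathcal{E}$ in Section~\ref{se:list_interpretation} are handled uniformly, and that every clause (preservation of $+,\cdot,0,1,t,\ell$) is phrased as a single first-order sentence so that the transfer $\mathbb N\equiv \nstdN$ can be applied. No new model-theoretic ingredient is needed beyond the existence of the definable bijection and the transfer principle; everything else is syntactic repackaging.
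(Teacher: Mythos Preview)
Your proposal is correct and follows essentially the same approach as the paper's proof, which simply reads ``Since $\mathbb S(\nstdM,\nstdM)\equiv\mathbb S(\mathbb N,\mathbb N)$ the result follows from Lemma~\ref{le:arithmetical_enum_welldef}.'' You have unpacked this one-line argument in detail: the explicit formula $\phi$ you wrote is precisely the formula whose definability is the content of Lemma~\ref{le:arithmetical_enum_welldef}, and your transfer of the sentence ``$\phi$ defines a bijection'' (and, for part~2, ``$\psi$ defines an isomorphism'') via $\nstdN\equiv\mathbb N$ is exactly what the paper's terse proof intends.
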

\begin{proof}
Since $\mathbb S(\nstdM,\nstdM)\equiv\mathbb S(\mathbb N,\mathbb N)$ the result follows from Lemma~\ref{le:arithmetical_enum_welldef}. 
\end{proof}
\begin{remark}
The set $\List_\mathcal E(\nstdN)$ and the interpretation $\Gamma_\mathcal E(\nstdN)$ do not depend (the latter up to a definable isomorphism) on the enumeration $\mathcal E$. We denote them 
$\List(\nstdN)$ and $\mathbb S(\nstdN,\nstdN)$ respectively. \end{remark}

\begin{remark}
The structure $\mathbb S(\nstdN,\mathbb N)$
 is \emph{not} elementarily equivalent to $\mathbb S(\mathbb N,\mathbb N)$, since such equivalence would give a way to define $\mathbb N$ in $\nstdN$. For example, $\mathbb N\subseteq \nstdN$ would be defined in $\mathbb S(\nstdN,\mathbb N)$ by the formula
\[
\mathop{\mathrm{std}}(n)=\exists \mathrm{tuple}\, s\ ( s_1=1\wedge s_{\ell(s)}=n\wedge [\forall 1\le i\le\ell(s)-1\ s_{i+1}=s_{i}+1]).
\]
The sentence $\forall n\, \mathop{\mathrm{std}}(n)$ holds in $\mathbb S(\mathbb N,\mathbb N)$ but not in $\mathbb S(\nstdN,\mathbb N)$. It follows that none of the nonstandard models $\nstdM$ have the same weak second order theory as $\mathbb N$. See also Section~\ref{se:summation_arb_superstructre} for a related discussion.
\end{remark}

\subsection{Properties of nonstandard tuples}\label{se:nonstd_tuples}

\subsubsection{Nonstandard tuples that are always there}
In this section we describe some tuples that belong to $\List(\nstdN)$ for every $\nstdN \equiv \N$.
\begin{lemma} [Definable functions] The following holds.
\begin{itemize}
\item [1)] Let $f:\N \to \N$ be an arithmetic function, i.e., there is a formula $\phi(x,y)$ of arithmetic such that $\forall m,n \in \N$ 
\[
f(m) = n \Longleftrightarrow \N \models \phi(m,n).
\]
Then for any $\nstdN \equiv \N$ and any $k \in \nstdN$ there is a nonstandard tuple $s = (s_1,\ldots,s_k) \in \List(\nstdN)$ such that $\forall i \leq k$ $\nstdN \models \phi(i,s_i)$.
\item [2)] Conversely, let $\phi(x,y)$ be a formula of arithmetic, such that for some $\nstdN \equiv \N$ and some infinite $k \in \nstdN$ there is a tuple $s \in \List(\nstdN)$ such that $l(s) = k$ and $\nstdN \models \phi(i,s_i)$ for every $i \leq k$. Then $\phi(x,y)$ defines an arithmetic function $f:\N \to \N$. 
\end{itemize}
\end{lemma}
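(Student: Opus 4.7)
The plan is to use the arithmetical enumeration $\mathcal E = (T, L)$ of tuples in $\N$ from Section~\ref{se:list_interpretation} and transfer the statements across the elementary equivalence $\nstdN \equiv \N$.

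For (1), I would write the desired existence as a first-order arithmetic sentence that is manifestly true in $\N$. In $\N$, for every $k \in \N$ the tuple $(f(1),\ldots,f(k))$ lies in $\List(\N)$, so by property~\eqref{eq:tuple7} there is some $n\in\N$ with $\mathcal E(n)=(f(1),\ldots,f(k))$. Since $\phi$ defines the graph of $f$ in $\N$, this gives
\[
\N \models \forall k\, \exists n\, \Bigl(L(n,k) \wedge \forall i\, \bigl(1 \leq i \leq k \to \exists y\,(\phi(i,y) \wedge T(n,y,i))\bigr)\Bigr).
\]
By elementary equivalence the same sentence holds in $\nstdN$; applied to the given $k \in \nstdN$ it yields $n \in \nstdN$ such that $s := \mathcal E(n)$ has length $k$, and \eqref{eq:tuple3} forces the witness $y$ to coincide with $s_i$, so $\nstdN \models \phi(i, s_i)$ for every $i \leq k$, as required.

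For (2), the plan is to use the nonstandard tuple $s$ together with $k > \N$ to certify that $\phi$ defines a total relation on $\N$, and then to extract an arithmetic function by taking least witnesses. Fix $m \in \N$. Since $k$ is nonstandard, $m \leq k$, so the component $s_m$ exists in $\nstdN$ and $\nstdN \models \phi(m, s_m)$, whence $\nstdN \models \exists y\, \phi(m,y)$. As $m$ is a standard parameter and $\N$ is an elementary submodel of $\nstdN$, this transfers back: $\N \models \exists y\, \phi(m,y)$. The formula
\[
\phi'(x,y) \equiv \phi(x,y) \wedge \forall z < y\, \neg \phi(x,z)
\]
then defines in $\N$ the graph of a total arithmetic function $f:\N \to \N$ (if $\phi$ is already single-valued on $\N$, $\phi'$ and $\phi$ agree on standard inputs).

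The main obstacle is really only conceptual and lives in (2): the hypothesis only supplies \emph{some} nonstandard witness $s_m$ for each $m \in \N$, not necessarily a standard one nor a unique one, so single-valuedness of $\phi$ on $\N$ has to be engineered via the least-witness trick, whereas totality comes for free from $k > \N$. Everything else is bookkeeping, namely unfolding the abbreviations $\ell$ and $t$ into the formulas $L$ and $T$ of the enumeration $\mathcal E$, and ensuring that every sentence transferred between $\N$ and $\nstdN$ uses only parameters from the elementary submodel $\N$; no appeal to overspill is required.
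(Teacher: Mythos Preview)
Your proof of part~(1) is correct and follows the same idea as the paper: write the existence of the tuple of values $(f(1),\ldots,f(k))$ as a single first-order sentence valid in $\N$ and transfer by elementary equivalence. The paper phrases the transfer through the list superstructure, via $\mathbb S(\nstdN,\nstdN)\cong\Gamma_{\mathcal E}(\nstdN)\equiv\Gamma_{\mathcal E}(\N)$, while you unfold everything into the arithmetic formulas $T,L$ of the enumeration $\mathcal E$; this is only a cosmetic difference.

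For part~(2) the paper actually gives no argument, so your proof is an addition rather than an alternative. Your argument is sound: from $m\le k$ for every standard $m$ you obtain $\nstdN\models\exists y\,\phi(m,y)$, and since $\N\preceq\nstdN$ with $m\in\N$ this reflects to $\N$. You are also right that the literal conclusion of~(2) is too strong: the formula $\phi(x,y)\equiv(y=x\vee y=x+1)$ admits the nonstandard tuple $s_i=i$ of any length (this tuple is in $\List(\nstdN)$ by part~(1)) yet is not the graph of a function, so one must pass to the least-witness variant $\phi'$ as you do. Your proposal therefore establishes the honest form of~(2), namely that $\phi$ is total on~$\N$ and that the induced $\phi'$ defines an arithmetic function.
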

\begin{proof}
Fix an arbitrary arithmetic enumeration $\mathcal E$ of tuples in $\N$.

For any $k \in \N$ there is a tuple $s \in \List(\N) = \List_\mathcal E(\N)$ such that $l(s) = k$ and $\forall i \leq k$ $\N \models \phi(i,s_i)$. It follows that 
\[
S(\nstdN,\nstdN) \models \forall k \in \N \ \exists s \in \List(\nstdN) [\forall i \leq k \ \phi(i,s_i)].
\]
Since 
\[
S(\nstdN,\nstdN) \cong \Gamma_\mathcal E(\nstdN) \equiv \Gamma_\mathcal E(\N) = \List_\mathcal E(\N)
\]
one has that for any $k \in \nstdN$ there is a nonstandard tuple $s \in \List_\mathcal E(\nstdN)$ such that $l(s) = k$ and $\nstdN \models \phi(i,s_i)$ for any $i \leq k$.
\end{proof}

The following sentence holds in $\mathbb S(\mathbb N,\mathbb N)$, hence in $\mathbb S(\nstdN,\nstdN)$:
\[
\forall k\ \exists s\ \forall a\ \forall i\ t(s,a,i)\iff i\le k \wedge a=i.
\]
Therefore, the function $s$ defined by $s(i)=i$ for $1\le i\le k$ is a nonstandard tuple. We denote this nonstandard tuple by $[1,k]$ or $(1,\ndots,k)$.

Similar reasoning shows that there is a ``constant'' nonstandard tuple $(1,\ldots,1)$ of length $n$ for each $n\in\nstdM$.

For each nonstandard tuple $s$ and each $n\le \ell(s)$ we can consider its restriction to $[1,n]$, or its initial segment $s|_{n}$, defined by $\ell(s|_n)=n$ and $\forall i \forall a\ t(s|_{n},a,i)\leftrightarrow i\le n\wedge t(s,a,i)$.

\subsubsection{Nonstandard permutations}\label{se:nonstd_permutations} We say that a nonstandard tuple $\sigma$ of length $n=\ell(\sigma)\in\nstdN$ is a \emph{nonstandard permutation} of $[1,n]$ if each term of $\sigma$ is at least $1$ and at most $n$, and each $1\le k\le n$ appears exactly once in $\sigma$. The following first-order formula $P(\sigma,n)$ tells if a tuple $\sigma$ is a nonstandard permutation of $[1,n]$:
\begin{equation}\label{eq:permutation}
 \begin{split}
 P(\sigma,n)&=(\ell(\sigma)=n)\\
 &\wedge \forall 1\le k\le n\ \exists! 1\le i\le n\ t(\sigma,k,i)\\
 &\wedge \forall 1\le i\le n\ \exists! 1\le k\le n\ t(\sigma,k,i).
 \end{split}
\end{equation}
Note that every nonstandard permutation gives a first-order definable bijection of $[1,n]$ onto itself.

In the sequel, we often omit the word nonstandard and simply term such $\sigma$ \emph{permutations}. In particular, note that if $\sigma$ is a permutation, it is implied that it is a nonstandard tuple.

For illustration purposes, in the next proposition we show that the behavior we expect in the finite case is observed in the nonstandard case. In particular, note that it follows from this proposition that the second or the third line in~\eqref{eq:permutation} can be omitted.

\begin{proposition}
Suppose $\sigma$ is a nonstandard tuple of length $n\in\nstdN$.
\begin{enumerate}[(a)]
    \item If every $1\le i\le n$ appears in $\sigma$, then $\sigma$ is a permutation.
    \item If all components of $\sigma$ are distinct, and every component is at least $1$ and at most $n$, then $\sigma$ is a permutation.
\end{enumerate}
\end{proposition}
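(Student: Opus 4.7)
The plan is to prove both parts by a straightforward transfer argument, exploiting the elementary equivalence $\mathbb S(\nstdN,\nstdN) \equiv \mathbb S(\mathbb N,\mathbb N)$ established in Section~\ref{se:nonstd_list}. Each implication (a) and (b) can be phrased as a first-order sentence in the language of the list superstructure, since the notion of a nonstandard tuple, the predicate $t(s,a,i)$, the length function $\ell(s)$, the bounds ``$1\le i\le n$'', and the formula $P(\sigma,n)$ from~\eqref{eq:permutation} are all first-order expressible. So it suffices to verify each sentence in the standard structure $\mathbb S(\mathbb N,\mathbb N)$, where it reduces to elementary finite combinatorics.

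Concretely, for part (a) I would write the sentence
\[
\forall n\,\forall \sigma\,\Bigl[\ell(\sigma)=n\wedge \bigl(\forall 1\le k\le n\,\exists 1\le i\le n\; t(\sigma,k,i)\bigr) \to P(\sigma,n)\Bigr],
\]
and for part (b) the sentence
\[
\forall n\,\forall \sigma\,\Bigl[\ell(\sigma)=n\wedge \bigl(\forall 1\le i\le n\,\exists 1\le k\le n\; t(\sigma,k,i)\bigr)\wedge \mathrm{Inj}(\sigma,n) \to P(\sigma,n)\Bigr],
\]
where $\mathrm{Inj}(\sigma,n)$ is the first-order formula stating that distinct indices $i_1,i_2\in[1,n]$ yield distinct components. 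In $\mathbb S(\mathbb N,\mathbb N)$ both sentences are true: a surjection $[1,n]\to[1,n]$ between finite sets of equal cardinality is automatically injective, and an injection $[1,n]\to[1,n]$ is automatically surjective (pigeonhole). In either case, both the ``every value appears'' and ``each value appears at most once'' clauses of~\eqref{eq:permutation} are satisfied, so $P(\sigma,n)$ holds.

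Applying the transfer principle $\mathbb S(\nstdN,\nstdN) \equiv \mathbb S(\mathbb N,\mathbb N)$ immediately yields both (a) and (b) in the nonstandard case. I do not expect any real obstacle here: the only care needed is to confirm that the hypotheses of each part have been phrased as first-order conditions in the language of $\mathbb S$, which is immediate from the availability of $t$, $\ell$, and bounded quantifiers via the arithmetic sort. As an alternative to transfer, one could run the proof directly inside $\nstdN$ using nonstandard induction on $n\in\nstdN$ (both the surjectivity-to-bijectivity and injectivity-to-bijectivity statements are first-order inductive), but the transfer route is the most economical and fits the narrative of the section.
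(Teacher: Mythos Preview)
Your proposal is correct and matches the paper's own proof essentially line for line: the paper also writes down the first-order formulas expressing the hypotheses of (a) and (b), observes that in the finite case these imply the permutation formula~\eqref{eq:permutation}, and then invokes elementary equivalence to transfer to $\nstdN$. Your version is slightly more explicit in naming the pigeonhole principle and in mentioning the inductive alternative, but the substance is identical.
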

\begin{proof}
To show (a), note that in the finite case the formula
 \[
 (\ell(\sigma)=n)\; \wedge\; \forall 1\le k\le n\ \exists 1\le i\le n\ t(\sigma,k,i)
 \]
 implies~\eqref{eq:permutation}. By elementary equivalence, the same takes place for $\nstdN$ and nonstandard tuples. The reasoning for (b) is similar, with formula
 \[
 \begin{split}
 (\ell(\sigma)=n)\; &\wedge\; \forall 1\le i\le n\ \exists 1\le k\le n\ t(\sigma,k,i)\\
 &\wedge\; \forall 1\le i,j\le n\ \forall k\ t(\sigma,k,i)\wedge t(\sigma,k,j)\to i=j. 
 \end{split}
 \]
\end{proof}

\subsubsection{Nonstandard sum, product, and exponentiation}\label{se:nonstandard_summation} Using the above reasoning, we can extend sum, product and other definable functions to nonstandard tuples.
\begin{lemma}\label{le:summation}
Let $f:\mathbb N\times\mathbb N\to \mathbb N$ be a definable in $\mathbb N$ function. Then there is a definable in $\mathbb S(\nstdM,\nstdM)$ function $\widetilde f:\mathbb S(\nstdM,\nstdM)\to \nstdM$ s.t. $\widetilde f((a_1,a_2))=f(a_1,a_2)$ and $\widetilde f(a_1,\ldots,a_k)=f(\widetilde f((a_1,\ldots,a_{k-1})),a_k)$ for each $k\in\nstdM, k> 2$.
\end{lemma}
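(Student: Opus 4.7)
The plan is to define $\widetilde f$ via an auxiliary ``running value'' tuple, whose existence is expressible as a first-order sentence in the list superstructure and thus transfers from the standard to the nonstandard setting by elementary equivalence.

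First, I would write down explicitly a defining formula. Let $\phi_f(x,y,z)$ be an $\mathcal L_r$-formula defining $f$ in $\N$, so $f(a,b)=c$ iff $\N\models\phi_f(a,b,c)$. Consider the formula $\Psi(s,y)$ in the language of $\mathbb S(\nstdN,\nstdN)$ expressing: there exists a tuple $t\in\List(\nstdN)$ with $\ell(t)=\ell(s)\geq 2$ such that $t_1=s_1$, and for every index $i$ with $2\le i\le\ell(s)$ one has $\phi_f(t_{i-1},s_i,t_i)$, and $y=t_{\ell(s)}$. Here $t_i$ abbreviates the unique element $a$ with $t(t,a,i)$. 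I would then define $\widetilde f(s)=y\iff \Psi(s,y)$.

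The verification splits into two pieces. The first is uniqueness of the witness tuple $t$ given $s$, which follows by a first-order expressible induction on the index $i\le\ell(s)$ from the functionality of $\phi_f$. The second, and the step that requires most care, is existence of such a $t$ for every nonstandard tuple $s$ of arbitrary (possibly infinite) nonstandard length. In the standard structure $\mathbb S(\N,\N)$ this is an immediate induction on $\ell(s)$: given a witness $t$ for $s|_{k-1}$ one extends it by the single value $f(t_{k-1},s_k)$. This argument is captured by the single first-order sentence
\[
\forall s\;\bigl(\ell(s)\ge 2\to\exists t\;[\ell(t)=\ell(s)\wedge t_1=s_1\wedge \forall i\,(2\le i\le \ell(s)\to \phi_f(t_{i-1},s_i,t_i))]\bigr),
\]
which holds in $\mathbb S(\N,\N)$. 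Since $\mathbb S(\nstdN,\nstdN)\equiv\mathbb S(\N,\N)$ by construction (Section~\ref{se:nonstd_list}), the same sentence holds in $\mathbb S(\nstdN,\nstdN)$, giving existence of $t$ for every nonstandard $s$.

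The main obstacle to anticipate is precisely this existence step: in the nonstandard setting a naive ``by induction on $k$'' argument from outside does not work, because one cannot perform transfinite induction externally over nonstandard lengths. The remedy is to internalize the induction as the first-order sentence above and rely on elementary equivalence, which is exactly what the list-superstructure framework is designed to accommodate. Finally, the required recursive identities $\widetilde f((a_1,a_2))=f(a_1,a_2)$ and $\widetilde f((a_1,\ldots,a_k))=f(\widetilde f((a_1,\ldots,a_{k-1})),a_k)$ for $k\in\nstdN$ with $k>2$ follow by restricting the unique witness tuple $t$ to the initial segment $s|_{k-1}$, whose corresponding witness is $t|_{k-1}$, again a consequence of the first-order expressibility of the defining relation.
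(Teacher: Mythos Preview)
Your proof is correct and follows essentially the same approach as the paper: both construct the auxiliary ``running values'' tuple $t$ with $t_1=s_1$ and $t_i=f(t_{i-1},s_i)$ and take its last entry. Your version is more explicit about why the witness tuple exists in the nonstandard setting (via elementary equivalence) and why it is unique, whereas the paper simply states the construction and asserts definability.
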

\begin{proof}
For a tuple $(a_1,\ldots,a_k)\in \List(\nstdN)$, we define a new tuple $s\in \List(\nstdN)$ of length $k$ by
\begin{enumerate}
    \item $s_1=a_1$,
    \item $s_{i+1}=f(s_i,a_{i+1})$, $1\le i\le k-1$.
\end{enumerate}
Then the function $(a_1,\ldots,a_k)\mapsto s_k$ is definable and satisfies the requirement of the lemma.
\end{proof}
We think of $\widetilde f$ as an extension of $f$ to the nonstandard case. For example, this allows summation over nonstandard tuples: for a tuple $s\in \mathbb S(\nstdN,\nstdN),$ consider the tuple $r$ of the same length $\ell(s)=k$ given by
\begin{enumerate}
    \item $r_1=s_1$,
    \item $r_{i+1}=r_i+s_{i+1}$, $1\le i\le k-1$.
\end{enumerate}
Then the component $r_{k}$ gives the sum $\sum s=\sum_{i\le n} s_i$. We can similarly define $\prod s$. If $a,k\in \nstdN$ we can define $a^k$ as $\prod s$, where $\ell(s)=k$ and $s_i=a$ for each $1\le i\le k$.
It is easy to see (for example, by induction) that sum and product inherit associativity and commutativity in the following sense. If $s,s'$ are nonstandard tuples, then $\sum s+\sum s'=\sum s\frown s'$; if additionally $s'$ is a permutation of $s$, then $\sum s=\sum s'$. The same hold for the product.

As an example of commutativity and associativity, suppose $S_i$, $1\le i\le n$, are nonstandard tuples with $\ell(S_i)=m$ for each $1\le i\le n$, where $m,n\in\nstdM$, and $(i,j)\mapsto (S_i)_j$ is a definable function. Then we can observe that $\sum_i\sum_j (S_i)_j=\sum_j\sum_i (S_i)_j$. Indeed, the former sum by associativity is equal to $\sum_{k\le mn} T_{k}$, where $T_{im-m+j}=(S_i)_j$ for $1\le i\le n$, $1\le j\le m$, and the latter sum is equal to $\sum_{k\le mn}$, where $U_{jn-n+i}=(S_i)_j$ for $1\le i\le n$, $1\le j\le m$. $T$ and $U$ clearly differ by a nonstandard permutation, so by commutativity the two sums are equal.

We can also organize counting via the same approach: if $s$ is a nonstandard tuple of length $k\in\nstdN$, and $a\in \nstdN$, define $r$ as follows:
\begin{enumerate}
    \item If $s_1=a$, then $r_1=1$, and $r_1=0$ otherwise.
    \item For $1\le i\le k-1$, if $s_{i+1}=a$, then $r_{i+1}=r_{i}+1$, and $r_{i+1}=r_{i}$ otherwise.
\end{enumerate}
Then $r_k$ is the (nonstandard) number of occurrences of $a$ in $s$.
Equivalently, if we want to refer to Lemma~\ref{le:summation}, we first map $s\mapsto s'$ where $s'_i=1$ if $s_i=a$ and $s'_i=0$ otherwise (which is a definable mapping), and then take $\sum s'$.

The above lemma is a particular case of a quite general observation, which we, in fact, already have encountered in the proof of Theorem~\ref{th:well-def}. We formulate this observation precisely in Remark~\ref{re:terms_generalization} following Lemma~\ref{le:arb_summation}.\label{page:terms_generalization}

\subsection{Nonstandard tuples over a given structure and weak second-order logic.}\label{se:summation_arb_superstructre}
Consider the list superstructure $\mathbb S(\mathbb A,\mathbb N)$ over an arbitrary structure $\mathbb A=\langle A;L\rangle$. We are particularly interested in the case when $\mathbb A$ is a field, since our present investigation is enabled by interpretation of (Laurent) polynomials over a field $F$ in the list superstructure $\mathbb S(F,\mathbb N)$ (Sections~\ref{se:polynomials} and~\ref{se:laurent}).
In light of Theorem~\ref{th:complete-scheme}, we therefore are interested in structures $\mathcal M$ elementarily equivalent to $\mathbb S(F,\mathbb N)$, or, generally, $\mathbb S(\mathbb A,\mathbb N)$.

For an arbitrary structure $\mathbb A$, consider $\mathcal M\equiv \mathbb S(\mathbb A,\mathbb N)$. Such $\mathcal M$ is three-sorted, with sorts $\widetilde{\mathbb A}\equiv \mathbb A$, $\nstdM\equiv \mathbb N$, and a sort $\widetilde S$ corresponding to $\List(\mathbb A,\mathbb N)$.
We refer to these sorts as the \emph{structure sort}, \emph{number sort}, and \emph{list sort} of $\mathcal M$, respectively.
The list sort can be treated similarly to nonstandard tuples described above. Namely, for each $a\in \widetilde S$ its length $\ell(s)$ is defined, and
\[
\mathcal M\models \forall a\in\widetilde S\ \forall 1\le i\le \ell(s)\ \exists! x\in A\ t(a,x,i),
\]
that is, each element $s$ of list sort of $\mathcal M$ has a unique $i$th component for each $1\le i\le \ell(s)$. Further, each $a$ is entirely defined by its components. Indeed, the following formula is satisfied in $\mathbb S(\mathbb A,\mathbb N)$ and therefore in $\mathcal M$:
\[
\forall a,b\ a=b \leftrightarrow [\forall x\in A\ \forall i\ (i\le \ell(a)\vee i\le \ell(b)) \to (t(a,x,i)\leftrightarrow t(b,x,i))].
\]
With that in mind, we often write $a=(a_1,\ldots,a_k)$, meaning that $\ell(a)=k$ and that predicates $t(a,a_1,1)$ and $t(a,a_k,k)$ are satisfied. We also write $a_i=x$ to denote that $t(a,x,i)$ is satisfied.

Like it is the case with standard tuples, we can define the membership predicate. Namely, we write $x\in a$ to denote the formula $\exists i\ t(a,x,i)$.

In the same vein, we can define the operation of concatenation $a^\frown b$ of $a,b\in \widetilde S$ through $t,\ell$ by taking advantage of elementary equivalence. To be specific, the following formula is satisfied in $\mathbb S(\mathbb A,\mathbb N)$ and therefore in $\mathcal M$:
\[
\begin{split}
\forall a,b\ &\exists! c\ \ell(c)=\ell(a)+\ell(b)\ \wedge\\
&\forall 1\le i\le\ell(c)\ [t(c,x,i)\leftrightarrow\\
&(i\le \ell(a)\wedge t(a,x,i))\vee (i>\ell(a) \wedge t(b,x,i-\ell(a)))].
\end{split}
\]
The unique $c$ given by the above formula is denoted $a^\frown b$, like in the case of standard tuples.

We refer to elements of the list sort of $\mathcal M$ as (nonstandard) tuples, and we say that a nonstandard tuple $a$ has length $k$ if $\ell(a)=k$, and we say that $x$ is the $i$th element of a nonstandard tuple $a$ if $\mathcal M\models t(a,x,i)$. Further, we can define the (nonstandard) number of occurrences $n_x(a)$ of $x$ in $a$, by defining first $n_x(a)$ to be the number of occurrences of $x$ in $a$ if $a$ has finite length, and then defining
\[
n_x(a_1,\ldots,a_k)=\begin{cases}
 n_x(a_1,\ldots,a_{k-1}),& x\neq a_k,\\
 n_x(a_1,\ldots,a_{k-1})+1,& x=a_k.
\end{cases}
\]
We say that $b$ is a nonstandard permutation of $a$ if $\forall x\in\widetilde{\mathbb A}\ n_x(a)=n_x(b)$. The same can be defined explicitly through nonstandard permutations introduced in~Subsection~\ref{se:nonstd_permutations}. Indeed, fix an arithmetic enumeration $\mathcal E$ (see Definition~\ref{de:arithmetic_enumeration}) and consider formula analogous to~\eqref{eq:permutation} that says that $n\in\mathbb N$ encodes a permutation under $\mathcal E$:
\[
\begin{split}
P(n)=\exists k\ L(n,k) &\wedge \forall 1\le i\le k\ \exists! 1\le j\le \ T(n,i,j)\\
&\wedge \forall 1\le j'\le k\ \exists! 1\le i'\le k\ T(n,i',j').
\end{split}
\]
Since $\mathcal M\equiv \mathbb S(\mathbb A,\mathbb N)$, it follows that $b$ is a nonstandard permutation of $a$ if and only if the following formula is satisfied in $\mathcal M$:
\[
\ell(a)=\ell(b) \wedge \exists n\ [P(n)\wedge L(n,\ell(a))\wedge \forall i,j\ T(n,i,j)\to b_j=a_i],
\]
where the latter equality $b_j=a_i$ is a shorthand for $\forall x\ t(b,x,j)\leftrightarrow t(a,x,i)$.

We can extend definable binary operations (for example, addition and multiplication in case of a field) on $\mathbb A$ to the tuples in $\mathcal M\equiv \mathbb S(\mathbb A,\mathbb N)$.

\begin{lemma}\label{le:arb_summation} Let $\mathbb A=\langle A;L\rangle$ be an arbitrary structure. Let $\mathcal M\equiv \mathbb S(\mathbb A,\mathbb N)$, with list sort $\widetilde{S}$. Let $f: A\times A\to A$ be a definable in $\mathbb A$ function. Then there is a definable in $\mathcal M$ function $\widetilde f:\widetilde{S}\to A$ s.t. $\widetilde f((a_1,a_2))=f(a_1,a_2)$ and $\widetilde f(a_1,\ldots,a_k)=f(\widetilde f((a_1,\ldots,a_{k-1})),a_k)$ for each $k\in\nstdM, k> 2$.
\end{lemma}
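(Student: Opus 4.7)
The plan is to mirror the proof of Lemma~\ref{le:summation}, replacing $\nstdN$ in the role of the structure sort by $\widetilde{\mathbb A}$, and letting elementary equivalence do the transfer from $\mathbb S(\mathbb A,\mathbb N)$ to $\mathcal M$. The key point is that the recursion defining $\widetilde f$ can be captured by a single first-order formula in the three-sorted language of $\mathbb S(\mathbb A,\mathbb N)$ once we have a formula $\phi_f(x,y,z)$ of $L(\mathbb A)$ defining the graph of $f$.

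First I would write down the recursion formula. Using the length function $\ell$, the term predicate $t$, and $\phi_f$, define
\[
\Phi_f(a,y) \;\equiv\; \exists s\;\Bigl[\ell(s)=\ell(a)\;\wedge\; t(s,a_1,1)\;\wedge\;\forall i\bigl(1\le i<\ell(a)\to \phi_f(s_i,a_{i+1},s_{i+1})\bigr)\;\wedge\; t(s,y,\ell(a))\Bigr],
\]
where $s_i=x$ and $a_i=x$ abbreviate $t(s,x,i)$ and $t(a,x,i)$ respectively. I then need to check in $\mathbb S(\mathbb A,\mathbb N)$ the sentence
\[
\forall a\in\widetilde S\;\bigl(\ell(a)\ge 1 \to \exists! y\;\Phi_f(a,y)\bigr).
\]
Existence and uniqueness of the witness $s$ follow by a routine induction on the standard length $k=\ell(a)$: $s=(a_1)$ works for $k=1$, and given the witness $s$ for $(a_1,\ldots,a_k)$ the concatenation $s{\frown}(f(s_k,a_{k+1}))$ works for $(a_1,\ldots,a_{k+1})$, using that $\List(A,\mathbb N)$ contains all finite tuples of elements of $A$ and is closed under adding one element.

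Having verified this sentence in $\mathbb S(\mathbb A,\mathbb N)$, elementary equivalence $\mathcal M\equiv \mathbb S(\mathbb A,\mathbb N)$ yields the same sentence in $\mathcal M$, so $\Phi_f$ defines a total function $\widetilde f\colon \widetilde S\to \widetilde{\mathbb A}$ in $\mathcal M$. The two required identities $\widetilde f((a_1,a_2))=f(a_1,a_2)$ and $\widetilde f(a_1,\ldots,a_k)=f(\widetilde f((a_1,\ldots,a_{k-1})),a_k)$ are themselves first-order statements about the graph of $\widetilde f$ (expressible via $\Phi_f$, $t$, $\ell$ and concatenation, all of which are $0$-definable in $\mathbb S(\mathbb A,\mathbb N)$), which hold in $\mathbb S(\mathbb A,\mathbb N)$ by direct inspection for standard tuples and hence transfer to $\mathcal M$ by elementary equivalence.

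The only real obstacle is bookkeeping in the three-sorted language: one must be careful that the quantified $s$ lives in the list sort, that $s_i,a_{i+1},s_{i+1}$ are properly introduced via the predicate $t$ rather than as first-class variables, and that $\phi_f$ is applied to elements of the structure sort $\widetilde{\mathbb A}$. None of this requires any new idea beyond the technique already used in Lemma~\ref{le:summation} and in the proof of Theorem~\ref{th:well-def}; indeed, this is exactly the general observation flagged on page~\pageref{page:terms_generalization}.
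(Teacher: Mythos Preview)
Your proposal is correct and follows essentially the same approach as the paper: define an auxiliary ``partial computation'' tuple $s$ with $s_1=a_1$ and $s_{i+1}=f(s_i,a_{i+1})$, and return $s_{\ell(a)}$. You simply make explicit what the paper leaves implicit, namely the verification of existence and uniqueness of $s$ in $\mathbb S(\mathbb A,\mathbb N)$ by standard induction and the subsequent transfer to $\mathcal M$ by elementary equivalence.
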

\begin{proof}
The proof essentially repeats that of Lemma~\ref{le:summation}.

Indeed, for a tuple $(a_1,\ldots,a_k)\in \widetilde S$, we define a new tuple $s\in \widetilde S$ of length $k$ by
\begin{enumerate}
    \item $s_1=a_1$,
    \item $s_{i+1}=f(s_i,a_{i+1})$, $1\le i\le k-1$.
\end{enumerate}
Then the function $(a_1,\ldots,a_k)\mapsto s_k$ is definable and satisfies the requirement of the lemma.
\end{proof}

Similarly to and in the same sense as in Subsection~\ref{se:nonstandard_summation}, properties like associativity and commutativity of $f$ carry to $\widetilde{f}$.
Namely, let $(b_1,\ldots,b_k)\in \widetilde{S}$ be a nonstandard permutation of $(a_1,\ldots,a_k)\in \widetilde{S}$. Then, if $f$ is associative and commutative, then $\widetilde f((a_{1},\ldots,a_{k}))=\widetilde f((b_1,\ldots,b_k))$.

\begin{remark}\label{re:terms_generalization}
Lemma~\ref{le:arb_summation} is a particular application of the following general observation. Suppose we have an indexed by $i\in\mathbb N$ family of functions $f_i$ in variables $x_1,\ldots,x_{k_i}$ defined by a formula of $\mathbb S(\mathbb A,\mathbb N)$ in the following sense: there is a formula $\sigma(z,\bar x,i)$ s.t. $\mathbb S(\mathbb A,\mathbb N)\models \sigma(b,\bar a,i)$ if and only if $\ell(\bar a)=k_i$ and $b=f_i(a_1,\ldots,a_{k_i})$.
Then over any $\mathcal M\equiv \mathbb S(\mathbb A,\mathbb N)$ with numbers sort~$\nstdN$, the same formula defines generalization of $f_i$ to $i\in\nstdN$, in particular, to $i>\mathbb N$.
\end{remark}

As we mention above, we are particularly interested in $\mathcal M\equiv \mathbb S(F,\mathbb N)$ for a field~$F$. However, a specific description of $\mathcal M$ is an open question. Note that the choices of $\widetilde F$ and $\nstdM$ are not independent of each other, as the following two observations show.

\begin{remark}\label{re:embed} Note that $\widetilde S$ must contain tuples $(1,\ldots,1)$ (see Section~\ref{se:nonstd_tuples}). Taking summation over such tuple by Lemma~\ref{le:arb_summation}, we see that there must be a definable homomorphism of additive semigroups $\nstdM\to \widetilde{F}$, which is injective whenever the respective homomorphism $\mathbb N\to F$ is. For example, there is no structure $\mathcal M\equiv \mathbb S(\mathbb R,\mathbb N)$ with field sort $\widetilde F\cong \mathbb R$ and number sort $\nstdM\not\cong\mathbb N$ since these $\widetilde F$ and $\nstdN$ would not allow such injective homomorphism.
\end{remark}

\begin{remark} Conversely, if $\widetilde{F}\equiv \mathbb R$, then the image of the above homomorphism is ``archimedean'' in $\widetilde{F}$. Indeed, recall that the usual order relation is definable in $\langle \mathbb R;+,\cdot,0,1\rangle$ and note the following first-order sentence in $\mathbb S(\mathbb R,\mathbb N)$:
\[
\forall x\in\mathbb R\,\exists (1,\ldots,1)\ \sum(1,\ldots,1)>x.
\]
Here, $\exists (1,\ldots,1)$ is a shorthand for $\exists n\in\mathbb N\, \exists s\, [\ell(s)=n\, \wedge \forall i\le n\ t(s,1,i)]$. The above statement is false in the case when,
say, a field of hyperreal numbers $\widetilde{\mathbb R}$ is taken instead of $\mathbb R$. Therefore, there is no $\mathcal M\equiv \mathbb S(\mathbb R,\mathbb N)$ with field sort $\widetilde{\mathbb R}$ and number sort $\mathbb N$.
\end{remark}

\begin{remark} \label{re:any_field} If $\mathcal M\equiv \mathbb S(F,\mathbb N)$, then the field sort $\widetilde{F}$ of $\mathcal M$ is elementarily equivalent to~$F$.
However, not every field $F'\equiv F$ may appear as a field sort of some $\mathcal M\equiv \mathbb S(F,\mathbb N)$.
Indeed, consider the real algebraic closure $\mathbb R_\pi=\overline{\mathbb Q(\pi)}$ of the field $\mathbb Q(\pi)$, where $\pi$ is any transcendental number.
We have $\mathbb R_\pi\equiv \mathbb R$, since both are real closed fields.
We claim that $\mathbb R_\pi$ cannot be a field sort of such $\mathcal M$.
Indeed, consider $\mathcal M\equiv \mathbb S(F,\mathbb N)$ with field sort $\mathbb R_\pi$.
Then the numbers sort $\mathbb N'$ of $\mathcal M$ cannot be a nonstandard model of $\mathbb N$, for the same reasons as in Remark~\ref{re:embed}; nor can it be the standard natural numbers $\mathbb N$, since then existence of a transcendental element in the field sort would be possible to express by a first-order formula $\phi$ of $\mathcal M$, and then we would have $\mathcal M\models \phi$ but $\mathbb S(F,\mathbb N)\not\models \phi$.
\end{remark}

This leads to the following question.

\begin{question}\begin{enumerate}[(a)]
    \item For an infinite field $F$, describe all pairs $\widetilde F\equiv F$ and $\nstdM\equiv \mathbb N$ such that there is a subset $\widetilde S$ of functions $\nstdM\to \widetilde F$ such that the triple $\widetilde F, \widetilde S,\nstdM$ underlies a three-sorted structure $\mathcal M$ in the signature of a list superstructure of $F$ elementary equivalent to $\mathbb S(F,\mathbb N)$.
    \item Given such pair $\widetilde F,\nstdM$, describe the involved subset $\widetilde S$ in specific terms.
\end{enumerate} 
\end{question}

In the general case, even though we do not offer a specific description of $\mathcal M\equiv \mathbb S(\mathbb A,\mathbb N)$, the properties explicit in its first-order theory are enough to allow a description of elements of rings elementarily equivalent to the ring of (Laurent) polynomials. We give such descriptions in Sections~\ref{se:single_var}, \ref{se:multi_var}, \ref{se:laurent_single}, \ref{se:laurent_multi}.

\begin{remark}\label{re:F_interpretable_in_N}
One important case when the organization of such $\mathcal M$ is clear is when $\mathbb A$ is regularly invertibly interpretable in $\mathbb N$ (for example, like the field of rationals $\mathbb Q$), and therefore, so is $\mathbb S(\mathbb A,\mathbb N)$, say, as $\mathbb S(\mathbb A,\mathbb N)\cong \Gamma(\mathbb N)$.
Then by Theorem~\ref{th:complete-scheme}, every $\mathcal M\equiv \mathbb S(\mathbb A,\mathbb N)$ has the form $\mathcal M\cong \Gamma(\nstdN)$ for some $\nstdN\equiv \mathbb N$.
In this case we denote $\mathcal M=\mathbb S(\widetilde{\mathbb A},\widetilde{\mathbb N})$.
Note that the sort corresponding to $\mathbb A$ is indeed uniquely defined by $\nstdN$ by Theorem~\ref{th:well-def-inN}.
\end{remark}

\subsubsection{Weak second-order logic and nonstandard models}
By a weak second-order logic formula over a structure $\mathbb A=\langle A; L\rangle$ we mean a first-order formula over the list superstructure $\mathbb S(\mathbb A,\mathbb N)$. There are other, equivalent, descriptions, for details of which we refer the reader to~\cite{KharlampovichMyasnikovSohrabi:2021}.

For our investigation, it is a crucial observation that quantification over tuples becomes quantification over nonstandard tuples when a nonstandard model is considered. We formulate this as the following remark.

\begin{remark}\label{re:nonstd_tuples}
Suppose $\mathbb A$ is regularly bi-interpreted in a list superstructure $\mathbb S(\mathbb B,\mathbb N)$ as $\mathbb A\cong \Gamma(\mathbb S(\mathbb B,\mathbb N))$. It is then straightforward to see that $\mathbb S(\mathbb A,\mathbb N)$ is bi-interpreted in $\mathbb S(\mathbb B,\mathbb N)$ and therefore, by transitivity, in $\mathbb A$, as $\mathbb S(\mathbb A,\mathbb N)\cong \Delta(\mathbb A)$.

Let $\mathcal M\equiv \mathbb S(\mathbb B,\mathbb N)$. Consider the corresponding nonstandard models $\widetilde{\mathbb A}\cong \Gamma(\mathcal M)$ of $\mathbb A$, and $\widetilde{\mathbb S}\cong \Delta(\widetilde{\mathbb A})$ of $\mathbb S(\mathbb A,\mathbb N)$. We have $\widetilde{\mathbb A}\equiv\mathbb A$ and $\widetilde{\mathbb S}\equiv \mathbb S(\mathbb A,\mathbb N)$. Denote the list sort of $\widetilde{\mathbb S}$ by $\List_{\widetilde{\mathbb A}}$.

Suppose $\varphi$ is a second-order formula over $\mathbb A$, that is, a first-order formula over $\mathbb S(\mathbb A,\mathbb N)$. Suppose further $\varphi$ is of the form
\[
\forall P\in \List(A,\mathbb N)\ \psi(P)\mbox{\quad or\quad}\exists P\in \List(A,\mathbb N)\ \psi(P).
\]
Let $\varphi^\diamond$ be the respectitve first-order formula of $\mathbb A$ produced via the interpretation:
\[
\mathbb S(\mathbb A,\mathbb N)\models \varphi \iff \mathbb A\models \varphi^\diamond.
\]
By the elementary equivalence $\mathbb A\equiv \widetilde{\mathbb A}$, we have $\mathbb A\models \varphi^\diamond\iff \widetilde{\mathbb A}\models \varphi^\diamond$. By the elementary equivalence $\widetilde{\mathbb S}\equiv \mathbb S(\mathbb A,\mathbb N)$, we have $\mathbb S(\mathbb A,\mathbb N)\models \varphi\iff \widetilde{\mathbb S}\models \varphi$. It follows that $\widetilde{\mathbb A}\models\varphi^\diamond\iff \widetilde{\mathbb S}\models\varphi$. Note that over $\widetilde{\mathbb S}$, the formula $\varphi$ takes form
\[
\forall P\in \List_{\widetilde{\mathbb A}}\ \psi(P)\mbox{\quad or\quad}\exists P\in \List_{\widetilde{\mathbb A}}\ \psi(P).
\]
In other words, under these assumptions, a statement of the weak second-order theory of $\mathbb A$ translates to a statement of the second-order theory of $\mathbb A$ by replacing quantification over tuples with quantification over nonstandard tuples.
\end{remark}

\section{Polynomials over a field}\label{se:polynomials}
\subsection{Definition of the ring of nonstandard polynomials} Let $F$ be an infinite field. Then the polynomial ring $F[x]$ is regularly invertibly interpretable in the list superstructure $\mathbb S(F,\mathbb N)$. We refer to~\cite{KharlampovichMyasnikov:2018a} for details and overview the interpretation here, with slight adjustments to fit our purposes.

To interpret $F[x]$ in $\mathbb S(F,\mathbb N)$, a polynomial $p(x)=a_0+a_1x+\ldots+a_nx^n$ is associated to a tuple $\bar a=(a_0,\ldots,a_n)\in \List(F,\mathbb N)$. The respective ``tail of zeroes'' equivalence relation, addition, multiplication, and constants $0,1$ are then defined in the straightforward way. Note that this interpretation is absolute.

For completeness, we explicate the details. In the context of this interpretation, and indeed throughout the paper whenever tuples of coefficients are concerned, as convention, we index the entries from $0$ rather than from $1$. So, for example, the predicate $t(\bar a,a_0,0)$ holds for $\bar a=(a_0,\ldots,a_n)$ rather than the predicate $t(\bar a,a_0,1)$.

We define the ``tail of zeros'' equivalence relation $\sim$ on $\List(F,\mathbb N)$ by the formula $\varphi_\sim((a_0,\ldots,a_n),(b_0,\ldots,b_m))$:
\[
(n\le m \wedge \forall n<i\le m\ b_i=0) \vee (m\le n \wedge \forall m< i\le n\ a_i=0).
\]
This formula delivers the definability of the relation $\sim$ in Definition~\ref{de:interpretable}. We also note that each equivalence class has easily definable representative with $a_n\neq 0$ (or empty list in case of $p(x)=0$). In the subsequent formulas, we use the relation $\sim$ in formulas under the convention that a formula incorporates $\varphi_\sim(\bar a,\bar b)$ in place of $\bar a\sim \bar b$.

The constants $1$ and $0$ are interpreted by the tuple $(1)$ and the empty tuple, respectively. Addition $a(x)+b(x)=c(x)$ is then defined by the formula
\[
\begin{split}
&\varphi_+((a_0,\ldots,a_n),(b_0,\ldots,b_m),(c_0,\ldots,c_r))=\\
&\exists a'\sim a\ \exists b'\sim b\ \exists c'\sim c\ \ell(a')=\ell(b')=\ell(c') \wedge \forall 0\le i\le \ell(a')\ a'_i+b'_i=c'_i
\end{split}
\]
and multiplication by
\[
\begin{split}
&\varphi_\times((a_0,\ldots,a_n),(b_0,\ldots,b_m),(c_0,\ldots,c_r))=\\
&\exists a'\sim a\ \exists b'\sim b\ \exists c'\sim c\ 
\ell(a)+\ell(b)=\ell(c')=\ell(a')=\ell(b')\ \wedge\\
&\forall 0\le k\le \ell(c')\ \exists d\ (\ell(d)=k+1 \wedge \forall 0\le i\le k\ d_i=a_ib_{k-i} \wedge c_k=\sum d).
\end{split}
\]
The absolute interpretation $\Gamma$ of $F[x]$ in $\mathbb S(F,\mathbb N)$ is given by formulas $\varphi_\sim, \varphi_+,\varphi_\times$.

Conversely, to interpret $\mathbb S(F,\mathbb N)$ in $F[x]$, a tuple $(a_0,\ldots,a_n)$ is associated with the pair $(\sum_{i=0}^n a_it^i, t^n)\in F[x]^2$, where $t$ is any linear polynomial in $F[x]$. Observe that the set of linear polynomials in $F[x]$ is definable. Indeed, $s$ is linear if and only if $F[x]\models \varphi_{\mathrm{lin}}(s)$, where
\[
\varphi_{\mathrm{lin}}(s)=[\forall \alpha\ (\alpha=0\vee \exists\beta\ \alpha\beta=1)\to s+\alpha\ \mathrm{irreducible}].
\]
In~\cite{KharlampovichMyasnikov:2018a} it is checked that, if $F$ is infinite, this leads to a regular interpretation of $\mathbb S(F,\mathbb N)$ in $F[x]$. To keep exposition self-contained, we repeat the reasoning in Lemma~\ref{le:Fx_mutual} below.
(Note that the cited paper uses the term $0$-interpretation rather than regular interpretation in several statements, but in the terminology of the present paper, it is indeed a regular interpretation.)
Although not stated explicitly in~\cite{KharlampovichMyasnikov:2018a}, the two described interpretations give rise to a bi-interpretation between $F[x]$ and $\mathbb S(F,\mathbb N)$.
Largely repeating the exposition in~\cite{KharlampovichMyasnikov:2018a}, we verify both regular interpretation and bi-interpretation here.

We start by observing the set of pairs $(s,s^n)$ for linear $s$ is definable in $F[x]$. Recall that $\Irr$ denotes the definable set of irreducible polynomials.

\begin{lemma}[\cite{KharlampovichMyasnikov:2018a}]\label{le:pair_t_tn} Let $F$ be a field. There is a formula $\Phi_{\mathrm{LP}}(s,u)$ of $F[x]$ that defines the set $\{(s,s^n): \deg s=1, n\in\mathbb N\}$ in $F[x]$.
\end{lemma}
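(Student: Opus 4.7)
The plan is to give $\Phi_{\mathrm{LP}}(s,u)$ explicitly, as the conjunction of four first-order conditions in the ring language: (i) $\varphi_{\mathrm{lin}}(s)$, already at our disposal from the excerpt; (ii) $u\neq 0$; (iii) every irreducible divisor of $u$ is an associate of $s$, i.e.,
\[
\forall p\,\bigl[(\Irr(p)\wedge \exists q\,(u=pq))\to\exists c\,(\exists d\,(cd=1)\wedge p=cs)\bigr];
\]
and (iv) $(s-1)\mid(u-1)$, i.e., $\exists v\,(u-1=(s-1)v)$. Each piece uses only constructs already known to be first-order, in particular the definable set $\Irr$ of irreducibles, the divisibility predicate, and the definable set of units in $F[x]$.

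The forward implication is immediate: if $s$ is linear and $u=s^n$, then (i)--(ii) are trivial; the only irreducible factor of $s^n$ up to associates is $s$, so (iii) holds (and is vacuous for $n=0$); and (iv) follows from the identity $s^n-1=(s-1)(s^{n-1}+\cdots+s+1)$ for $n\ge 1$, while for $n=0$ we have $u-1=0$. For the converse, unique factorization in $F[x]$ combined with (ii)--(iii) and linearity of $s$ forces $u$ to belong to the multiplicative set $\{c\,s^n:c\in F^{\times},\,n\ge 0\}$, that is, $u=cs^n$ for a unique pair $(c,n)$; it then remains only to check that (iv) pins down $c=1$.

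This extraction of $c=1$ is the one step requiring a clever idea. Since $s$ is linear and not a constant, $s-1$ is itself a linear polynomial, hence has a unique root $\alpha\in F$, characterized by $s(\alpha)=1$. Evaluating $u-1=cs^n-1$ at $\alpha$ gives $c\cdot 1^n-1=c-1$, and divisibility by a linear polynomial in $F[x]$ is equivalent to vanishing at its root; thus (iv) forces $c-1=0$, and so $u=s^n$.

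I expect the main conceptual obstacle to be precisely this issue: the natural number $n$ is not directly quantifiable inside $F[x]$, so there is no obvious first-order handle on the scalar $c$ in the factorization $u=cs^n$. The trick that dissolves the obstacle is the substitution "$s\mapsto 1$" afforded by passing modulo $s-1\in F[x]$: regardless of $n$, the image of $s^n$ in $F[x]/(s-1)\cong F$ is $1^n=1$, so the image of $cs^n$ is simply $c$, laying the scalar bare. Once this trick is in place, the remaining verifications are routine UFD bookkeeping.
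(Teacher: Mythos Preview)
Your proof is correct and follows essentially the same route as the paper: both use $\varphi_{\mathrm{lin}}(s)$, a condition on irreducible divisors to force $u=cs^n$, and the divisibility $(s-1)\mid(u-1)$ to pin down $c=1$ via reduction modulo $s-1$. The only cosmetic difference is that the paper uses the biconditional ``$p$ and $s$ have the same irreducible divisors'' (forcing $n\ge 1$) and then adds an explicit disjunct $u=1$ for the case $n=0$, whereas your one-directional condition (iii) absorbs the unit case automatically.
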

\begin{proof}
Observe that the formula $\varphi_1(p,q)=\forall r\in \Irr\ r\mid p \leftrightarrow r\mid q$
defines the set of pairs of polynomials $(p,q)$ with the same irreducible divisors. Therefore, $F[x]\models \varphi_1(s,p)\wedge \varphi_{\mathrm{lin}}(s)$ if and only if $p=\alpha s^n$, $\alpha \in F, n\in\mathbb N$, $n\ge 1$. Finally, note that $s-1\mid \alpha s^n-1$ if and only if $\alpha = 1$. Indeed, $\alpha s^n-1=\alpha (s^n-1)+\alpha-1=(s-1)p' + \alpha-1$ for some polynomial $p'$, from which the claim follows. Now we see that the set in question is the truth set of the formula $\varphi_{\mathrm{lin}}(s)\wedge [p=1\vee (\varphi_1(s,p)\wedge s-1\mid p-1)]$.
\end{proof}

\begin{lemma}[\cite{KharlampovichMyasnikov:2018a}]\label{le:deg_and_value} Let $F$ be an infinite field.
\begin{enumerate}[(a)]
    \item There is a formula $\varphi_{\mathrm{ev}}(s,\alpha,p,\beta)$ s.t. $F[x]\models \varphi_{\mathrm{ev}}(s,\alpha,p,\beta)$ if and only if $s$ is linear and $s=\alpha$ implies $p=\beta$.
    \item \label{le:deg_and_value_b}There is a formula $\varphi_{\deg}(p,u)$ s.t. $F[x]\models \varphi_{\deg}(p,u)$ if and only if $\deg p\le \deg u$ and $u$ is a power of a linear polynomial.
    \item There is a formula $\varphi_{\mathrm{lower}}(p,p')$ s.t. $F[x]\models \varphi_{\mathrm{lower}}(p,p')$ if and only if $\deg p<\deg p'$.
\end{enumerate}
\end{lemma}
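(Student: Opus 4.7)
For part (a), my plan is to define
\[
\varphi_{\mathrm{ev}}(s,\alpha,p,\beta) := \varphi_{\mathrm{lin}}(s) \wedge \mathrm{const}(\alpha) \wedge \mathrm{const}(\beta) \wedge \exists q\,(p - \beta = q(s - \alpha)),
\]
where $\mathrm{const}(c)$ abbreviates $c = 0 \vee \exists c'\, cc' = 1$ and so picks out $F$ inside $F[x]$. Since $s$ is linear and $\alpha$ constant, $s - \alpha$ is itself linear with a unique root $\xi \in F$ satisfying $s(\xi) = \alpha$, and the divisibility clause then says precisely $p(\xi) = \beta$, which is the intended reading of ``$s = \alpha \Rightarrow p = \beta$.''

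The hard part is (b). My plan is a \emph{reverse polynomial} trick that compresses the recursive ``peel off powers of $s$'' description of $\deg p \leq n$ into a single existential witness. Setting $\tilde p(\sigma) := p(s^{-1}(\sigma))$, which has the same degree as $p$ since $s^{-1}$ is affine, the classical observation that $\sigma^n \tilde p(1/\sigma) \in F[\sigma]$ iff $\deg \tilde p \leq n$ furnishes the characterization. Concretely, I will show that $\deg p \leq n$ holds iff there exists $r \in F[x]$ such that, for every $\sigma \in F^{\times}$ and $\tau := 1/\sigma$,
\[
r(s^{-1}(\sigma)) \;=\; u(s^{-1}(\sigma)) \cdot p(s^{-1}(\tau)),
\]
noting that $u(s^{-1}(\sigma)) = \sigma^n$ automatically because $u = s^n$. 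The nontrivial direction uses that $F$ is infinite: a polynomial agreeing on the infinite set $F^{\times}$ with the Laurent expression $\sigma^n \tilde p(1/\sigma)$ must equal it in $F(\sigma)$, forcing the latter to have no true pole at $0$. Every evaluation is then unpacked through $\varphi_{\mathrm{ev}}$ from part (a) using $s$ itself as the reference linear polynomial, and the hypothesis that $u$ is a power of a linear polynomial is built in via $\Phi_{\mathrm{LP}}$ from Lemma~\ref{le:pair_t_tn}. Assembling these pieces produces
\[
\varphi_{\deg}(p,u) := \exists s,r\,\bigl[\Phi_{\mathrm{LP}}(s,u) \wedge \forall \sigma,\tau,\rho,\mu,\pi \in F\,(\Psi \to \rho = \mu\pi)\bigr],
\]
where $\Psi$ abbreviates $\sigma \neq 0 \wedge \sigma\tau = 1 \wedge \varphi_{\mathrm{ev}}(s,\sigma,r,\rho) \wedge \varphi_{\mathrm{ev}}(s,\sigma,u,\mu) \wedge \varphi_{\mathrm{ev}}(s,\tau,p,\pi)$.

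For part (c), I plan a short parameter argument using (a) and (b). The strict inequality $\deg p < \deg p'$ holds iff there exists a power of a linear polynomial $u$ with $\deg p \leq \deg u < \deg p'$; for $p \neq 0$ one takes $u = s^{\deg p}$ for any linear $s$, and the case $p = 0,\ p' \neq 0$ is handled separately. Hence
\[
\varphi_{\mathrm{lower}}(p,p') := p' \neq 0 \wedge \bigl(p = 0 \vee \exists u\,(\varphi_{\deg}(p,u) \wedge \neg\varphi_{\deg}(p',u))\bigr).
\]
The real content of the lemma is part (b); the reverse-polynomial trick is the obstacle to anticipate, since without it one seems forced into a variable-depth induction that first-order $F[x]$ has no direct means of unrolling. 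Once that witness is available, parts (a) and (c) are routine bookkeeping.
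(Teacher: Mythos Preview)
Your proof is correct and follows essentially the same approach as the paper: part~(a) via the divisibility $s-\alpha\mid p-\beta$, part~(b) via the reverse-polynomial witness $r$ (the paper calls it $q$) whose values at $s=\sigma$ agree with $u\cdot p$ evaluated at the reciprocal point, together with the infinite-field argument that pointwise agreement on $F^\times$ forces equality of rational functions, and part~(c) via an intermediate power $u$. Your version is in fact slightly more careful about the edge case $p=0$, $p'$ a nonzero constant in part~(c), which the paper's bare formula $\exists u\,(\varphi_{\deg}(p,u)\wedge\neg\varphi_{\deg}(p',u))$ does not cover.
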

\begin{proof}
\begin{enumerate}[(a)]
    \item The required relation is defined by the formula $\varphi_{\mathrm{lin}}(s)\wedge s-\alpha\mid p-\beta$.
    \item Define $\deg p\le \deg u$, where $u=s^n$ is a power of a linear polynomial $s$. Observe that $p$ has degree at most $n$ when and only when $s^n p(\tfrac{1}{s})$ is a polynomial and that $p(\tfrac1s)$ takes value $\beta$ at $s=\alpha$ if and only if $p(s)$ takes value $\beta$ at $s=\tfrac1\alpha$. Lastly, note that over an infinite field, two rational functions coincide when and only when all their values at equal points are equal. Therefore, the required relation is defined by the formula
    \[
    \begin{split}
    \exists s, q\ \Phi_{\mathrm{LP}}(s,u) \wedge \forall \alpha, \beta,\gamma\in F\ (s-\tfrac1\alpha\mid p-\beta \wedge s-\alpha\mid u-\gamma)\to s-\alpha\mid q-\beta\gamma.
    \end{split}
    \]
    The above is the required formula $\varphi_{\deg}(p,u)$.
    \item Now, $\deg p<\deg p'$ if and only if $F[x]\models \exists u\ \varphi_{\deg}(p,u)\wedge \neg \varphi_{\deg}(p',u)$.
\end{enumerate}
\end{proof}

\begin{lemma}[\cite{KharlampovichMyasnikov:2018a}]\label{le:Fx_mutual} Let $F$ be an infinite field. Then $F[x]$ and $\mathbb S(F,\mathbb N)$ are mutually regularly interpretable in each other.
\end{lemma}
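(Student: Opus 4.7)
The plan is to establish the two regular interpretations separately and glue them with the formulas already developed in the excerpt.

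The first direction, $F[x]\rightsquigarrow\mathbb S(F,\mathbb N)$, is delivered by the absolute interpretation $\Gamma$ spelled out right before the lemma, with formulas $\varphi_\sim$, $\varphi_+$, $\varphi_\times$. A polynomial is encoded by its coefficient list modulo tail-of-zeros, addition is componentwise after padding to equal length, and multiplication is the convolution $c_k=\sum_{i+j=k}a_ib_j$, which is first-order expressible in $\mathbb S(F,\mathbb N)$ thanks to the definable summation over tuples from Subsection~\ref{se:nonstandard_summation}. Since no parameters are required, this interpretation is absolute, hence regular; what remains is the routine verification that the induced operations on $\List(F,\mathbb N)/{\sim}$ coincide with the ring operations of $F[x]$.

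For the second direction, $\mathbb S(F,\mathbb N)\rightsquigarrow F[x]$, I would take as parameter a linear polynomial $t$, with the regularity formula $\varphi_{\mathrm{lin}}$; the infiniteness of $F$ guarantees its truth set is nonempty. The field sort is interpreted absolutely by $\{a:a=0\vee\exists b\,(ab=1)\}$. The number sort is interpreted by $\{t^n:n\ge 0\}$, i.e., the truth set of $\Phi_{\mathrm{LP}}(t,\cdot)$ from Lemma~\ref{le:pair_t_tn}, with the constants $0,1$ assigned to $1,t$ and with addition realized by polynomial multiplication. The list sort is interpreted by pairs $(p,t^n)\in F[x]^2$ satisfying $\varphi_{\deg}(p,t^n)$; the length function returns $t^n$, and the component-access predicate ``the $i$-th entry of the tuple encoded by $(p,t^n)$ equals $a$'' is expressible by existence of $q,r\in F[x]$ with $p-at^i=qt^{i+1}+r$ and $\varphi_{\mathrm{lower}}(r,t^i)$, so only the data already granted by Lemma~\ref{le:deg_and_value} is used.

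The hard part will be defining multiplication on the number sort, that is, the function $(t^m,t^n)\mapsto t^{mn}$, since polynomial multiplication delivers only addition of exponents. I would resolve this by a G\"odel-$\beta$-style encoding: $k=mn$ holds iff there exist a sufficiently large $d$ and a polynomial $R\in F[x]$ packaging a sequence $P_0=1,\,P_1=t^m,\ldots,P_n=t^k$ as $R=\sum_{i=0}^n P_it^{id}$, with each $P_i$ recoverable from $R$ by polynomial division modulo $t^{(i+1)d}$ and the recurrence $P_{i+1}=P_i\cdot t^m$ enforced for each index $i$. Quantification over $0\le i\le n$ takes place inside the already-interpreted number sort $\{t^n\}$ and is expressible via divisibility of powers of $t$, while the individual arithmetic steps rely on the degree comparison of Lemma~\ref{le:deg_and_value}. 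Once multiplication is in place, the isomorphism of the interpreted three-sorted structure with $\mathbb S(F,\mathbb N)$ follows from a routine check; the infiniteness of $F$ enters only through Lemma~\ref{le:deg_and_value}.
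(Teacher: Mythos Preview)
Your first direction and most of your second direction match the paper's argument: the absolute interpretation $\Gamma$ of $F[x]$ in $\mathbb S(F,\mathbb N)$, the use of a linear parameter $s$ with regularity formula $\varphi_{\mathrm{lin}}$, the identification of the number sort with $\{s^n\}$ with addition given by polynomial multiplication, the field sort as the definable constants, and the extraction of the $i$-th coefficient for the list predicate via $p=qt^{i+1}+at^i+r$ with $\deg r<i$ are all essentially what the paper does.

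The gap is in your definition of multiplication on the number sort. Your G\"odel-$\beta$ scheme packages the sequence $P_0,\dots,P_n$ inside $R=\sum_i P_i t^{id}$ and proposes to recover $P_i$ by reduction modulo $t^{(i+1)d}$. But forming $t^{(i+1)d}$ from $t^{i+1}$ and $t^d$ is precisely exponent multiplication, which is the operation you are trying to define; the same circularity appears if you instead try to quantify over block positions and need to recognise ``$v$ is a power of $t^d$''. Quantifying over indices via $t^i\mid t^n$ only gives you the order $\le$ on exponents, and $\langle\mathbb N;+,\le\rangle$ is Presburger arithmetic, which does not define multiplication, so degree comparison alone cannot close this loop.

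The paper sidesteps the issue entirely: instead of multiplication it interprets number-theoretic divisibility on the number sort, using the classical fact that $s^{m}-1\mid s^{n}-1$ in $F[x]$ iff $m\mid n$ in $\mathbb N$. It then invokes Robinson's result that $\langle\mathbb N;0,1,+,\mid\rangle$ defines $\langle\mathbb N;0,1,+,\cdot\rangle$, so no $\beta$-encoding is needed. If you wanted to rescue your approach you would need exactly this divisibility trick to identify the block positions, at which point Robinson already hands you multiplication and the $\beta$-machinery becomes redundant.
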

\begin{proof}
We skip the straightforward verification that $\Gamma$ given by formulas $\varphi_\sim, \varphi_+,\varphi_\times$ introduced above is indeed an absolute interpretation of $F[x]$ in $\mathbb S(F,\mathbb N)$.

We can check that $\mathbb S(F,\mathbb N)$ is regularly interpreted in $F[x]$ whenever $F$ is an infinite field. As we mention above, the interpretation uses a parameter $s\in F[x]$ s.t. $F[x]\models \varphi_{\mathrm{lin}}(s)$. To interpret the sort $\mathbb N$, it suffices to interpret $\mathbb N$ with language $\langle 0,1,+,\mathrel{|}\rangle$~\cite[Section 4b]{Robinson:1951}. Each $n\in\mathbb N$ is interpreted as $s^n$, $0$ and $1$ are interpreted as $1$ and $t$, respectively. Addition is interpreted via $n_1+n_2=n_3 \iff s^{n_1}s^{n_2}=s^{n_3}$. Divisibility is interpreted via $n_1\mid n_2\iff s^{n_1}-1\mid s^{n_2}-1$.

The field sort $F$ is interpreted straightforwardly as a definable subset of $F[x]$. For the list sort and the list predicates, we interpret $\bar p=(p_0,\ldots,p_n)\in \List(F,\mathbb N)$ as $(\sum_{i=0}^n p_i s^i, s^n)$. The predicate $t(\bar p,a,k)$ is interpreted by the following formula $\varphi_t(p,u,\beta,v)$ with parameter $s$, which essentially extracts the coefficient $\beta=p_k$ in front of $s^k=v$ in $p=\sum_{i=1}^n p_is^i$ by saying that $p=p'+\beta s^k +s^{k+1}p''$, where $\deg p'<k$:
\[
\begin{split}
&\Phi_{\mathrm{LP}}(s,u)\wedge \Phi_{\mathrm{LP}}(s,v)\wedge \varphi_{\mathrm{deg}}(p,u)\wedge \varphi_{\mathrm{deg}}(v,u)\\
&\wedge \exists p',p''\ [\varphi_{\mathrm{lower}}(p',v) \wedge p=p'+\beta v+svp''],
\end{split}
\]
where the participating formula are as in Lemmas~\ref{le:pair_t_tn} and ~\ref{le:deg_and_value}.
Interpretation of length predicate $\ell$ is similarly straightforward using the same lemmas.
\end{proof}

\begin{theorem}[\cite{KharlampovichMyasnikov:2018a}]\label{th:Fx_SFN_biint}
Let $F$ be an infinite field.
Then $F[x]$ is regularly bi-interpretable with $\mathbb S(F,\mathbb N)$.
\end{theorem}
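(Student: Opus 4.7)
The plan is to build on Lemma~\ref{le:Fx_mutual} by supplying the two coordinate-map formulas required by Definition~\ref{def:reg}. Let $\Gamma$ denote the absolute interpretation of $F[x]$ in $\mathbb{S}(F,\mathbb{N})$ via coefficient tuples modulo tail-of-zeros, and $\Delta$ the regular interpretation of $\mathbb{S}(F,\mathbb{N})$ in $F[x]$ with a linear polynomial $s$ as parameter. Mutual regular interpretability is already given; what remains is to exhibit $\theta_{F[x]}$ and $\theta_{\mathbb{S}(F,\mathbb{N})}$ defining the coordinate maps of $\Gamma\circ\Delta$ and $\Delta\circ\Gamma$ respectively.

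For the $F[x]$-side, I would trace $\Gamma\circ\Delta$: a polynomial $q=\sum_{i=0}^n a_i x^i$ is sent by $\Gamma$ to the coefficient tuple $(a_0,\ldots,a_n)$, and then by $\Delta$ to the pair $(q(s),s^n)\in F[x]^2$. The coordinate map is $(q(s),s^n)\mapsto q$. To express the condition "$u=q(s)$" in $F[x]$ with parameter $s$, I would use that $F$ is infinite: polynomials agree iff they take equal values at every $\alpha\in F$, and evaluation is captured by divisibility ($x-\alpha\mid u-\beta$ iff $u(\alpha)=\beta$). The value $q(s(\alpha))$ is reached through an intermediate $\eta$ with $x-\alpha\mid s-\eta$ and $x-\eta\mid q-\beta$. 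Combining with the degree control of Lemma~\ref{le:deg_and_value}(\ref{le:deg_and_value_b}) to pin down $n=\deg q$ from $v=s^n$, this yields a first-order formula $\theta_{F[x]}(u,v,q,s)$ defining the coordinate map.

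For the $\mathbb{S}(F,\mathbb{N})$-side, I would trace $\Delta\circ\Gamma$ sort by sort. On the $F$-sort, $a$ goes to $a\in F[x]$ and then to the length-one list $(a)$; on the $\mathbb{N}$-sort, $n$ goes to $s^n\in F[x]$ and then to the coefficient list of $s^n$, from which $n$ is recovered as the length minus one. The interesting case is the list sort: $(b_0,\ldots,b_m)$ is sent by $\Delta$ to $(\sum_i b_i s^i, s^m)\in F[x]^2$, and by $\Gamma$ to the pair of their $x$-coefficient lists. The inverse map must recover $(b_0,\ldots,b_m)$ from this pair together with the parameter $s$. Since $\Delta$ is given by explicit first-order formulas of $F[x]$—in particular the $s$-coefficient extraction formula $\varphi_t$ from the proof of Lemma~\ref{le:Fx_mutual}—one pulls these formulas back through the absolute interpretation $\Gamma$ to obtain a formula in $\mathbb{S}(F,\mathbb{N})$ that executes the extraction on list representations; equivalently, one reads off the $b_i$ as the solution of the invertible triangular linear system relating them to the $x$-coefficients of $\sum_i b_i s^i$, expressed via the definable summation on lists available in $\mathbb{S}(F,\mathbb{N})$.

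The main obstacle is the $\mathbb{S}(F,\mathbb{N})$-side, where one must carefully track the multi-sorted dimensions of $\Delta\circ\Gamma$ and translate the $F[x]$-level divisibility/evaluation machinery through $\Gamma$ into list-superstructure language. No new ideas beyond Lemmas~\ref{le:Fx_mutual} and~\ref{le:deg_and_value} are required; the whole content of the proof is this bookkeeping across the field, number, and list sorts, after which both regularity and the coordinate-map definability conditions of Definition~\ref{def:reg} are satisfied, yielding the regular bi-interpretation.
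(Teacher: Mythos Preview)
Your proposal is correct and follows the same overall structure as the paper's proof: invoke Lemma~\ref{le:Fx_mutual} for mutual regular interpretability, then supply the two coordinate-map formulas. The treatment of the $\mathbb{S}(F,\mathbb{N})$-side is essentially what the paper does; the paper phrases the recovery of $(a_0,\ldots,a_n)$ from the $x$-coefficient tuples of $(\sum a_i s^i,s^n)$ as applying the ``reverse change of variable'' $x=\tfrac{1}{l_1}(s-l_0)$, which is the same triangular inversion you describe.

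The one genuine difference is on the $F[x]$-side. You work to define the natural coordinate map $(q(s),s^n)\mapsto q$, which requires expressing polynomial composition $u=q\circ s$ via the evaluation/divisibility trick over infinitely many $\alpha\in F$. The paper avoids this entirely by observing that $p\mapsto p(s)$ is already an \emph{automorphism} of $F[x]$ (since $s$ is linear), so the first projection $(p(s),s^n)\mapsto p(s)$ is itself an isomorphism $\Gamma\circ\Delta(F[x])\to F[x]$. Since Definition~\ref{def:reg} only asks for \emph{some} coordinate map, this suffices, and the formula reduces to defining the domain $\{(u,v):\Phi_{\mathrm{LP}}(s,v)\wedge\varphi_{\deg}(u,v)\}$ --- no composition needed. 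Your route works but is more laborious; the paper's shortcut is what you would want to notice.
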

\begin{proof}
Denote the above interpretation of $F[x]$ in $\mathbb S(F,\mathbb N)$ by $F[x]\cong \Gamma(\mathbb S(F,\mathbb N))$, and the reverse interpretation by $\mathbb S(F,\mathbb N)\cong \Delta(F[x],\varphi_{\mathrm{lin}})$. By Lemma~\ref{le:Fx_mutual}, we only need to check the definability of isomorphisms.

To show that an isomorphism $F[x]\cong \Gamma\circ \Delta(F[x],\varphi_{\mathrm{lin}})$ is definable, we note that if $p(x)=a_0+\ldots+a_nx^n$, then the polynomial $p$ is interpreted as $(a_0,\ldots,a_n)$. The latter tuple, in turn, is interpreted as the pair $(p(s),s^n)$, where $s$ is linear. To deliver the conditions of Definition~\ref{def:reg}, we note that the mapping $(p(s),s^n)\mapsto p(s)$ serves as the required isomorphism. Therefore, it suffices to define by a formula the subset $\{(p(s),s^n): n\ge \deg p\}\subseteq F[x]^2$. By Lemma~\ref{le:deg_and_value}(\ref{le:deg_and_value_b}), this subset is defined by $\Phi_{\mathrm{LP}}(s,u)\wedge \varphi_{\deg}(p,u)$.

Consider the reverse direction $\mathbb S(F,\mathbb N)\cong \Delta\circ \Gamma(\mathbb S(F,\mathbb N)),\varphi_{\mathrm{lin}}^*)$ (see Definition~\ref{def:reg}.
A tuple $(a_0,\ldots,a_n)$ is interpreted as $(a_0+\ldots+a_ns^n,s^n)$ in $F[x]$. The latter pair is then interpreted in $\mathbb S(F,\mathbb N)$ as $((b_0,\ldots,b_n),(c_0,\ldots,c_n))$, where $a_0+\ldots+a_ns^n=b_0+\ldots+b_nx^n$ and $s^n=c_0+\ldots+c_nx^n$. Note that $\varphi^*$ defines the set of tuples $\{(l_0,l_1)\mid l_1\neq 0\}$.
Then the formula that applies ``reverse change of variable'' $x=\tfrac{1}{l_1}(s-l_0)$ to $b$ delivers the required isomorphism.
\end{proof}

This allows to take advantage of Theorem~\ref{th:complete-scheme}.

\begin{corollary}\label{co:Fx_elem_equiv_list}
Let $F$ be an infinite field. Let $\Gamma_0$ be the code of a regular invertible interpretation of $F[x]$ in $\mathbb S(F,\mathbb N)$, $F[x]\cong \Gamma_0(\mathbb S(F,\mathbb N))$. Then every ring elementarily equivalent to $F[x]$ has the form $\Gamma_0(\mathcal M)$, where $\mathcal M\equiv \mathbb S(F,\mathbb N)$.
\end{corollary}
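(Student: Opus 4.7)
The plan is to derive this as a direct consequence of Theorem~\ref{th:Fx_SFN_biint} together with Theorem~\ref{th:complete-scheme}. First, I would invoke Theorem~\ref{th:Fx_SFN_biint}, which gives a regular bi-interpretation between $F[x]$ and $\mathbb S(F,\mathbb N)$ via concrete interpretations $F[x]\cong \Gamma(\mathbb S(F,\mathbb N))$ and $\mathbb S(F,\mathbb N)\cong \Delta(F[x],\varphi_{\mathrm{lin}})$. Since a regular bi-interpretation is in particular a regular invertible interpretation (one of the conditions of Definition~\ref{de:reg_invert} is automatically supplied by the formula $\theta_{F[x]}$ defining the coordinate map $\mu_\Gamma \circ \mu_\Delta$), this $\Gamma$ is a concrete witness to the hypothesis of the corollary.

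Next, I would apply Theorem~\ref{th:complete-scheme} to $F[x]\cong \Gamma(\mathbb S(F,\mathbb N))$ with right inverse $\mathbb S(F,\mathbb N)\cong \Delta(F[x],\varphi_{\mathrm{lin}})$. The equivalence of (1) and (2) in that theorem gives precisely that a ring $\widetilde{R}$ is elementarily equivalent to $F[x]$ if and only if $\widetilde{R} \cong \Gamma(\mathcal M,\bar b)$ for some $\mathcal M \equiv \mathbb S(F,\mathbb N)$ and some $\bar b$ in the parameter formula. This handles the statement for the specific interpretation $\Gamma$ produced by Theorem~\ref{th:Fx_SFN_biint}.

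To upgrade the conclusion from this specific $\Gamma$ to an arbitrary $\Gamma_0$ as in the corollary statement, I would argue as follows. Given any regular invertible interpretation $F[x]\cong \Gamma_0(\mathbb S(F,\mathbb N))$, Proposition~\ref{pr:elem_equiv} already ensures that $\Gamma_0(\mathcal M)\equiv F[x]$ for every $\mathcal M \equiv \mathbb S(F,\mathbb N)$, yielding one direction. For the converse direction, given $\widetilde{R}\equiv F[x]$, apply Theorem~\ref{th:complete-scheme} directly with the interpretation $\Gamma_0$ and any right inverse regular interpretation of $\mathbb S(F,\mathbb N)$ in $F[x]$ provided by the definition of regular invertibility; its conclusion (2) gives the desired $\mathcal M\equiv \mathbb S(F,\mathbb N)$ with $\widetilde{R}\cong \Gamma_0(\mathcal M)$.

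The main point to verify carefully is that Theorem~\ref{th:Fx_SFN_biint} really does deliver a regular invertible interpretation (so that the corollary has non-vacuous hypothesis), and that Theorem~\ref{th:complete-scheme} applies to every such $\Gamma_0$, not only to the specific one constructed from the bi-interpretation. Both of these reduce to bookkeeping of the conditions in Definitions~\ref{def:reg} and~\ref{de:reg_invert}: once an invertible interpretation $\Gamma_0$ is fixed, a right inverse regular interpretation $\mathbb S(F,\mathbb N)\rightsquigarrow F[x]$ exists by definition, and the applicability of Theorem~\ref{th:complete-scheme} is then immediate. Thus the corollary reduces to a straightforward citation of Theorem~\ref{th:complete-scheme}, with the nontrivial content supplied upstream by Theorem~\ref{th:Fx_SFN_biint}.
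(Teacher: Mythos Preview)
Your proposal is correct and matches the paper's approach exactly: the paper states this corollary immediately after Theorem~\ref{th:Fx_SFN_biint} with the single remark ``This allows to take advantage of Theorem~\ref{th:complete-scheme},'' so the intended argument is precisely the citation of Theorem~\ref{th:complete-scheme} applied to the regular invertible interpretation supplied by Theorem~\ref{th:Fx_SFN_biint}. Your extra care in distinguishing the specific $\Gamma$ from an arbitrary $\Gamma_0$ is a reasonable clarification but not something the paper spells out.
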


Suppose further that $F$ is regularly bi-interpretable with $\mathbb N$ (or regularly invertibly interpretable in $\mathbb N$), thus necessarily infinite. Then $\mathbb S(F,\mathbb N)$ is regularly bi-interpretable with $\mathbb S(\mathbb N,\mathbb N)$ (resp., regularly invertibly interpretable in $\mathbb S(\mathbb N,\mathbb N)$), which leads to a regular bi-interpretation of $F[x]$ with $\mathbb S(\mathbb N,\mathbb N)$ (resp., regular invertible interpretation in $\mathbb S(\mathbb N,\mathbb N)$) and therefore with $\mathbb N$. We record this as the following statement.

\begin{corollary}
\label{co:bi-int_Fx_N}
Let $F$ be a field regularly bi-interpretable with $\mathbb N$ (regularly invertibly interpretable in $\mathbb N$). Then the ring of polynomials in a single variable $\langle F[x];+,\cdot,0,1\rangle$ is regularly bi-interpretable with $\mathbb N$ (resp., regularly invertibly interpretable in $\mathbb N$), $F[x]\cong \Gamma(\mathbb N)$.
\end{corollary}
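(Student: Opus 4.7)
The plan is to obtain the corollary by composing two regular bi-interpretations (respectively, two regular invertible interpretations) that are already available from earlier results in the paper, and then appealing to the transitivity statements in Lemma~\ref{le:int-transitivity}.

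First I would invoke Theorem~\ref{th:Fx_SFN_biint}, which supplies a regular bi-interpretation $F[x]\cong\Gamma(\mathbb S(F,\mathbb N),\varphi)$ together with its inverse $\mathbb S(F,\mathbb N)\cong\Delta(F[x],\psi)$, valid for any infinite field $F$; in particular this applies here since a field regularly bi-interpretable (or regularly invertibly interpretable) with $\mathbb N$ is necessarily infinite. Second, I would apply Proposition~\ref{le:tuples_tuples}(b): starting from a regular bi-interpretation $F\cong\Lambda(\mathbb N)$ and $\mathbb N\cong\Sigma(F)$, fix any arithmetical enumeration $\mathcal E$ and lift to $\mathbb S(F,\mathbb N)\cong\Lambda_{\mathcal E}(\mathbb N)$ with inverse $\mathbb N\cong\Sigma'(\mathbb S(F,\mathbb N))$; the proposition guarantees that the lifted pair is again a regular bi-interpretation. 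In the invertible-only case I would use Proposition~\ref{le:tuples_tuples}(a) together with the inverse interpretation packaged inside the definition of regular invertible interpretation (Definition~\ref{de:reg_invert}) to obtain a regularly invertible $\mathbb S(F,\mathbb N)\cong\Lambda_{\mathcal E}(\mathbb N)$.

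Third, I would compose the two interpretations using Lemma~\ref{le:int-transitivity} item~\ref{le:int7}, which gives a regular interpretation $F[x]\stackrel{\Gamma\circ\Lambda_{\mathcal E}}{\rightsquigarrow}\mathbb N$ with parameter formula $\varphi_{\Lambda_{\mathcal E}}\wedge\lambda$ (where $\lambda$ is the parameter formula of $\Lambda_{\mathcal E}$), and likewise $\mathbb N\stackrel{\Sigma'\circ\Delta}{\rightsquigarrow}F[x]$. To upgrade this mutual regular interpretability to a regular bi-interpretation, I would verify condition~2) and~3) of Definition~\ref{def:reg}: the coordinate map of the composition $(\Gamma\circ\Lambda_{\mathcal E})\circ(\Sigma'\circ\Delta)$ evaluated over $F[x]$ must be definable by a single formula uniformly in the parameters. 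This is where the work sits, and it is the main potential obstacle. However, by Definition~\ref{def:reg} each of the two bi-interpretations already supplies its defining formula $\theta$ for the round-trip coordinate map, and one pulls back and combines these along the composition, precisely as in the standard transitivity argument for bi-interpretation (cf.\ the discussion following Lemma~\ref{le:int-transitivity} and Remark~\ref{remark2}); in the invertible case, one only needs to produce a definable isomorphism to $F[x]$, which is handed to us by the definition.

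Finally, I would record the resulting code as the promised $\Gamma$ in the statement, so that $F[x]\cong\Gamma(\mathbb N)$ with $\Gamma:=\Gamma_0\circ\Lambda_{\mathcal E}$ (where $\Gamma_0$ is the code from Theorem~\ref{th:Fx_SFN_biint}), and, in the bi-interpretable case, with inverse $\Sigma'\circ\Delta$. I expect the whole argument to be short since the two constituent bi-interpretations have already been established and Lemma~\ref{le:int-transitivity} and Proposition~\ref{le:tuples_tuples} do all the heavy lifting; the only real verification is that definability of the coordinate maps in each factor yields definability of their composition, which is a routine use of $\Gamma$-translation.
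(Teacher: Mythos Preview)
Your proposal is correct and follows essentially the same route as the paper. The paper's entire argument is the short paragraph immediately preceding the corollary: $F[x]$ is regularly bi-interpretable with $\mathbb S(F,\mathbb N)$ by Theorem~\ref{th:Fx_SFN_biint}, the hypothesis on $F$ lifts to a regular bi-interpretation (resp.\ regular invertible interpretation) of $\mathbb S(F,\mathbb N)$ with $\mathbb N$ (the paper phrases this as going through $\mathbb S(\mathbb N,\mathbb N)$, while you invoke Proposition~\ref{le:tuples_tuples} directly---these are the same step), and transitivity finishes. Your identification of the one point requiring care, namely that transitivity of regular \emph{bi}-interpretation (as opposed to mere regular interpretation) needs the definability of the composed coordinate maps, is apt; the paper simply takes this for granted.
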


\begin{corollary}\label{co:Fx_elem_equiv}
Let $F$ be a field regularly invertibly interpretable in $\mathbb N$. Let $\Gamma$ be the code of a regular invertible interpretation of $F[x]$ in $\mathbb N$, $F[x]\cong \Gamma(\mathbb N)$. Then every ring elementarily equivalent to $F[x]$ has the form $\Gamma(\nstdM)$, where $\nstdM\equiv \mathbb N$.
\end{corollary}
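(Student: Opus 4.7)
The plan is to derive this corollary as a direct consequence of Theorem~\ref{th:complete-scheme} combined with Corollary~\ref{co:bi-int_Fx_N}. First, I would note that the hypothesis that $F$ is regularly invertibly interpretable in $\mathbb N$ feeds into Corollary~\ref{co:bi-int_Fx_N} to yield that $F[x]$ itself is regularly invertibly interpretable in $\mathbb N$, so the interpretation $\Gamma$ (with some defining formula $\phi$ for its parameter scheme) is legitimate. Concretely, $\Gamma$ is obtained as a composition $\Gamma_0 \circ \mathcal{E}$ where $\Gamma_0$ is the absolute interpretation of $F[x]$ in $\mathbb S(F,\mathbb N)$ from Theorem~\ref{th:Fx_SFN_biint} and $\mathcal{E}$ is an arithmetical interpretation of $\mathbb S(F,\mathbb N)$ in $\mathbb N$ (built as in Section~\ref{se:list_interpretation} using the given regular invertible interpretation of $F$ in $\mathbb N$); the transitivity Lemma~\ref{le:int-transitivity}(7) guarantees regularity of the composition.

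Next, I would fix a right-inverse regular interpretation $\mathbb N \cong \Delta(F[x], \psi)$ whose existence is guaranteed by Definition~\ref{de:reg_invert} applied to $\Gamma$; this is the only place where the invertibility hypothesis, as opposed to mere regular interpretability, is used. With $\Gamma$ and $\Delta$ in hand, one may directly invoke Theorem~\ref{th:complete-scheme} with $\mathbb A = F[x]$ and $\mathbb B = \mathbb N$: the equivalence $(1)\Leftrightarrow(2)$ there says precisely that a ring $\widetilde R$ in the language of rings satisfies $\widetilde R \equiv F[x]$ if and only if $\widetilde R \cong \Gamma(\nstdM, \bar b)$ for some $\nstdM \equiv \mathbb N$ and $\bar b \in \phi(\nstdM)$. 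Since elementary equivalence with $\mathbb N$ preserves $\exists \bar y\,\phi(\bar y)$ (which holds in $\mathbb N$ because $\Gamma$ is a regular interpretation), such $\bar b$ always exists, so we may suppress parameters in the notation and write simply $\widetilde R \cong \Gamma(\nstdM)$.

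There is no substantive obstacle here beyond a notational one, namely matching the statement's ``$\Gamma(\nstdM)$'' with the parameterized form $\Gamma(\nstdM, \phi)$ used in Theorem~\ref{th:complete-scheme}; this is handled by the preceding remark that $\phi(\nstdM)\ne\emptyset$ and the observation (from Proposition~\ref{pr:reg-int-elem-equiv}) that the isomorphism type of $\Gamma(\nstdM,\bar b)$ is independent of the choice $\bar b \in \phi(\nstdM)$ when $\Gamma(\nstdM,\phi)$ is well-defined, so the notation $\Gamma(\nstdM)$ is unambiguous. Thus the corollary reduces to citing Corollary~\ref{co:bi-int_Fx_N} followed by the $(1)\Leftrightarrow(2)$ portion of Theorem~\ref{th:complete-scheme}.
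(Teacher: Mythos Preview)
Your approach is correct and matches the paper's own (implicit) argument: the corollary is stated without a separate proof precisely because it follows immediately from Corollary~\ref{co:bi-int_Fx_N} together with the $(1)\Leftrightarrow(2)$ direction of Theorem~\ref{th:complete-scheme}. One small correction: the independence of the isomorphism type of $\Gamma(\nstdM,\bar b)$ from the choice of $\bar b\in\phi(\nstdM)$ is not supplied by Proposition~\ref{pr:reg-int-elem-equiv} (which only gives elementary equivalence, as the paper explicitly notes right after it) but rather by item~(3) of Theorem~\ref{th:complete-scheme}, which asserts that $\Gamma(\nstdM,\phi)$ is well-defined.
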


Consider a nonstandard model of $\mathbb N$, denoted $\nstdN$. With the above statement in mind, we can shed more light on the structure $\Gamma(\nstdM)$, which we naturally call the \emph{ring of nonstandard polynomials} in a single variable, and denote by 
$\nonstandardmodel{F[x]}{\nstdN}$. Notice that $\nonstandardmodel{F[x]}{\nstdN}$ is independent of the choice of a particular interpretation $\Gamma$ by Theorems~\ref{th:well-def-inN}. 
\begin{definition}\label{de:nonstandard_model}
Let $\mathbb A$ be a structure regularly interpretable in a structure $\mathbb B$ as $\mathbb A\cong \Gamma(\mathbb B)$. Let $\widetilde{\mathbb B}\equiv\mathbb B$. We say that $\Gamma(\widetilde{\mathbb B})$ is a \emph{nonstandard model} of $\mathbb A$ and denote it by $\nonstandardmodel{\mathbb A}{\widetilde{\mathbb B}}$.
\end{definition}
Note that for plain interpretations in the list superstructure, nonstandard models are independent of choice of interpretation, in the sense of Theorem~\ref{th:well-def}.

In this notation, $\Gamma_0(\mathcal M)$ in Corollary~\ref{co:Fx_elem_equiv_list} can be denoted 
$\nonstandardmodel{F[x]}{\widetilde{\mathbb S(F,\mathbb N)}}$ or $\nonstandardmodel{F[x]}{\mathcal M}, \mathcal M\equiv \mathbb S(F,\mathbb N)$,
and $\Gamma(\nstdN)$ in Corollary~\ref{co:Fx_elem_equiv} can be denoted $\nonstandardmodel{F[x]}{\nstdN}$.
Taking advantage of the latter notation, we restate Corollaries~\ref{co:Fx_elem_equiv_list} and~\ref{co:Fx_elem_equiv} as follows.
\begin{corollary}\label{th:Fx_elem_equiv_restate}
Let $F$ be a field regularly invertibly interpretable in $\mathbb N$. Then every ring elementarily equivalent to $F[x]$ has the form $\nonstandardmodel{F[x]}{\nstdN}$, where $\nstdN\equiv \mathbb N$.
\end{corollary}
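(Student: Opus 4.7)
The plan is to derive the statement by assembling two previously established facts: the regular bi-interpretability of $F[x]$ with the list superstructure $\mathbb S(F,\mathbb N)$ (Theorem~\ref{th:Fx_SFN_biint}), and the fact that a regular invertible interpretation of $F$ in $\mathbb N$ promotes $\mathbb S(F,\mathbb N)$ to a regular invertible interpretation in $\mathbb N$ (Proposition~\ref{le:tuples_tuples}). Composing these via Lemma~\ref{le:int-transitivity}, we obtain a regular invertible interpretation $F[x]\cong \Gamma(\mathbb N)$ for a concrete code $\Gamma$.

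Next, I would invoke the Tarski-type equivalence in Theorem~\ref{th:complete-scheme} for regularly invertible interpretations: every structure $\widetilde{R}$ of the language of rings satisfies $\widetilde{R}\equiv F[x]$ if and only if $\widetilde{R}\cong \Gamma(\nstdN)$ for some $\nstdN\equiv \mathbb N$. This is exactly the forward direction we need, since the backward direction of the corollary follows trivially from Proposition~\ref{pr:reg-int-elem-equiv}. In the notation of Definition~\ref{de:nonstandard_model}, the structure $\Gamma(\nstdN)$ is precisely $\nonstandardmodel{F[x]}{\nstdN}$, giving the desired conclusion.

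A small bookkeeping point that I would address is the independence of the right-hand side from the particular choice of code $\Gamma$. This is supplied by the uniqueness statement in Theorem~\ref{th:well-def-inN}: any two regular interpretations $\Gamma_1(\mathbb N,\phi_1)$ and $\Gamma_2(\mathbb N,\phi_2)$ of the finitely presented-style structure $F[x]$ (well, $F[x]$ is interpretable in $\N$ in a finite language, which is what the theorem requires modulo the finite-generation caveat applied at the level of $F$) yield isomorphic nonstandard models $\Gamma_1(\nstdN,\phi_1)\cong \Gamma_2(\nstdN,\phi_2)$ for any $\nstdN\equiv \N$. So the notation $\nonstandardmodel{F[x]}{\nstdN}$ is well-defined and matches the conclusion.

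I do not anticipate a genuine obstacle here: the entire argument is a straightforward chaining of results already in place, and the only thing to be careful about is the precise passage from the bi-interpretation with $\mathbb S(F,\mathbb N)$ through $\mathbb S(\mathbb N,\mathbb N)$ down to $\mathbb N$, making sure that \emph{regular invertibility} is preserved under composition at each step (which is covered by the cited results). The novelty of the statement lies entirely in the unified notation $\nonstandardmodel{F[x]}{\nstdN}$ introduced in Definition~\ref{de:nonstandard_model}; the mathematical content has already been proved as Corollary~\ref{co:Fx_elem_equiv}.
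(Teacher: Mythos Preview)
Your proposal is correct and matches the paper's approach exactly: the paper treats this corollary as a pure restatement of Corollary~\ref{co:Fx_elem_equiv} in the notation of Definition~\ref{de:nonstandard_model}, and you explicitly recognize this in your final sentence. The only content beyond the restatement is the well-definedness of $\nonstandardmodel{F[x]}{\nstdN}$, which both you and the paper handle by invoking Theorem~\ref{th:well-def-inN}.
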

\begin{corollary}\label{co:Fx_elem_equiv_list_restate}
Let $F$ be an infinite field. Then every ring elementarily equivalent to $F[x]$ has the form $\nonstandardmodel{F[x]}{\mathcal M}$, where $\mathcal M\equiv\mathbb S(F,\mathbb N)$.
\end{corollary}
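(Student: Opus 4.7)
The plan is to observe that this corollary is essentially a repackaging of Corollary~\ref{co:Fx_elem_equiv_list} in the notation of Definition~\ref{de:nonstandard_model}, so the work is already done by the bi-interpretation Theorem~\ref{th:Fx_SFN_biint} combined with the general machinery in Theorem~\ref{th:complete-scheme}.

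First I would invoke Theorem~\ref{th:Fx_SFN_biint} to fix a regular bi-interpretation $F[x] \cong \Gamma_0(\mathbb S(F,\mathbb N))$, noting that any regular bi-interpretation is in particular a regularly (injectively) invertible interpretation in the sense of Definition~\ref{de:reg_invert}, with right inverse given by the reverse interpretation $\mathbb S(F,\mathbb N) \cong \Delta(F[x],\varphi_{\mathrm{lin}})$ exhibited in the proof of that theorem. This places us squarely in the hypotheses of Theorem~\ref{th:complete-scheme} with $\mathbb A = F[x]$ and $\mathbb B = \mathbb S(F,\mathbb N)$.

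Next I would apply the equivalence $(1) \Leftrightarrow (2)$ of Theorem~\ref{th:complete-scheme}: for any ring $\widetilde R$ in the language of rings, $\widetilde R \equiv F[x]$ if and only if there exist some $\mathcal M \equiv \mathbb S(F,\mathbb N)$ and a parameter tuple $\bar b \in \phi(\mathcal M)$ (where $\phi$ is the parameter-formula of the regular interpretation $\Gamma_0$) such that $\widetilde R \cong \Gamma_0(\mathcal M,\bar b)$. Since $\Gamma_0$ is the regular interpretation from the bi-interpretation, the structure $\Gamma_0(\mathcal M,\phi)$ is well-defined and all choices of $\bar b$ yield isomorphic rings, so we may write $\widetilde R \cong \Gamma_0(\mathcal M)$ unambiguously. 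By Definition~\ref{de:nonstandard_model} this is precisely $\nonstandardmodel{F[x]}{\mathcal M}$, which yields the claim.

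There is no real obstacle here: the heavy lifting was carried out in Theorem~\ref{th:Fx_SFN_biint} (constructing the bi-interpretation, including definability of the coordinate maps via Lemmas~\ref{le:pair_t_tn} and~\ref{le:deg_and_value}) and in Theorem~\ref{th:complete-scheme} (the general scheme transferring first-order equivalence through a regularly invertible interpretation). The only thing to be careful about is that the notation $\nonstandardmodel{F[x]}{\mathcal M}$ is well-defined, which follows from the ``well-defined'' clause of Theorem~\ref{th:complete-scheme}(3) applied to the regular bi-interpretation.
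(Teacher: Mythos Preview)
Your proposal is correct and matches the paper's approach exactly: the paper explicitly introduces this corollary as a restatement of Corollary~\ref{co:Fx_elem_equiv_list} in the notation of Definition~\ref{de:nonstandard_model}, with the underlying justification being Theorem~\ref{th:Fx_SFN_biint} together with Theorem~\ref{th:complete-scheme}. Your unpacking of the argument (bi-interpretation $\Rightarrow$ regularly invertible interpretation $\Rightarrow$ apply $(1)\Leftrightarrow(2)$ of Theorem~\ref{th:complete-scheme}) is precisely the intended reasoning; note only that $\Gamma_0$ here is in fact absolute, so the parameter discussion, while harmless, is not strictly needed.
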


In Section~\ref{se:single_var}, we exploit the independence on the choice of an interpretation to study algebraic properties of $\nonstandardmodel{F[x]}{\nstdN}$ and $\nonstandardmodel{F[x]}{\mathcal M}$. In the case of a field regularly invertibly interpretable in $\mathbb N$, we will typically focus on the straightforward interpretation of $F[x]$ in~$\mathbb S(F,\mathbb N)$ (with the implication that the latter is interpreted in~$\mathbb S(\mathbb N,\mathbb N)$ and then in~$\mathbb N$).

Finally, we note here that in the sequel, we exhibit statements similar to the four statements Theorems~\ref{th:Fx_SFN_biint}--Corollary~\ref{co:Fx_elem_equiv} for other objects we study: polynomials in several variables in Section~\ref{se:multi_var} and Laurent polynomials in Sections~\ref{se:laurent_single} and~\ref{se:laurent_multi}).

\subsection{Nonstandard polynomials in single variable}\label{se:single_var}
In this section we describe and study properties of the elements of the ring of nonstandard polynomials $\nonstandardmodel{F[x]}{\nstdM}$ and, more generally, $\nonstandardmodel{F[x]}{\mathcal M}$, $\mathcal M\equiv \mathbb S(F,\mathbb N)$. These elements are called \emph{nonstandard polynomials}.

Let $F[x]$ be interpreted as $F[x]\cong \Gamma(\mathbb S(F,\mathbb N))$. Let $\mathcal M\equiv \mathbb S(F,\mathbb N)$; in particular, $\mathcal M=\mathbb S(\widetilde F,\widetilde{\mathbb N})$ if $F$ is regularly invertibly interpretable in $\mathbb N$.
From Theorems~\ref{th:complete-scheme}, \ref{th:well-def-inN}, \ref{th:well-def}
we get that $\nonstandardmodel{F[x]}{\mathcal M}\cong \Gamma(\mathcal M)$, or, in the case of a regularly invertibly interpretable in $\mathbb N$ field $F$, $\nonstandardmodel{F[x]}{\nstdN}\cong \Gamma(\mathbb S(\widetilde F,\nstdM))$. For convenience of notation, in the context of this interpretation we index entries of a tuple from $0$ to $n$, as opposed to indexing from $1$ to $n$.

\noindent\textsc{Convention.} In what follows, we will denote (nonstandard) polynomials by small Latin letters like $p$; coefficients of $p(x)$ by $p_i$; tuples of polynomials by capital Latin letters like $P$; $i$th polynomial in a tuple $P$ by $P_i$;
constant elements of $\nonstandardmodel{F[x]}{\mathcal M}$ by Greek letters like $\alpha$ or $\beta$.

\noindent\textsc{Convention.} In our first-order formulas over $\nonstandardmodel{F[x]}{\nstdN}$ we will use quantification over constants, since the set of constants is definable: $p\in \nonstandardmodel{F[x]}{\mathcal M}$ is constant if and only if $\nonstandardmodel{F[x]}{\mathcal M}\models p=0\vee \exists q\ pq=1$. To indicate such quantification, we will write $\forall \alpha$, $\exists\beta$, etc.

\noindent\textsc{Convention.} In our first-order formulas over $\nonstandardmodel{F[x]}{\mathcal M}$ we will use quantification over (nonstandard) tuples, since the respective list superstructure is interpretable in $\nonstandardmodel{F[x]}{\mathcal M}$. To indicate that, we will write $\forall P$, $\exists Q$, etc. Note that when we pass from $F[x]$ to $\nonstandardmodel{F[x]}{\mathcal M}$, quantification over tuples becomes quantification over nonstandard tuples, as explained in Remark~\ref{re:nonstd_tuples}.

With the above interpretation in mind, for $n\in\nstdN$, a polynomial $p(x)\in \nonstandardmodel{F[x]}{\mathcal M}$ of degree $n$ can be viewed as an element $p$ of the list sort of $\mathcal M$ (of $\List(\widetilde F,\nstdM)$ in the case of a regularly invertibly interpretable in $\mathbb N$ field $F$), with $\ell(p)=n+1$ and the relation $t(p,p_{i},i)$, $0\le i\le n$, indicating the coefficient $p_{i}$ in front of $x^{i}$. Moreover, any tuple with $p'$ with $\ell(p')\ge n+1$ with initial segment $p$ and $t(p',0,i)$ for all $i> n$, also corresponds to $p(x)$.

Notice that if $p,q\in F[x]$ with $\deg p=n$ and $\deg q=m$, their sum and product in $F[x]$ correspond to the tuples $p+q$, $pq$ given by the following formulas:
\begin{itemize}
    \item $\ell(p+q)=1+\max\{\deg p, \deg q\}$,
    $(p+q)_i=
    \begin{cases}
    p_i+q_i, &0\le i\le \min\{\deg p, \deg q\},\\
    q_i, &\deg p<i\le \deg q,\\
    p_i, &\deg q<i\le \deg p.
    \end{cases}
    $
    Notice that it may be the case that $\deg(p+q)+1<\ell(p+q)$.
    \item $\ell(pq)=\deg p+\deg q$. For simplicity of the subsequent formula, we assume that $p$ and $q$ are represented by tuples $p,q$ with $\ell(p)=\ell(q)=m+n+1$. The $i$th coefficient of $pq$ is then defined by
    \[
    (pq)_i=\sum_{\nu=0}^{i}p_{\nu}q_{i-\nu}.
    \]
\end{itemize}
Since both of the above formulas can be expressed as the first-order formulas of $\mathbb S(F,\mathbb N)$, they also define addition and multiplication in $\nonstandardmodel{F[x]}{\mathcal M}$. Inspecting the respective first-order sentences, we see that $\nonstandardmodel{F[x]}{\mathcal M}$ is an associative commutative ring with unity.

Further, whenever $P=(P_1(x),\ldots,P_n(x))$ is a nonstandard tuple of nonstandard polynomials (in the sense of Subsection~\ref{se:summation_arb_superstructre}, ultimately meaning that $(P_i)_j$ can be defined by first order formulas),
we can define the sum and the product of the tuple, $\sum_{i\le n} P_i(x)$ and $\prod_{i\le n} P_i(x)$, via the procedure shown Subsections~\ref{se:nonstandard_summation} and~\ref{se:summation_arb_superstructre}. Indeed, to define, say, the sum, we consider the tuple $(S_1(x),\ldots,S_n(x))$ given by
\begin{enumerate}
    \item $S_1=P_1$
    \item $S_{i+1}=S_i+P_{i+1}$ for $1<i\le n$.
\end{enumerate}
By induction we see that $\prod_{i\le n}x^1=x^n$, where $x$ stands for the monomial $x^1$.

We can similarly define cooordinate-wise sum of a nonstandard tuple of nonstandard tuples $(S_1,\ldots,S_n)$, assuming all tuples are extended by zeros to length $\max\ell(s_i)$ (notice that this maximum is well-defined). In that case, inspecting each coordinate, we get that
\[
\left(\sum\nolimits_{i\le n} S_i\right)_j=\sum_{i\le n} (S_i)_j.
\]
In particular,
\[
\sum_{i\le n}(0,\ldots,0,a_i,0,\ldots,0)=(a_1,\ldots,a_n).
\]
It follows that the polynomial corresponding to the nonstandard tuple $(a_0,\ldots,a_n)$ of coefficients can be expressed as
\begin{equation}\label{eq:nstd_poly_desc}
\sum_{i\le n} a_i\prod_{j\le i} x=\sum_{i\le n} a_ix^i.
\end{equation}
Further, notice that for a given polynomial the uniqueness of the ``coefficient tuple'' $(a_0,\ldots,a_n)$ (with nonzero $a_n$) is a first-order sentence. It follows that the coefficient tuple is unique for a given polynomial, assuming nonzero leading coefficient.

For two polynomials written as $p=\sum_{i\le n} a_ix^i$ and $q=\sum_{i\le m} b_ix^i$, their sum and product defined earlier take the familiar form
\begin{equation}\label{eq:nstd_poly_sum_product}
p+q=\sum_{k\le\max\{n,m\}} (a_k+b_k)x^k,\qquad pq=\sum_{k\le n+m} \left(\sum_{i+j=k}a_ib_j\right)x^k,
\end{equation}
where for convenience of notation (and consistently with the equivalence relation in the considered interpretation) we set $a_i=0$ for $i>n$ and $b_i=0$ for $i>m$.

\subsubsection{Universal property}\label{se:single_var_universal} To justify using the term \emph{polynomials} for elements of~$\nonstandardmodel{F[x]}{\mathcal M}$, we show that the latter has a certain universal property, like the algebra of polynomials does.
To this end, in the following lemma we observe that a homomorphism of nonstandard algebras is nonstandardly additive and multiplicative. We call such homomorphisms (namely, those that satisfy formula~\eqref{eq:nstd_homomorphism} below) \emph{nonstandard}.

\begin{lemma}\label{le:nonstd_homomorphism_general}
Suppose $\mathbb A_1,\mathbb A_2$ are two associative algebras over a field $F$ with regular interpretations in the list superstructure $\mathbb S(F, \mathbb N)$, $\mathbb A_i\cong \Delta_i(\mathbb S(F, \mathbb N))$, $i=1,2$. Suppose that $\mathbb A_1$ is finitely generated as an algebra over $F$. Let $\mathcal M\equiv\mathbb S(F, \mathbb N)$ and let $\widetilde{\mathbb A}_i\cong \nonstandardmodel{\mathbb A_i}{\mathcal M}$. If $\varphi:\widetilde{\mathbb A}_1\to\widetilde{\mathbb A}_2$ is an algebra homomorphism, then \begin{equation}\label{eq:nstd_homomorphism}
\varphi\left(\sum_{i=1}^n a_i\right)=\sum_{i=1}^n \varphi(a_i),\qquad \varphi\left(\prod_{i=1}^n a_i\right)=\prod_{i=1}^n \varphi(a_i)
\end{equation}
for every nonstandard tuple $(a_1,\ldots,a_n)$ of elements of $\widetilde{\mathbb A}_1$.
\end{lemma}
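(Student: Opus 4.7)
My plan is to prove the claim for sums; the product case follows by the same argument with $+$ replaced by $\cdot$ and additivity of $\varphi$ by its multiplicativity. Fix a nonstandard tuple $A=(a_1,\ldots,a_n)$ of elements of $\widetilde{\mathbb A}_1$, and let $S=(S_1,\ldots,S_n)$ denote the nonstandard tuple of partial sums supplied by Lemma~\ref{le:arb_summation}, so that $S_1=a_1$, $S_{k+1}=S_k+a_{k+1}$ for $1\le k<n$, and $\sum_{i=1}^n a_i=S_n$ by definition.

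The first step will be to argue that applying $\varphi$ termwise to $A$ and to $S$ yields nonstandard tuples $A'=(\varphi(a_1),\ldots,\varphi(a_n))$ and $T=(\varphi(S_1),\ldots,\varphi(S_n))$ in $\widetilde{\mathbb A}_2$. This is where finite generation of $\mathbb A_1$ is essential: with standard generators $g_1,\ldots,g_r$ of $\mathbb A_1$ over $F$ fixed, every element of $\widetilde{\mathbb A}_1$ is uniformly representable in $\mathcal M$ as $\tau_j(g_1,\ldots,g_r,\bar c)$ for some $j\in\nstdN$ and a tuple $\bar c$ from the field sort $\widetilde F$, where $(\tau_j)_{j\in\mathbb N}$ is the standard enumeration of terms in the finite language (cf.\ Theorem~\ref{th:well-def} and Remark~\ref{re:terms_generalization}). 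Since $\varphi$ commutes with every \emph{standard} term constructor and fixes the field sort $\widetilde F$ (viewing $\varphi$ as an $\widetilde F$-algebra homomorphism), the assignment $\tau_j(\bar g,\bar c)\mapsto\tau_j(\varphi(\bar g),\bar c)$ is definable in $\mathcal M$ with parameters $\varphi(g_1),\ldots,\varphi(g_r)$, so the image tuples $A'$ and $T$ are indeed nonstandard.

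The second step will be a short induction identifying $T$ with the partial-sums tuple of $A'$. By additivity of $\varphi$ as an ordinary algebra homomorphism, the tuple $T$ satisfies
\[
T_1=\varphi(a_1)=A'_1,\quad T_{k+1}=\varphi(S_k+a_{k+1})=\varphi(S_k)+\varphi(a_{k+1})=T_k+A'_{k+1}.
\]
Hence $T$ obeys the defining recurrence of the nonstandard partial-sums tuple of $A'$; uniqueness of this tuple, which follows from standard induction in $\mathcal M$ applied to the definable set of indices where $T$ agrees with that partial-sums tuple, forces them to coincide. In particular, $T_n=\varphi(S_n)=\sum_{i=1}^n\varphi(a_i)$, giving the desired equality.

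The main obstacle I anticipate is the first step: rigorously verifying that the termwise images $A'$ and $T$ form nonstandard tuples definable in $\mathcal M$. This is exactly the place where finite generation of $\mathbb A_1$ over $F$ is used, via the uniform term-representation of elements of $\widetilde{\mathbb A}_1$. Once this definability is in hand, the recurrence-and-uniqueness argument of the second step is routine.
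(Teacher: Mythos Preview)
Your Step~1 contains a circularity. To conclude that $A'=(\varphi(a_1),\ldots,\varphi(a_n))$ and $T=(\varphi(S_1),\ldots,\varphi(S_n))$ are nonstandard tuples, you need the external map $\varphi$ to agree with the definable map $\psi\colon\tau_j(\bar g,\bar c)\mapsto\tau_j(\varphi(\bar g),\bar c)$. Your justification is that ``$\varphi$ commutes with every \emph{standard} term constructor,'' but this only gives $\varphi(\tau_j(\bar g,\bar c))=\tau_j(\varphi(\bar g),\bar c)$ for \emph{standard} $j$. For nonstandard $j$---which is precisely what is needed, since a general $a_i$ in a nonstandard tuple is not a standard term in $\bar g$---the equality $\varphi(\tau_j(\bar g,\bar c))=\tau_j(\varphi(\bar g),\bar c)$ amounts to saying $\varphi$ commutes with nonstandard sums and products, which is the conclusion you are trying to establish. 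Without $\varphi=\psi$ on all of $\widetilde{\mathbb A}_1$, the sequence $i\mapsto\varphi(a_i)$ need not be an element of the list sort, and Step~2 (which assumes $T_i=\varphi(S_i)$) cannot proceed. There is also a secondary issue: you assert $\psi$ is well-defined, but this requires $\varphi(\bar g)$ to satisfy every \emph{nonstandard} relation of $\bar g$, which again is not guaranteed by $\varphi$ being an ordinary algebra homomorphism.

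The paper's argument is organized differently: it writes a single first-order sentence $\Phi$ of $\mathbb S(F,\mathbb N)$ of the form $\forall\bar a,\bar b\ [\Phi_{\mathrm{hom}}(\bar a,\bar b)\to\Phi_{\mathrm{tuple}}(\bar a,\bar b)]$, where $\Phi_{\mathrm{hom}}$ expresses ``$a_i\mapsto b_i$ extends to an algebra homomorphism $\langle\bar a\rangle\to\mathbb A_2$'' and $\Phi_{\mathrm{tuple}}$ expresses \eqref{eq:nstd_homomorphism} for that extension. The implication holds in $\mathbb S(F,\mathbb N)$ by ordinary induction, and is then transferred to $\mathcal M$ by elementary equivalence; instantiating at $(\bar g,\varphi(\bar g))$ gives the conclusion. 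Note that this route also leaves implicit the verification that $\mathcal M\models\Phi_{\mathrm{hom}}(\bar g,\varphi(\bar g))$ and that the resulting definable extension coincides with $\varphi$---so the delicate point is shared---but the paper's packaging at least isolates the issue to a single instantiation rather than to a termwise definability claim about $\varphi$.
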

\begin{proof}
By enumerating words in $x_1,\ldots,x_k$, we can write a first order formula $\Phi_{\mathrm{hom}}(x_1,\ldots,x_k,y_1,\ldots,y_k)$ of $\mathbb S(F, \mathbb N)$ s.t. $\mathbb S(F, \mathbb N)\models \Phi_{\mathrm{hom}}(x_1,\ldots,x_k,y_1,\ldots,y_k)$ if and only if the mapping $\mu_1(x_i)\mapsto \mu_2(y_i)$ extends to an algebra homomorphism
\[
\langle \mu_1(x_1),\ldots,\mu_1(x_k)\rangle\to \Delta_2(\mathbb S(F, \mathbb N))\cong\mathbb A_2,
\]
where $\langle \ldots \rangle$ denotes the subalgebra generated by the listed elements.

We similarly can write a first order formula $\Phi_{\mathrm{tuple}}(x_1,\ldots,x_k,y_1,\ldots,y_k)$ of $\mathbb S(F, \mathbb N)$ s.t. $\mathbb S(F, \mathbb N)\models \Phi_{\mathrm{tuple}}(x_1,\ldots,x_k,y_1,\ldots,y_k)$ if and only if the mapping $\mu_1(x_i)\mapsto \mu_2(y_i)$ extends to a mapping for which the equations~\eqref{eq:nstd_homomorphism} are satisfied.

By induction, the following formula $\Phi$ is satisfied in $\mathbb S(F, \mathbb N)$ (with an abuse of notation, we use use $a_i\in \mathbb A_1,b_i\in \mathbb B_2$ instead $x_i,y_i\in \List(F,\mathbb N)$):
\[
\begin{split}
\forall a_1,&\ldots,a_k\in \mathbb A_1\ \forall b_1,\ldots,b_k\in\mathbb A_2\\
&\Phi_{\mathrm{hom}}(a_1,\ldots,a_k,b_1,\ldots,b_k)\to \Phi_{\mathrm{tuple}}(a_1,\ldots,a_k,b_1,\ldots,b_k).
\end{split}
\]
Now, since $\mathcal M\equiv \mathbb S(F, \mathbb N)$, we have $\mathcal M\models \Phi$, from which the required statmement follows.
\end{proof}

Now we consider the regular interpretation of $F[x]$ in the list superstructure $\mathbb S(F,\mathbb N)$, $F[x]\cong \Gamma(\mathbb S(F,\mathbb N))$.
Let $\mathcal M\equiv \mathbb S(F,\mathbb N)$, with field sort $\widetilde F$, list sort $\List(\widetilde F,\nstdN)$, and number sort $\nstdN$.
Consider the ring $\nonstandardmodel{F[x]}{{\mathcal M}}$. Note that if $F$ is regularly invertibly interpretable in $\mathbb N$, then $\nonstandardmodel{F[x]}{{\mathcal M}}$ is $\nonstandardmodel{F[x]}{\nstdN}$ for an appropriate $\nstdN$.

Further, consider an associative algebra with unity $\mathbb A$ over a field $F$ with a regular interpretation in the list superstructure $\mathbb S(F,\mathbb N)$, $\mathbb A\cong \Delta(\mathbb S(F,\mathbb N))$.
For the same $\mathcal M\equiv \mathbb S(F,\mathbb N)$, consider the algebra arising from the interpretation $\Delta$, that is, $\widetilde{\mathbb A}\cong \Delta(\mathcal M)=\nonstandardmodel{\mathbb A}{\mathcal M}$.

\begin{theorem}\label{th:universal_nstd_single}
Let $F$ be an infinite field, $F[x]$ the ring of polynomials, and $\mathbb A$ an arbitrary $F$-algebra with a regular interpretation in $\mathbb S(F, \mathbb N)$. Let $\mathcal M\equiv\mathbb S(F, \mathbb N)$ and $\widetilde{\mathbb A}=\nonstandardmodel{\mathbb A}{\mathcal M}$. Then every mapping $x\mapsto a\in \widetilde{\mathbb A}$ extends uniquely to an algebra homomorphism $\nonstandardmodel{F[x]}{\mathcal M}\to\widetilde{\mathbb A}$.
\end{theorem}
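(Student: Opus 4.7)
The plan is to define the extension $\varphi$ explicitly using nonstandard powers and nonstandard sums in $\widetilde{\mathbb A}$, then to verify the homomorphism property by transferring the classical universal property of $F[x]$ from $\mathbb S(F,\mathbb N)$ to $\mathcal M$ via elementary equivalence, and finally to establish uniqueness by invoking Lemma~\ref{le:nonstd_homomorphism_general}.

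\emph{Construction of $\varphi$.} Fix $a \in \widetilde{\mathbb A}$. Applying Lemma~\ref{le:arb_summation} to the multiplication in $\widetilde{\mathbb A}$, the nonstandard power $a^i$ is defined for every $i \in \nstdN$ as the nonstandard product of the constant tuple $(a,\ldots,a)$ of length $i$ (with $a^0 := 1$). By~\eqref{eq:nstd_poly_desc}, each $p \in \nonstandardmodel{F[x]}{\mathcal M}$ has a unique canonical representation $p = \sum_{i \le n} p_i x^i$ for some $n \in \nstdN$ and a coefficient tuple $(p_0,\ldots,p_n)$ from the field sort of $\mathcal M$. Set
\[
\varphi(p) \;:=\; \sum_{i \le n} p_i \cdot a^i,
\]
where the outer sum is the nonstandard sum in $\widetilde{\mathbb A}$, again furnished by Lemma~\ref{le:arb_summation}. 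Then $\varphi$ is well-defined and $\varphi(x) = a$.

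\emph{Homomorphism via elementary equivalence.} Using the nonstandard exponentiation and summation machinery of Section~\ref{se:nonstandard_summation} and Remark~\ref{re:terms_generalization}, the evaluation map $(a,p) \mapsto \varphi_a(p)$ is uniformly first-order definable over $\mathbb S(F,\mathbb N)$: there is a formula of $\mathbb S(F,\mathbb N)$ whose satisfaction expresses $y = \varphi_a(p)$ in terms of $a$, the coefficient tuple of $p$, and the intermediate partial-sum tuple. Consequently, the sentence
\[
\Psi \;\equiv\; \forall a \in \mathbb A \;\forall p,q \in F[x] \bigl[\,\varphi_a(p+q)=\varphi_a(p)+\varphi_a(q) \,\wedge\, \varphi_a(pq)=\varphi_a(p)\varphi_a(q) \,\wedge\, \varphi_a(x)=a\,\bigr]
\]
is a first-order statement over $\mathbb S(F,\mathbb N)$, which holds by the classical universal property of $F[x]$. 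Since $\mathcal M \equiv \mathbb S(F,\mathbb N)$, we have $\mathcal M \models \Psi$, and translating this through the interpretations of $F[x]$ and of $\mathbb A$ in $\mathbb S(F,\mathbb N)$ yields that $\varphi$ is an algebra homomorphism $\nonstandardmodel{F[x]}{\mathcal M} \to \widetilde{\mathbb A}$ sending $x$ to $a$.

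\emph{Uniqueness and main obstacle.} Suppose $\psi : \nonstandardmodel{F[x]}{\mathcal M} \to \widetilde{\mathbb A}$ is any algebra homomorphism with $\psi(x) = a$. Since $F[x]$ is finitely generated as an $F$-algebra, Lemma~\ref{le:nonstd_homomorphism_general} applies and guarantees that $\psi$ commutes with nonstandard sums and nonstandard products. Writing $p = \sum_{i \le n} p_i x^i$ via~\eqref{eq:nstd_poly_desc} and applying $\psi$ term by term,
\[
\psi(p) \;=\; \sum_{i \le n} p_i \cdot \psi(x)^i \;=\; \sum_{i \le n} p_i \cdot a^i \;=\; \varphi(p),
\]
so $\psi = \varphi$. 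The technically delicate point of the whole argument is the uniform definability claim in the previous paragraph: one must carefully assemble nonstandard exponentiation and nonstandard summation into a single formula of $\mathbb S(F,\mathbb N)$ expressing the evaluation map for all parameters $a$ and all polynomials $p$ (including those of nonstandard degree), so that the first-order homomorphism identities in $\mathbb S(F,\mathbb N)$ transfer faithfully to $\mathcal M$ and hence to $\nonstandardmodel{F[x]}{\mathcal M}$ and $\widetilde{\mathbb A}$.
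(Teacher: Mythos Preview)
Your proof is correct and arrives at the same conclusion as the paper, but the verification of the homomorphism property follows a different route. The paper, after using Lemma~\ref{le:nonstd_homomorphism_general} to force the formula $\varphi(\sum p_i x^i)=\sum p_i a^i$, checks the multiplicativity by a direct algebraic computation inside the nonstandard model: it expands $\varphi(pq)$ using the explicit product formula~\eqref{eq:nstd_poly_sum_product}, rearranges the nonstandard double sum, and recognizes the result as $\varphi(p)\varphi(q)$. You instead package the entire homomorphism identity into a single first-order sentence $\Psi$ over $\mathbb S(F,\mathbb N)$ (relying on the uniform definability of the evaluation map), observe that $\Psi$ holds there by the classical universal property of $F[x]$, and transfer it to $\mathcal M$ by elementary equivalence. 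Your approach is arguably cleaner and more in the spirit of the rest of the paper (which repeatedly emphasizes the transfer principle in Remarks~\ref{re:terms_generalization} and~\ref{re:nonstd_tuples}), and it avoids the manual manipulation of nonstandard sums; the paper's direct computation has the minor advantage of making explicit exactly which identities in $\nonstandardmodel{F[x]}{\mathcal M}$ are being used and of not depending on the uniform-definability claim you flag as the delicate point. For uniqueness both proofs invoke Lemma~\ref{le:nonstd_homomorphism_general} in the same way.
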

\begin{proof}
By Lemma~\ref{le:nonstd_homomorphism_general}, such homomorphism $\varphi$ must be nonstandardly additive and multiplicative. It follows that uniquely
\[
\varphi\left(\sum_{i=0}^np_ix^i\right)=\sum_{i=0}^np_ia^i.
\]
Then
\[
\begin{split}
\varphi(pq)&=\varphi\left(\left(\sum_{i=0}^np_ix^i\right)\left(\sum_{j=0}^mq_jx^j\right)\right)
=\varphi\left(\sum_{k=1}^{n+m}x^k\sum_{i+j=k}p_iq_j\right)\\
&=\sum_{k=1}^{n+m}a^k\sum_{i+j=k}p_iq_j=
\left(\sum_{i=0}^np_ia^i\right)\left(\sum_{j=0}^mq_ja^j\right)
=\varphi(p)\varphi(q).
\end{split}
\]
By a similar, simpler check we see $\varphi(p+q)=\varphi(p)+\varphi(q)$. This shows that the above defined $\varphi$ is indeed a homomorphism.
\end{proof}
By Lemma~\ref{le:nonstd_homomorphism_general} it follows that under the same assumptions, every mapping $x\mapsto a\in \widetilde{\mathbb A}$ extends uniquely to a nonstandard algebra homomorphism $\nonstandardmodel{F[x]}{\mathcal M}\to\widetilde{\mathbb A}$.

Appropriate universal properties similarly hold for polynomials in several variables (Section~\ref{se:multi_var}), and for Laurent polynomials (Section~\ref{se:laurent_single}, \ref{se:laurent_multi}).

\subsubsection{Algebraic properties} Now we go over some features of nonstandard polynomials. The general sentiment is that we can treat nonstandard polynomials like we would the usual kind, as long the respective features can be expressed in the first-order language. However, the properties whose expression requires finite tuples of polynomials (that is, requires weak second-order language) only carry over to the nonstandard case if ``finite'' is replaced with ``nonstandard,'' for example, like in Proposition~\ref{le:ufd} below.

In the first-order formulas below we use expressions that, while do not belong in the language, are straightforwardly definable. For example, ``$d| p$'' below can be expressed by the formula $\exists q\, p=qd$. Of particular importance is the observation
made in Remark~\ref{re:nonstd_tuples}, applied to the case of second-order formula over $F[x]$; that is, that quantification over tuples becomes quantification over nonstandard tuples when we consider $\nonstandardmodel{F[x]}{\mathcal M}$.

For the remainder of this section, we consider the ring of nonstandard polynomials $\nonstandardmodel{F[x]}{\mathcal M}$, $\mathcal M\equiv \mathbb S(F,\mathbb N)$. By $\nstdN$ we denote the numbers sort of $\mathcal M$. Recall that by Remark~\ref{re:F_interpretable_in_N} in the case when $F$ is invertibly interpretable in $\mathbb N$, such $\mathcal M$ is uniquely defined by $\nstdN$ and can be explicitly described as $\mathcal M\cong \mathbb S(\widetilde F,\nstdN)$.
\begin{lemma}\label{le:remainder}
For every $p,q\in \nonstandardmodel{F[x]}{\mathcal M}$, $q\neq 0$, there is $s,r\in \nonstandardmodel{F[x]}{\nstdN}$ s.t. $p=qs+r$ with $\deg r<\deg q$.
\end{lemma}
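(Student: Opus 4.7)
The plan is to observe that the classical division algorithm for $F[x]$ can be phrased as a first-order sentence in the language of rings, and then transfer it to $\nonstandardmodel{F[x]}{\mathcal M}$ by elementary equivalence. The nontrivial input, namely the definability of degree comparison, has already been carried out in Lemma~\ref{le:deg_and_value}\eqref{le:deg_and_value_b} and its corollary, which furnishes a formula $\varphi_{\mathrm{lower}}(r,q)$ in the pure language of rings such that $F[x] \models \varphi_{\mathrm{lower}}(r,q)$ iff $\deg r < \deg q$.

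With $\varphi_{\mathrm{lower}}$ in hand, the division statement is captured by the first-order sentence
\[
\Phi \;:=\; \forall p\,\forall q\,\bigl[\,q \neq 0 \;\rightarrow\; \exists s\,\exists r\,\bigl(p = qs + r \,\wedge\, \varphi_{\mathrm{lower}}(r,q)\bigr)\bigr].
\]
Classical polynomial long division (plus the convention that $\deg 0 < \deg q$ for every nonzero $q$, which is consistent with how $\varphi_{\mathrm{lower}}$ was set up in Lemma~\ref{le:deg_and_value}) shows $F[x] \models \Phi$. Next I would invoke Corollary~\ref{co:Fx_elem_equiv_list_restate}, which yields $\nonstandardmodel{F[x]}{\mathcal M} \equiv F[x]$. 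In particular $\nonstandardmodel{F[x]}{\mathcal M} \models \Phi$, which is precisely the assertion of the lemma (with the $s,r$ landing in $\nonstandardmodel{F[x]}{\mathcal M}$, as the statement intends).

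There is essentially no obstacle here beyond citing the right prerequisites; the whole content of the lemma is that the division algorithm is a first-order property, and the work of making "$\deg r < \deg q$" first-order was already done upstream. If one instead preferred a constructive argument that avoids appealing to elementary equivalence, the natural alternative would be nonstandard induction on $\deg p$ inside $\nstdN$: the base case $\deg p < \deg q$ is trivial with $s=0,\, r=p$, and the inductive step subtracts $(p_n/q_m)\,x^{n-m}\,q$, which, using the definable extraction of leading coefficients and the ring operations on coefficient tuples described in \eqref{eq:nstd_poly_sum_product}, strictly decreases the degree; since the set of degrees for which division fails would be a definable subset of $\nstdN$ without a least element, induction forces it to be empty. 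Either route works, but the elementary-equivalence argument is substantially shorter and is the one I would present.
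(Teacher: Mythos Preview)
Your proposal is correct and matches the paper's own proof essentially verbatim: the paper simply notes that $\deg r<\deg q$ is definable by Lemma~\ref{le:deg_and_value} and then invokes $\nonstandardmodel{F[x]}{\mathcal M}\equiv F[x]$. Your write-up just spells out the first-order sentence $\Phi$ explicitly and names the corollary supplying the elementary equivalence, which is fine.
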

\begin{proof}
The relation $\deg r<\deg q$ is definable by Lemma~\ref{le:deg_and_value}. Then the statement follows since $\nonstandardmodel{F[x]}{\mathcal M}\equiv F[x]$.
\end{proof}

\begin{lemma}\label{le:bezout}
If for $p,q\in \nonstandardmodel{F[x]}{\mathcal M}$ we have $\gcd(p,q)=1$, then there are polynomials $a,b\in \nonstandardmodel{F[x]}{\mathcal M}$ s.t. $a p+b q = 1$.
\end{lemma}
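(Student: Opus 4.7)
The plan is to transfer B\'ezout's identity from the standard ring $F[x]$ to $\nonstandardmodel{F[x]}{\mathcal M}$ via elementary equivalence. By Corollary~\ref{co:Fx_elem_equiv_list_restate}, we have $\nonstandardmodel{F[x]}{\mathcal M}\equiv F[x]$, so any first-order sentence in the language of rings true in $F[x]$ also holds in $\nonstandardmodel{F[x]}{\mathcal M}$. It therefore suffices to express the statement of the lemma as a single first-order sentence.

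To that end, I would introduce the auxiliary formulas $\mathrm{unit}(d):=\exists e\ de=1$ (being a unit, i.e., a nonzero constant) and $d\mid p:=\exists s\ p=sd$ (divisibility). The hypothesis $\gcd(p,q)=1$ then corresponds, in the language of rings, to the formula
\[
\phi(p,q)\;:=\;\forall d\ \bigl[(d\mid p\,\wedge\, d\mid q)\to \mathrm{unit}(d)\bigr],
\]
which expresses that every common divisor of $p$ and $q$ is a unit. The lemma itself is then the first-order sentence
\[
\sigma\;:=\;\forall p,q\ \bigl[\phi(p,q)\to \exists a,b\ ap+bq=1\bigr].
\]
In the classical ring $F[x]$, which is a Euclidean (hence PID) domain, the sentence $\sigma$ holds by the usual B\'ezout identity obtained from the Euclidean algorithm. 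Applying elementary equivalence, $\nonstandardmodel{F[x]}{\mathcal M}\models \sigma$, and this is exactly the conclusion of the lemma.

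As a sanity check, one could instead try to imitate the Euclidean algorithm directly inside $\nonstandardmodel{F[x]}{\mathcal M}$, using Lemma~\ref{le:remainder} repeatedly. The main obstacle in that approach would be producing a nonstandard-length sequence of remainders and controlling its termination: the algorithm would need to run for a nonstandard number of steps indexed by some $n\in\nstdN$, and termination would follow from the fact that the sequence of degrees $\deg r_0>\deg r_1>\cdots$ is a definable strictly decreasing sequence in $\nstdN$, hence by nonstandard induction attains a minimum. This direct argument is possible but more delicate; the transfer route via $\sigma$ is cleaner and will be the one I carry out in the proof.
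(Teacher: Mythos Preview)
Your proposal is correct and follows essentially the same approach as the paper: express B\'ezout's identity as a single first-order sentence in the language of rings and transfer it from $F[x]$ to $\nonstandardmodel{F[x]}{\mathcal M}$ by elementary equivalence. Your formulation of the coprimality hypothesis with $\wedge$ is in fact cleaner than the paper's (which writes $\vee$, apparently a typo), and your added remark about the alternative Euclidean-algorithm route is accurate but, as you note, unnecessary here.
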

\begin{proof}
This can be expressed by a first order formula:
\[
\forall p,q\ \left(\forall d\ d|p \vee d|q \rightarrow d\ \mathrm{invertible}\right)\rightarrow \exists a, b\ a p+b q = 1.
\]
\end{proof}

Nonstandard analogue of Lemma~\ref{le:bezout} takes place.
\begin{lemma}\label{le:nstd_bezout}
If for a nonstandard tuple of polynomials $P$ we have $\gcd(P)=1$, then there is a nonstandard tuple of polynomials $A$ s.t. $\sum A P= 1$.
\end{lemma}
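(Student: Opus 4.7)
The plan is to use the observation from Remark~\ref{re:nonstd_tuples}: since $F[x]$ is regularly bi-interpretable with $\mathbb S(F,\mathbb N)$, quantification over \emph{finite} tuples of polynomials in the weak second-order theory of $F[x]$ translates, via elementary equivalence, to quantification over \emph{nonstandard} tuples of polynomials in $\nonstandardmodel{F[x]}{\mathcal M}$. So the strategy is to express the classical multi-variable Bezout identity as a first-order sentence $\Phi$ over $\mathbb S(F,\mathbb N)$, verify it in the standard model, and transfer it to $\mathcal M$.

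First I would record the two ingredients that need to be first-order definable. The predicate ``$\gcd(P) = 1$'' for a tuple $P$ of polynomials translates to $\forall d\ [(\forall i\le \ell(P)\ d\mid P_i) \to d \text{ is a unit}]$, which is first-order over the list superstructure (divisibility and invertibility are first-order in $F[x]$, which in turn is interpreted in $\mathbb S(F,\mathbb N)$). The expression $\sum A P$ requires the coordinate-wise product tuple $AP = (A_1P_1,\ldots,A_nP_n)$ (definable, since multiplication in $F[x]$ is definable coordinate-wise on tuples) followed by the nonstandard sum function $\widetilde{+}$ on tuples of polynomials supplied by Lemma~\ref{le:arb_summation} applied to the definable binary operation of polynomial addition (here using that $\nonstandardmodel{F[x]}{\mathcal M}$ embeds as a definable sort in a structure elementary equivalent to $\mathbb S(F[x],\mathbb N)$, which by Proposition~\ref{le:tuples_tuples} is already encoded in $\mathcal M$).

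Next, I would write the sentence
\[
\Phi \;=\; \forall P\,\Bigl[\bigl(\forall d\ (\forall i\le \ell(P)\ d\mid P_i) \to d \text{ unit}\bigr) \to \exists A\ \ell(A)=\ell(P)\wedge \widetilde{+}(AP)=1\Bigr],
\]
where $P, A$ range over the list sort of tuples of polynomials. To verify $\mathbb S(F,\mathbb N)\models \Phi$, I would apply the classical Bezout identity for finite tuples in a Euclidean domain: given $P=(P_1,\ldots,P_n)\in F[x]^n$ with $\gcd(P)=1$, iterate Bezout in two variables — set $d_2=\gcd(P_1,P_2)=a_1P_1+a_2P_2$, then $d_3=\gcd(d_2,P_3)=b_3d_2+c_3P_3$, and so on, producing $A=(A_1,\ldots,A_n)\in F[x]^n$ with $\sum A_iP_i=1$. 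This yields $\Phi$ in $\mathbb S(F,\mathbb N)$.

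Finally, by elementary equivalence $\mathcal M\equiv \mathbb S(F,\mathbb N)$, the same sentence $\Phi$ holds in $\mathcal M$. Interpreting $\Phi$ there, every nonstandard tuple $P$ of polynomials with $\gcd(P)=1$ admits a nonstandard tuple $A$ with $\sum AP=1$, which is precisely the claim. The main obstacle I would want to be careful about is the bookkeeping at the translation step: one must check that the informal objects ``nonstandard tuple of polynomials'' and ``nonstandard sum $\sum AP$'' correspond to the list sort and the definable iterated sum in the list superstructure that interprets $\mathbb S(F[x],\mathbb N)$ over $\mathcal M$, so that $\Phi$'s truth in $\mathcal M$ really amounts to the stated conclusion about $\nonstandardmodel{F[x]}{\mathcal M}$. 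The $\gcd$ and summation clauses being expressible at the first-order level of $\mathbb S(F,\mathbb N)$ — not just at the weak second-order level — is exactly what makes the transfer work.
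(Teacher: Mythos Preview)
Your proposal is correct and follows essentially the same approach as the paper: the paper's proof simply writes down the first-order sentence
\[
\forall P\ \left(\forall d\ (\forall i\le\ell(P)\ d\mid P_i) \rightarrow d\ \mathrm{invertible}\right)\rightarrow \exists A\ \ell(A)=\ell(P)\wedge \sum AP= 1
\]
over the list superstructure and appeals to elementary equivalence, which is exactly your strategy. Your version is more explicit about why the ingredients are first-order expressible and about the classical verification in $\mathbb S(F,\mathbb N)$, but the argument is the same.
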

\begin{proof}
This can be expressed by a first order formula:
\[
\forall P\ \left(\forall d\ (\forall i\le\ell(p)\ d|P_i) \rightarrow d\ \mathrm{invertible}\right)\rightarrow \exists A\ \ell(\alpha)=\ell(p)\wedge \sum AP= 1.
\]
\end{proof}

\begin{lemma}
The following takes place for all polynomials $p,q,r\in \nonstandardmodel{F[x]}{\mathcal M}$:
\begin{enumerate}[(a)]
    \item If $p|qr$, $\gcd(p,q)=1$, then $p|r$.
    \item If $p|r$, $q|r$, and $\gcd(p,q)=1$, then $pq|r$.
\end{enumerate}
\end{lemma}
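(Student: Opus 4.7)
The plan is to prove both items by transfer of first-order sentences, relying on Lemma~\ref{le:bezout} which is already established for $\nonstandardmodel{F[x]}{\mathcal M}$. Both statements are plainly first-order expressible in the ring language (divisibility $d\mid p$ abbreviates $\exists s\ p=ds$, and ``$\gcd(p,q)=1$'' abbreviates ``$\forall d\ (d\mid p\wedge d\mid q)\to d\text{ invertible}$''), so one can either invoke $\nonstandardmodel{F[x]}{\mathcal M}\equiv F[x]$ directly or reproduce the classical Bezout argument verbatim inside $\nonstandardmodel{F[x]}{\mathcal M}$.

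For item (a), the plan is to apply Lemma~\ref{le:bezout} to $(p,q)$, obtaining $a,b\in\nonstandardmodel{F[x]}{\mathcal M}$ with $ap+bq=1$; multiplying through by $r$ gives $apr+bqr=r$, and since $p$ divides both summands on the left (the second by hypothesis $p\mid qr$), it divides $r$. For item (b), write $r=ps$ using $p\mid r$; then $q\mid ps$, and $\gcd(p,q)=1$ allows us to apply part (a) to conclude $q\mid s$, i.e., $s=qt$, so that $r=pqt$ and hence $pq\mid r$. Each of these manipulations is a finite combination of ring operations, so no weak second-order machinery is needed — only the first-order content of Lemma~\ref{le:bezout}.

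There is essentially no obstacle: the classical proofs over $F[x]$ already consist only of finitely many ring operations together with one application of Bezout, and Lemma~\ref{le:bezout} has already been shown to transfer to $\nonstandardmodel{F[x]}{\mathcal M}$ by elementary equivalence. The only thing worth flagging is the temptation to phrase (a) with a ``nonstandard tuple'' version, but here we do not need it since the statement concerns only three polynomials; it is the plain first-order Bezout (Lemma~\ref{le:bezout}), not its nonstandard-tuple analogue (Lemma~\ref{le:nstd_bezout}), that is required. For completeness I would conclude by remarking that these items can equivalently be derived by writing the whole implication as a first-order sentence of $L(\text{rings})$ true in $F[x]$ and applying $\nonstandardmodel{F[x]}{\mathcal M}\equiv F[x]$ from Corollary~\ref{co:Fx_elem_equiv_list_restate}.
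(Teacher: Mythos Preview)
Your proposal is correct and takes essentially the same approach as the paper, which simply says ``As usual, follows by Lemma~\ref{le:bezout}.'' You have spelled out the standard Bezout argument that the paper leaves implicit, and your additional remark about direct transfer via elementary equivalence is a valid alternative justification.
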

\begin{proof}
As usual, follows by Lemma~\ref{le:bezout}.
\end{proof}

\begin{lemma}\label{le:principal}
Let $I\lhd \nonstandardmodel{F[x]}{\mathcal M}$ be finitely generated. Then $I$ is principal.
\end{lemma}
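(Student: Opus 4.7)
The plan is to reduce to the two-generator case and then apply Lemma~\ref{le:bezout}. I would proceed by induction on the standard natural number $n$ of generators. The base case $n=1$ is trivial. For the inductive step, suppose $I=(p_1,\ldots,p_{n+1})$. By the induction hypothesis, $(p_1,\ldots,p_n)=(q)$ for some $q\in\nonstandardmodel{F[x]}{\mathcal M}$, so $I=(q,p_{n+1})$. Thus it suffices to handle the two-generator case.

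For two generators $p,q$, the plan is to exhibit a gcd and then invoke Lemma~\ref{le:bezout} to show this gcd lies in $(p,q)$. The existence of a gcd in the nonstandard setting is immediate from elementary equivalence, because the sentence
\[
\forall p,q\,\exists d\,\bigl(d\mid p\wedge d\mid q\wedge \forall d'\,((d'\mid p\wedge d'\mid q)\to d'\mid d)\bigr)
\]
holds in the PID $F[x]$, hence in $\nonstandardmodel{F[x]}{\mathcal M}$. Pick such a $d$ and write $p=dp'$, $q=dq'$. Then $\gcd(p',q')=1$: any common divisor $e$ of $p'$ and $q'$ yields $de\mid p$ and $de\mid q$, forcing $de\mid d$, so $e$ is invertible. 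By Lemma~\ref{le:bezout}, there exist $a,b\in\nonstandardmodel{F[x]}{\mathcal M}$ with $ap'+bq'=1$, whence $ap+bq=d$. Thus $d\in(p,q)$, and since $d$ divides both $p$ and $q$ we get $(p,q)=(d)$, as desired.

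There is essentially no real obstacle here: the proof is a routine translation of the classical argument, and every ingredient (existence of a gcd, Bezout's identity, divisibility) is expressible by a first-order formula over $\nonstandardmodel{F[x]}{\mathcal M}$. The only subtlety worth flagging is that ``finitely generated'' is understood in the external, standard sense, i.e.\ by a standard natural number of generators; this is what makes the induction on $n$ legitimate. One should note in contrast that a nonstandardly generated ideal, i.e.\ one of the form $(P_1,\ldots,P_k)$ for a nonstandard tuple $P$ of length $k\in\nstdN$, would instead be handled by Lemma~\ref{le:nstd_bezout} and would give a ``nonstandardly principal'' conclusion, but that is outside the scope of the present statement.
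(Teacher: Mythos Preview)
Your proof is correct, but it takes a more elaborate route than the paper. The paper's argument is a one-line transfer: for each standard $n\in\mathbb N$, the statement ``every ideal generated by $n$ elements is principal'' is a single first-order sentence of the ring language (asserting the existence of a $q$ that lies in and divides every element of the ideal generated by $P_1,\ldots,P_n$), which holds in the PID $F[x]$ and hence in $\nonstandardmodel{F[x]}{\mathcal M}$ by elementary equivalence. No induction, no Bezout, no separate gcd argument.

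Your approach instead factors the argument through Lemmas~\ref{le:bezout} and the transferred existence of a gcd, reducing first to the two-generator case by (external) induction. This is perfectly valid and arguably more transparent about where the principal generator comes from --- namely, it is the gcd --- whereas the paper's transfer argument is nonconstructive. The trade-off is economy: the paper's method matches the pattern of the surrounding lemmas (write a sentence, transfer it), while yours does more work than is strictly needed. Your closing remarks about the external versus nonstandard notion of ``finitely generated'' are on point; the nonstandard version is indeed handled separately later in the paper as Proposition~\ref{pr:nstd_principal}.
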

\begin{proof}
This can be expressed by a family of first-order formulas $(n\in\mathbb N)$:
\[
\forall P_1,\ldots,P_n\ \exists q\ \forall T_1,\ldots,T_n\ \exists s\ T_1 P_1+\ldots+T_n P_n = s q.
\]
\end{proof}
Looking at the above formula, it is easy to observe that a stronger statement can be made by quantifying over a tuple instead of individual polynomials, see Proposition~\ref{pr:nstd_principal} below.

Note that $\nonstandardmodel{F[x]}{\mathcal M}$ is not a unique factorization domain unless $\nstdN\cong \mathbb N$. For example, for each $n>\mathbb N$, the polynomial $x^n$ is not a product of finitely many irreducible polynomials. However, it is not hard to see that $\nonstandardmodel{F[x]}{\mathcal M}$ is a unique ``nonstandard'' factorization domain in the following sense.
\begin{proposition}\label{le:ufd}
For each $q\in \nonstandardmodel{F[x]}{\mathcal M}$, $\mathcal M\equiv \mathbb S(F,\mathbb N)$, there is $n\in\nstdN$ and a nonstandard tuple $P=(P_1,\ndots,P_n)$, where each $P_i$ is irreducible ($1\le i\le n$) s.t. $q=\prod_{i=1}^n P_i$. Moreover, such factorization is unique up to a nonstandard permutation of factors and invertible factors.
\end{proposition}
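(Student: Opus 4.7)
The plan is to express both existence and uniqueness of irreducible factorization as first-order sentences of $\mathbb S(F,\mathbb N)$, note that they hold because classically $F[x]$ is a UFD, and then transfer via $\mathcal M\equiv\mathbb S(F,\mathbb N)$ using Remark~\ref{re:nonstd_tuples} so that ``tuple'' becomes ``nonstandard tuple''.

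For existence, I would first observe that irreducibility of $p\in F[x]$ is first-order definable by the usual formula $p\neq 0\wedge p \text{ not invertible}\wedge (\forall a,b)(p=ab\to a\text{ invertible}\vee b\text{ invertible})$. Applying Lemma~\ref{le:arb_summation} to multiplication in $F[x]$ (or rather to the algebra obtained via $F[x]\cong\Gamma(\mathbb S(F,\mathbb N))$ together with Proposition~\ref{le:tuples_tuples} which packages tuples of polynomials inside $\mathbb S(F,\mathbb N)$), the iterated product $P\mapsto \prod P$ of a list of polynomials is definable. Thus the sentence
\[
(\forall q)\,q\neq 0\to (\exists P)\,\bigl[(\forall 1\le i\le \ell(P))\,P_i\text{ is irreducible}\bigr]\wedge q=\textstyle\prod P
\]
is first-order over $\mathbb S(F,\mathbb N)$. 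It holds in $\mathbb S(F,\mathbb N)$ because $F[x]$ is a UFD. Transferring to $\mathcal M$ via elementary equivalence and reading off the interpretation (Remark~\ref{re:nonstd_tuples}), the quantifier $\exists P$ becomes a quantifier over the list sort of $\mathcal M$, which is exactly the set of nonstandard tuples; hence every $q\in\nonstandardmodel{F[x]}{\mathcal M}$ factors as $q=\prod_{i=1}^n P_i$ with $P$ a nonstandard tuple of irreducibles of length $n=\ell(P)\in\nstdN$.

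For uniqueness, I would use nonstandard permutations from Subsection~\ref{se:nonstd_permutations}: if $P$ and $Q$ are two irreducible factorizations of the same $q$, then they should differ by a permutation and a tuple of units. This is the first-order sentence
\[
(\forall P,Q,q)\,\bigl[\textstyle\prod P=q=\prod Q\wedge (\forall i)\,P_i,Q_i\text{ irreducible}\bigr]\to (\exists \sigma, U)\,\Phi(P,Q,\sigma,U),
\]
where $\Phi$ says $\ell(P)=\ell(Q)=\ell(U)=\ell(\sigma)$, $\sigma$ is a (nonstandard) permutation of $[1,\ell(P)]$ in the sense of formula~\eqref{eq:permutation}, each $U_i$ is invertible, and $Q_i=U_i\cdot P_{\sigma_i}$ for every $1\le i\le\ell(P)$. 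Again this holds in $\mathbb S(F,\mathbb N)$ by classical unique factorization and transfers to $\mathcal M$, giving uniqueness up to a nonstandard permutation and a nonstandard tuple of invertible factors.

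The only real obstacle is bookkeeping: quantifying over finite sequences of polynomials formally requires a list superstructure over $F[x]$, not over $F$. This is handled by Proposition~\ref{le:tuples_tuples} together with the bi-interpretation of Theorem~\ref{th:Fx_SFN_biint}, which lets us treat $\mathbb S(F[x],\mathbb N)$ as interpretable in $\mathbb S(F,\mathbb N)$, so the sentences above genuinely lie in the first-order language of $\mathbb S(F,\mathbb N)$. Once this packaging is in place, the proof reduces to classical UFD facts plus elementary-equivalence transfer.
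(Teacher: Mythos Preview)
Your proposal is correct and follows essentially the same approach as the paper: express existence and uniqueness of irreducible factorization as first-order sentences over the list superstructure, verify them classically using that $F[x]$ is a UFD, and transfer by elementary equivalence. The paper's proof differs only cosmetically---it pads the shorter factorization with $1$'s to equalize lengths before writing the uniqueness formula, whereas you fold $\ell(P)=\ell(Q)$ into the conclusion $\Phi$; both formulations hold in $\mathbb S(F,\mathbb N)$ and transfer identically.
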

\begin{proof}
Existence can be expressed by a first-order formula:
\[
\forall q\ \exists P\ \left(\prod P=q \wedge (t(P,f,i)\rightarrow f\mbox{ irreducible})\right).
\]
For uniqueness, if $\prod P=\prod S$ and $\ell(P)<\ell(S)$, we first consider the definable tuple $P'$ given by $\ell(P')=\ell(S)$, $i\le \ell(P)\rightarrow (t(P',f,i)\leftrightarrow t(P,f,i))$, $\ell(P)<i\le \ell(S)\rightarrow t(P',1,i).$ Now, without loss of generality, we assume $\ell(P)=\ell(S)$, and uniqueness of an irreducible decomposition can be expressed by a first-order formula (see Subsection~\ref{se:nonstd_permutations} concerning nonstandard permutations):
\[
 \begin{split}
 \forall P,S\ &\left(
 \ell(P)=\ell(S)\wedge \prod P = \prod S \wedge \forall i\forall f\ t(P\vee S,f,i) \rightarrow f\ \mathrm{irreducible}
 \right)
 \rightarrow\\
 &\exists A\ \prod A=1 \wedge \ell(A)=\ell(P)\\
 &\exists \mathrm{permutation\ }\sigma\ \forall 1\le i\le n\ \forall f\ t(P,A_if,i)\leftrightarrow t(S,f,\sigma_i).
 \end{split}
\]
\end{proof}

Notice that $\nonstandardmodel{F[x]}{\mathcal M}$ is not a Noetherian ring (unless $\nstdN\cong \mathbb N$). Indeed, for any $n\in\nstdN$, $n>\mathbb N$ we consider the principal ideals
 \[
 \langle x^n\rangle<\langle x^{n-1}\rangle<\langle x^{n-2}\rangle<\ldots.
 \]
The same observation also follows by~\cite{Bauval:1985} where it is shown that a Noetherian ring elementarily equivalent to $F[x]$ must necessarily be a polynomial ring.
$\nonstandardmodel{F[x]}{\mathcal M}$ is, however, nonstandardly Noetherian in the following sense.
\begin{proposition}
Suppose a formula $\Phi(p,n)$ defines, for each $n\in\nstdN$, a set $I_n$ that is an ideal of $\nonstandardmodel{F[x]}{\mathcal M}$, so that $I_{n_1}\subseteq I_{n_2}$ whenever $n_1\le n_2$. Then there is $m\in\nstdN$ s.t. $I_n=I_m$ for all $n\ge m$.
\end{proposition}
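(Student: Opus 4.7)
The plan is to reduce the statement to a first-order sentence over $\mathbb S(F,\mathbb N)$, verify that sentence in the standard model via Noetherianness of $F[x]$, and then transfer it back through elementary equivalence.

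Fix the formula $\Phi(p,n,\bar a)$ defining the chain $I_n$, with parameters $\bar a$ from $\nonstandardmodel{F[x]}{\mathcal M}$. Using the regular bi-interpretation $F[x]\cong\Gamma(\mathbb S(F,\mathbb N))$ from Theorem~\ref{th:Fx_SFN_biint}, translate $\Phi$ to a formula $\Phi^*(p^*,n,\bar a^*)$ in the language of $\mathbb S(F,\mathbb N)$, where $p^*,\bar a^*$ range over tuples of field elements (representing polynomials) and $n$ over the number sort. Now write the first-order sentence $\Psi_\Phi$ over $\mathbb S(F,\mathbb N)$ which says: for every parameter tuple $\bar a^*$, if the sets $I_n=\{p^*:\Phi^*(p^*,n,\bar a^*)\}$ are ideals for every $n$ in the number sort and $I_{n_1}\subseteq I_{n_2}$ whenever $n_1\le n_2$, then there exists $m$ in the number sort such that $I_n=I_m$ for all $n\ge m$. ``Being an ideal'', set inclusion, and equality of defined sets are all first-order expressible in $\mathbb S(F,\mathbb N)$ since $F[x]$ is interpreted there and $\Phi^*$ is fixed (no second-order quantification over formulas is needed).

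The crucial point is that $\mathbb S(F,\mathbb N)\models\Psi_\Phi$. Indeed, for any tuple $\bar a^*$ satisfying the hypotheses in the standard model, the corresponding $I_n$ form a genuine ascending chain of ideals in $F[x]$; since $F[x]$ is a principal ideal domain, hence Noetherian, the chain stabilizes at some $m\in\mathbb N$. Then $\mathcal M\equiv\mathbb S(F,\mathbb N)$ gives $\mathcal M\models\Psi_\Phi$. Specializing $\bar a^*$ to the translate of $\bar a$ and transferring back through $\Gamma$ produces the required $m\in\nstdN$.

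The main (mild) obstacle is purely bookkeeping: writing out ``$I_n$ is an ideal'', ``$I_{n_1}\subseteq I_{n_2}$'', and ``$I_n=I_m$'' uniformly in $n,m$ and $\bar a^*$ as first-order formulas of $\mathbb S(F,\mathbb N)$, together with the translation step under $\Gamma$. Once this is done, the argument is simply the Tarski-style transfer enabled by the bi-interpretation together with classical Noetherianness of $F[x]$.
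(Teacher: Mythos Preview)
Your proof is correct. The approach, however, differs from the paper's primary argument for this proposition. The paper argues directly inside $\nonstandardmodel{F[x]}{\mathcal M}$: using that degree is definable (Lemma~\ref{le:deg_and_value}) and that $\nstdN$ has least elements for definable subsets, it picks $d\in\bigcup_n I_n$ of minimal degree, and then division with remainder (Lemma~\ref{le:remainder}) shows $d\mid p$ for every $p\in\bigcup_n I_n$, so the union is the principal ideal $(d)$ and the chain stabilizes at any $m$ with $d\in I_m$.

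Your route---packaging the whole statement as a single first-order sentence over $\mathbb S(F,\mathbb N)$, verifying it in the standard model via Noetherianness of $F[x]$, and transferring---is exactly the alternative the paper mentions immediately after its proof and is the method it actually adopts for the multivariable version (Proposition~\ref{le:noetherity_multi}). The explicit least-degree argument has the advantage of producing the stabilizing generator $d$ and the index $m$ by a concrete definable recipe; your transfer argument is cleaner to write down and generalizes verbatim to $\nonstandardmodel{F[x_1,\ldots,x_k]}{\mathcal M}$, where no degree-based shortcut is available.
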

\begin{proof}
Consider the polynomial $d\in \cup I_n$ of lowest degree, which is expressed by the following formula:
\[
\exists m\, \Phi(d,m)\wedge \left[\forall p\ \forall n\ \Phi(p,n) \rightarrow \deg d\le \deg p\right].
\]
Note that the above can be expressed in the first-order language by Lemma~\ref{le:deg_and_value}.
By dividing with a remainder (Lemma~\ref{le:remainder}), it is easy to see that $d| p$ for all $p\in\cup I_n$, from which the statement follows. Notice that the lowest $m$ satisfying the conclusion of the lemma is defined by the formula:
\[
(\forall n\ge m\ \Phi(d,n))\wedge (\forall n<m\ \neg\Phi(d,n)).
\]
\end{proof}
Note that we could prove the above lemma by directly expressing nonstandard chain stabilization by a first-order formula, see Proposition~\ref{le:noetherity_multi} and its proof for details.

We can also consider nonstandardly generated ideals: for a tuple $P={(P_1,\ndots,P_n)}$, put
\[\langle P\rangle^\sim = \left\{q\in \nonstandardmodel{F[x]}{\nstdN}\left| \exists T\ \ell(T)=\ell(P) \wedge q=\sum T P\right.\right\}.
\]
It is worth noting that if a definable subset is an ideal, i.e., closed under linear combinations, then by induction it is also closed under nonstandard linear combinations.
\begin{proposition}\label{pr:nstd_principal}
Let $I\lhd \nonstandardmodel{F[x]}{\mathcal M}$ be nonstandardly generated by a nonstandard tuple $P$ of polynomials. Then $I$ is principal.
\end{proposition}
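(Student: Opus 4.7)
The plan is to lift the PID property of $F[x]$ to the nonstandard setting by expressing it as a first-order sentence of $\mathbb S(F,\mathbb N)$, in the spirit of Lemmas~\ref{le:nstd_bezout} and~\ref{le:principal} but without the coprimality hypothesis and with the fixed arity $n$ replaced by a quantifier over tuples. Since $F[x]$ is a PID, any ideal generated by a finite tuple $(P_1,\ldots,P_n)$ is principal, with generator $d=\gcd(P_1,\ldots,P_n)$ admitting a Bezout representation $d=\sum T_{0,i}P_i$ for some $T_0$ of length $n$.

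Concretely, via the interpretation $F[x]\cong \Gamma(\mathbb S(F,\mathbb N))$ of Theorem~\ref{th:Fx_SFN_biint}, I would consider the sentence
\[
\Psi\;\equiv\;\forall P\ \exists d\ \exists T_0\ \Bigl[\ell(T_0)=\ell(P)\ \wedge\ \sum T_0 P = d\ \wedge\ \forall i\le \ell(P)\ d\mid P_i\Bigr],
\]
which is first-order in $\mathbb S(F,\mathbb N)$ after translation; the sum $\sum T_0 P$ is available by Lemma~\ref{le:arb_summation} applied to the termwise product of $T_0$ and $P$, and divisibility is already first-order in $F[x]$. The sentence $\Psi$ holds in $\mathbb S(F,\mathbb N)$ by the preceding paragraph, hence also in $\mathcal M$ by elementary equivalence, and by Remark~\ref{re:nonstd_tuples} the same statement holds in $\nonstandardmodel{F[x]}{\mathcal M}$ with ``tuple'' replaced throughout by ``nonstandard tuple''.

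Applying $\Psi$ to the given nonstandard tuple $P$ produces $d\in \nonstandardmodel{F[x]}{\mathcal M}$ and a nonstandard tuple $T_0$ with $\ell(T_0)=\ell(P)$, $d=\sum T_0 P$, and $d\mid P_i$ for every $i\le \ell(P)$. The identity $d=\sum T_0 P$ gives $d\in \langle P\rangle^\sim$, so $\langle d\rangle \subseteq \langle P\rangle^\sim$. Conversely, since every $P_i$ lies in the definable ideal $\langle d\rangle$, the remark preceding the proposition about closure under nonstandard linear combinations implies $\sum T P\in \langle d\rangle$ for every nonstandard tuple $T$ with $\ell(T)=\ell(P)$, giving $\langle P\rangle^\sim\subseteq \langle d\rangle$. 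Hence $I=\langle P\rangle^\sim=\langle d\rangle$ is principal. The only real subtlety is the bookkeeping to verify that $\Psi$ is genuinely first-order after composing with $\Gamma$, but this is routine given the apparatus assembled in Section~\ref{se:tuples}.
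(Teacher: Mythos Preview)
Your proof is correct and follows essentially the same approach as the paper: express the principality of finitely generated ideals in $F[x]$ as a first-order sentence over $\mathbb S(F,\mathbb N)$ and transfer it by elementary equivalence. Your formulation is in fact more careful than the paper's one-line formula $\forall P\ \exists q\ \forall T\ \exists s\ (\ell(T)=\ell(P)\to \sum TP=sq)$, since you explicitly secure both containments by including the Bezout witness $T_0$ and the divisibility $d\mid P_i$, and then invoke the closure of the definable ideal $\langle d\rangle$ under nonstandard combinations.
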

\begin{proof}
This can be expressed by a first-order formula:
\[
\forall P\ \exists q\ \forall T\ \exists s \ \ell(T)=\ell(P)\rightarrow \sum TP = s q.
\]
\end{proof}

\subsection{Nonstandard polynomials in several variables}\label{se:multi_var}

Let $\{x_1,\ldots,x_k\}$ be a finite set of variables, and let $F$ be an infinite field.
Similarly to Section~\ref{se:single_var}, we introduce nonstandard models of $F[x_1,\ldots,x_k]$ by establishing a regular invertible interpretation in the list superstructure over $F$.
If, additionally, $F$ is regularly invertibly interpretable in $\mathbb N$, then this leads to a regular invertible interpretation with $\mathbb N$, and thus the nonstandard models are the rings $\nonstandardmodel{F[x_1,\ldots,x_k]}{\nstdN}$ (or $\nonstandardmodel{F[x_1,\ldots,x_k]}{\mathcal M}$, $\mathcal M\equiv \mathbb S(F,\nstdN)$ in the general case).

We fix a recursive enumeration of commutative monomials in variables $x_1,\ldots,x_k$ (for example, the shortlex enumeration of the respective exponent tuples).
Let $M_n$ stand for the $n$-th monomial.
As above, we encode this enumeration by first-order formulas, and express addition and multiplication of tuples $(a_0,\ldots,a_n)$ correspondingly to addition and multiplication of polynomials $\sum_{i\le n} a_iM_i$
In particular, sum and product of two nonstandard polynomials $p=\sum_{i\le n} a_iM_i$ and $q=\sum_{i\le m} b_iM_i$ take the familiar form
\begin{equation}\label{eq:multivar_sum}
(a_0,\ldots,a_n)+(b_0,\ldots,b_m)=(a_0+b_0,a_1+b_1,\ldots,a_{\max\{n,m\}}+b_{\max\{n,m\}}),
\end{equation}
where for convenience we set $a_i=0$ for $i>n$ and $b_i=0$ for $i>m$, and
\begin{equation}\label{eq:multivar_product}
(a_0,\ldots,a_n)\cdot(b_0,\ldots,b_m)=(c_0,\ldots,c_\ell), \quad c_k=\sum_{\substack{i\le n,j\le m\\ M_iM_j=M_k}}a_ib_j\mbox{ for } 0\le k\le \ell,
\end{equation}
where $\ell= \max\{k\mid M_k=M_iM_j, i\le n, j\le m\}$.

This allows us to interpret $F[x_1,\ldots,x_k]\cong \Gamma_0(\mathbb S(F,\mathbb N))$, and therefore to consider the nonstandard version of $F[x_1,\ldots,x_k]$, $\Gamma_0(\mathcal M)\cong \nonstandardmodel{F[x_1,\ldots,x_k]}{\mathcal M}$, where $\mathcal M\equiv \mathbb S(F,\mathbb N)$. Namely, a nonstandard tuple $(a_1,\ldots,a_n)$ is viewed as a tuple of coefficients for a nonstandard polynomial $p(x_1,\ldots,x_k)=\sum_{i\le n} a_iM_i$, with operations given by formulas~\eqref{eq:multivar_sum} and~\eqref{eq:multivar_product}. Note that the case $k=1$ is consistent with the definition in Section~\ref{se:single_var}. If, additionally, $F$ is regularly invertibly interpretable in $\mathbb N$, this gives a regular invertible interpretation $F[x_1,\ldots,x_k]\cong \Gamma(\mathbb N)$, leading to a nonstandard model $\Gamma(\nstdN)\cong \nonstandardmodel{F[x_1,\ldots,x_k]}{\nstdN}$. 

To make the nonstandard ring of polynomials more immediately approachable, we consider a more explicit interpretation. Express a polynomial $p(x_1,\ldots,x_k)$ as
\begin{equation}\label{eq:poly_multivar}
p=\sum_{i=1}^m a_i M_i=\sum_{i=1}^n a_i x_1^{n_{i1}}\cdots x_k^{n_{ik}},\quad m\in\mathbb N, a_i\in F, n_{ij}\in\mathbb N.
\end{equation}
To this polynomial, we associate a tuple
\begin{equation}\label{eq:multivar_interpretation}
(a_1,n_{11},\ldots,n_{1k},\ a_2,n_{21},\ldots,n_{2k},\ \ldots,\ a_m,n_{m1},\ldots,n_{mk}).
\end{equation}
This tuple is thought of as a pair of tuples $A=(a_1,\ldots,a_m), N=(n_1,\ldots,n_m)$, where $n_i$ is the index of $(n_{i1},\ldots,n_{ik})$ under enumeration of $k$-tuples in $\mathbb N$.
Using the approach of Sections~\ref{se:nonstandard_summation} and~\ref{se:summation_arb_superstructre}, it is straightforward to define a formula $\varphi_{\mathrm{red}}(A,N,A_{\mathrm{red}}, N_{\mathrm{red}})$ over $\mathbb S(F,\mathbb N)$ that says that the polynomial $p_{\mathrm{red}}$ encoded by $(A_{\mathrm{red}}, N_{\mathrm{red}})$ is a reduced form of the polynomial $p$ encoded by $(A,N)$, that $p_{\mathrm{red}}=p$ and the expression ~\eqref{eq:poly_multivar} for $p_{\mathrm{red}}$ has $M_i\neq M_j$ for $i\neq j$ and $0\neq a_i\in F$. Using the same approach, we can define addition~\eqref{eq:multivar_sum} and multiplication~\eqref{eq:multivar_product} by formulas $\varphi_+$ and $\varphi_\times$, respectively, over $\mathbb S(F,\mathbb N)$.

It is shown in~\cite{KharlampovichMyasnikov:2018a} that this interpretation gives rise to a regular bi-interpretation of $F[x_1,\ldots,x_k]$ with $\mathbb S(F,\mathbb N)$. If $F$ is invertibly regularly interpretable in $\mathbb N$, this ultimately gives a regular invertible interpretation of $F[x_1,\ldots,x_k]$ with $\mathbb N$, $F[x_1,\ldots,x_k]\cong \Gamma(\mathbb N)$. As a corollary of Theorem~\ref{th:complete-scheme}, like in the case of a single variable, either of the two invertible interpretations gives a description of rings elementarily equivalent to $F[x_1,\ldots,x_k]$.

We now record the above conclusions in the form of multivariable versions of Theorem~\ref{th:Fx_SFN_biint}--Corollary~\ref{co:Fx_elem_equiv} from Section~\ref{se:single_var}.

\begin{theorem}[\cite{KharlampovichMyasnikov:2018a}]\label{th:Fx_multi_SFN_biint}
Let $F$ be an infinite field, and let $\{x_1,\ldots,x_k\}$ be a finite set of variables. Then the polynomial ring $F[x_1,\ldots,x_k]$ is regularly bi-interpretable with $\mathbb S(F,\mathbb N)$.
\end{theorem}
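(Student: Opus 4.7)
The plan is to mirror the proof of Theorem~\ref{th:Fx_SFN_biint} (the single-variable case), adjusting for the presence of $k\ge 2$ indeterminates.

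First I would spell out the absolute interpretation $F[x_1,\ldots,x_k]\cong \Gamma_0(\mathbb S(F,\mathbb N))$. Fix a recursive enumeration $M_0,M_1,M_2,\ldots$ of the monomials in $x_1,\ldots,x_k$ (say shortlex on the exponent tuple). The multiplication table $(i,j)\mapsto k$ with $M_iM_j=M_k$ is computable, hence arithmetically definable, so it is $0$-definable in the number sort of $\mathbb S(F,\mathbb N)$. A polynomial is encoded by a tuple of coefficients $(a_0,\ldots,a_n)$ against $M_0,\ldots,M_n$ modulo the usual ``tail of zeroes'' equivalence, and the formulas~\eqref{eq:multivar_sum}, \eqref{eq:multivar_product} for sum and product become first-order expressible over $\mathbb S(F,\mathbb N)$ using the table together with the nonstandard summation machinery of Section~\ref{se:nonstandard_summation}.

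For the reverse interpretation $\mathbb S(F,\mathbb N)\cong \Delta(F[x_1,\ldots,x_k],\varphi)$, the idea is to reduce to the single-variable case of Lemma~\ref{le:Fx_mutual} by working inside a definable univariate subring. Call $s\in F[x_1,\ldots,x_k]$ \emph{admissible} if $s\notin F$ and $s-\alpha$ is irreducible for every $\alpha\in F$; this set is $L(F[x_1,\ldots,x_k])$-definable via the definable subfield $F$ and the set $\Irr$, and it contains every nonconstant linear polynomial, so $\varphi$ is nonempty. For admissible $s$ I would define the subring $F[s]$ by the formula
\begin{equation*}
p\in F[s]\iff \forall\alpha\in F\ \exists\beta\in F\ (s-\alpha)\mid (p-\beta),
\end{equation*}
whose correctness uses that $s$ is transcendental over $F$, that each $s-\alpha$ is prime, and that $F$ is infinite. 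Within $F[s]$, Lemmas~\ref{le:pair_t_tn} and~\ref{le:deg_and_value} go through essentially verbatim, since their proofs rest only on $s-\alpha$ being prime and $F$ being infinite. The tuple $(p_0,\ldots,p_n)$ is then coded by the pair $(\sum_i p_is^i, s^n)\in F[s]^2$, and the formulas interpreting $t(\bar p,a,k)$ and $\ell(\bar p)$ are the same as in the single-variable proof.

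The bi-interpretability is then handled exactly as in Theorem~\ref{th:Fx_SFN_biint}. For the composition $\Gamma_0\circ\Delta$, the coordinate map $(p(s),s^n)\mapsto p(s)\in F[x_1,\ldots,x_k]$ is definable on $\{(p,u)\mid p\in F[s]\wedge \Phi_{\mathrm{LP}}(s,u)\wedge \varphi_{\deg}(p,u)\}$ using the same formulas as in the single-variable case. For $\Delta\circ\Gamma_0$, a tuple $(a_0,\ldots,a_n)$ becomes, inside $F[x_1,\ldots,x_k]$, the pair $(\sum a_i s^i, s^n)$, and is then re-encoded as a list of its coefficients against the $M_j$'s; the return coordinate map is obtained by the same ``reverse change of indeterminate'' as in Theorem~\ref{th:Fx_SFN_biint}. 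The main obstacle is the multivariable form of Lemma~\ref{le:deg_and_value} together with the correctness of the $F[s]$-definition above. Both rely on the passage from divisibility by $s-\alpha$ to evaluation ``at $s=\alpha$''; in one variable this is transparent, whereas in several variables one must check that, for admissible $s$ (or, more cleanly, for linear $s$), the quotient $F[x_1,\ldots,x_k]/(s-\alpha)$ still detects membership in $F[s]$ correctly, and that polynomial identity in $s$ follows from pointwise agreement on the infinite field $F$. This is the technical core of the adaptation; the rest of the proof is a straightforward bookkeeping of the single-variable argument.
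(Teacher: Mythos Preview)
The paper does not give its own proof of this statement: it sets up the forward interpretation $\Gamma_0$ via the monomial enumeration exactly as you do, and then simply cites \cite{KharlampovichMyasnikov:2018a} for the reverse interpretation and for the bi-interpretability. So your proposal already goes further than the text.

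Your overall strategy---reduce to the single-variable machinery inside a definable subring $F[s]$---is the right one, and the definition of admissible $s$ together with your formula for $F[s]$ is reasonable (and you correctly flag its verification as the technical core). There is, however, a genuine gap in your treatment of the composite $\Gamma_0\circ\Delta$. In the single-variable proof the polynomial $q=\sum a_i x^i$ is encoded, via its coefficient tuple, as the pair $(\sum a_i s^i,s^n)$ with $s$ linear; since $F[s]=F[x]$, the projection $(p(s),s^n)\mapsto p(s)$ really is a surjective coordinate map onto $F[x]$. In several variables this fails: the polynomial $q=\sum_i a_i M_i$ is encoded as $(\sum_i a_i s^i,s^n)$, and $\sum_i a_i s^i$ lies in the proper subring $F[s]\subsetneq F[x_1,\ldots,x_k]$ and is in general unrelated to $q$. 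The map $(p(s),s^n)\mapsto p(s)$ is therefore not a coordinate map for $\Gamma_0\circ\Delta$ at all---its image is only $F[s]$. The correct coordinate map sends $(\sum_i a_i s^i,s^n)\mapsto \sum_i a_i M_i$, and defining \emph{this} by a single formula of $F[x_1,\ldots,x_k]$ is precisely the new work: after extracting each $a_i$ (your single-variable formulas do that) and producing $M_i$ definably in $i$ (via the interpreted arithmetic), you still have to perform the variable-length summation $\sum_{i\le n} a_i M_i$ inside the ring, which needs tuples of polynomials, not just tuples of field elements. This is available once $\Delta$ is in hand, but it is not ``the same formulas as in the single-variable case.'' The analogous remark applies to $\Delta\circ\Gamma_0$: the ``reverse change of indeterminate'' recovers $x$-coefficients from $s$-coefficients when $s$ is linear in one variable, whereas here you must recover $s$-coefficients from $M$-coefficients, again an iterated rather than a one-step construction.
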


\begin{corollary}\label{co:Fx_multi_elem_equiv_list}
Let $F$ be an infinite field.
Let $\Gamma_0$ be the code of a regular invertible interpretation of $F[x_1,\ldots,x_k]$ in $\mathbb S(F,\mathbb N)$, $F[x_1,\ldots,x_k]\cong \Gamma_0(\mathbb S(F,\mathbb N))$. Then every ring elementarily equivalent to $F[x_1,\ldots,x_k]$ has the form $\nonstandardmodel{F[x_1,\ldots,x_k]}{\mathcal M}\cong \Gamma_0(\mathcal M)$, where $\mathcal M\equiv \mathbb S(F,\mathbb N)$.
\end{corollary}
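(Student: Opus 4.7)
The plan is to derive Corollary~\ref{co:Fx_multi_elem_equiv_list} as a direct specialization of Theorem~\ref{th:complete-scheme} to the regular invertible interpretation $\Gamma_0$ furnished in the hypothesis, by invoking Theorem~\ref{th:Fx_multi_SFN_biint} only if needed to confirm that such $\Gamma_0$ exists. In other words, the existence of the regular invertible interpretation $F[x_1,\ldots,x_k]\cong \Gamma_0(\mathbb S(F,\mathbb N))$ is assumed in the statement, so no construction is required; what remains is to extract from this assumption the desired description of all elementarily equivalent rings.

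First, I would fix a right inverse regular interpretation $\Delta$ of $\mathbb S(F,\mathbb N)$ in $F[x_1,\ldots,x_k]$ supplied by the invertibility of $\Gamma_0$, namely a regular interpretation $\mathbb S(F,\mathbb N)\cong \Delta(F[x_1,\ldots,x_k],\psi)$ together with a formula $\theta$ defining the composite coordinate map as in Definition~\ref{de:reg_invert}. Since $\Gamma_0$ is regularly invertible, the hypotheses of Theorem~\ref{th:complete-scheme} are satisfied with $\mathbb A = F[x_1,\ldots,x_k]$ and $\mathbb B = \mathbb S(F,\mathbb N)$.

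Next, I would apply the equivalence \emph{(1)}$\Leftrightarrow$\emph{(2)} of Theorem~\ref{th:complete-scheme}. Given any ring $\widetilde R$ with $\widetilde R \equiv F[x_1,\ldots,x_k]$, condition \emph{(2)} of that theorem produces a structure $\mathcal M \equiv \mathbb S(F,\mathbb N)$ and a tuple $\bar b \in \varphi_0(\mathcal M)$ (where $\varphi_0$ is the parameter formula of $\Gamma_0$) such that $\widetilde R \cong \Gamma_0(\mathcal M,\bar b)$. Because $\Gamma_0$ is regular and $\mathcal M \equiv \mathbb S(F,\mathbb N)$, Proposition~\ref{pr:reg-int-elem-equiv} guarantees that $\Gamma_0(\mathcal M,\varphi_0)$ is well-defined, so we may drop the parameter and write $\widetilde R \cong \Gamma_0(\mathcal M) = \nonstandardmodel{F[x_1,\ldots,x_k]}{\mathcal M}$ in the notation of Definition~\ref{de:nonstandard_model}. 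Conversely, Proposition~\ref{pr:elem_equiv} yields that every such $\Gamma_0(\mathcal M)$ is indeed elementarily equivalent to $F[x_1,\ldots,x_k]$, giving the reverse inclusion and confirming the claimed form.

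I do not expect any genuine obstacle; the whole content of the corollary is a bookkeeping translation of Theorem~\ref{th:complete-scheme} into the notation $\nonstandardmodel{\cdot}{\cdot}$ of Definition~\ref{de:nonstandard_model}. The only subtlety worth flagging is that the statement hypothesizes merely that $\Gamma_0$ is a regular \emph{invertible} interpretation, whereas Theorem~\ref{th:Fx_multi_SFN_biint} produces a regular \emph{bi-}interpretation; since every half of a regular bi-interpretation is automatically regularly invertible, this mild discrepancy is harmless, and one could equally well restate the corollary as following directly from the bi-interpretation.
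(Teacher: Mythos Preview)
Your proposal is correct and matches the paper's approach: the corollary is stated without proof as a direct consequence of Theorem~\ref{th:complete-scheme} applied to the regular invertible interpretation supplied by Theorem~\ref{th:Fx_multi_SFN_biint}. One tiny citation quibble: the well-definedness of $\Gamma_0(\mathcal M,\varphi_0)$ (i.e., that all parameter choices yield isomorphic structures) is guaranteed by item~(3) of Theorem~\ref{th:complete-scheme} rather than by Proposition~\ref{pr:reg-int-elem-equiv}, which only asserts that each individual $\Gamma_0(\mathcal M,\bar u)$ is a well-defined $L$-structure elementarily equivalent to $F[x_1,\ldots,x_k]$.
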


\begin{corollary}
\label{co:bi-int_Fx_multi_N}
Let $F$ be a field regularly bi-interpretable with $\mathbb N$ (regularly invertibly interpretable in $\mathbb N$). Then the ring of polynomials $\langle F[x_1,\ldots,x_k];+,\cdot,0,1\rangle$ is regularly bi-interpretable with $\mathbb N$ (resp., regularly invertibly interpretable in $\mathbb N$), $F[x_1,\ldots,x_k]\cong \Gamma(\mathbb N)$.
\end{corollary}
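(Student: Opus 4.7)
The plan is to chain together two already-established regular (bi-)interpretations, passing through $\mathbb S(F,\mathbb N)$: namely the bi-interpretation $F[x_1,\ldots,x_k] \leftrightarrow \mathbb S(F,\mathbb N)$ from Theorem~\ref{th:Fx_multi_SFN_biint}, and a (bi-)interpretation $\mathbb S(F,\mathbb N) \leftrightarrow \mathbb N$ obtained from the hypothesis on $F$. Transitivity of regular interpretations (Lemma~\ref{le:int-transitivity}, in particular item~7 for the preservation of the regularity of the composition) then delivers the conclusion.

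First I would handle the bi-interpretation case, which is the cleaner of the two. The hypothesis $F \leftrightarrow \mathbb N$ lifts to a regular bi-interpretation $\mathbb S(F,\mathbb N) \leftrightarrow \mathbb N$ by Proposition~\ref{le:tuples_tuples}(b), which takes the interpretation of $F$ in $\mathbb N$ together with an arithmetical enumeration of tuples (Definition~\ref{de:arithmetic_enumeration}) to produce a regular bi-interpretation $\mathbb S(F,\mathbb N)\cong\Gamma_\mathcal E(\mathbb N)$. Composing this with the bi-interpretation $F[x_1,\ldots,x_k]\cong\Gamma_0(\mathbb S(F,\mathbb N))$ of Theorem~\ref{th:Fx_multi_SFN_biint} yields a regular interpretation $F[x_1,\ldots,x_k]\cong \Gamma_0\circ\Gamma_\mathcal E(\mathbb N)$; the definability of the coordinate maps for both factors (Definition~\ref{def:reg}) composes (as in Lemma~\ref{le:int-transitivity}) to give the definability required for the bi-interpretation in the forward direction. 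The reverse direction is obtained symmetrically, using the reverse interpretations provided by Theorem~\ref{th:Fx_multi_SFN_biint} and Proposition~\ref{le:tuples_tuples}(b).

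The parenthetical invertible case is analogous but requires an extra verification, since Proposition~\ref{le:tuples_tuples}(a) only guarantees a regular interpretation of $\mathbb S(F,\mathbb N)$ in $\mathbb N$ and does not explicitly address invertibility. Assume $F\cong\Gamma(\mathbb N,\varphi)$ with right inverse $\Delta$, and let $\Gamma'=\Gamma_\mathcal E$ be the interpretation of $\mathbb S(F,\mathbb N)$ in $\mathbb N$ from Proposition~\ref{le:tuples_tuples}(a). A right inverse $\Delta'$ for $\Gamma'$ is given by the trivial dimension-one interpretation of $\mathbb N$ in $\mathbb S(F,\mathbb N)$ via its $\mathbb N$-sort. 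To see that $\Gamma'\circ\Delta'(\mathbb S(F,\mathbb N))\cong \mathbb S(F,\mathbb N)$ is definable in $\mathbb S(F,\mathbb N)$, note that on the $F$-sort the required isomorphism reduces to the $F$-definable isomorphism $\Gamma\circ\Delta(F)\cong F$ (which witnesses invertibility for $F$), transported to $\mathbb S(F,\mathbb N)$ since $F$ is a definable sort; on the list sort, the isomorphism extends componentwise through the tuple-encoding predicates $t(s,a,i),\ell(s)$, which is first-order expressible in the list superstructure by Lemma~\ref{le:arb_summation} (and the general observation of Remark~\ref{re:terms_generalization}). Then composing this invertible interpretation with Theorem~\ref{th:Fx_multi_SFN_biint} as in the bi-interpretation case gives the desired $F[x_1,\ldots,x_k]\cong\Gamma(\mathbb N)$, regularly invertibly.

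The main obstacle is precisely this invertibility-lifting verification in the parenthetical case: the tuple-encoding step of Proposition~\ref{le:tuples_tuples}(a) is cheap to state but one must inspect the construction in order to extend the $F$-level definable isomorphism $\Gamma\circ\Delta(F)\cong F$ to the list sort in a first-order way over $\mathbb S(F,\mathbb N)$. Once this is done, composition of regular invertible interpretations yields a regular invertible interpretation by an argument parallel to the composition of regular bi-interpretations in Lemma~\ref{le:int-transitivity}, and the corollary follows.
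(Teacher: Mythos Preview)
Your approach is essentially the same as the paper's: the paper states this corollary without a separate proof, deriving it (as in the single-variable Corollary~\ref{co:bi-int_Fx_N}) by chaining Theorem~\ref{th:Fx_multi_SFN_biint} with the fact that $\mathbb S(F,\mathbb N)$ is regularly bi-interpretable (resp.\ regularly invertibly interpretable) with $\mathbb S(\mathbb N,\mathbb N)$ and then with $\mathbb N$; your use of Proposition~\ref{le:tuples_tuples} simply collapses the last two links into one.

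One small point on your invertible-case argument: with your choice $\Delta'=$ the $\mathbb N$-sort projection, the composite $\Gamma'\circ\Delta'$ never sees the original right inverse $\Delta$, so ``reduces to the $F$-definable isomorphism $\Gamma\circ\Delta(F)\cong F$'' is missing a bridge. What you need is that the $\mathbb N$-sort of $\mathbb S(F,\mathbb N)$ is \emph{definably} isomorphic to $\Delta(F)$ inside $\mathbb S(F,\mathbb N)$; this is true (build the isomorphism by a length-$n$ tuple recording the successor chain in $\Delta(F)$, exactly the kind of recursion the list structure affords), and once inserted your argument goes through. The paper does not spell this out either, so you are already giving more detail than the source.
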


\begin{corollary}\label{co:Fx_multi_elem_equiv}
Let $F$ be a field regularly invertibly interpretable in $\mathbb N$. Let $\Gamma$ be the code of an invertible interpretation of $F[x_1,\ldots,x_k]$ in $\mathbb N$, $F[x_1,\ldots,x_k]\cong \Gamma(\mathbb N)$. Then every ring elementarily equivalent to $F[x_1,\ldots,x_k]$ has the form $\nonstandardmodel{F[x_1,\ldots,x_k]}{\nstdN}\cong \Gamma(\nstdM)$, where $\nstdM\equiv \mathbb N$.
\end{corollary}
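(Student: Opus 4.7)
The plan is to derive the corollary as a direct combination of two prior results. First, from Corollary~\ref{co:bi-int_Fx_multi_N} I have that, under the hypothesis that $F$ is regularly invertibly interpretable in $\mathbb N$, the polynomial ring $F[x_1,\ldots,x_k]$ itself is regularly invertibly interpretable in $\mathbb N$. Concretely, such an interpretation is obtained by composing the regular bi-interpretation $F[x_1,\ldots,x_k] \cong \Gamma_0(\mathbb S(F,\mathbb N))$ furnished by Theorem~\ref{th:Fx_multi_SFN_biint} with a regular invertible interpretation of $\mathbb S(F,\mathbb N)$ in $\mathbb N$; the latter is built by lifting the interpretation $F \rightsquigarrow \mathbb N$ to list superstructures via Proposition~\ref{le:tuples_tuples} and then applying the bi-interpretation $\mathbb S(\mathbb N,\mathbb N) \cong \mathbb N$ of Proposition~\ref{le:list_bi-int}. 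Transitivity of regular invertible interpretations (Lemma~\ref{le:int-transitivity}, item~\ref{le:int7}, together with the observation on inverses) produces the code $\Gamma$ of the hypothesis.

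Second, I would invoke Theorem~\ref{th:complete-scheme} with $\mathbb A = F[x_1,\ldots,x_k]$ and $\mathbb B = \mathbb N$, applied to this regular invertible interpretation $F[x_1,\ldots,x_k] \cong \Gamma(\mathbb N)$ together with a right inverse regular interpretation $\mathbb N \cong \Delta(F[x_1,\ldots,x_k],\psi)$. The equivalence of items (1) and (2) of that theorem states precisely that an $L(F[x_1,\ldots,x_k])$-structure $\widetilde{\mathbb A}$ satisfies $\widetilde{\mathbb A} \equiv F[x_1,\ldots,x_k]$ if and only if $\widetilde{\mathbb A} \cong \Gamma(\nstdM,\bar b)$ for some $\nstdM \equiv \mathbb N$ and some tuple of parameters $\bar b$. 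Rewriting $\Gamma(\nstdM)$ via Definition~\ref{de:nonstandard_model} as $\nonstandardmodel{F[x_1,\ldots,x_k]}{\nstdM}$ yields exactly the stated conclusion.

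There is essentially no main obstacle here: all the substantive work has already been carried out in the single-variable analogue (Corollary~\ref{co:Fx_elem_equiv}), in the general completeness result for regular invertible interpretations (Theorem~\ref{th:complete-scheme}), and in the regular bi-interpretation $F[x_1,\ldots,x_k] \cong \Gamma_0(\mathbb S(F,\mathbb N))$ (Theorem~\ref{th:Fx_multi_SFN_biint}). The one place to take minor care is verifying that the composition of a regular bi-interpretation with a regular invertible interpretation remains regular invertible, but this follows from items~\ref{le:int2} and~\ref{le:int7} of Lemma~\ref{le:int-transitivity} by transporting the definable coordinate isomorphisms of the bi-interpretation through the second interpretation. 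Once this is in place, the corollary is a formal consequence of assembling the listed pieces.
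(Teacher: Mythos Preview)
Your proposal is correct and follows essentially the same route as the paper: the paper presents this corollary as the multivariable analogue of Corollary~\ref{co:Fx_elem_equiv}, obtained by composing the regular bi-interpretation of Theorem~\ref{th:Fx_multi_SFN_biint} with the interpretation of $\mathbb S(F,\mathbb N)$ in $\mathbb N$ (via Corollary~\ref{co:bi-int_Fx_multi_N}) and then invoking Theorem~\ref{th:complete-scheme}. Your explicit remark about why the composite interpretation remains regularly invertible is a useful clarification that the paper leaves implicit.
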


For the purpose of understanding the ring of nonstandard polynomials $\nonstandardmodel{F[x_1,\ldots,x_k]}{\mathcal M}$, even in the case when $F$ is regularly invertibly interpretable in $\mathbb N$, it is convenient to think of the tuples $A$ and $N$, rather than their ultimate number in $\mathbb N$.
Indeed, if $\nstdM\equiv \mathbb N$ and $\widetilde{F}\equiv F$ are the numbers sort and the field sort of $\mathcal M\cong \mathbb S(F,\mathbb N)$, respectively, then $A=(a_1,\ldots,a_m)$, where $m\in\nstdM$, $a_i\in \widetilde{F}$, is a nonstandard tuple from the list sort of $\mathcal M$,
and $N=(n_1,\ldots,n_m)$, where $m\in\nstdM$, $n_i\in\nstdM$.
This leads to the following form of elements of $\nonstandardmodel{F[x_1,\ldots,x_k]}{\mathcal M}\cong \Gamma_0(\mathcal M)$:
\begin{equation}\label{eq:nonstd_multivar}
\begin{split}
p=&\sum_{i=1}^m a_i M_i=\sum_{i=1}^m a_i x_1^{n_{i1}}\cdots x_k^{n_{ik}},\\
&(a_1,\ldots,a_m)\in \mathcal M,\
(n_{11},\ldots,n_{mk})\in \mathbb S(\nstdM,\nstdM),
\end{split}
\end{equation}
which is an expression immediately analogous to~\eqref{eq:poly_multivar}, with the difference that nonstandard tuples are used (and therefore variables $m,a_i,n_{ij}$ are allowed nonstandard values).
The formula $\varphi_{\mathrm{red}}$ above defines the reduced form of a nonstandard polynomial, that is, an expression~\eqref{eq:nonstd_multivar} with $M_i\neq M_j$ for $i\neq j$ and $0\neq a_i\in\widetilde{F}$. The formulas $\varphi_+$ and $\varphi_\times$ define addition and multiplication of nonstandard polynomials.

We can define summation and product over a nonstandard tuple of nonstandard polynomials, ultimately by Lemma~\ref{le:summation}. Observe that the polynomial $p=x_i$ corresponds to a pair of tuples $(a_1=1), (n_{11}=0,\ldots,n_{1i}=1,\ldots,n_{1k}=0)$. It is then a straightforward check that
\[
\prod_{j=1}^{n_1}x_1\cdots \prod_{j=1}^{n_k}x_k=x_1^{n_1}\cdots x_k^{n_k},
\]
where the products are taken in the sense of Lemma~\ref{le:summation}, and then that
\[
\sum_{i=1}^m a_i \prod_{j=1}^{n_{i1}}x_1\cdots \prod_{j=1}^{n_{ik}}x_k=\sum_{i=1}^n a_i x_1^{n_{i1}}\cdots x_k^{n_{ik}},
\]
where the sum and the products in the left hand side are taken in the sense of Lemma~\ref{le:summation}. Theorem~\ref{th:well-def} shows that the resulting ring is isomorphic to $\nonstandardmodel{F[x_1,\ldots,x_k]}{\mathcal M}$ defined previously.

Unique nonstandard factorization takes place for $\nonstandardmodel{F[x_1,\ldots,x_k]}{\mathcal M}$, similarly to Proposition~\ref{le:ufd}. We omit the nearly identical proof.
\begin{proposition}\label{le:ufd_multi}
For each $q\in \nonstandardmodel{F[x_1,\ldots,x_k]}{\mathcal M}$, there is $n\in\nstdN$ and a nonstandard tuple of irreducible polynomials $P$ s.t. $q=\prod P$. Moreover, such factorization is unique up to a nonstandard permutation of factors and invertible factors.
\end{proposition}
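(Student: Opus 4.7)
The plan is to proceed in exact parallel with the proof of Proposition~\ref{le:ufd}, leveraging the regular bi-interpretation $F[x_1,\ldots,x_k]\cong\Gamma_0(\mathbb S(F,\mathbb N))$ from Theorem~\ref{th:Fx_multi_SFN_biint} and the fact that $F[x_1,\ldots,x_k]$ is a classical UFD. The key conceptual observation is Remark~\ref{re:nonstd_tuples}: weak second-order statements about tuples of polynomials over $F[x_1,\ldots,x_k]$ translate to first-order statements about nonstandard tuples over $\nonstandardmodel{F[x_1,\ldots,x_k]}{\mathcal M}$. Thus it suffices to express both existence and uniqueness of irreducible factorization as first-order sentences in the language of $\mathbb S(F,\mathbb N)$ and then invoke the elementary equivalence $\mathcal M\equiv\mathbb S(F,\mathbb N)$.

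First I would note that the predicates ``$f$ is irreducible'', ``$f$ is invertible'', and the tuple-operations $\prod P$ and $\sum TP$ are all definable in $\mathbb S(F,\mathbb N)$ via $\Gamma_0$, the latter two by Lemma~\ref{le:arb_summation} applied to multiplication in $F[x_1,\ldots,x_k]$. For existence, since $F[x_1,\ldots,x_k]$ is a UFD, the sentence
\[
\forall q\ \exists P\ \bigl[q=\textstyle\prod P \,\wedge\, \forall i\,\forall f\,\bigl(t(P,f,i)\to f\text{ irreducible}\bigr)\bigr]
\]
holds in $\mathbb S(F,\mathbb N)$, hence in $\mathcal M$, which gives the existence claim in $\nonstandardmodel{F[x_1,\ldots,x_k]}{\mathcal M}$.

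For uniqueness, suppose $\prod P=\prod S$ with both $P$ and $S$ tuples of irreducibles. As in the single-variable case, if $\ell(P)<\ell(S)$ we first pad $P$ to a tuple $P'$ of the same length by appending $1$s (a definable operation). Then the sentence
\[
\begin{split}
\forall P,S\ &\bigl[\ell(P)=\ell(S)\,\wedge\,\textstyle\prod P=\prod S\,\wedge\,\forall i\,\forall f\,(t(P\vee S,f,i)\to f\text{ irreducible})\bigr]\\
&\to\exists A\,\exists \sigma\ \bigl[\textstyle\prod A=1\,\wedge\,\ell(A)=\ell(P)\,\wedge\,P(\sigma,\ell(P))\\
&\hphantom{\to\exists A\,\exists\sigma}\,\wedge\,\forall 1\le i\le\ell(P)\,\forall f\,\bigl(t(P,A_if,i)\leftrightarrow t(S,f,\sigma_i)\bigr)\bigr]
\end{split}
\]
(where $P(\sigma,n)$ is the permutation predicate from~\eqref{eq:permutation}) holds in $\mathbb S(F,\mathbb N)$ by the classical UFD property of $F[x_1,\ldots,x_k]$, and therefore in $\mathcal M$, giving uniqueness up to a nonstandard permutation and multiplication by invertible factors.

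The only real obstacle is bookkeeping: one must verify that the multivariate interpretation $\Gamma_0$ really does make all the ingredients (the reduced form of a polynomial, irreducibility, divisibility, and the tuple operations $\prod$ and its pointwise companions) first-order definable in $\mathbb S(F,\mathbb N)$, and that the two sentences above are genuinely satisfied in $\mathbb S(F,\mathbb N)$ in the classical sense. Both are routine: the former follows from the explicit formulas $\varphi_{\mathrm{red}},\varphi_+,\varphi_\times$ discussed just before Theorem~\ref{th:Fx_multi_SFN_biint}, and the latter is the standard UFD theorem for $F[x_1,\ldots,x_k]$. No new ideas beyond those used in Proposition~\ref{le:ufd} are required.
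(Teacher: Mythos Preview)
Your proposal is correct and follows exactly the approach the paper takes: the paper explicitly omits the proof, saying it is ``nearly identical'' to that of Proposition~\ref{le:ufd}, and your argument is precisely that nearly identical proof, transferring the single-variable first-order sentences for existence and uniqueness of factorization to the multivariate setting via the bi-interpretation of Theorem~\ref{th:Fx_multi_SFN_biint} and the elementary equivalence $\mathcal M\equiv\mathbb S(F,\mathbb N)$.
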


Noetherity behaves similarly to the single variable case. Namely, the ring is not Noetherian (via the same example), but is nonstandardly Noetherian for definable ideals.

\begin{proposition}\label{le:noetherity_multi}
Suppose a formula $\Phi(p,n)$ defines, for each $n\in\nstdN$, a set $I_n$ that is an ideal of $\nonstandardmodel{F[x_1,\ldots,x_k]}{\mathcal M}$. Then there is $m\in\nstdN$ s.t. $I_n=I_m$ for all $n\ge m$.
\end{proposition}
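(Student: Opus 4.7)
The plan is to reduce to Hilbert's basis theorem for $F[x_1,\ldots,x_k]$ and transfer via elementary equivalence, along the lines indicated in the remark following the single-variable counterpart. As in Proposition~\ref{le:principal} and the single-variable version of this statement, the implicit assumption is that the chain is ascending, $I_{n_1}\subseteq I_{n_2}$ whenever $n_1\le n_2$; without such a nesting hypothesis, the conclusion is false already in $F[x_1,\ldots,x_k]$.

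Fix the formula $\Phi(p,n,\bar y)$, where $\bar y$ collects all parameters (from the ring or the bi-interpretable superstructure). Using the bi-interpretation with $\mathbb S(F,\mathbb N)$ from Theorem~\ref{th:Fx_multi_SFN_biint}, one writes a single first-order sentence $\sigma_\Phi$:
\[
\begin{split}
\forall \bar y\ &\bigl[\,(\forall n)\ I_n(\bar y)\text{ is an ideal}\ \wedge\ (\forall n_1\le n_2)\ I_{n_1}(\bar y)\subseteq I_{n_2}(\bar y)\,\bigr]\\
&\qquad\to\ (\exists m)(\forall n\ge m)\ I_n(\bar y)=I_m(\bar y),
\end{split}
\]
where $I_n(\bar y)=\{p:\Phi(p,n,\bar y)\}$. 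Each conjunct unpacks into a first-order formula: closure of $I_n(\bar y)$ under $+$ and under multiplication by arbitrary ring elements is standard, while inclusion and equality of $I_n(\bar y)$ and $I_m(\bar y)$ are universal statements involving $\Phi$. The quantifiers over $n,m$ run through the number sort of $\mathbb S(F,\mathbb N)$, which is available precisely thanks to the bi-interpretation.

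Next, $\mathbb S(F,\mathbb N)\models\sigma_\Phi$ because $F[x_1,\ldots,x_k]$ is Noetherian by Hilbert's basis theorem, so every ascending chain of ideals stabilizes; this applies in particular to any chain defined by a first-order formula over any choice of parameters. By Corollary~\ref{co:Fx_multi_elem_equiv_list}, $\mathcal M\equiv\mathbb S(F,\mathbb N)$, hence $\mathcal M\models\sigma_\Phi$. Instantiating $\sigma_\Phi$ with the actual parameters of the given $\Phi$ and translating through the interpretation $\Gamma_0$ produces the desired $m\in\nstdN$ with $I_n=I_m$ for all $n\ge m$.

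The only delicate step is the uniform first-order formalization of $\sigma_\Phi$: one needs the bi-interpretation to quantify over $n,m\in\mathbb N$ inside the theory of the ring, and one must verify that ``$I_n(\bar y)$ is an ideal'' and ``$I_{n_1}(\bar y)\subseteq I_{n_2}(\bar y)$'' are genuinely expressible as single formulas for the fixed template $\Phi$. Once this translation is in place, transfer by elementary equivalence closes the argument, mirroring the general strategy that any first-order schema true of $F[x_1,\ldots,x_k]$ (with quantification over $\mathbb N$ permitted via the superstructure) automatically holds in its nonstandard models.
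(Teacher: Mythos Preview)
Your argument is essentially the same as the paper's: both observe that the ascending-chain hypothesis is implicit, package the stabilization statement as a single first-order sentence (with the ``is an ideal'' and ``$I_{n_1}\subseteq I_{n_2}$'' conditions in the antecedent and the existence of a stabilizing $m$ in the consequent), note it holds in the standard model by Hilbert's basis theorem, and transfer by elementary equivalence. The paper simply displays the formula without your additional commentary on parameters and the role of the bi-interpretation.
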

\begin{proof}
Stabilization of the ascending ideal chain given by the formula $\Phi(p,n)$ can be recorded by the following first-order formula:
\[
\begin{split}
 [&\forall n\ \{p\mid \Phi(p,n)\} \mbox{ is an ideal}\\
 \wedge &\forall n_1,n_2\ n_1\le n_2\rightarrow\forall p\ \Phi(p,n_1)\rightarrow \Phi(p,n_2)]\\
 \rightarrow &\exists m\ \forall n\ge m\ \forall p\ \Phi(p,n)\leftrightarrow \Phi(p,m).
\end{split}
\]
\end{proof}
Finally, we note that a universal property similar to the one described in Section~\ref{se:single_var_universal} holds for~$\nonstandardmodel{F[x_1,\ldots,x_k]}{\mathcal M}$. We omit the details, since the reasoning is almost identical.

\begin{theorem}\label{th:universal_nstd_multi}
Let $F$ be an infinite field, $F[x_1,\ldots,x_k]$ the ring of polynomials, and $\mathbb A$ an arbitrary commutative $F$-algebra with a regular interpretaion in $\mathbb S(F, \mathbb N)$. Let $\mathcal M\equiv\mathbb S(F, \mathbb N)$ and $\widetilde{\mathbb A}=\nonstandardmodel{\mathbb A}{\mathcal M}$. Then every mapping $x_i\mapsto a_i\in \widetilde{\mathbb A}$, $i=1,\ldots,k$, extends uniquely to an algebra homomorphism $\nonstandardmodel{F[x_1,\ldots,x_k]}{\mathcal M}\to\widetilde{\mathbb A}$.
\end{theorem}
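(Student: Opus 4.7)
The plan is to follow the same template as the proof of Theorem~\ref{th:universal_nstd_single}, replacing the single-variable monomial basis $\{x^n\}$ with the enumeration $\{M_n\}$ of commutative monomials in $x_1,\ldots,x_k$. First I would invoke Lemma~\ref{le:nonstd_homomorphism_general} applied to $\mathbb A_1 = F[x_1,\ldots,x_k]$ (which is finitely generated as an $F$-algebra by $x_1,\ldots,x_k$) and $\mathbb A_2 = \mathbb A$, to conclude that any algebra homomorphism $\varphi:\nonstandardmodel{F[x_1,\ldots,x_k]}{\mathcal M}\to \widetilde{\mathbb A}$ satisfies the nonstandard additivity and multiplicativity identities~\eqref{eq:nstd_homomorphism}.

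For uniqueness, given a prescribed assignment $x_i\mapsto a_i$ for $i=1,\ldots,k$, nonstandard multiplicativity forces
\[
\varphi\bigl(x_1^{n_1}\cdots x_k^{n_k}\bigr) = \varphi\Bigl(\prod_{j=1}^{n_1} x_1\cdots \prod_{j=1}^{n_k} x_k\Bigr) = \prod_{j=1}^{n_1}a_1\cdots \prod_{j=1}^{n_k}a_k = a_1^{n_1}\cdots a_k^{n_k}
\]
for arbitrary $(n_1,\ldots,n_k)\in \nstdN^k$, and then nonstandard additivity forces, for a nonstandard polynomial presented as in~\eqref{eq:nonstd_multivar},
\[
\varphi\Bigl(\sum_{i=1}^m a_i x_1^{n_{i1}}\cdots x_k^{n_{ik}}\Bigr) = \sum_{i=1}^m a_i\, a_1^{n_{i1}}\cdots a_k^{n_{ik}}.
\]
Here the sums and products on the right are taken in the nonstandard sense of Lemma~\ref{le:arb_summation} inside $\widetilde{\mathbb A}$, which is available since $\mathbb A$ is regularly interpreted in $\mathbb S(F,\mathbb N)$. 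This proves uniqueness and pins down the candidate map $\varphi$.

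For existence, I would verify that the formula displayed above defines a well-defined $F$-algebra homomorphism. Well-definedness (independence of the particular coefficient tuple representing a polynomial) comes from passing to the reduced form via $\varphi_{\mathrm{red}}$. Preservation of addition reduces, via formula~\eqref{eq:multivar_sum}, to coordinate-wise additivity of the summation operator from Lemma~\ref{le:arb_summation}. Preservation of multiplication is the genuine computation: using formula~\eqref{eq:multivar_product} and the commutative-monoid structure of the monomials under the enumeration $\{M_n\}$, one has
\[
\varphi(pq) = \sum_{k} \Bigl(\sum_{M_iM_j=M_k} a_ib_j\Bigr)\varphi(M_k) = \sum_{i,j} a_ib_j\,\varphi(M_i)\varphi(M_j) = \varphi(p)\varphi(q),
\]
where the middle equality uses that $\varphi(M_iM_j)=\varphi(M_i)\varphi(M_j)$ (already established) and the reordering is a nonstandard permutation, so it preserves the sum by the commutativity/associativity remarks in Subsection~\ref{se:summation_arb_superstructre}.

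The main obstacle is the multiplication check: unlike the single-variable case where $x^n \cdot x^m = x^{n+m}$ is transparent, here one must justify that the nonstandard double sum $\sum_{i\le m}\sum_{j\le m'} a_i b_j\, \varphi(M_i)\varphi(M_j)$ can be reindexed, via a definable (and hence nonstandard) bijection induced by the enumeration $\{M_n\}$, into the form prescribed by~\eqref{eq:multivar_product}. This is ultimately an instance of the associativity/commutativity of nonstandard summation described in Subsection~\ref{se:summation_arb_superstructre}; the only care needed is to write the relevant reindexing as a first-order definable tuple of tuples over $\mathbb S(F,\mathbb N)$ and then transfer the identity, which already holds in $F[x_1,\ldots,x_k]$, to $\nonstandardmodel{F[x_1,\ldots,x_k]}{\mathcal M}$ by elementary equivalence.
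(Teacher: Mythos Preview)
Your proposal is correct and is precisely the approach the paper takes: the paper omits the proof entirely, stating only that it ``essentially repeats that of Theorem~\ref{th:universal_nstd_single},'' and your write-up is exactly that repetition with the single-variable basis $\{x^n\}$ replaced by the enumerated monomial basis $\{M_n\}$. One cosmetic point: you use $a_i$ both for the assigned images of the variables $x_i$ and for the coefficients of the nonstandard polynomial, which makes the displayed formula for $\varphi$ ambiguous; renaming one of them (e.g.\ coefficients $p_i$ as in the single-variable proof) would clean this up.
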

We omit the proof since it essentially repeats that of Theorem~\ref{th:universal_nstd_single}.

\section{Laurent polynomials over a field}\label{se:laurent}
\subsection{Laurent polynomials in single variable}\label{se:laurent_single}
Let $F$ be an infinite field. It is straightforward to see that the ring of Laurent polynomials $\langle F[x,x^{-1}]; +,\cdot,0,1\rangle$ in a single variable can be regularly interpreted in $\mathbb S(F,\mathbb N)$ by associating a Laurent polynomial $a_{-m}x^{-m}+\ldots+a_nx^n$, $m,n\ge 0$, with a pair of tuples
\begin{equation}\label{eq:laurent_in_list}
(a_0,\ldots,a_n), (a_{-1},\ldots,a_{-m})
\end{equation}
(with ``tail of zeros'' equivalence relation). In the case when $F$ is regularly interpretable in $\mathbb N$, this further leads to a regular (in fact, absolute) interpretation $F[x,x^{-1}]\cong \Gamma(\mathbb N)$.

\begin{proposition}\label{pr:laurent_interpretation}
Let $F$ be an infinite field. Let $\Gamma_0$ be the code of regular interpretation of $F[x,x^{-1}]$ in $\mathbb S(F,\mathbb N)$, $F[x,x^{-1}]\cong \Gamma_0(\mathbb S(F,\mathbb N))$.
Then for every $\mathcal M\equiv \mathbb S(F,\mathbb N)$, the ring $\Gamma_0(\mathcal M)$ is elementarily equivalent to $F[x,x^{-1}]$.

If, additionally, $F$ is regularly interpretable in $\mathbb N$ and $\Gamma$ is the code of regular interpretation of $F[x,x^{-1}]$ in $\mathbb N$, $F[x,x^{-1}]\cong \Gamma(\mathbb N)$, then for every $\nstdM\equiv \mathbb N$, $\Gamma(\nstdM)$ is elementarily equivalent to $F[x,x^{-1}]$.
\end{proposition}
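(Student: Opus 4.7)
The plan is to observe that both parts of the proposition reduce to Proposition~\ref{pr:reg-int-elem-equiv}, once the relevant codes are shown to describe genuine regular interpretations. So my first task is to spell out the interpretation code $\Gamma_0$ described informally above the proposition. Concretely, I would take the universe of $\Gamma_0$ to be pairs $(\bar a,\bar b)\in \mathrm{List}(F,\mathbb N)\times\mathrm{List}(F,\mathbb N)$, where $\bar a=(a_0,\ldots,a_n)$ and $\bar b=(a_{-1},\ldots,a_{-m})$ encode the non-negative and negative parts of a Laurent polynomial, respectively. The equivalence relation $\varphi_\sim$ is two-sided tail-of-zeros: $(\bar a,\bar b)\sim(\bar a',\bar b')$ iff $\bar a$ and $\bar a'$ agree up to a trailing zero tail, and similarly for $\bar b$ and $\bar b'$. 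Addition $\varphi_+$ is interpreted coordinate-wise in each of the two tuples, following exactly the template of $F[x]$ in Lemma~\ref{le:Fx_mutual}. Multiplication $\varphi_\times$ is the Laurent convolution: for $p=\sum_{i=-m}^{n} a_i x^i$ and $q=\sum_{j=-m'}^{n'} b_j x^j$, we set $pq=\sum_{k=-(m+m')}^{n+n'} c_k x^k$ with $c_k=\sum_{i+j=k} a_ib_j$, and this is first-order expressible in $\mathbb S(F,\mathbb N)$ by the same nonstandard-summation technique employed for ordinary polynomials (Lemma~\ref{le:summation}, Section~\ref{se:nonstandard_summation}), merely bookkeeping two index ranges instead of one. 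The constants $0,1$ are interpreted by the pair of empty tuples and by $((1),\emptyset)$ respectively.

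Next, I would check that the admissibility conditions $\mathcal{AC}_{\Gamma_0}$ (in the sense of Section~\ref{se:interpretation}) are satisfied in $\mathbb S(F,\mathbb N)$: $\varphi_\sim$ is an equivalence relation, the operations respect it, and the quotient $\Gamma_0(\mathbb S(F,\mathbb N))$ is $L$-isomorphic to $F[x,x^{-1}]$ via $(\bar a,\bar b)\mapsto \sum_{i\ge 0} a_i x^i + \sum_{i\ge 1} a_{-i} x^{-i}$. This is purely a routine verification paralleling the $F[x]$ case. Note that no parameters are needed, so $\Gamma_0$ is an absolute interpretation, which is automatically a regular interpretation in the sense of Definition~\ref{def:regular_int}.

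With $\Gamma_0$ established as a regular interpretation, the first assertion is a direct application of Proposition~\ref{pr:reg-int-elem-equiv}: for every $\mathcal M \equiv \mathbb S(F,\mathbb N)$ the structure $\Gamma_0(\mathcal M)$ is well-defined and satisfies $\Gamma_0(\mathcal M)\equiv F[x,x^{-1}]$. For the second assertion, I would use transitivity of regular interpretations (Lemma~\ref{le:int-transitivity}). If $F$ is regularly interpretable in $\mathbb N$, then by Proposition~\ref{le:tuples_tuples} so is $\mathbb S(F,\mathbb N)$, say via a code $\Delta$; composing gives a regular interpretation $\Gamma:=\Gamma_0\circ\Delta$ of $F[x,x^{-1}]$ in $\mathbb N$. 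By Theorem~\ref{th:well-def-inN}, any other regular interpretation code of $F[x,x^{-1}]$ in $\mathbb N$ produces an isomorphic structure when evaluated on $\nstdM\equiv\mathbb N$, so fixing any such $\Gamma$ and applying Proposition~\ref{pr:reg-int-elem-equiv} once more yields $\Gamma(\nstdM)\equiv F[x,x^{-1}]$.

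The only mildly delicate step is writing the convolution formula $\varphi_\times$ in a way that uniformly handles the two sign ranges; but since we already have $\sum$ and $\prod$ extended to nonstandard tuples via Lemma~\ref{le:summation}, this is a direct adaptation of the formula used for $F[x]$. I do not expect a genuine obstacle: the entire content of the proposition is essentially the observation that the construction sketched in the paragraph preceding it really does yield an interpretation code, after which the two elementary-equivalence conclusions are formal consequences of the preservation result Proposition~\ref{pr:reg-int-elem-equiv} together with transitivity.
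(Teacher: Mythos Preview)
Your approach is correct and matches the paper's: the proposition is stated there without proof because both assertions are immediate instances of Proposition~\ref{pr:reg-int-elem-equiv} once one has a regular (here, absolute) interpretation code in hand, and your write-up supplies exactly that verification.

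One small caveat: your appeal to Theorem~\ref{th:well-def-inN} to argue that any regular interpretation of $F[x,x^{-1}]$ in $\mathbb N$ yields the same nonstandard structure requires $F[x,x^{-1}]$ to be finitely generated in the ring signature $\{+,\cdot,0,1\}$, which fails for general infinite $F$ (it holds only when $F$ itself is finitely generated, e.g.\ $F=\mathbb Q$). This detour is unnecessary anyway: the proposition concerns a fixed code $\Gamma$, and Proposition~\ref{pr:reg-int-elem-equiv} applies to it directly without any uniqueness statement.
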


However, we cannot take full advantage of our techniques unless this interpretation gives rise to a regular bi-interpretation, or at least a regular invertible interpretation.

For reverse interpretation, we follow the ideas of~\cite[Lemma 9]{KharlampovichMyasnikov:2018a}. We recall that the definable set of invertible Laurent polynomials is $\Inv=\{\alpha x^n\mid 0\neq \alpha \in F, n\in\mathbb Z\}$.
Then $F\subseteq F[x,x^{-1}]$ is definable as
\[
\{p\mid p\in \Inv\wedge [p=1\vee (p+1)\in \Inv \vee (p-1)\in \Inv]\}.
\]
(The disjunction in the square brackets is set up to accommodate the case of a finite characteristic.)
We can also define the set of irreducible Laurent polynomials as
\[
\Irr=\{p\mid \neg (p\in \Inv)\wedge \forall q\forall r\ p=qr\to [q\in \Inv\vee r\in \Inv]\}.
\]
Further, the formula
\[
p\in \Inv \wedge \forall 0\neq \alpha\in F\quad p-\alpha\in\Irr
\]
defines the set $X=\{\alpha x^{\pm 1}\mid 0\neq \alpha\in F\}$.

\begin{lemma}\label{le:Xpowers}
Let $a=\alpha x^\varepsilon \in X$, with $X$ as above. Then $a-1$ divides $\beta x^m-1$ in $F[x,x^{-1}]$, where $\beta\in F$, $m\in\mathbb Z$, if and only if $\beta=\alpha^{\varepsilon m}$.
\end{lemma}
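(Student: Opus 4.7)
\medskip
\noindent\textbf{Proof proposal.} The plan is to reduce the question of divisibility in $F[x,x^{-1}]$ to the usual polynomial divisibility in $F[x]$, where we can read off the answer by evaluating at a single point.

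First I would observe that units in $F[x,x^{-1}]$ are precisely $\Inv = \{\gamma x^k \mid 0 \neq \gamma \in F, k \in \Z\}$, so divisibility is unchanged when either side is multiplied by such a unit. If $\varepsilon = 1$, then $a - 1 = \alpha x - 1$ is already a polynomial, and in $F[x]$ it has the unique root $x = \alpha^{-1}$. If $\varepsilon = -1$, then
\[
a - 1 = \alpha x^{-1} - 1 = -x^{-1}(x - \alpha),
\]
so $a - 1$ is a unit multiple of the polynomial $x - \alpha$, whose root is $x = \alpha$. In either case the unique root of $a-1$ in $F$ (or in an extension) is $\alpha^{-\varepsilon}$.

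Next I would do the analogous normalization for $\beta x^m - 1$. If $m \geq 0$ this is already a polynomial; if $m < 0$, write $m = -k$ with $k > 0$ and observe
\[
\beta x^m - 1 = x^{-k}(\beta - x^k) = -x^{-k}(x^k - \beta),
\]
a unit multiple of the polynomial $x^k - \beta$. So in every case $\beta x^m - 1$ is associate in $F[x,x^{-1}]$ to a genuine polynomial $P(x) \in F[x]$ which is either $\beta x^m - 1$ (if $m \geq 0$) or $x^{-m} - \beta$ (if $m < 0$).

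Now $a-1$ divides $\beta x^m - 1$ in $F[x,x^{-1}]$ iff the corresponding polynomial associate of $a-1$ (which is linear, hence $F[x]$-irreducible) divides $P(x)$ in $F[x]$, iff the root $\alpha^{-\varepsilon}$ of $a-1$ is a root of $P$. A four-way case split on the signs of $\varepsilon$ and $m$ evaluates $P(\alpha^{-\varepsilon})$: for instance, when $\varepsilon = 1$ and $m \geq 0$ we need $\beta(\alpha^{-1})^m = 1$, i.e.\ $\beta = \alpha^m = \alpha^{\varepsilon m}$; when $\varepsilon = -1$ and $m < 0$ we need $\alpha^{-m} = \beta$, i.e.\ $\beta = \alpha^{-m} = \alpha^{\varepsilon m}$; the other two cases are identical in spirit. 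All four yield the same condition $\beta = \alpha^{\varepsilon m}$, which completes the proof.

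The steps are all routine; there is no real obstacle beyond keeping the bookkeeping of signs and the two branches ($\varepsilon = \pm 1$, $m \gtreqless 0$) straight. The one point worth stating explicitly is why $a-1$ is genuinely associate to a linear irreducible polynomial (so that unique-root evaluation applies), which is exactly the content of the unit factorization above.
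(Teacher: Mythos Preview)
Your argument is correct and complete; the four-way case analysis checks out. One small point you gloss over: when you pass from ``$L$ divides $P$ in $F[x,x^{-1}]$'' to ``$L$ divides $P$ in $F[x]$'' (where $L$ is your linear associate of $a-1$), you are implicitly using that $L(0)\neq 0$, so $L$ is coprime to $x$ in $F[x]$ and divisibility is unaffected by localizing at $x$. You have the ingredients for this (you note $L$ is irreducible and compute its constant term), but it is worth saying in one line.

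The paper takes a different, case-free route: it writes
\[
\beta x^m - 1 \;=\; (\beta - \alpha^{\varepsilon m})x^m \;+\; (\alpha^{\varepsilon m}x^m - 1)
\;=\; (\beta - \alpha^{\varepsilon m})x^m \;+\; (a^{\varepsilon m}-1),
\]
uses the standard identity $a-1\mid a^k-1$ (valid for all $k\in\Z$ since $a$ is a unit), and concludes that $a-1$ divides $\beta x^m-1$ iff it divides the constant $\beta - \alpha^{\varepsilon m}$, which forces that constant to vanish. This avoids the sign bookkeeping and the reduction to $F[x]$ entirely. Your root-evaluation approach is equally valid and perhaps more transparent from first principles, but the paper's subtraction trick is shorter and handles all $(\varepsilon,m)$ uniformly.
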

\begin{proof}
Suppose $a-1$ divides $\beta x^m -1$. Write
\[
\beta x^m -1=\beta x^m -\alpha^{\varepsilon m} x^{m}+\underbrace{\alpha^{\varepsilon m} x^{m}-1}_{a^{\varepsilon m}-1},
\]
therefore $a-1\mid x^m(\beta-\alpha^{\varepsilon m})$, from which the statement follows.
\end{proof}

As an immediate corollary, we get the following.
\begin{lemma}\label{le:X}
For every $a\in X$, the formula
\[
\varphi(p,a)=(p\in \Inv)\wedge a-1\mid p-1
\]
defines the set $\{a^m\mid m\in\mathbb Z\}$.
\end{lemma}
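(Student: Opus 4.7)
The plan is to verify the two inclusions between the set $\{a^m\mid m\in\mathbb Z\}$ and the truth set $\varphi(F[x,x^{-1}],a)$, relying essentially on the previous Lemma~\ref{le:Xpowers}.

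For the forward inclusion, fix $m\in\mathbb Z$ and observe that $a^m\in\Inv$ since $a\in\Inv$ and $\Inv$ is a multiplicative subgroup. The divisibility $a-1\mid a^m-1$ follows from the familiar telescoping identity: for $m\ge 0$ we have $a^m-1=(a-1)(a^{m-1}+\ldots+a+1)$, and for $m<0$ we rewrite $a^m-1=-a^m(a^{-m}-1)$ and apply the previous case together with the fact that $a^m$ is a unit. Hence $\varphi(a^m,a)$ holds.

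For the converse, suppose $p\in F[x,x^{-1}]$ satisfies $\varphi(p,a)$. Writing $a=\alpha x^\varepsilon$ with $\varepsilon\in\{-1,+1\}$ and using $p\in\Inv$, we may write $p=\beta x^m$ for some $0\ne\beta\in F$ and $m\in\mathbb Z$. Lemma~\ref{le:Xpowers} then forces $\beta=\alpha^{\varepsilon m}$. Using $\varepsilon^2=1$, we compute
\[
a^{\varepsilon m}=(\alpha x^\varepsilon)^{\varepsilon m}=\alpha^{\varepsilon m}x^{\varepsilon^2 m}=\alpha^{\varepsilon m}x^{m}=\beta x^m=p,
\]
so $p=a^{\varepsilon m}\in\{a^k\mid k\in\mathbb Z\}$, as required.

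There is no real obstacle here: once Lemma~\ref{le:Xpowers} is in hand, the only subtlety is keeping track of the sign $\varepsilon$ so that the exponent of $x$ matches up in both cases $\varepsilon=\pm 1$; this is handled uniformly by the identity $\varepsilon^2=1$, which automatically converts the description $\alpha^{\varepsilon m}x^m$ coming from Lemma~\ref{le:Xpowers} into the power $a^{\varepsilon m}$.
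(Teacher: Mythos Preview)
Your proof is correct and follows exactly the approach the paper intends: the paper itself gives no explicit proof, stating only that the lemma is ``an immediate corollary'' of Lemma~\ref{le:Xpowers}, and your argument is precisely the natural unpacking of that claim. The only remark is that your forward direction via telescoping could alternatively be read off directly from the ``if'' part of Lemma~\ref{le:Xpowers}, but either route is equally valid.
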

This allows a regular interpretation $\mathbb Z\cong \Delta(F[x,x^{-1}])$. Indeed, with $a$ as above, we note that for $m,n,k\in\mathbb Z$ we have
\[
\begin{split}
&m+n=k\iff a^m\cdot a^n=a^k,\\
&m\mid n\iff (a^m-1)\mid (a^n-1).
\end{split}
\]
This allows to regularly interpret $\langle\mathbb Z;+,|,0,1\rangle$ in $F[x,x^{-1}]$. Since $\langle\mathbb N;+,\cdot,0,1\rangle$ is absolutely interpretable (in fact, definable) in $\langle \mathbb Z;+,|,0,1\rangle$ by Lemma~\ref{le:Z_div} below, we obtain a regular interpretation of the former in $F[x,x^{-1}]$.
To interpret $\List(F,\mathbb N)$ in $F[x,x^{-1}]$, we, similarly to the case of polynomials, to a tuple $(b_0,b_1,\ldots,b_n)$ we associate a quadruple
\begin{equation}\label{eq:list_in_laurent}
(b_0+b_1a+\ldots+b_na^n, a^n, b_0+b_1a^{-1}+\ldots+b_na^{-n}, a^{-n}).
\end{equation}
The field sort $F$ of $\mathbb S(F,\mathbb N)$ is interpretable by definability of $F\subseteq F[x,x^{-1}]$, as mentioned above. The length function $\ell$ is interpreted straightforwardly; the predicate $t(s,a,i)$ by Lemma~\ref{le:extract_coeffs} below. We denote the resulting regular interpretation by $\mathbb S(F,\mathbb N)\cong \Delta(F[x,x^{-1}]$.

\begin{lemma}\label{le:Z_div}
$\langle\mathbb N;+,\cdot,0,1\rangle$ is definable in $\langle \mathbb Z;+,|,0,1\rangle$ without parameters.
\end{lemma}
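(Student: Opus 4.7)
The plan is two-step. First, I would define multiplication on $\mathbb{Z}$ using only $+, |, 0, 1$; then I would define $\mathbb{N} \subseteq \mathbb{Z}$ via Lagrange's four-square theorem and restrict $+$ and $\cdot$ to $\mathbb{N}$.

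\textbf{Step 1 (the hard part): definability of multiplication on $\mathbb{Z}$.} The divisibility relation $a\mid b$ is, unfolded, ``$\exists c\ b = ac$,'' so the obstacle is to force uniqueness of the witness $c$. My approach would be to first define the squaring predicate $y = x^2$, and then recover multiplication via the polarization identity
\[
2xy = (x+y)^2 - x^2 - y^2,
\]
noting that $z = w+w$ defines $z = 2w$ and that division by $2$ in $\mathbb{Z}$ is unambiguous, so $c = xy$ is the unique $c$ with $c + c = (x+y)^2 - x^2 - y^2$.

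To define squaring, I would exploit the identities $x^2 - k^2 = (x-k)(x+k)$ for each fixed constant $k \in \mathbb{N}$, giving natural first-order conditions $(x-k)\mid (y-k^2)$ and $(x+k)\mid (y-k^2)$ that are necessary for $y = x^2$ (the constants $k$ and $k^2$ being definable terms of the language since they are concrete integers). A suitable finite conjunction of such conditions, combined with auxiliary constraints like $x \mid y$, pins $y$ down to $x^2$ modulo a very large product, and the remaining candidates can be eliminated using additive constraints of the form $(x-k)\mid (y - k\cdot x)$ (which encode $y \equiv kx \pmod{x-k}$, true for $y=x^2$). Producing a single uniform formula requires careful bookkeeping, particularly for small $|x|$ where most of the $x\pm k$ coincide or vanish; these edge cases can be treated by an explicit disjunction over a fixed finite table of values, using that the specific integers $0, \pm 1, \pm 2, \ldots$ are $0$-definable terms.

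\textbf{Step 2: from multiplication to $\mathbb{N}$.} Once $\cdot$ is $0$-definable, I define
\[
\mathbb{N}^{\ast} = \{ n \in \mathbb{Z} \mid \exists a,b,c,d \in \mathbb{Z}\ \ n = a\cdot a + b\cdot b + c\cdot c + d\cdot d\}.
\]
By Lagrange's four-square theorem, $\mathbb{N}^{\ast} = \mathbb{N}$, and $\mathbb{N}^{\ast}$ is defined without parameters. The restriction of $+$ and of the $\cdot$ from Step~1 to $\mathbb{N}^\ast$ then gives a $0$-definable copy of $\langle\mathbb{N}; +, \cdot, 0, 1\rangle$ inside $\langle\mathbb{Z}; +, \mid, 0, 1\rangle$.

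\textbf{Main obstacle.} The crux is Step~1: writing down an explicit first-order formula for $y = x^2$ (equivalently, for multiplication) using only $\{+, \mid, 0, 1\}$, with a clean uniqueness argument that works uniformly in $x$. Conceptually this is a classical elementary-number-theory exercise going back to Robinson-style definability arguments, but producing a fully polished formula requires care with edge cases and with the CRT-style argument that pins down $y$ from finitely many congruences. I would not expect any difficulty in Step~2 once Step~1 is in hand.
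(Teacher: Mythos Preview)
Your overall architecture---define squaring, recover multiplication by polarization, then carve out $\mathbb N$ via Lagrange---is exactly the paper's route. The gap is entirely in your proposed definition of squaring. A finite conjunction of conditions of the form $(x-k)\mid (y-k^2)$ for fixed integers $k$ only determines $y$ modulo $\mathrm{lcm}(x,x-1,\ldots,x-N)$, and your ``additional'' constraints $(x-k)\mid(y-kx)$ add nothing: since $x\equiv k\pmod{x-k}$ we have $kx\equiv k^2\pmod{x-k}$, so $(x-k)\mid(y-kx)$ is literally the same condition as $(x-k)\mid(y-k^2)$. No finite list of such congruences can pin down $y$ uniquely for all $x$, because for large $|x|$ there are always other solutions $y=x^2+M$ with $M$ a multiple of the relevant lcm.

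The paper escapes this by using a \emph{universal} quantifier rather than a finite list: first observe that $n=\pm\mathrm{lcm}(k,l)$ is $0$-definable in $\langle\mathbb Z;+,|\rangle$ via $\forall m\ (n\mid m \leftrightarrow k\mid m\wedge l\mid m)$. Then $\pm(k-1)(k+1)$ is definable as $\pm\mathrm{lcm}(k-1,k+1)$ (with a factor-of-$2$ correction when $k$ is odd), and $n=k^2$ is recovered from $n-1=\pm(k-1)(k+1)$ together with $k\mid n$ and a small explicit case split for $|k|\le 2$. Everything downstream of squaring then proceeds as you outlined.
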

\begin{proof}
We employ the idea similar to the one in~\cite[Section 4b]{Robinson:1951}, where it is shown that $\langle\mathbb N;+,\cdot,0,1\rangle$ is definable in $\langle \mathbb N;+,|,0,1\rangle$.

The only difficulty is to define the multiplication. We note that in $\langle \mathbb Z;+,|,0,1\rangle$, the relation $n=\pm\mathop{\mathrm{lcm}}(k,l)$ is defined by
\[
n=\pm\mathop{\mathrm{lcm}}(k,l)\leftrightarrow [\forall m\ n\mid m\leftrightarrow k\mid m\wedge l\mid m].
\]
Then $n=\pm(k-1)(k+1)$ is given by
\[
\begin{split}
n=\pm(k-1)(k+1)\leftrightarrow [&2\mid k\ \wedge\ n=\pm\mathop{\mathrm{lcm}}(k-1,k+1)]\vee\\
\vee [&2\nmid k\ \wedge\ n/2=\pm\mathop{\mathrm{lcm}}(k-1,k+1)].
\end{split}
\]
(Here $n/2$ is a shorthand for $n_1$ such that $n=n_1+n_1$.)
Finally, we observe that $n-1=\pm (k-1)(k+1)$ if and only if $n=k^2$ or $n=2-k^2$. With that in mind, the relation $n=k^2$ is defined by
\[
\begin{split}
n=k^2\leftrightarrow (&k=\pm 2\wedge n=4)\vee(k=\pm 1\wedge n=1)\vee (k=0\wedge n=0)\vee \\
\vee [&(k\neq \pm2\wedge k\neq \pm1\wedge k\neq 0)\wedge (n-1=\pm(k-1)(k+1)\wedge k\mid n)].
\end{split}
\]
After that, multiplication is defined in a standard way:
\[
n=kl\leftrightarrow n+n=(k+l)^2-k^2-l^2.
\]
Once multiplication is defined, $\mathbb N\subseteq \mathbb Z$ is defined in the usual way by the four-square theorem. (Alternatively, once we defined $n=k^2$, we could use the four-square theorem to define $\mathbb N\subseteq\mathbb Z$ and then refer to the definability of $\langle\mathbb N;+,\cdot,0,1\rangle$ in $\langle \mathbb N;+,|,0,1\rangle$ by \cite[Section 4b]{Robinson:1951}.)

\end{proof}

To show bi-interpretation, we need to show that $\mathbb S(F,\mathbb N) \cong \Delta\circ \Gamma_0(\mathbb S(F,\mathbb N))$ is definable in $\mathbb S(F,\mathbb N)$ and $F[x,x^{-1}]\cong \Gamma_0\circ \Delta(F[x,x^{-1}])$ is definable in $F[x,x^{-1}]$.

The definability of $\mathbb S(F,\mathbb N) \cong \Delta\circ \Gamma_0(\mathbb S(F,\mathbb N))$ in $\mathbb S(F,\mathbb N)$ is straightforward, since, for the list sort, composition of~\eqref{eq:laurent_in_list}, \eqref{eq:list_in_laurent} results in the correspondence
\[
\begin{split}
(b_0,\ldots,b_n)\to (&(b_0,\ldots,b_n),(0,\ldots,0,0),\\
&(0,\ldots,0,1),(0,\ldots,0,0),\\
&(1,\ldots,0,0),(b_1,\ldots,b_n),\\
&(0,\ldots,0,0),(0,\ldots,0,1))
\end{split}
\]
if $a=\alpha x$, or a similar tuple if $a=\alpha x^{-1}$.

The definability of $F[x,x^{-1}]\cong \Gamma_0\circ \Delta(F[x,x^{-1}])$ in $F[x,x^{-1}]$ follows by inspection, which we perform below.
We start by considering the following lemma.

\begin{lemma}\label{le:free_term}
Let $a\in X=\{\alpha x^{\pm 1}\mid 0\neq \alpha\in F\}$. The relation 
\[
\{(p,p^-,p_0,p^+)\mid p=p^-+p_0+p^+, p_0\in F, p^{-}=\sum_{i<0}\beta_ia^i, p^+=\sum_{i>0}\beta_ia^i, \beta_i\in F\}
\]
on $F[x,x^{-1}]^4$ is definable with parameter $a$.
\end{lemma}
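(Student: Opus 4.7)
The plan is to reduce the lemma to the first-order definability (with parameter $a$) of the subring $F[a] \subseteq F[x,x^{-1}]$. Once $F[a]$ is given by a formula $\varphi_+(q,a)$, the symmetric subring $F[a^{-1}]$ is definable via the involutive automorphism $\sigma \colon a \mapsto a^{-1}$ of $F[x,x^{-1}]$, which is itself first-order definable by
\[
\sigma(p) = q \iff \forall \mu \in F^* \, \forall \nu \in F \; \bigl[(a-\mu) \mid (q-\nu) \leftrightarrow (a-\mu^{-1}) \mid (p-\nu)\bigr],
\]
since the divisibility $(a-\mu) \mid (p-\nu)$ precisely captures evaluation $p(\mu) = \nu$, and an infinite field $F$ forces $p$ to be uniquely determined by its values on $F^*$. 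The required relation is then expressed by
\[
p = p^- + p_0 + p^+ \; \wedge \; p_0 \in F \; \wedge \; \varphi_+(a^{-1} p^+, a) \; \wedge \; \varphi_+(\sigma(a p^-), a),
\]
since $p^+ \in a F[a]$ iff $a^{-1} p^+ \in F[a]$, and $p^- \in a^{-1} F[a^{-1}]$ iff $a p^- \in F[a^{-1}]$ iff $\sigma(a p^-) \in F[a]$.

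For the definability of $F[a]$ itself, I would leverage the regular interpretation $\mathbb{Z} \cong \Delta(F[x,x^{-1}])$ established just above the lemma, through which (via Lemma~\ref{le:Z_div}) the subset $\{a^n : n \geq 0\}$ becomes definable. My proposed characterization is via the Taylor expansion of $p$ at $a = 1$: for each $k \geq 0$, the $k$-th coefficient $d_k(p) \in F$ is definable inductively as the unique $d \in F$ with $(a-1)^{k+1} \mid p - \sum_{j < k} d_j(p)(a-1)^j - d(a-1)^k$. Then $p \in F[a]$ iff there exists $n \geq 0$ (encoded as $a^n$ in the $\mathbb{Z}$-interpretation) such that $p$ equals its $n$-th partial Taylor polynomial at $a = 1$, i.e.\ iff the Taylor expansion of $p$ terminates.

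The principal obstacle is rendering ``$p = \sum_{k=0}^n d_k(p)(a-1)^k$'' as a single first-order formula with variable $n$, since both the sum and the iterative chain of Taylor coefficients $d_0, \ldots, d_n$ have length depending on $n$. I would handle this by existentially quantifying over an auxiliary witness $r \in F[x,x^{-1}]$ claimed to equal the $n$-th partial Taylor polynomial, pinning $r$ down by (i) the congruence $(a-1)^{n+1} \mid p - r$, which forces the first $n+1$ Taylor coefficients of $r$ and $p$ to agree, and (ii) a structural requirement that $r$ itself be a polynomial in $a$ of degree $\leq n$. Condition (ii) is a restricted instance of the original definability problem; its treatment is genuinely delicate, and I expect to handle it by the bookkeeping strategy of Lemma~9 in~\cite{KharlampovichMyasnikov:2018a}, which effectively encodes bounded-length data in a first-order manner by combining the $\mathbb{Z}$-interpretation with auxiliary witnesses pinned down by finitely many divisibility conditions involving both $a$ and $a^n$.
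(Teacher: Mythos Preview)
Your high-level strategy---reduce the relation to the definability of the subring $F[a]$, and then handle $F[a^{-1}]$ by symmetry---matches the paper exactly. Your detour through the explicit automorphism $\sigma$ is a perfectly valid alternative to the paper's one-word ``similarly'' (indeed $a^{-1}\in X$ as well, so the same argument applies with $a$ replaced by $a^{-1}$).

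The genuine gap is in your treatment of $F[a]$ itself. Your Taylor-expansion idea is circular in a way that you partially acknowledge but do not close. To say ``$p$ equals its $n$-th Taylor polynomial $r$'' you need to pin down $r$, and your condition~(i), namely $(a-1)^{n+1}\mid p-r$, becomes vacuous once you also demand $p=r$; the entire content then lives in condition~(ii), ``$r$ is a polynomial in $a$ of degree $\le n$.'' But this is not a \emph{restricted} instance of defining $F[a]$---it is the \emph{same} problem, since $F[a]=\bigcup_n\{r:\deg_a r\le n\}$ and the interpretation of $\mathbb N$ is already at hand. Every natural attempt to express (ii) (Lagrange interpolation over $n+2$ points, coefficient tuples of length $n+1$, iterated divided differences, etc.) requires quantifying over a tuple whose length varies with $n$, i.e.\ requires the list sort of $\mathbb S(F,\mathbb N)$, which is precisely what this lemma is a step toward building. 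The bookkeeping of \cite[Lemma~9]{KharlampovichMyasnikov:2018a} does not rescue this: that lemma operates inside $F[x]$, where the degree-comparison formula $\varphi_{\deg}$ of Lemma~\ref{le:deg_and_value} is already available, and no analogue of $\varphi_{\deg}$ is yet on the table in $F[x,x^{-1}]$.

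The paper sidesteps the circularity entirely. It invokes \cite[Lemma~2]{KharlampovichMyasnikov:2018b}, which gives directly that the subring $F[c]$ with $c=a+a^2+a^3$ is definable in $F[x,x^{-1}]$; the specific shape of $c$ is what makes that external lemma go through. Since $a^3=c-a-a^2$, one has the finite-module decomposition $F[a]=F[c]+aF[c]+a^2F[c]$, which is then first-order definable as a sum of three definable sets. From there $aF[a]$ and $a^{-1}F[a^{-1}]$ are immediate, and the relation in the lemma is just $p=p^-+p_0+p^+$ with $p_0\in F$, $p^+\in aF[a]$, $p^-\in a^{-1}F[a^{-1}]$.
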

\begin{proof}
By~\cite[Lemma 2]{KharlampovichMyasnikov:2018b}, the polynomial ring $F[a+a^2+a^3]$ is definable in $F[x,x^{-1}]$. Therefore, so is the polynomial ring \[
F[a]=F[a+a^2+a^3]+aF[a+a^2+a^3]+a^2F[a+a^2+a^3].
\]
It follows that the set $aF[a]$ is definable in $F[x,x^{-1}]$, and similarly definable is the set $a^{-1}F[a^{-1}]$. Now, the desired formula in $F[x,x^{-1}]$ is
\[
p_0\in F\wedge p^-\in a^{-1}F[a^{-1}]\wedge p^+\in aF[a]\quad p=p^{-}+p_0+p^+.
\]
\end{proof}

Recall that the set $\{a^{i}\mid i\in\mathbb Z\}$ is definable by Lemma~\ref{le:X}. It follows that for $p\in F[x,x^{-1}]$, each coefficient $\beta_i$ in $p=\sum_i\beta_i a^i$ is can be defined by a formula in $F[x,x^{-1}]$ as $((a^i)^{-1}p)_0$. Namely, the following lemma takes place.
\begin{lemma}\label{le:extract_coeffs} There is a formula $\mathop{\mathrm{coeff}}(a,a^m,p,p_m)$ that is satisfied in $F[x,x^{-1}]$ if and only if $a\in X=\{\alpha x^{\pm 1}\mid 0\neq \alpha\in F\}$ and $p=\sum_i \beta_ia^i$, $\beta_i\in F$, with $\beta_m=p_m$.
\end{lemma}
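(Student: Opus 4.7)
The plan is to realize the $m$-th coefficient of $p$ as the free (constant) term of the Laurent polynomial $a^{-m}p$. Since $a\in X$ means $a=\alpha x^{\pm 1}$ with $\alpha\neq 0$, every power $a^m$ is invertible in $F[x,x^{-1}]$, with inverse $a^{-m}$ again lying in the set $\{a^i\mid i\in\mathbb Z\}$. This set is definable with parameter $a$ by Lemma~\ref{le:X}. Now if $p=\sum_i\beta_i a^i$ with $\beta_i\in F$, then
\[
a^{-m}p=\sum_{i}\beta_i a^{i-m},
\]
so the coefficient of $a^0$ in $a^{-m}p$ is exactly $\beta_m$.

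Concretely, I would assemble the formula in three layers. First, I would assert that $a\in X$ (using the definability of $X$ established just before Lemma~\ref{le:Xpowers}) and that $a^m$ lies in the cyclic set $\{a^i\mid i\in\mathbb Z\}$ via the formula $\varphi(\cdot,a)$ from Lemma~\ref{le:X}. Second, I would existentially quantify the inverse $q$ of $a^m$ by the clause $q\cdot a^m=1$; such $q$ exists and is unique in $F[x,x^{-1}]$. Third, I would invoke Lemma~\ref{le:free_term} applied to the product $pq$ with the same parameter $a$, writing $pq=p^-+p_0+p^+$ where $p_0\in F$, $p^-\in a^{-1}F[a^{-1}]$, and $p^+\in aF[a]$, and then impose $p_0=p_m$. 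Combining these gives
\[
\mathop{\mathrm{coeff}}(a,a^m,p,p_m)\;\equiv\;a\in X\;\wedge\;\varphi(a^m,a)\;\wedge\;\exists q\,\exists p^-\,\exists p^+\bigl(q a^m=1\;\wedge\;pq=p^-+p_m+p^+\bigr),
\]
where the last conjunct is understood to include the definable membership conditions $p_m\in F$, $p^-\in a^{-1}F[a^{-1}]$, and $p^+\in aF[a]$ supplied by Lemma~\ref{le:free_term}.

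There is no substantive obstacle: the only thing to verify is that Lemma~\ref{le:free_term} really does apply with the parameter $a$ rather than the distinguished variable $x$, so that the decomposition of $pq$ is the decomposition relative to powers of $a$. This is exactly how Lemma~\ref{le:free_term} is stated (it holds for any $a\in X$), so the extraction of the free term of $a^{-m}p$ recovers $\beta_m$ as required. Correctness in both directions is immediate from the identity $a^{-m}p=\sum_i\beta_i a^{i-m}$ and uniqueness of the Laurent decomposition in powers of $a$.
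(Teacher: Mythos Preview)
Your proposal is correct and follows exactly the paper's approach: assert $a\in X$, assert $a^m$ lies in the definable set $\{a^i\mid i\in\mathbb Z\}$ via Lemma~\ref{le:X}, and then apply Lemma~\ref{le:free_term} to $(a^m)^{-1}p$ to read off the constant term as $p_m$. The only cosmetic difference is that you existentially quantify the inverse $q$ of $a^m$, whereas the paper simply writes $(a^m)^{-1}p$ inside the formula $\varphi_1$; these amount to the same first-order formula.
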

\begin{proof}
Let $\varphi_0$ be the formula that defines $X$, $\varphi(p,a)$ be as in Lemma~\ref{le:X}, and $\varphi_1(p,p^-,p_0,p^+)$ be the formula from Lemma~\ref{le:free_term}. Then
\[
\mathop{\mathrm{coeff}}(a,a^m,p,p_m)=\varphi_0(a)\wedge \varphi(a^m,a)\wedge \exists p^-,p^+\ \varphi_1((a^m)^{-1}p,p^-,p_m,p^+).
\]
\end{proof}
The desired definability of the isomorphism between $F[x,x^{-1}]$ and $\Gamma_0\circ\Delta(F[x,x^{-1}])$ follows by Lemma~\ref{le:extract_coeffs}. Thus we obtain the following theorem.

\begin{theorem}\label{th:laurent_bi}
Let $F$ be an infinite field. Then the interpretation $F[x,x^{-1}]\cong \Gamma_0(\mathbb S(F,\mathbb N))$ defined above gives rise to a regular bi-interpretation of $F[x,x^{-1}]$ and $\mathbb S(F,\mathbb N)$ in each other.
\end{theorem}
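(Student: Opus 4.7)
The plan is to verify the three items of Definition~\ref{def:reg} for the interpretation $F[x,x^{-1}]\cong\Gamma_0(\mathbb S(F,\mathbb N))$ given by~\eqref{eq:laurent_in_list} and the reverse interpretation $\mathbb S(F,\mathbb N)\cong\Delta(F[x,x^{-1}])$ constructed just before the statement using~\eqref{eq:list_in_laurent} with parameter $a$ in the definable set $X=\{\alpha x^{\pm 1}\mid 0\neq\alpha\in F\}$. Item (1) is essentially in place: $\Gamma_0$ is absolute and hence regular, while Lemmas~\ref{le:Xpowers}, \ref{le:X}, \ref{le:Z_div}, \ref{le:free_term}, and~\ref{le:extract_coeffs} together with the encoding~\eqref{eq:list_in_laurent} produce the interpretation $\Delta$, which is regular with the parameter formula $\phi(a)$ defining $X$. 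So the remaining task is to exhibit the two coordinate-map formulas $\theta_{F[x,x^{-1}]}$ (item 3) and $\theta_{\mathbb S(F,\mathbb N)}$ (item 2), uniformly in the choices of parameters admitted by $\phi_\Delta\wedge\psi$ on either side.

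For $\theta_{F[x,x^{-1}]}$ I trace the composition $\Gamma_0\circ\Delta$. Given a parameter $a\in X$, the interpretation $\Delta$ encodes a Laurent polynomial $p=\sum_{i}\beta_ia^i$ as the quadruple of Laurent polynomials in~\eqref{eq:list_in_laurent} that records its positive and negative coefficient tuples in the basis $\{a^i\}_{i\in\mathbb Z}$; the formula $\Gamma_0$ then reassembles a Laurent polynomial from these two coefficient tuples. The coordinate map $\mu_\Gamma\circ\mu_\Delta$ sends the reassembled datum back to $p$, and its graph is directly expressible by a conjunction (over the indices in range) of instances of the formula $\mathop{\mathrm{coeff}}(a,a^i,p,p_i)$ from Lemma~\ref{le:extract_coeffs}, asserting that the target polynomial has coefficient $p_i$ at $a^i$ and no coefficients outside the encoded range. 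Because $\mathop{\mathrm{coeff}}$ is itself uniform in the parameter $a$, the resulting formula satisfies the regularity requirement of item (3).

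For $\theta_{\mathbb S(F,\mathbb N)}$ I trace the opposite composition $\Delta\circ\Gamma_0$. A list $s=(b_0,\ldots,b_n)$ is sent by $\Gamma_0$ to the pair $((b_0,\ldots,b_n),\emptyset)$, viewed as the Laurent polynomial $\sum b_i x^i$; applying $\Delta$ with parameter $a\in X$ then packages this polynomial into a quadruple of lists whose first coordinate — obtained by coefficient extraction in the basis $\{a^i\}$ — is again $(b_0,\ldots,b_n)$ (possibly up to a rescaling dictated by the choice of $a=\alpha x^{\pm 1}$, which can be inverted by a first-order formula since $\alpha$ and the sign of the exponent are recoverable from $a$). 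The coordinate map that recovers $s$ is therefore the projection onto the first component of the quadruple, composed with the appropriate scalar adjustment, and this is immediately first-order definable in $\mathbb S(F,\mathbb N)$ using $t$ and $\ell$.

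The main conceptual obstacle — defining coefficient extraction in $F[x,x^{-1}]$ uniformly in the parameter $a\in X$ — has already been overcome in Lemma~\ref{le:extract_coeffs}, so the proof of the theorem reduces to this bookkeeping. The only secondary subtlety I anticipate is uniformity: the formulas $\theta_{F[x,x^{-1}]}$ and $\theta_{\mathbb S(F,\mathbb N)}$ must be correct for every admissible parameter tuple in $\phi_\Delta\wedge\psi$, but this uniformity is inherited from the fact that every lemma feeding into $\Delta$ (namely Lemmas~\ref{le:X}, \ref{le:free_term}, \ref{le:extract_coeffs}) treats $a$ as a free parameter ranging over all of $X$, and no choice of representative of an equivalence class in the encoding is needed in either direction beyond what $\Gamma_0$ and $\Delta$ already supply.
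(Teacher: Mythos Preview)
Your proposal is correct and follows essentially the same approach as the paper: the paper likewise reduces the $\Gamma_0\circ\Delta$ side to Lemma~\ref{le:extract_coeffs} (coefficient extraction with parameter $a$) and handles the $\Delta\circ\Gamma_0$ side by explicitly tracing the composition of~\eqref{eq:laurent_in_list} and~\eqref{eq:list_in_laurent} to a projection. Your phrasing of the $\Gamma_0\circ\Delta$ trace slightly swaps the roles of $\Gamma_0$ and $\Delta$ (it is $\Delta$ that encodes \emph{tuples} as quadruples of Laurent polynomials, and $\Gamma_0$ that encodes Laurent polynomials as pairs of tuples), but the conclusion you draw---that the graph of the coordinate map is expressible uniformly in $a$ via $\mathop{\mathrm{coeff}}$---is exactly what the paper uses.
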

Like we did in Section~\ref{se:single_var}, we see that this leads to the following representation of nonstandard Laurent polynomials: each nonstandard Laurent polynomial is a nonstandard sum
\[
p=\sum_{i=-m}^n p_i x^i,
\]
where $m,n\in\mathbb N$, $(p_{-m},\ldots,p_n)$ is a nonstandard tuple in $\mathbb S(\widetilde{F},\nstdM)$, and $x^i$, $i\in\nstdM$, is the Laurent polynomial corresponding under $\Gamma$ to a pair of tuples where all components are zeros except one component $a_i=1$. It is straightforward to check that in this case the multiplication and addition are defined by formulas similar to~\eqref{eq:nstd_poly_sum_product}. 
Namely, for $p=\sum_{i=-m}^n p_i x^i$ and $q=\sum_{i=-m'}^{n'} q_i x^i$ we have
\begin{equation}\label{eq:laurent}
p+q=\sum_{k=-\max{m,m'}}^{\max\{n,n'\}} (p_k+q_k)x^k,\qquad pq=\sum_{k=-m-m'}^{n+n'} \left(\sum_{i+j=k}a_ib_j\right)x^k.
\end{equation}

As a corollary of Theorem~\ref{th:laurent_bi}, we can strengthen Proposition~\ref{pr:laurent_interpretation} using Theorem~\ref{th:complete-scheme}.

\begin{corollary}
\label{co:laurent_equiv}
Let $F$ be an infinite field. Then every ring elementarily equivalent to $F[x,x^{-1}]$ has the form $\nonstandardmodel{F[x,x^{-1}]}{\mathcal M}$, $\mathcal M\equiv \mathbb S(F,\mathbb N)$.
\end{corollary}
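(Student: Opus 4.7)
The plan is to derive this corollary directly from the regular bi-interpretation established in Theorem~\ref{th:laurent_bi} together with the general machinery of Theorem~\ref{th:complete-scheme}. Indeed, Corollary~\ref{co:laurent_equiv} is essentially the specialization of Theorem~\ref{th:complete-scheme} to the pair $(\mathbb A,\mathbb B) = (F[x,x^{-1}],\mathbb S(F,\mathbb N))$, so most of the substantive work has already been done.

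First I would record that the interpretation $F[x,x^{-1}] \cong \Gamma_0(\mathbb S(F,\mathbb N))$ from Theorem~\ref{th:laurent_bi} is, in particular, regularly invertible in the sense of Definition~\ref{de:reg_invert}: the reverse regular interpretation $\mathbb S(F,\mathbb N) \cong \Delta(F[x,x^{-1}])$ built in the paragraphs leading up to Theorem~\ref{th:laurent_bi} serves as a right inverse, and the definability of the isomorphism $F[x,x^{-1}] \cong \Gamma_0 \circ \Delta(F[x,x^{-1}])$ (ensured by Lemma~\ref{le:extract_coeffs}, as invoked in the proof of Theorem~\ref{th:laurent_bi}) is exactly the extra ingredient required by Definition~\ref{de:reg_invert}.

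Second, I would apply Theorem~\ref{th:complete-scheme} to this regularly invertible interpretation. The equivalence of clauses (1) and (2) of that theorem says: for any ring $\widetilde{R}$ in the language of rings, $\widetilde{R} \equiv F[x,x^{-1}]$ if and only if there exists $\mathcal M \equiv \mathbb S(F,\mathbb N)$ and a parameter tuple $\bar b \in \phi(\mathcal M)$ such that $\widetilde{R} \cong \Gamma_0(\mathcal M,\bar b)$. By Definition~\ref{de:nonstandard_model}, the right-hand side is precisely the nonstandard model $\nonstandardmodel{F[x,x^{-1}]}{\mathcal M}$, which yields the claim.

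The argument is almost purely bookkeeping, so there is no real obstacle beyond making sure the regular invertibility hypothesis of Theorem~\ref{th:complete-scheme} is correctly extracted from the stronger regular bi-interpretation of Theorem~\ref{th:laurent_bi}. If one wished to avoid even this bookkeeping, one could invoke the specialization of Theorem~\ref{th:equiv1} to regular bi-interpretations directly, since in the bi-interpretable case $\Gamma_0(\mathcal M,\phi)$ is automatically well-defined for every $\mathcal M \equiv \mathbb S(F,\mathbb N)$ and every choice of parameters, making the nonstandard model $\nonstandardmodel{F[x,x^{-1}]}{\mathcal M}$ independent of the specific $\bar b \in \phi(\mathcal M)$.
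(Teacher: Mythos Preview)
Your proposal is correct and matches the paper's own approach: the paper derives the corollary in one line by invoking Theorem~\ref{th:complete-scheme} on the regular bi-interpretation of Theorem~\ref{th:laurent_bi}, which is exactly what you do (with the extra, but harmless, step of noting that regular bi-interpretability implies regular invertibility). The only simplification you could make is to observe that $\Gamma_0$ is in fact absolute, so the parameter tuple $\bar b$ is vacuous and $\Gamma_0(\mathcal M)$ is already well-defined without any choice.
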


Like in Section~\ref{se:polynomials}, in the case when $F$ is regularly bi- or invertibly interpretable in $\mathbb N$, we ultimately get a respective regular bi- or invertible interpretation of Laurent polynomials in $\mathbb N$, and a description of elementarily equivalent rings through nonstandard models of $\mathbb N$.

\begin{corollary}
\label{co:laurent_bi_N}
Let $F$ be a field regularly bi-interpretable with $\mathbb N$ (regularly invertibly interpretable in $\mathbb N$). Then the ring of Laurent polynomials $\langle F[x,x^{-1}];+,\cdot,0,1\rangle$ is regularly bi-interpretable with $\mathbb N$ (resp., regularly invertibly interpretable in $\mathbb N$), $F[x,x^{-1}]\cong \Gamma(\mathbb N)$.
\end{corollary}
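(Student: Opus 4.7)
The plan is a straightforward transitivity argument, stringing together Theorem~\ref{th:laurent_bi} with the interpretation of $\mathbb S(F,\mathbb N)$ in $\mathbb N$ that the hypothesis on $F$ provides. By Theorem~\ref{th:laurent_bi}, the ring $F[x,x^{-1}]$ is regularly bi-interpretable with $\mathbb S(F,\mathbb N)$ via some code $\Gamma_0$ with right-inverse $\Delta_0$. Thus the only thing to supply is a regular bi-interpretation (respectively, a regular invertible interpretation) between $\mathbb S(F,\mathbb N)$ and $\mathbb N$, after which we compose codes using Lemma~\ref{le:int-transitivity}.

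For the first clause, assume $F$ is regularly bi-interpretable with $\mathbb N$, say via a regular bi-interpretation $F\cong\Delta(\mathbb N)$ and $\mathbb N\cong\Delta'(F)$. Fix an arithmetical enumeration $\mathcal E$ of tuples in $\mathbb N$. By Proposition~\ref{le:tuples_tuples}(a), the interpretation $\Delta$ lifts to a regular interpretation $\mathbb S(F,\mathbb N)\cong\Delta_\mathcal E(\mathbb N)$; by Proposition~\ref{le:tuples_tuples}(b), this remains a bi-interpretation (the reverse direction is immediate since the sort $\mathbb N$ is part of $\mathbb S(F,\mathbb N)$). Composing via Lemma~\ref{le:int-transitivity} and its items 5), 7), the code $\Gamma_0\circ\Delta_\mathcal E$ yields a regular interpretation $F[x,x^{-1}]\cong \Gamma(\mathbb N)$. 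To promote this to a regular bi-interpretation, one verifies that the composite coordinate map remains definable: the coordinate map of $\Gamma_0\circ\Delta_\mathcal E$ is a definable composition of the coordinate maps of $\Gamma_0$ and $\Delta_\mathcal E$ (item~4) of Lemma~\ref{le:int-transitivity}), and the same holds in the opposite direction for the composition $\Delta_\mathcal E'\circ\Delta_0$ where $\Delta_\mathcal E'$ is the lift for $\Delta'$. Thus conditions 1)--3) of Definition~\ref{def:reg} are met.

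For the parenthetical clause, the argument is the same with ``bi-interpretable'' replaced throughout by ``invertibly interpretable.'' The key point is that the construction in Proposition~\ref{le:tuples_tuples} produces a regular invertible interpretation $\mathbb S(F,\mathbb N)\cong \Delta_\mathcal E(\mathbb N)$ whenever $F\cong \Delta(\mathbb N)$ is regular invertible: the right inverse is obtained by lifting the right inverse of $\Delta$ to tuples, with the definable isomorphism on $\mathbb S$-level assembled componentwise from the definable isomorphism on the sort $F$ (plus the identity on the $\mathbb N$-sort). Invertibility is then preserved under composition by the same item~4) of Lemma~\ref{le:int-transitivity}, giving $F[x,x^{-1}]\cong \Gamma(\mathbb N)$ as a regular invertible interpretation.

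The main obstacle is not conceptual but bookkeeping: verifying that the formula $\theta_\mathbb A$ required by item~2) of Definition~\ref{def:reg} (and likewise the formula for the right inverse) survives the composition with the code $\Delta_\mathcal E$ on the tuple sort. This amounts to checking that extending coordinate maps of $\Delta$ componentwise across tuples is itself definable in $\mathbb S(F,\mathbb N)$, which is exactly the content embedded in the proof of Proposition~\ref{le:tuples_tuples}: one uses the arithmetical enumeration $\mathcal E$ together with the predicates $t,\ell$ to translate the coordinate formula pointwise along a tuple. Once this lifting is acknowledged, everything else follows formally from Lemma~\ref{le:int-transitivity}.
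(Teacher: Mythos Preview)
Your proposal is correct and follows essentially the same approach as the paper: the paper treats this corollary as immediate from Theorem~\ref{th:laurent_bi} together with the lifting of the hypothesis on $F$ to $\mathbb S(F,\mathbb N)$ via Proposition~\ref{le:tuples_tuples}, then transitivity (exactly parallel to the discussion preceding Corollary~\ref{co:bi-int_Fx_N} for the polynomial case). The paper provides no explicit proof beyond the sentence ``Like in Section~\ref{se:polynomials}, in the case when $F$ is regularly bi- or invertibly interpretable in $\mathbb N$, we ultimately get a respective regular bi- or invertible interpretation of Laurent polynomials in $\mathbb N$,'' so your version simply spells out the bookkeeping that the paper leaves implicit.
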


\begin{corollary}\label{co:laurent_elem_equiv_N}
Let $F$ be a field regularly invertibly interpretable in $\mathbb N$. Let $\Gamma$ be the code of a regular invertible interpretation of $F[x,x^{-1}]$ in $\mathbb N$, $F[x,x^{-1}]\cong \Gamma(\mathbb N)$. Then every ring elementarily equivalent to $F[x,x^{-1}]$ has the form $\nonstandardmodel{F[x,x^{-1}]}{\nstdN}\cong \Gamma(\nstdM)$, where $\nstdM\equiv \mathbb N$.
\end{corollary}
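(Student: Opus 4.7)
The plan is to derive this as an immediate instance of Theorem~\ref{th:complete-scheme}, via the invertible interpretation supplied by Corollary~\ref{co:laurent_bi_N}.

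First I will invoke Corollary~\ref{co:laurent_bi_N}: under the hypothesis that $F$ is regularly invertibly interpretable in $\mathbb N$, this gives a regular invertible interpretation $F[x,x^{-1}] \cong \Gamma(\mathbb N,\phi)$ together with a right inverse regular interpretation $\Delta$ of $\mathbb N$ in $F[x,x^{-1}]$. Concretely, this $\Gamma$ arises as the composition of a regular invertible interpretation $F\cong \Delta_0(\mathbb N)$ with the explicit bi-interpretation $F[x,x^{-1}]\cong \Gamma_0(\mathbb S(F,\mathbb N))$ from Theorem~\ref{th:laurent_bi} and the bi-interpretation $\mathbb S(F,\mathbb N)\cong \Gamma_1(\mathbb N)$ from Proposition~\ref{le:list_bi-int}, the composition being legitimate by the transitivity property of Lemma~\ref{le:int-transitivity}, which preserves regularity and invertibility.

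Next I will apply Theorem~\ref{th:complete-scheme} directly with $\mathbb A = F[x,x^{-1}]$ and $\mathbb B = \mathbb N$. The equivalence (1)$\Leftrightarrow$(2) of that theorem asserts that a structure $\widetilde R$ in the language of rings satisfies $\widetilde R \equiv F[x,x^{-1}]$ if and only if $\widetilde R \cong \Gamma(\nstdN,\bar b)$ for some $\nstdN \equiv \mathbb N$ and some $\bar b \in \phi(\nstdN)$. By Definition~\ref{de:nonstandard_model}, this is precisely $\widetilde R \cong \nonstandardmodel{F[x,x^{-1}]}{\nstdN}$. The converse direction, that every $\nonstandardmodel{F[x,x^{-1}]}{\nstdN}$ is elementarily equivalent to $F[x,x^{-1}]$, is exactly (2)$\Rightarrow$(1) of the same theorem (alternatively, it follows from Proposition~\ref{pr:reg-int-elem-equiv}).

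There is no genuine obstacle: the statement is a pure repackaging of Corollary~\ref{co:laurent_bi_N} and Theorem~\ref{th:complete-scheme}; the substantive work has already been discharged in establishing the bi-interpretation of Theorem~\ref{th:laurent_bi}. The only point that might deserve a sentence is the well-definedness of the notation $\nonstandardmodel{F[x,x^{-1}]}{\nstdN}$ with respect to the choice of invertible interpretation $\Gamma$; this follows from Theorem~\ref{th:well-def} applied via $\mathbb S(F,\mathbb N)$, since $F[x,x^{-1}]$ is finitely generated over its definable subring $F$ by the elements $x$ and $x^{-1}$.
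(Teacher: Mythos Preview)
Your proposal is correct and follows the same approach as the paper, which states this corollary without proof as an immediate consequence of Corollary~\ref{co:laurent_bi_N} and Theorem~\ref{th:complete-scheme}, exactly as you outline. One minor remark: the reference to Proposition~\ref{le:list_bi-int} for the interpretation of $\mathbb S(F,\mathbb N)$ in $\mathbb N$ is slightly off, since that proposition concerns $\mathbb S(\mathbb N,\mathbb N)$; the passage from $\mathbb S(\mathbb N,\mathbb N)$ to $\mathbb S(F,\mathbb N)$ given an interpretation of $F$ in $\mathbb N$ is handled by Proposition~\ref{le:tuples_tuples}.
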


\begin{theorem}\label{th:universal_nstd_laurent_single}
Let $F$ be an infinite field, $F[x,x^{-1}]$ the ring of Laurent polynomials, and $\mathbb A$ an arbitrary commutative $F$-algebra with a regular interpretaion in $\mathbb S(F, \mathbb N)$. Let $\mathcal M\equiv\mathbb S(F, \mathbb N)$ and $\widetilde{\mathbb A}=\nonstandardmodel{\mathbb A}{\mathcal M}$. Then every mapping $x\mapsto a\in \widetilde{\mathbb A}$, where $a$ is invertible in $\mathbb A$, extends uniquely to an algebra homomorphism $\nonstandardmodel{F[x,x^{-1}]}{\mathcal M}\to\widetilde{\mathbb A}$.
\end{theorem}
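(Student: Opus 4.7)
The plan is to mirror the proof of Theorem~\ref{th:universal_nstd_single} as closely as possible, with the only new ingredient being the handling of negative (and nonstandard) powers. First, by Lemma~\ref{le:nonstd_homomorphism_general} applied to the $F$-algebras $\nonstandardmodel{F[x,x^{-1}]}{\mathcal M}$ and $\widetilde{\mathbb A}$, any $F$-algebra homomorphism $\varphi$ between them is automatically nonstandardly additive and multiplicative. A generic nonstandard Laurent polynomial has the form $p=\sum_{i=-m}^{n} p_i x^i$ with $m,n\in\nstdM$, and nonstandard additivity/multiplicativity forces
\[
\varphi(p)=\sum_{i=-m}^{n} p_i\,a^i,
\]
which yields the uniqueness half of the theorem, provided the right-hand side is well-defined.

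To make sense of $a^i$ for $i\in\nstdZ$, I would let $b\in\widetilde{\mathbb A}$ denote the inverse of $a$ (which exists by hypothesis in the nonstandard algebra; note that $\Inv$ is a definable subset of the algebra, so invertibility transfers under the interpretation). The standard identity ``for every algebra element $a$ with inverse $b$, the function $k\mapsto \prod_{j=1}^{k} a$ on nonnegative integers and $k\mapsto \prod_{j=1}^{k} b$ on negative integers defines a $\mathbb Z$-indexed family satisfying $a^i a^j=a^{i+j}$ and $a^0=1$'' is first-order over $\mathbb S(F,\mathbb N)$ via the regular interpretation $\mathbb A\cong\Delta(\mathbb S(F,\mathbb N))$. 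By Remark~\ref{re:terms_generalization} and Lemma~\ref{le:arb_summation}, this family extends definably to $i\in\nstdZ$ and continues to satisfy $a^{i+j}=a^i a^j$, $a^0=1$ in $\widetilde{\mathbb A}$.

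Next, I would set $\varphi(p):=\sum_{i=-m}^{n} p_i a^i$ and verify the homomorphism axioms. For addition, formula~\eqref{eq:laurent} gives $p+q=\sum_k (p_k+q_k)x^k$, and applying $\varphi$ coefficient-wise together with commutativity/associativity of nonstandard sums (carried over from $\widetilde{\mathbb A}$ via Section~\ref{se:summation_arb_superstructre}) yields $\varphi(p+q)=\varphi(p)+\varphi(q)$. For multiplication, again by~\eqref{eq:laurent},
\[
\varphi(pq)=\sum_{k=-m-m'}^{n+n'} a^k\!\!\!\sum_{\substack{i+j=k}} p_i q_j =\Bigl(\sum_i p_i a^i\Bigr)\Bigl(\sum_j q_j a^j\Bigr)=\varphi(p)\varphi(q),
\]
where the middle equality is a nonstandard double-sum rearrangement of exactly the type already justified in the polynomial case (see the discussion after Lemma~\ref{le:summation}) combined with $a^i a^j=a^{i+j}$ established above.

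The main obstacle, as the above indicates, is the handling of two-sided (negative) nonstandard exponents, which is absent in Theorem~\ref{th:universal_nstd_single}. This is overcome by observing that all the needed identities --- the definition of $a^i$ on $\nstdZ$, the law $a^{i+j}=a^i a^j$, and the rearrangement of the product sum --- are first-order statements about $\mathbb S(F,\mathbb N)$ (in the interpreted language of $\mathbb A$) that hold in the standard case and therefore transfer to $\mathcal M$. Once this transfer is noted, the rest of the argument is a direct adaptation of the single-variable polynomial case.
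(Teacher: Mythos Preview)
Your proposal is correct and follows essentially the same approach as the paper, which simply states ``We omit the proof since it essentially repeats that of Theorem~\ref{th:universal_nstd_single}.'' You have supplied exactly the adaptation the paper leaves implicit, namely the definable extension of $a^i$ to $i\in\nstdZ$ via the inverse $b$ and the verification that the exponent law $a^{i+j}=a^ia^j$ transfers; otherwise your argument is the expected repetition of the polynomial case using Lemma~\ref{le:nonstd_homomorphism_general} and the Laurent sum/product formulas~\eqref{eq:laurent}.
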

We omit the proof since it essentially repeats that of Theorem~\ref{th:universal_nstd_single}.

\subsection{Laurent polynomials in several variables}\label{se:laurent_multi}
To interpret the ring of Laurent polynomials in $k$ variables $F[x_1,x_1^{-1},\ldots, x_k,x_k^{-1}]$ in $\mathbb S(F,\mathbb N)$, we fix a recursive enumeration of Laurent monomials $\{x_1^{i_1}\cdots x_k^{i_k}\mid i_1,\ldots,i_k\in\mathbb N\}\to \mathbb N$. Similarly to Section~\ref{se:multi_var}, we denote $n$-th Laurent monomial by $M_n$ and interpret a Laurent polynomial $\sum_{i\le n} a_iM_i$ as a tuple $(a_0,\ldots,a_n)$ (up to a tail of zeros).
We can record formulas for addition and multiplication of $p=\sum_{i\le n}a_iM_i$ and $q=\sum_{i\le m}b_iM_i$ by formulas~\eqref{eq:multivar_sum} and~\eqref{eq:multivar_product}.
This produces a regular intepretation of $F[x_1,x_1^{-1},\ldots, x_k,x_k^{-1}]\cong\Gamma_0\mathbb S(F,\mathbb N)$.

By~\cite[Lemma 3]{KharlampovichMyasnikov:2018b}, $F[x,x^{-1}]$ is regularly interpretable in $F[x_1,x_1^{-1},\ldots, x_k,x_k^{-1}]$, which allows regular interpretation of $\mathbb S(F,\mathbb N)$ in $F[x_1,x_1^{-1},\ldots, x_k,x_k^{-1}]$.
Omitting details of a consideration similar to the one in Section~\ref{se:laurent_single}, it follows by~\cite[Theorem 3, Lemma 11]{KharlampovichMyasnikov:2018b} that this gives rise to a regular bi-interpretation, which we record as the following theorem.

\begin{theorem}\label{th:laurent_mult_bi}
Let $F$ be an infinite field. Then, in the above notation, the interpretation $F[x_1,x_1^{-1},\ldots, x_k,x_k^{-1}]\cong\Gamma_0(\mathbb S(F,\mathbb N))$ gives rise to a regular bi-interpretation of $F[x_1,x_1^{-1},\ldots, x_k,x_k^{-1}]$ and $\mathbb S(F,\mathbb N)$ in each other.
\end{theorem}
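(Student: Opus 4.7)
The plan is to follow the blueprint of Theorem~\ref{th:laurent_bi} for the single-variable case, adapted to accommodate multi-index exponents. The forward direction $F[x_1,x_1^{-1},\ldots,x_k,x_k^{-1}] \cong \Gamma_0(\mathbb S(F,\mathbb N))$ is regular essentially by construction: the enumeration of Laurent monomials $M_n$ encodes an element as a tuple of coefficients, and formulas~\eqref{eq:multivar_sum} and~\eqref{eq:multivar_product} translate ring operations into first-order statements about such tuples. After passing to a definable reduced form (only nonzero $a_i$, distinct $M_i$), each polynomial has a unique representative.

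Next, I would build a reverse regular interpretation $\mathbb S(F,\mathbb N) \cong \Delta(F[x_1^{\pm 1},\ldots,x_k^{\pm 1}])$ by composition. By~\cite[Lemma 3]{KharlampovichMyasnikov:2018b}, the single-variable ring $F[x,x^{-1}]$ is regularly interpretable in $F[x_1^{\pm 1},\ldots,x_k^{\pm 1}]$; call this interpretation $\Lambda$. Combining with the bi-interpretation $\mathbb S(F,\mathbb N) \cong \Delta_1(F[x,x^{-1}])$ provided by Theorem~\ref{th:laurent_bi} and applying Lemma~\ref{le:int-transitivity}, the composition $\Delta := \Delta_1 \circ \Lambda$ yields a regular interpretation of $\mathbb S(F,\mathbb N)$ in the multivariable ring.

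Finally, I would verify the two definability requirements of Definition~\ref{def:reg}. For the isomorphism $\mathbb S(F,\mathbb N) \cong \Delta \circ \Gamma_0(\mathbb S(F,\mathbb N))$, the check is essentially bookkeeping: a tuple $(a_0,\ldots,a_n)$ passes through $\Gamma_0$ to the formal polynomial $\sum a_i M_i$, and then $\Delta$ re-extracts the sequence of coefficients indexed by the enumeration of Laurent monomials; the map sending the resulting pair back to the original tuple is first-order definable in $\mathbb S(F,\mathbb N)$ using $t$ and $\ell$. For the reverse composition $F[x_1^{\pm 1},\ldots,x_k^{\pm 1}] \cong \Gamma_0 \circ \Delta(F[x_1^{\pm 1},\ldots,x_k^{\pm 1}])$, the key technical ingredient is a multivariable analog of Lemma~\ref{le:extract_coeffs}: a formula $\mathrm{coeff}_k$ that, uniformly in a multi-index encoded by the interpreted copy of $\mathbb N$, extracts the coefficient of $x_1^{i_1}\cdots x_k^{i_k}$ from a given Laurent polynomial. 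Such a formula is obtained by iterated application of the single-variable coefficient-extraction from Lemma~\ref{le:extract_coeffs} to each variable in turn, an approach already present in~\cite[Theorem 3, Lemma 11]{KharlampovichMyasnikov:2018b}, to which the proof can largely defer.

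The main obstacle is the uniformity in the multi-index: one needs a single formula that works for all $(i_1,\ldots,i_k)$ simultaneously and faithfully reconstructs the Laurent polynomial from its interpreted encoding. Once the multivariable coefficient-extraction formula is in place, the finite support of a Laurent polynomial (expressed via the interpreted length function $\ell$ and the enumeration $M_n$) lets one assemble a definable bijection between elements of $F[x_1^{\pm 1},\ldots,x_k^{\pm 1}]$ and the image of $\Gamma_0 \circ \Delta$, completing the bi-interpretation.
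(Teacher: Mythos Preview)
Your proposal is correct and follows essentially the same route as the paper. The paper's own argument is extremely terse: it cites \cite[Lemma~3]{KharlampovichMyasnikov:2018b} to get $F[x,x^{-1}]$ regularly interpreted in the multivariable Laurent ring (hence $\mathbb S(F,\mathbb N)$ via Theorem~\ref{th:laurent_bi}), and then defers to \cite[Theorem~3, Lemma~11]{KharlampovichMyasnikov:2018b} together with the single-variable computations of Section~\ref{se:laurent_single} for the definability of the two composite isomorphisms; your proposal simply unpacks these steps in more detail, including the multivariable coefficient-extraction that the paper leaves implicit in its citations.
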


The description of rings elementarily equivalent to $F[x_1,x_1^{-1},\ldots, x_k,x_k^{-1}]$ follows by Theorem~\ref{th:complete-scheme}.

\begin{corollary}
\label{co:laurent_multi_equiv}
Let $F$ be an infinite field. Then every ring elementarily equivalent to $F[x_1,x_1^{-1},\ldots, x_k,x_k^{-1}]$ has the form $\nonstandardmodel{F[x_1,x_1^{-1},\ldots, x_k,x_k^{-1}]}{\mathcal M}$, $\mathcal M\equiv \mathbb S(F,\mathbb N)$.
\end{corollary}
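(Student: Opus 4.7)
The plan is to derive this corollary as an immediate consequence of the bi-interpretation asserted in Theorem~\ref{th:laurent_mult_bi} combined with the general machinery of Theorem~\ref{th:complete-scheme}. Observe that a regular bi-interpretation is, in particular, a regular invertible interpretation in both directions (the coordinate maps required by Definition~\ref{de:reg_invert} are furnished by the formulas $\theta_\mathbb{A}, \theta_\mathbb{B}$ of Definition~\ref{def:reg}). So from Theorem~\ref{th:laurent_mult_bi} we obtain a regular invertible interpretation $F[x_1,x_1^{-1},\ldots,x_k,x_k^{-1}] \cong \Gamma_0(\mathbb{S}(F,\mathbb{N}))$ together with a right inverse $\mathbb{S}(F,\mathbb{N}) \cong \Delta(F[x_1,x_1^{-1},\ldots,x_k,x_k^{-1}])$ provided by the reverse interpretation used in the proof of Theorem~\ref{th:laurent_mult_bi}.

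Next, I would directly invoke Theorem~\ref{th:complete-scheme} applied to $\mathbb{A} = F[x_1,x_1^{-1},\ldots,x_k,x_k^{-1}]$ and $\mathbb{B} = \mathbb{S}(F,\mathbb{N})$. The equivalence (1)$\Leftrightarrow$(2) in that theorem states precisely that a ring $\widetilde{R}$ is elementarily equivalent to $F[x_1,x_1^{-1},\ldots,x_k,x_k^{-1}]$ if and only if $\widetilde{R} \cong \Gamma_0(\mathcal{M}, \bar{b})$ for some $\mathcal{M} \equiv \mathbb{S}(F,\mathbb{N})$ and some parameter tuple $\bar{b}$ in the parameter set of the regular interpretation. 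By Definition~\ref{de:nonstandard_model}, this is exactly the assertion that $\widetilde{R} \cong \nonstandardmodel{F[x_1,x_1^{-1},\ldots,x_k,x_k^{-1}]}{\mathcal{M}}$.

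The single nontrivial input required here is Theorem~\ref{th:laurent_mult_bi}, whose verification is the technical heart of the section (and itself already delegated to~\cite{KharlampovichMyasnikov:2018b} and to the preceding consideration in Section~\ref{se:laurent_single}); once that is granted, the corollary is a purely formal consequence of the general framework developed in Section~\ref{se:inv_int_and_bi-int}. Thus there is no real obstacle in the proof of this corollary itself; the only thing to be careful about is to note that the bi-interpretation gives both a regular interpretation and its regular inverse, so that Theorem~\ref{th:complete-scheme} applies, and to record that the resulting description matches the notational convention $\nonstandardmodel{\mathbb{A}}{\widetilde{\mathbb{B}}} = \Gamma(\widetilde{\mathbb{B}})$ of Definition~\ref{de:nonstandard_model}.
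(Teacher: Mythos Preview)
Your proposal is correct and follows essentially the same approach as the paper: the paper simply states that the description follows from Theorem~\ref{th:complete-scheme}, applied via the regular bi-interpretation of Theorem~\ref{th:laurent_mult_bi}. Your write-up is actually slightly more explicit in spelling out that a regular bi-interpretation yields a regular invertible interpretation and in matching the conclusion with the notation of Definition~\ref{de:nonstandard_model}.
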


Like in Sections~\ref{se:polynomials} and~\ref{se:laurent_single}, in the case when $F$ is regularly bi- or invertibly interpretable in $\mathbb N$, we ultimately get a respective regular bi- or interpretable interpretation of Laurent polynomials in $\mathbb N$, and a description of elementarily equivalent rings through nonstandard models of $\mathbb N$.

\begin{corollary}
\label{co:laurent_multi_bi_N}
Let $F$ be a field regularly bi-interpretable with $\mathbb N$ (regularly invertibly interpretable in $\mathbb N$). Then the ring of Laurent polynomials $\langle F[x_1,x_1^{-1},\ldots, x_k,x_k^{-1}];+,\cdot,0,1\rangle$ is regularly bi-interpretable with $\mathbb N$ (resp., regularly invertibly interpretable in $\mathbb N$), $F[x_1,x_1^{-1},\ldots, x_k,x_k^{-1}]\cong \Gamma(\mathbb N)$.
\end{corollary}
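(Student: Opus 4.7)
The plan is to derive this corollary by composing two regular bi-interpretations (respectively, two regular invertible interpretations) via Lemma~\ref{le:int-transitivity}. The first input is Theorem~\ref{th:laurent_mult_bi}, which gives a regular bi-interpretation $F[x_1,x_1^{-1},\ldots,x_k,x_k^{-1}] \cong \Gamma_0(\mathbb S(F,\mathbb N))$ with the list superstructure. The second input is a regular bi-interpretation (resp.\ regular invertible interpretation) $\mathbb S(F,\mathbb N) \cong \Gamma_1(\mathbb N)$, which I would obtain from the hypothesis $F \cong \Gamma'(\mathbb N)$ via Proposition~\ref{le:tuples_tuples}. Specifically, part (a) of that proposition produces, from any regular interpretation of $F$ in $\mathbb N$, a corresponding regular interpretation of $\mathbb S(F,\mathbb N)$ in $\mathbb N$; part (b) upgrades this to a regular bi-interpretation when $\Gamma'$ does, which covers the bi-interpretable case. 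For the invertible case, I would check that the same enumeration-based construction preserves regular invertibility (by exhibiting the right inverse and the defining formula $\theta$ of Definition~\ref{de:reg_invert} on the nose from the data for $\Gamma'$).

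Having $F[x_1,x_1^{-1},\ldots,x_k,x_k^{-1}] \cong \Gamma_0(\mathbb S(F,\mathbb N))$ and $\mathbb S(F,\mathbb N) \cong \Gamma_1(\mathbb N)$ (both regularly bi-interpretable, or regularly invertibly interpretable in the appropriate direction), set $\Gamma = \Gamma_0 \circ \Gamma_1$. By Lemma~\ref{le:int-transitivity}, items 5)--7), $\Gamma$ is again a regular interpretation with formula $(\varphi_0)_{\Gamma_1} \wedge \varphi_1$. In the bi-interpretable case I would also compose the reverse codes $\Delta_1 \circ \Delta_0$ (where $\mathbb S(F,\mathbb N) \cong \Delta_0(F[x_1,x_1^{-1},\ldots,x_k,x_k^{-1}])$ and $\mathbb N \cong \Delta_1(\mathbb S(F,\mathbb N))$) and verify the definability of the coordinate-map compositions $\mu_{\Gamma_0}\circ\mu_{\Gamma_1}\circ\mu_{\Delta_1}\circ\mu_{\Delta_0}$ and its mirror in the appropriate structures; each factor is already definable by hypothesis on the two constituent bi-interpretations, and definability is preserved under composition because each $\mu$ is definable in the relevant structure after pulling back along the other interpretations.

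The main technical bookkeeping — which is the only real obstacle — is tracking parameters and the formula $\theta$ of Definition~\ref{de:reg_invert} (or Definition~\ref{def:reg}, items 2)--3)) through the composition. Concretely, the formula $\theta$ witnessing that $\Gamma\circ\Delta$ defines an isomorphism to the original structure has to be produced uniformly in all admissible parameter tuples in $(\varphi_0)_{\Gamma_1} \wedge \varphi_1(\mathbb N)$; this is exactly the content of Remark~\ref{remark2}, which describes how parameters of the composition factor as a $\mu_{\Gamma_1}$-preimage of parameters for $\Gamma_0$ paired with parameters for $\Gamma_1$. Invoking Remark~\ref{remark2} and Lemma~\ref{le:int-transitivity}(2)--(4) lets one assemble $\theta$ for $\Gamma\circ\Delta$ from the given $\theta_0,\theta_1$ without further work.

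Finally, the parenthetical clause of the corollary — the invertible-in-$\mathbb N$ case — is handled identically but using Definition~\ref{de:reg_invert} in place of Definition~\ref{def:reg}: one only needs a right inverse and a single formula $\theta$ defining an isomorphism of $\Gamma\circ\Delta(\mathbb A,\bar r) \cong \mathbb A$, which is strictly weaker than full bi-interpretation and so follows from the bi-interpretation assembled above (or, more directly, from Proposition~\ref{le:tuples_tuples}(a) applied to the assumed regular invertible interpretation $F \rightsquigarrow \mathbb N$, together with Theorem~\ref{th:laurent_mult_bi}).
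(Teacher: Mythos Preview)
Your proposal is correct and follows essentially the same approach as the paper. The paper gives no explicit proof for this corollary; it simply says ``Like in Sections~\ref{se:polynomials} and~\ref{se:laurent_single}\ldots'' and the argument there (before Corollary~\ref{co:bi-int_Fx_N}) is exactly yours: pass from $F\rightsquigarrow\mathbb N$ to $\mathbb S(F,\mathbb N)\rightsquigarrow\mathbb N$ (the paper phrases this as going through $\mathbb S(\mathbb N,\mathbb N)$, which is the content of Proposition~\ref{le:tuples_tuples} combined with Proposition~\ref{le:list_bi-int}), then compose with Theorem~\ref{th:laurent_mult_bi} via transitivity.
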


\begin{corollary}\label{co:laurent_multi_elem_equiv_N}
Let $F$ be a field regularly invertibly interpretable in $\mathbb N$. Let $\Gamma$ be the code of a regular invertible interpretation of $F[x_1,x_1^{-1},\ldots, x_k,x_k^{-1}]$ in $\mathbb N$, $F[x_1,x_1^{-1},\ldots, x_k,x_k^{-1}]\cong \Gamma(\mathbb N)$. Then every ring elementarily equivalent to $F[x,x^{-1}]$ has the form $\nonstandardmodel{F[x_1,x_1^{-1},\ldots, x_k,x_k^{-1}]}{\nstdN}\cong \Gamma(\nstdM)$, where $\nstdM\equiv \mathbb N$.
\end{corollary}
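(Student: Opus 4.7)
The plan is to deduce this corollary by assembling results already proved earlier in the paper, with no new construction required. The hypothesis gives that $F$ is regularly invertibly interpretable in $\mathbb{N}$, so the first step is to invoke Corollary~\ref{co:laurent_multi_bi_N} to upgrade this to a regular invertible interpretation $F[x_1,x_1^{-1},\ldots,x_k,x_k^{-1}]\cong \Gamma(\mathbb{N})$ of the Laurent polynomial ring in $\mathbb{N}$. This is indeed the content of the second half of that corollary, obtained by composing the regular bi-interpretation of the Laurent polynomial ring with $\mathbb{S}(F,\mathbb{N})$ from Theorem~\ref{th:laurent_mult_bi} with the (regular invertible) interpretation of $\mathbb{S}(F,\mathbb{N})$ in $\mathbb{N}$ that comes from the hypothesis on $F$ together with Proposition~\ref{le:tuples_tuples}.

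Next, with the regular invertible interpretation $\Gamma$ in hand, I would apply Theorem~\ref{th:complete-scheme} with $\mathbb{A}=F[x_1,x_1^{-1},\ldots,x_k,x_k^{-1}]$ and $\mathbb{B}=\mathbb{N}$. The equivalence (1)$\Leftrightarrow$(2) of that theorem says precisely that a ring $\widetilde R$ is elementarily equivalent to the Laurent polynomial ring if and only if there is some $\widetilde{\mathbb{N}}\equiv\mathbb{N}$ and some tuple $\bar b\in \phi(\widetilde{\mathbb{N}})$ (where $\phi$ is the parameter formula of $\Gamma$) such that $\widetilde R\cong \Gamma(\widetilde{\mathbb{N}},\bar b)$. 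Since $\Gamma$ comes from a regular invertible interpretation, Theorem~\ref{th:well-def-inN} guarantees that the algebraic structure $\Gamma(\widetilde{\mathbb{N}},\phi)$ is well-defined (all choices of parameter tuple yield isomorphic rings), so one may unambiguously write $\Gamma(\widetilde{\mathbb{N}})$.

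Finally, the notation $\nonstandardmodel{F[x_1,x_1^{-1},\ldots,x_k,x_k^{-1}]}{\widetilde{\mathbb{N}}}$ was introduced in Definition~\ref{de:nonstandard_model} as the structure $\Gamma(\widetilde{\mathbb{N}})$, and Theorem~\ref{th:well-def-inN} ensures that this is independent of the particular choice of regular (invertible) interpretation code, so the notation is unambiguous. Combining these observations yields the claim exactly.

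I do not expect any technical obstacle here: the entire corollary is a formal consequence of Corollary~\ref{co:laurent_multi_bi_N}, Theorem~\ref{th:complete-scheme}, and Theorem~\ref{th:well-def-inN}, in complete analogy with how Corollary~\ref{co:Fx_elem_equiv} was deduced from the corresponding results in Section~\ref{se:polynomials} and how Corollary~\ref{co:laurent_elem_equiv_N} was deduced in the single-variable Laurent case. The role of this corollary in the paper is essentially to record the multivariable Laurent version of the Tarski-type description statement, so the proof is a one- or two-line reference.
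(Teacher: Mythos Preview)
Your proposal is correct and matches the paper's intended approach: the paper states this corollary without proof, treating it as an immediate consequence of Corollary~\ref{co:laurent_multi_bi_N} and Theorem~\ref{th:complete-scheme}, exactly as you outline. One small caveat: your appeal to Theorem~\ref{th:well-def-inN} for independence of the choice of $\Gamma$ carries a finite-generation hypothesis that need not hold for $F[x_1,x_1^{-1},\ldots,x_k,x_k^{-1}]$ when $F$ is not finitely generated as a ring, but this is inessential to the argument since the corollary is stated for a fixed $\Gamma$ and Theorem~\ref{th:complete-scheme} already supplies the well-definedness of $\Gamma(\widetilde{\mathbb N},\phi)$.
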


As usual, 
$\nonstandardmodel{F[x_1,x_1^{-1},\ldots,x_k,x_k^{-1}]}{\mathcal M}$ denotes $\Gamma_0(\mathcal M)$, where $\Gamma_0$ is the code of a regular invertible interpretation of $F[x_1,x_1^{-1},\ldots, x_k,x_k^{-1}]$ in $\mathbb S(F, \mathbb N)$, and $\mathcal M\equiv\mathbb S(F,\mathbb N)$.
Respectively, $\nonstandardmodel{F[x_1,x_1^{-1},\ldots,x_k,x_k^{-1}]}{\nstdN}$ denotes $\Gamma(\nstdM)$, where $\Gamma$ is the code of a regular invertible interpretation of $F[x_1,x_1^{-1},\ldots, x_k,x_k^{-1}]$ in $\mathbb N$.
In either case, we call the resulting ring the ring of nonstandard Laurent polynomials in variables $x_1,\ldots,x_k$.
Inspecting the interpretation of $F[x_1,x_1^{-1},\ldots,x_k,x_k^{-1}]$ in $\mathbb S(F,\mathbb N)$, similarly to Section~\ref{se:multi_var}, we obtain that the elements of $\nonstandardmodel{F[x_1,x_1^{-1},\ldots,x_k,x_k^{-1}]}{\mathcal M}$, which we naturally call \emph{nonstandard Laurent polynomials}, have the form directly analogous to~\eqref{eq:nonstd_multivar}:
\begin{equation}\label{eq:laurent_nonstd_multivar}
\begin{split}
p=&\sum_{i=1}^m a_i M_i=\sum_{i=1}^n a_i x_1^{n_{i1}}\cdots x_k^{n_{ik}},\\
&(a_1,\ldots,a_m)\in \mathcal M,\ (n_{11},\ldots,n_{mk})\in \mathbb S(\widetilde{\mathbb Z},\nstdM),
\end{split}
\end{equation}
where $\widetilde{\mathbb Z}=\nstdM-\nstdM$, the summation is understood in the sense of 
Lemma~\ref{le:arb_summation}, and, in the case of a field interpretable in $\mathbb N$, $\mathcal M\cong \mathbb S(\widetilde{F},\nstdM)$.

\begin{theorem}\label{th:universal_nstd_laurent_multi}
Let $F$ be an infinite field, $F[x_1,x_1^{-1},\ldots, x_k,x_k^{-1}]$ the ring of Laurent polynomials, and $\mathbb A$ an arbitrary commutative $F$-algebra with a regular interpretaion in $\mathbb S(F, \mathbb N)$. Let $\mathcal M\equiv\mathbb S(F, \mathbb N)$ and $\widetilde{\mathbb A}=\nonstandardmodel{\mathbb A}{\mathcal M}$. Then every mapping $x_i\mapsto a_i\in \widetilde{\mathbb A}$, where $a_i$ is invertible in $\mathbb A$, $i=1,\ldots,k$, extends uniquely to an algebra homomorphism $\nonstandardmodel{F[x_1,x_1^{-1},\ldots, x_k,x_k^{-1}]}{\mathcal M}\to\widetilde{\mathbb A}$.
\end{theorem}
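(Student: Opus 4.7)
The plan is to adapt the proof of Theorem~\ref{th:universal_nstd_single} (and its multivariable analogue Theorem~\ref{th:universal_nstd_multi}), with the one additional ingredient being that the exponents $n_{ij}$ of the nonstandard monomials are elements of $\widetilde{\MZ}=\nstdM-\nstdM$ rather than merely of $\nstdM$, so we must make sense of $a^n$ for $n\in\widetilde{\MZ}$ and $a\in\widetilde{\MA}$ invertible.

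First I would dispose of uniqueness via Lemma~\ref{le:nonstd_homomorphism_general}: any $F$-algebra homomorphism $\varphi\colon\nonstandardmodel{F[x_1,x_1^{-1},\ldots,x_k,x_k^{-1}]}{\mathcal M}\to\widetilde{\MA}$ must satisfy the nonstandard additivity and multiplicativity~\eqref{eq:nstd_homomorphism}. Applying this to the reduced form~\eqref{eq:laurent_nonstd_multivar} of a nonstandard Laurent polynomial forces
\[
\varphi\!\left(\sum_{i=1}^m \alpha_i\, x_1^{n_{i1}}\cdots x_k^{n_{ik}}\right)=\sum_{i=1}^m \alpha_i\, a_1^{n_{i1}}\cdots a_k^{n_{ik}},
\]
where the sum and products on the right are the nonstandard ones provided by Lemma~\ref{le:arb_summation} applied to the definable operations $+,\cdot$ on $\widetilde{\MA}$. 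This uniquely determines $\varphi$.

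For existence, I would take the displayed formula above as the \emph{definition} of $\varphi$. The main point is to make $a_j^{n}$ a legitimate element of $\widetilde{\MA}$ for every $n\in\widetilde{\MZ}$: since $a_j$ is invertible, the function $n\mapsto a_j^n$, which in the standard case is defined by $n\ge 0\mapsto\prod^n a_j$ and $n<0\mapsto\prod^{-n}a_j^{-1}$, is definable in $\mathbb S(\MA,\N)$ via Lemma~\ref{le:arb_summation}; so the same formula extends it to nonstandard $n$ in $\widetilde{\MS}\equiv\mathbb S(\MA,\N)$. Well-definedness (independence of the chosen coefficient tuple representing $p$) follows because the reducedness condition is first-order and picks out a unique reduced tuple for each element in the standard case, hence in the nonstandard case too.

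The homomorphism identities $\varphi(p+q)=\varphi(p)+\varphi(q)$ and $\varphi(pq)=\varphi(p)\varphi(q)$ are then first-order statements over $\mathbb S(F,\N)$ (after interpreting $F[x_1,x_1^{-1},\ldots,x_k,x_k^{-1}]$ and $\MA$ there via their given codes) expressed in terms of the nonstandard sum/product symbols whose definitions we have fixed. They hold in the standard structure $\mathbb S(F,\N)$ because they reduce to the usual universal property of ordinary Laurent polynomial rings over $F$; by elementary equivalence $\mathcal M\equiv\mathbb S(F,\N)$, they also hold in $\mathcal M$, and hence $\varphi$ is an $F$-algebra homomorphism. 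The only real subtlety---and the step I expect to occupy most of the bookkeeping---is writing down carefully the first-order formula in $\mathbb S(F,\N)$ that says ``the map defined by the displayed rule is a homomorphism,'' checking that it uses only the nonstandardly-defined sum/product and the invertibility of the $a_j$'s, and verifying it in the standard case; after that, elementary equivalence does all the remaining work.
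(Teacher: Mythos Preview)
Your proposal is correct and follows essentially the same approach as the paper, which simply writes ``We omit the proof since it essentially repeats that of Theorem~\ref{th:universal_nstd_single}.'' The only cosmetic difference is that the paper's model proof verifies $\varphi(pq)=\varphi(p)\varphi(q)$ by an explicit nonstandard-sum computation, whereas you package that same verification as a single first-order sentence transferred from $\mathbb S(F,\mathbb N)$ to $\mathcal M$; both routes rest on the same elementary equivalence and the nonstandard additivity/multiplicativity from Lemma~\ref{le:nonstd_homomorphism_general}.
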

We omit the proof since it essentially repeats that of Theorem~\ref{th:universal_nstd_single}.

\section{Nonstandard polynomials and Laurent polynomials over integers}\label{se:over_Z}
In this section we describe bi-interpretations of polynomials and Laurent polynomials with integers coefficients with $\mathbb Z$, which allows to treat their nonstandard models the same way we did polynomials and Laurent polynomials over a field. Note that these bi-interpretability results are particular cases of a more general statement recently shown in~\cite{Aschenbrenner-etal:2020}.

\subsection{Polynomials over integers}\label{se:Z_poly}

\begin{theorem}\label{pr:zx_biint}
 The ring $\Z[x]$ is regularly bi-interpretable with $\Z$.
\end{theorem}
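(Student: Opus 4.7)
The plan is to imitate the proof of Theorem~\ref{th:Fx_SFN_biint} (regular bi-interpretation of $F[x]$ with $\mathbb S(F,\mathbb N)$) with the field $F$ replaced by the ring $\MZ$, and then transport the resulting bi-interpretation along the absolute bi-interpretation of $\MZ$ with $\MN$, so as to land in $\MZ$ itself. The absolute direction $\MZ[x]\cong \Gamma(\mathbb S(\MZ,\MN))$ is obtained verbatim from Section~\ref{se:polynomials}: a polynomial $a_0+a_1x+\ndots+a_nx^n$ is encoded by the tuple $(a_0,\ndots,a_n)\in \List(\MZ,\MN)$ modulo the tail-of-zeros equivalence, and the defining formulas $\varphi_\sim,\varphi_+,\varphi_\times$ use no property of $F$ beyond being a commutative ring with unity.

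For the reverse direction $\mathbb S(\MZ,\MN)\cong \Delta(\MZ[x],\varphi)$ I would use $t=x$ as the parameter and encode each tuple $(b_0,\ndots,b_n)$ as the pair $(b_0+b_1t+\ndots+b_nt^n,\,t^n)$. Three definability ingredients are required. (i) The constants sort $\MZ$ must be $0$-definable in $\MZ[x]$; this is the classical theorem of~\cite{Robinson:1951}, and then $\MN\subseteq\MZ$ is defined by Lagrange's four-square theorem. (ii) The set of powers $\{t^n:n\in\MN\}$ must be definable with parameter $t=x$; this works exactly as in Lemma~\ref{le:pair_t_tn}, since the conjunction $\forall r\in\Irr\,(r\mid t\leftrightarrow r\mid p)$ forces $p=\pm t^n$ and the extra clause $t-1\mid p-1$ fixes the sign by evaluation at $x=1$. (iii) Degree comparison and coefficient extraction in the spirit of Lemmas~\ref{le:deg_and_value} and~\ref{le:extract_coeffs} go through essentially unchanged, using evaluation at integers $\alpha\in\MZ$ via the equivalence $x-\alpha\mid p-\beta\iff p(\alpha)=\beta$ and, when needed, the scaled version $\alpha x-1\mid p-\beta$ encoding $p(1/\alpha)=\beta\in\MQ$ (available because $\MZ\subset\MQ$ is infinite and $\alpha x-1$ is primitive).

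With (i)--(iii) in hand I would assemble $\Delta$ and verify that the two compositions $\Gamma\circ\Delta$ and $\Delta\circ\Gamma$ are definably isomorphic to the identity by repeating the argument in the proof of Theorem~\ref{th:Fx_SFN_biint}, which is routine once the extraction formulas are available. This yields a regular bi-interpretation $\MZ[x]\cong \mathbb S(\MZ,\MN)$. Since $\MZ$ is absolutely bi-interpretable with $\MN$, Proposition~\ref{le:tuples_tuples} gives that $\mathbb S(\MZ,\MN)$ is regularly bi-interpretable with $\mathbb S(\MN,\MN)$, which by Proposition~\ref{le:list_bi-int} is absolutely bi-interpretable with $\MN$, hence with $\MZ$. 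Composing these regular bi-interpretations proves the theorem. The main obstacle is ingredient (i): in the field case this step was trivialized by $\varphi_\mathrm{lin}$, which exploits the rich unit group $F^\times$, whereas in $\MZ[x]$ the unit group is only $\{\pm 1\}$ and that shortcut fails, forcing us to invoke Robinson's original construction from~\cite{Robinson:1951} (or the more general bi-interpretability result of~\cite{Aschenbrenner-etal:2020}, of which Theorem~\ref{pr:zx_biint} is a special case).
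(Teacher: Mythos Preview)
Your plan is essentially correct, but it follows a genuinely different route from the paper's own proof, and one step of your sketch needs more care.

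\textbf{Comparison.} The paper does \emph{not} pass through $\mathbb S(\MZ,\MN)$. Instead it defines $\MZ$ inside $\MZ[x]$ directly, but by a different mechanism than Robinson's: since $\MZ$ is QFA (Nies), the set $S=\{s:\MZ[x]/\langle s\rangle\cong\MZ\}=\{\pm x+c:c\in\MZ\}$ is $0$-definable, and $\MZ$ is then extracted from $S$. The interpretation of $\MZ[x]$ in $\MZ$ is the naive one (code the coefficient tuple), and the coordinate isomorphism is pinned down in one stroke by the evaluation formula $\forall\alpha,\beta\in\MZ\ [\phi(n,\alpha,\beta)\leftrightarrow s-\alpha\mid q-\beta]$, which says ``the polynomial coded by $n$ and the polynomial $q$ agree at every integer''. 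Your approach---Robinson for (i), then recycling the $F[x]$ machinery for (ii),(iii), then composing four bi-interpretations---also works and has the advantage of reusing Section~\ref{se:polynomials} almost verbatim; the paper's route is shorter, avoids the list superstructure entirely, and showcases a use of QFA that is interesting in its own right.

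\textbf{A gap in (iii).} The phrase ``$\alpha x-1\mid p-\beta$ encoding $p(1/\alpha)=\beta\in\MQ$'' is not well-posed in $\MZ[x]$: $\beta$ must live in $\MZ[x]$, so you cannot hit non-integral values of $p(1/\alpha)$. The correct analogue is $\alpha x-1\mid \alpha^{n}p-\gamma$ with $\gamma\in\MZ$, which by primitivity of $\alpha x-1$ and Gauss's lemma does encode $\gamma=\alpha^{n}p(1/\alpha)$; here $\alpha^{n}$ is recovered from the parameter $u=t^{n}$ via $t-\alpha\mid u-\alpha^{n}$. With this fix, Lemma~\ref{le:deg_and_value}(b) does go through over $\MZ$ (two integer polynomials agreeing on all of $\MZ$ are equal). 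Alternatively---and this is closer in spirit to the paper's endgame---once (i) gives you the full arithmetic of $\MZ$ inside $\MZ[x]$ you can bypass the $p(1/s)$ trick entirely: code a candidate coefficient tuple $(a_0,\ldots,a_n)$ by a single integer and verify it against $p$ by the evaluation predicate $x-\alpha\mid p-\beta$ at all $\alpha\in\MZ$. Either repair suffices; just don't claim the field-case formulas transfer ``unchanged''.
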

\begin{proof}
We show first that $\Z$ is regularly definable in $\Z[x]$. Let $s \in \Z[x]$ and denote by $\langle s\rangle$ the ideal generated by $s$ in $\Z[x]$.
Then $\Z[x]/\langle s\rangle $ is isomorphic to $\Z$ if and only if $s = x +c$ or $p = -x +c$ for some $c \in \Z$. The ring $\Z$ is QFA~\cite{Nies:2007}, hence there is a formula $\psi(s)$ such that 
\[\Z[x]/\langle s\rangle \models \psi(s) \Longleftrightarrow \Z[x]/\langle s\rangle \cong \mathbb Z.
\]
Obviously, the ring $\Z[x]/\langle s\rangle $ is interpretable in $\Z[x]$ with the parameter $s$, so there is a formula $\psi^*(s)$ such that 
\[
\Z[x] \models \psi^*(s) \Longleftrightarrow \Z[x]/\langle s\rangle \models \psi(s).
\]
Hence the set of polynomials $S = \{\pm x + c \mid c \in \Z\}$
is definable in $\Z[x]$ as $S = \{s\in \Z[x] \mid \Z[x] \models \psi^*(s) \}$. Observe that polynomials $s,t \in S$ have the same leading coefficient if either $\frac12(s+t) \in S$ or $\frac12(s+t+1) \in S$. 
Denote by $S_0$ the set of polynomial in $S$ with the same leading coefficient as $s$. It follows that the set $S_0$ as well as the set $\Z = \{s-t\mid s,t \in S\}$ are definable in $\Z[x]$ with parameter $s$ provided $s \in S$. This shows the regular definability of $\Z$ in $\Z[x]$, which we denote by $\Z \cong \Gamma(\Z[x], \psi^*)$.

Interpretation of $\Z[x]$ in $\Z$ is organized via the numbering of tuples, i.e., the polynomial $p=a_nx^n + \ldots a_1x + a_0 \in \Z[x]$ corresponds to a tuple $(a_n, \ldots,a_0)$, which corresponds to its code $n_p \in \Z$. This gives an absolute interpretation of $\Z[x]$ in $\Z$, say $\Z[x] \cong \Delta(\Z)$.

To verify bi-interpretaion, now we construct a formula $\theta(p,n_p)$ that defines in $\Z[x]$ an isomorphism $p\circ s^{-1} \to n_p$ of $\Z[x]$ and $\Delta(\Z)$, where the latter copy of $\mathbb Z$ is interpreted via $\Gamma$ with parameter $s$ as definable subset of $\mathbb Z[x]$.

Let $n \in \Z \subseteq \Z[x]$ then $n = n_q$ for some polynomial $q \in \Z[x]$. Observe that there is a formula $\phi(n,\alpha, \beta)$, where $n,\alpha, \beta \in \Z$ such that for $n= n_q$
\[
\Z \models \phi(n,\alpha, \beta) \Longleftrightarrow q(\alpha)=\beta.
\] 
Consider the relation on pairs $(q,n)\in\mathbb Z[x]\times\mathbb Z$ given by the formula
\begin{equation}\label{eq:isomorphism}
\forall \alpha,\beta\ \phi(n,\alpha,\beta)\leftrightarrow s-\alpha\mid q-\beta,
\end{equation}
where $s$ is the parameter of interpretation $\Gamma$.
This relation is satisfied when and only when $n=n_p$, where $p=a_nx^n+\ldots+a_1x+a_0$ and $q=a_ns^n+\ldots+a_1s+a_0$. This defines an isomorphism between $\mathbb Z[x]$ and $\Delta(\Gamma(\mathbb Z[x],\psi^*))$.

The other isomorphism $\Z \to \Gamma(\Delta(\Z))$ is clearly definable in $\Z$ as the graph of codes of tuples of length $1$.
\end{proof}

Similarly, to prove the same for the ring $\Z[x_1, \ldots,x_k]$, one only needs to take $k$ polynomials $s_1, \ldots,s_k$ such that $\Z[x_1, \ldots,x_n]/\langle s_1, \ldots,s_k \rangle \cong \Z$. We call such $k$-tuples $s_1,\ldots,s_k$ \emph{$\mathbb Z$-maximal} in $\mathbb Z[x_1,\ldots,x_k]$.

Since $\mathbb Z$ is QFA, there is a formula $\psi_k(s_1,\ldots,s_k)$ such that
\[
\mathbb Z[x_1,\ldots,x_k]\models \psi_k(s_1,\ldots,s_k)\iff \mathbb Z[x_1,\ldots,x_k]/\langle s_1,\ldots,s_k\rangle\cong \mathbb Z.
\]
Denote $\chi_i(s_1,\ldots,s_k)$, where $i=1,\ldots,k$, to be the formula
\[\begin{split}
\forall y\ &\psi_k(s_1,\ldots,s_i+y,\ldots,s_k)\to\\
&\psi_k(s_1,\ldots,s_i+y+1,\ldots,s_k)\wedge \psi_k(s_1,\ldots,s_i+y-1,\ldots,s_k).
\end{split}
\]
Now let $\Psi_k(s_1,\ldots,s_k)$ be the formula
\[
\psi_k(s_1,\ldots,s_k)\wedge\bigwedge_{i=1}^k \chi_i(s_1,\ldots,s_k).
\]
\begin{lemma}\label{le:glkZ}
Every tuple $(\sum_{j=1}^k a_{ij}x_j)_{i=1}^k$, where $(a_{ij})\in GL_k(\mathbb Z)$, satisfies $\Psi_k$.
\end{lemma}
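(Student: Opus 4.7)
The plan is to verify the two parts of $\Psi_k$ separately, exploiting the fact that because $A = (a_{ij}) \in GL_k(\mathbb Z)$, the tuple $(s_1,\ldots,s_k)$ is an alternate $\mathbb Z$-algebraically free generating tuple for $\mathbb Z[x_1,\ldots,x_k]$. For $\psi_k(s_1,\ldots,s_k)$: since $A^{-1}$ has integer entries, each $x_j$ is a $\mathbb Z$-linear combination of the $s_i$, so $\langle s_1,\ldots,s_k\rangle = \langle x_1,\ldots,x_k\rangle$ and the quotient is $\cong \mathbb Z$.

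For each $\chi_i$, I would use the $\mathbb Z$-algebra automorphism $x_j \mapsto s_j$ to identify $\mathbb Z[x_1,\ldots,x_k] = \mathbb Z[s_1,\ldots,s_k]$. Any $y \in \mathbb Z[x_1,\ldots,x_k]$ can then be written uniquely as a polynomial $y(s_1,\ldots,s_k)$, and the third isomorphism theorem yields
\[
\mathbb Z[x]\big/\langle s_1,\ldots,s_{i-1}, s_i+y, s_{i+1},\ldots,s_k\rangle \;\cong\; \mathbb Z[s_i]\big/\langle s_i + \tilde y(s_i)\rangle,
\]
where $\tilde y(s_i)$ denotes $y(s_1,\ldots,s_k)$ with $s_j = 0$ substituted for $j \neq i$. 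The crucial observation is that replacing $y$ by $y \pm 1$ replaces $\tilde y$ by $\tilde y \pm 1$.

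It then suffices to describe when $\mathbb Z[s_i]/\langle p(s_i)\rangle \cong \mathbb Z$. Any such isomorphism arises from an evaluation $s_i \mapsto c$ with $c \in \mathbb Z$, whose kernel is $\langle s_i - c\rangle$; since $\mathbb Z[s_i]^{\times} = \{\pm 1\}$, this forces $p = \pm(s_i - c)$ for some $c \in \mathbb Z$. Applied to $p = s_i + \tilde y(s_i)$, the hypothesis $\psi_k(s_1,\ldots,s_i + y,\ldots,s_k)$ constrains $\tilde y$ to be either a constant in $\mathbb Z$ or of the form $-2s_i + c$ with $c \in \mathbb Z$. Both shapes survive the shift $\tilde y \mapsto \tilde y \pm 1$ (only the constant term moves), so $\psi_k(s_1,\ldots, s_i + y \pm 1,\ldots, s_k)$ also holds, establishing $\chi_i$. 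The only step requiring real care is this univariate characterization, which ultimately rests on $\mathbb Z[s_i]$ having only $\pm 1$ as units; everything else is bookkeeping with the change of variables.
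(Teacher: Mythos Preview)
Your proof is correct and follows essentially the same approach as the paper's. Both arguments reduce via the $GL_k(\mathbb Z)$-automorphism to the standard tuple $(x_1,\ldots,x_k)$, then characterise $\mathbb Z$-maximality of $(x_1+y,x_2,\ldots,x_k)$ by passing to the univariate quotient $\mathbb Z[x_1]/\langle x_1+\tilde y(x_1)\rangle$ and observing that the resulting condition $\tilde y\in\{c,\ -2x_1+c:c\in\mathbb Z\}$ is stable under $\tilde y\mapsto\tilde y\pm 1$; the paper phrases the reduction as ``removing monomials involving other variables'' where you invoke the third isomorphism theorem, but the content is identical.
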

\begin{proof}
Since every such tuple is an automorphic image of $x_1,\ldots,x_k$, it suffices to show the statement for the latter tuple. For that, it is enough to show that $\chi_1$ is satisfied.

Note that $s_1,\ldots,s_k$ is $\mathbb Z$-maximal if and only if $s_1-\sum_{i=2}^k t_is_i,s_2,\ldots,s_k$ is $\mathbb Z$-maximal, where $t_i\in \mathbb Z[x_1,\ldots,x_k]$. Using this observation to remove all monomials that involve variables other than $x_1$ from $y$, we see that $x_1+y,\ldots,x_n$ is $\mathbb Z$-maximal if and only if $x_1+y=\pm x_1+c+t_2x_2+\ldots+t_kx_k$, where $c\in\mathbb Z$ and $t_i\in \mathbb Z[x_1,\ldots,x_k]$. The latter condition does not change by adding $\pm 1$ to $y$.
\end{proof}

\begin{theorem}\label{th:z_polynomials_biint}
The ring $\mathbb Z[x_1,\ldots,x_k]$, $k\ge 1$, is regularly bi-interpretable with $\mathbb Z$.
\end{theorem}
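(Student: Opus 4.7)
The plan is to closely parallel the proof of Theorem~\ref{pr:zx_biint}, with Lemma~\ref{le:glkZ} playing the role of identifying an abundant supply of admissible parameter tuples.

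First, the absolute interpretation $\mathbb{Z}[x_1,\ldots,x_k]\cong\Delta(\mathbb{Z})$ is standard: encode a polynomial by its tuple of coefficients in a recursive enumeration of monomials (as in Section~\ref{se:multi_var}), and this tuple in turn by its integer code via a fixed arithmetical enumeration of finite tuples of integers. Addition and multiplication of polynomials translate into arithmetic predicates on codes.

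Next, we construct the regular interpretation $\mathbb{Z}\cong\Gamma(\mathbb{Z}[x_1,\ldots,x_k],\Psi_k)$. By Lemma~\ref{le:glkZ} the formula $\Psi_k$ is satisfied (in particular by the tuple $(x_1,\ldots,x_k)$), so its truth set is nonempty. Fix parameters $\bar s=(s_1,\ldots,s_k)\in\Psi_k(\mathbb{Z}[x_1,\ldots,x_k])$ and let $I=\langle s_1,\ldots,s_k\rangle$. By $\psi_k$ we have $\mathbb{Z}[x_1,\ldots,x_k]/I\cong\mathbb{Z}$. The quotient $R/I$ is interpretable in $R=\mathbb{Z}[x_1,\ldots,x_k]$ with parameters $\bar s$: take $R$ as universe, the equivalence relation $p\sim_I q\iff\exists r_1,\ldots,r_k\,(p-q=\sum_i r_is_i)$, and the induced ring operations. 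This yields $\mathbb{Z}\cong\Gamma(R,\bar s)$ regularly with the parameter formula $\Psi_k$.

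To upgrade to bi-interpretability, one has to exhibit the two definable coordinate isomorphisms of Definition~\ref{def:reg}. The easy direction is $\Gamma\circ\Delta(\mathbb{Z})\cong\mathbb{Z}$: tracing through, an integer $n$ is first encoded as a constant polynomial (tuple $(n)$ in $\Delta$), and then projected modulo the image of $I$ to recover $n$; the resulting pairing $n\leftrightarrow n$ is definable in $\mathbb{Z}$. The harder direction is $\Delta\circ\Gamma(R)\cong R$. Here one mimics the formula~\eqref{eq:isomorphism} from the single-variable case. Given $p\in R$ and an element $n$ of $\Delta\circ\Gamma(R)$—which is an integer code relative to the interpreted copy of $\mathbb{Z}$, hence a $\sim_I$-class—one writes the formula saying that for every $(\alpha_1,\ldots,\alpha_k)\in R/I$ and every $\beta\in R/I$, the polynomial encoded by $n$ evaluates at $(\alpha_1,\ldots,\alpha_k)$ to $\beta$ if and only if $p\equiv\beta\pmod{(s_1-\alpha_1,\ldots,s_k-\alpha_k)}$, where the $\alpha_i,\beta$ are lifted to arbitrary representatives in $R$. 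Here ``the polynomial encoded by $n$ evaluates at $\bar\alpha$ to $\beta$'' is an arithmetical predicate available in the copy of $\mathbb{Z}$ extracted by $\Gamma$, exactly as in the proof of Theorem~\ref{pr:zx_biint}. Since $\mathbb{Z}$ is an infinite domain and divisibility by the maximal ideal $(s_1-\alpha_1,\ldots,s_k-\alpha_k)$ cuts out point-evaluation, this pins down $p$ uniquely from $n$ and vice versa, yielding the required definable isomorphism.

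The main obstacle I anticipate is the multivariable lifting step in the last formula: one must check that joint divisibility by $(s_1-\alpha_1,\ldots,s_k-\alpha_k)$ does behave as point evaluation inside $R$ and that the universal quantification over $(\alpha_1,\ldots,\alpha_k)\in R/I$ suffices to separate distinct polynomials. This is a multivariable version of the polynomial identity test underlying the evaluation-based identification of polynomials by their values on $\mathbb{Z}^k$, which works because $\mathbb{Z}^k$ is Zariski-dense; everything else is a routine generalization of the arguments already in Theorem~\ref{pr:zx_biint} and in Section~\ref{se:multi_var}.
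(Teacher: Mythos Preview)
There is a genuine gap in the key step. You interpret $\mathbb Z$ as the quotient $R/I$ with $I=\langle s_1,\ldots,s_k\rangle$, and then write the evaluation condition $p\equiv\beta\pmod{\langle s_1-\alpha_1,\ldots,s_k-\alpha_k\rangle}$ ``where the $\alpha_i,\beta$ are lifted to arbitrary representatives in $R$.'' But this ideal depends heavily on the chosen lift. Already for $k=1$, $s_1=x$: the class of $5$ in $R/\langle x\rangle$ is represented both by $5$ and by $5+x$, yet $\langle x-5\rangle$ and $\langle x-(5+x)\rangle=\langle 5\rangle$ are entirely different ideals. So the displayed formula is not well-defined on $R/I$, and neither ``for some lift'' nor ``for all lifts'' rescues it (the former breaks injectivity, the latter forces $p=0$). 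The multivariable evaluation test you outline at the end presupposes that the $\alpha_i$ are genuine integer constants inside $R$, which your quotient interpretation does not supply.

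The paper closes exactly this gap before invoking the single-variable template: it first proves that the constants $\mathbb Z\subseteq R$ are \emph{definable as a subset}, parameter-free, via
\[
\varphi(q)\;=\;\forall s_1,\ldots,s_k\ \bigl[\Psi_k(s_1,\ldots,s_k)\to\psi_k(s_1+q,s_2,\ldots,s_k)\bigr],
\]
using Lemma~\ref{le:glkZ} together with a linear-part argument to show $\varphi$ cuts out precisely $\mathbb Z$. Once the actual integers are available as elements of $R$, the evaluation formula with $\alpha_i,\beta\in\mathbb Z\subseteq R$ is unambiguous, and the rest of your outline (the $\Delta$-encoding and the divisibility-based identification of $p$ with its code $n_p$) goes through as in Theorem~\ref{pr:zx_biint}. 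In short, the missing idea is replacing the quotient interpretation of $\mathbb Z$ by a subset definition; everything else in your plan is correct.
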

\begin{proof} We begin by defining $\mathbb Z$ as a subset of the polynomial ring.

\noindent\textsc{Claim.} The subset $\mathbb Z\subseteq \mathbb Z[x_1,\ldots,x_k]$ is given by the formula
\[
\varphi(q)=\forall s_1,\ldots,s_k\ \Psi_k(s_1,\ldots,s_k)\to \psi_k(s_1+q,\ldots,s_k),
\]
where $\psi_k$ and $\Psi_k$ are as above.

Indeed, if $q\in \mathbb Z$, then $\mathbb Z[x_1,\ldots,x_k]\models \varphi(q)$ by repeated application of $\chi_1$, starting with $y=0$.
Now, let $q\notin \mathbb Z$. Shifting variables by constants, we may assume that $q$ has a nonzero linear part $b_1x_1+\ldots+b_kx_k$, $b_i\in\mathbb Z$. By the choice of $\Psi_k$,
we may assume $q$ has a zero free term. Next, we restrict our consideration to $s_1,\ldots, s_k$ with zero free terms. Denote the linear part of each $s_i$ by $a_{i1}x_1+\ldots+a_{ik}x_k$. Then a necessary condition for $s_1,\ldots,s_k$ to be $\mathbb Z$-maximal is that the matrix $(a_{ij})$ belongs to $GL_k(\mathbb Z)$. However, it easy to see that for every nonzero integer vector $(b_1,\ldots,b_k)$ there is a matrix $A\in GL_k(\mathbb Z)$ s.t. adding $(b_1,\ldots,b_k)$ to the first row of $A$ takes it out of $GL_n(\mathbb Z)$. By Lemma~\ref{le:glkZ}, the corresponding to $A$ choice of linear polynomials $s_1,\ldots,s_k$ breaks $\varphi(q)$. This completes the proof of the claim.

From this point, the proof proceeds similarly to the proof of Theorem~\ref{pr:zx_biint}. Namely, a polynomial $p$ is interpreted by a tuple like in formula~\eqref{eq:multivar_interpretation}, which, in turn, is encoded by an integer $n_p$; the divisibility condition in formula~\eqref{eq:isomorphism} is replaced with membership in the respective ideal.
\end{proof}

\subsection{Laurent polynomials over integers}\label{se:Z_Laurent}
Now we organize a regular invertible interpretation of $\mathbb Z[x_1,x_1^{-1},\ldots,x_k,x_k^{-1}]$ in $\mathbb Z$ (in fact, a regular bi-interpretation). Our first steps are very similar to those in Section~\ref{se:laurent_single}. We notice that the definable set of invertible elements of $\mathbb Z[x_1,x_1^{-1},\ldots,x_k,x_k^{-1}]$ is $\Inv=\{\pm x_1^{\ell_1}\cdots x_k^{\ell_k}\mid \ell_i\in\mathbb Z\}$. We define the set of irreducible Laurent polynomials over $\mathbb Z$ as
\[
\Irr=\{p\mid p\notin \Inv\wedge \forall q\forall r\ p=qr\to [q\in\Inv\vee r\in\Inv]\}.
\]

\begin{lemma}\label{le:p_to_m}
There is a formula $\varphi_{\mathrm{power}}(p,b)$ that defines the set $\{b^m:m\in\mathbb N\}$ in $\mathbb Z[x_1,x_1^{-1},\ldots,x_k,x_k^{-1}]$, $k\ge 1$, whenever $b\in \mathbb Z[x_1,x_1^{-1},\ldots,x_k,x_k^{-1}]$ is irreducible such that $b-1$ does not divide any Laurent polynomial of the form $a-1$ for invertible $a\neq 1$.
\end{lemma}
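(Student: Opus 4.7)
The plan is to exhibit an explicit first-order formula and verify it by appealing to unique factorization in $R = \mathbb Z[x_1, x_1^{-1}, \ldots, x_k, x_k^{-1}]$ together with the divisibility hypothesis on $b-1$. I would take
\[
\varphi_{\mathrm{power}}(p, b) \;=\; (p \neq 0) \;\wedge\; \bigl[\forall q \in \Irr\ \bigl(q \mid p \to \exists u \in \Inv\ q = ub\bigr)\bigr] \;\wedge\; \bigl[(b-1) \mid (p-1)\bigr].
\]
All three clauses are first-order in the ring language, since $\Irr$ and $\Inv$ were shown to be definable at the start of Section~\ref{se:Z_Laurent} and divisibility is existential.

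For the easy direction, every $p = b^m$ with $m \geq 0$ satisfies $\varphi_{\mathrm{power}}$: it is nonzero, each irreducible factor of $b^m$ is associate to $b$, and $(b-1) \mid (b^m - 1)$ via the telescoping identity $b^m - 1 = (b-1)(b^{m-1} + \cdots + 1)$ (with $p - 1 = 0$ when $m = 0$).

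For the converse, I would use that $R$ is a UFD, being a localization of the UFD $\mathbb Z[x_1, \ldots, x_k]$ at the multiplicative system generated by the $x_i$, to deduce from the middle clause that any $p$ satisfying $\varphi_{\mathrm{power}}$ has the shape $p = u\,b^m$ for some $u \in \Inv$ and $m \geq 0$ (when $p$ has no irreducible divisors it is itself a unit, corresponding to the degenerate case $m = 0$). Then the decomposition $p - 1 = u(b^m - 1) + (u - 1)$ combined with $(b-1) \mid (b^m - 1)$ reduces the last clause to $(b-1) \mid (u-1)$; the standing hypothesis on $b$ forces $u = 1$, yielding $p = b^m$.

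The main subtlety is that the first two clauses of $\varphi_{\mathrm{power}}$ only pin $p$ down up to a unit of $R$, so alone they would capture the wider set $\{\pm x_1^{\ell_1} \cdots x_k^{\ell_k} \cdot b^m\}$; the hypothesis on $b$ is precisely calibrated so that, modulo $b-1$, the unit $1$ is distinguished from every other unit of $R$, and the third clause then eliminates the excess. This is the Laurent-polynomial, integer-coefficient analogue of Lemma~\ref{le:X} above, with the role played there by the ``linearity'' of $a$ over a field now played by the purely arithmetic condition on $b-1$.
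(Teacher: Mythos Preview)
Your proof is correct and follows essentially the same approach as the paper. The only cosmetic differences are that the paper phrases the middle clause as ``every divisor of $p$ is a unit or is divisible by $b$'' (equivalent to yours in a UFD) and folds the hypotheses on $b$ into the formula itself; the reduction step $p-1 = u(b^m-1)+(u-1)$ followed by invoking the condition on $b-1$ is identical.
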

\begin{proof} Let $\Inv_1=\Inv\setminus\{1\}$. 
We note that for an irreducible $b$, the set $\{ab^m:a\in\Inv, m\in\mathbb N\}$ is given by the formula
\[
\varphi_0(p,b)=b\in \Irr\wedge (\forall a\in\Inv_1\ b-1\nmid a-1) \wedge \forall q\ q\mid p\to (q\in\mathrm{Inv}\vee b\mid q).
\]
Then the set $\{b^m:m\in\mathbb N\}$ is defined by
\[
\varphi_{\mathrm{power}}(p,b)=\varphi_0(p,b)\wedge b-1\mid p-1.
\]
Indeed, if $b-1\mid ab^m-1$ for $a\in\Inv$, then
\[
b-1\mid a b^m-1= ab^m-a + a-1=a(b^m-1)+a-1,
\]
so $b-1\mid a-1$, which by the assumption on $b$ implies $a=1$.
\end{proof}
Since invertible polynomials have the form $a=\pm x_1^{\ell_1}\cdots x_k^{\ell_k}$, where $\ell_i\in\mathbb Z$, such $b$ exist, for example $b=x_1+3$.

Now the observation $(f+1)^m=1+mf+f^2f'$ allows to prove the following proposition.
\begin{proposition}\label{pr:Z_def_in_Laurent}
The subset $\mathbb Z$ is definable in $\mathbb Z[x_1,x_1^{-1},\ldots,x_k,x_k^{-1}]$, $k\ge 1$.
\end{proposition}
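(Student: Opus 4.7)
The plan is to define $\mathbb Z$ as the set of those $n \in R := \mathbb Z[x_1, x_1^{-1}, \ldots, x_k, x_k^{-1}]$ whose image in $R/(b-1)$ lies in $\mathbb Z \hookrightarrow R/(b-1)$ for every Laurent polynomial $b$ satisfying the hypotheses of Lemma~\ref{le:p_to_m}. For such $b$, Lemma~\ref{le:p_to_m} provides a formula $\varphi_{\mathrm{power}}(p,b)$ defining $B_b = \{b^m : m \in \mathbb N\}$, and the binomial identity $(f+1)^m = 1 + mf + f^2 f'$ with $f = b - 1$ gives $p - 1 - m(b-1) \in (b-1)^2 R$ whenever $p = b^m$. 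Consequently, ``$n \equiv \pm m \pmod{b-1}$ for some $m \in \mathbb N$'' is captured by the first-order condition
\[
\Psi_b(n) \;:\; \exists p, g \;\bigl[\varphi_{\mathrm{power}}(p,b) \wedge \bigl(p - 1 - n(b-1) = (b-1)^2 g \;\vee\; p - 1 + n(b-1) = (b-1)^2 g\bigr)\bigr],
\]
and I would define $\mathbb Z \subseteq R$ by the formula $\Phi(n) \equiv \forall b\,\bigl[b \text{ satisfies the hypotheses of Lemma~\ref{le:p_to_m}} \rightarrow \Psi_b(n)\bigr]$.

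The implication $n \in \mathbb Z \Rightarrow \Phi(n)$ is immediate: for any admissible $b$, take $p = b^{|n|}$ in $B_b$ and invoke the binomial identity above. For the converse, given $n \in R \setminus \mathbb Z$, I would produce an admissible $b$ for which the image of $n$ in $R/(b-1)$ is not in the subring $\mathbb Z$. When $k \geq 2$ this is straightforward: $n$ must depend on some variable $x_i$, so pick any $j \neq i$ and take $b = x_j + 3$. Modulo $(b - 1) = (x_j + 2)$ only $x_j$ is substituted by $-2$, while $x_i$ is retained, so the image of $n$ in $R/(x_j + 2)$ is still non-constant and hence not in $\mathbb Z$.

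The main obstacle is the case $k = 1$: if $b = x_1 + c$ is linear then $R/(b-1) \cong \mathbb Z[1/(c-1)]$, and for any $q(x_1) \in \mathbb Z[x_1] \setminus \mathbb Z$ the image $q(1-c)$ is automatically an integer, so linear $b$'s cannot rule out non-constant polynomials in $x_1$. Here I would use higher-degree $b = x_1^e + c$ with $c \geq 3$ prime (so $b$ is irreducible by Eisenstein) and $e > s + r$, where $n = \sum_{i=-r}^{s} a_i x_1^i$ with some $a_i \neq 0$ for $i \neq 0$. In $R/(x_1^e + c - 1)$ we have $x_1^e = 1 - c$ and $x_1^{-i} = x_1^{e-i}/(1-c)$; writing $n$ in the $\mathbb Z[1/(1-c)]$-basis $\{1, x_1, \ldots, x_1^{e-1}\}$, the positive powers of $x_1$ occupy positions $0, \ldots, s$ while the negative powers occupy positions $e-r, \ldots, e-1$, which are disjoint once $e > s + r$; thus some non-constant basis position has non-zero coefficient, and $n$ is not in $\mathbb Z \subseteq R/(b-1)$. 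That $b - 1$ does not divide $a - 1$ for any invertible $a \neq 1$ reduces to $(1-c)^m \neq \pm 1$ for $m \neq 0$, which holds since $|1-c| \geq 2$.
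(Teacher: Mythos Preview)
Your overall strategy---using Lemma~\ref{le:p_to_m} together with the binomial expansion $(1+(b-1))^m = 1 + m(b-1) + (b-1)^2(\cdots)$ to detect, uniformly over admissible $b$, whether $n$ is congruent to an integer modulo $b-1$---is exactly the paper's approach (the paper writes the formula for $\mathbb N$ rather than for $\mathbb Z$ directly, but that is immaterial). Your treatment of the case $k=1$ via the $\mathbb Z[1/(1-c)]$-basis $\{1,x_1,\dots,x_1^{e-1}\}$ of $R/(x_1^e+c-1)$ is correct and is a pleasant repackaging of the paper's degree-spread argument.

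The gap is in your $k\ge 2$ case. You write: ``pick any $j\ne i$ and take $b=x_j+3$; modulo $x_j+2$ only $x_j$ is substituted, while $x_i$ is retained, so the image of $n$ is still non-constant.'' This inference fails. Take $k=2$ and $n=x_1(x_2+2)$: then $n$ depends on $x_1$, but substituting $x_2=-2$ sends $n$ to $0\in\mathbb Z$, so $b=x_2+3$ does not witness $\neg\Psi_b(n)$. Worse, for $n=(x_1+2)(x_2+2)\cdots(x_k+2)$ every choice $b=x_j+3$ fails simultaneously. Your formula $\Phi$ is nonetheless correct---one can instead take $b=x_j+5$, or more robustly $b=x_i^{e}+3$ with $e$ large---but the proof as written does not establish it. The paper sidesteps this by using $b=x_1^{N}+3$ for all $k$ at once and arguing with the highest and lowest $x_1$-degrees of $(b-1)q'$. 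Your own $k=1$ argument in fact works verbatim for general $k$ once the coefficient ring $\mathbb Z[1/(1-c)]$ is replaced by $\mathbb Z[1/(1-c)][x_2^{\pm1},\dots,x_k^{\pm1}]$, so the cleanest fix is simply to drop the case distinction.
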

\begin{proof}

For any $b$ that satisfies the conditions of Lemma~\ref{le:p_to_m}, we consider the formula
\[
\varphi_1(p,b,q)=\varphi_{\mathrm{power}}(p,b)\wedge (b-1)^2\mid p-1-q(b-1).
\]
By Lemma~\ref{le:p_to_m}, we have $p=((b-1)+1)^m=1+m(b-1)+(b-1)^2p'$, so this formula defines the set of Laurent polynomials $q$ of the form $q=m+(b-1)q'$. Then the formula
\[
\begin{split}
\varphi_{\mathbb N}(q)&=\forall b\ [b\in \mathrm{Irr}\wedge (\forall a\in\Inv_1\ b-1\nmid a-1)]\to \exists p\ \varphi_1(p,b,q)
\end{split}
\]
says that $q$ is of the form $q=m_b+(b-1)q'_b$ for each $b$ that satisfies conditions of Lemma~\ref{le:p_to_m}. Show that if $q$ satisfies $\varphi_\mathbb N$, then it is constant.

Suppose that $q$ is not constant and satisfies $\varphi_\mathbb N$. Without loss of generality, assume it non-trivially involves the variable $x_1$.
Consider the irreducible (by Eisenstein's criterion) integer polynomials $b=x_1^{n}+3$, where $n\in\mathbb N$.
Since $3-1>1$, it follows that such $b$ satisfy the conditions of Lemma~\ref{le:p_to_m}.
For a Laurent polynomial~$r$, let $\overline{\deg}\,r$ (respectively, $\underline{\deg}\, r$) denote the highest (respectively, the lowest) degree in $x_1$ of the monomials nontrivially involved in $r$. Then we see that
\[
\overline{\deg}\,(b-1)q'- \underline{\deg}\, (b-1)q'\ge n
\]
whenever $q'$ is nonzero.
Therefore, for every nonzero $q'$ and every integer $m$, at least one of the following takes place for the Laurent polynomial $m+(b-1)q'$:
\[
\begin{split}
\overline{\deg}\,(m+(b-1)q')- \underline{\deg}\, (m+(b-1)q')\ge n,\\
\overline{\deg}\,(m+(b-1)q')\ge n,\\
-\underline{\deg}\, (m+(b-1)q')\ge n.
\end{split}
\]
Taking $b$ as above with $n>\max\{\overline{\deg}\,q- \underline{\deg}\, q,\overline{\deg}\,q, -\underline{\deg}\,q\}$, we obtain that $q'_b$ must be $0$ and therefore $q=m_b\in\mathbb N$.

Since each $m\in\mathbb N$ clearly satisfies $\varphi_\mathbb N$ with $p=b^m$, the formula $\varphi_\mathbb N(q)$ defines $\mathbb N\subseteq \mathbb Z[x_1,x_1^{-1},\ldots,x_k,x_k^{-1}]$, which suffices.

\end{proof}

At this point, regular bi-interpretability with $\mathbb Z$ is established similarly to Theorems~\ref{pr:zx_biint},~\ref{th:z_polynomials_biint}. We omit the details of the proof.

\begin{theorem}\label{th:laurent_z_biint}
The ring $\mathbb Z[x_1,x_1^{-1},\ldots,x_k,x_k^{-1}]$, $k\ge 1$, is regularly bi-interpretable with $\mathbb Z$.
\end{theorem}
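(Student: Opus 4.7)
The plan is to mirror the proofs of Theorem~\ref{pr:zx_biint} and Theorem~\ref{th:z_polynomials_biint} from Section~\ref{se:Z_poly}, replacing the definability of $\Z$ in the polynomial ring (which was obtained via QFA-ness of $\Z$) by the already-established definability given by Proposition~\ref{pr:Z_def_in_Laurent}. Write $R = \Z[x_1, x_1^{-1}, \ldots, x_k, x_k^{-1}]$. First, by Proposition~\ref{pr:Z_def_in_Laurent}, the subset $\Z \subseteq R$ is $0$-definable by some formula $\varphi_\Z$; together with the inherited ring operations on $\Z$, this gives an absolute interpretation $\Z \cong \Gamma(R)$. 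Second, an absolute interpretation $R \cong \Delta(\Z)$ is obtained by fixing a recursive enumeration of finite subsets of $\Z \times \Z^k$: a Laurent polynomial $p = \sum_\alpha a_\alpha x^\alpha$ is encoded by the integer $n_p \in \Z$ that numbers its support-with-coefficients. The ring operations on $R$ pull back to recursive, hence arithmetically definable, operations on codes.

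To upgrade this mutual interpretation to a regular bi-interpretation, I need to produce the two definable coordinate maps required by Definition~\ref{def:reg}. The map $\Z \to \Gamma \circ \Delta(\Z)$ sends an integer $n$ to the code of the constant Laurent polynomial $n \in R$; its graph is arithmetically definable in $\Z$ since the encoding is recursive and every element of $\Z$ is absolutely definable in $\Z$. The main obstacle — exactly as in the proofs of Theorems~\ref{pr:zx_biint}, \ref{th:z_polynomials_biint} — is producing a formula $\theta(p, n)$ of $R$ whose truth set is the graph of the coordinate map $R \to \Delta \circ \Gamma(R)$, i.e.\ the relation $\{(p, n_p) : p \in R\}$, where $n_p$ lives in the definable copy of $\Z$ sitting inside $R$ via $\varphi_\Z$.

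The construction of $\theta$ proceeds by recovering $p$ from its code via evaluation. Fix parameters $b_i = x_i + 3 \in R$; by Eisenstein and the argument of Proposition~\ref{pr:Z_def_in_Laurent}, each $b_i$ is irreducible and satisfies the hypothesis of Lemma~\ref{le:p_to_m}, so $\{b_i^m : m \in \N\}$ is definable in $R$. Using Lemma~\ref{le:p_to_m} combined with the divisibility trick $b_i - \alpha \mid q - \beta \iff q(x_i{=}\alpha{-}3) \equiv \beta \pmod{\alpha}$ that powered formula~\eqref{eq:isomorphism}, I obtain an arithmetic formula $\phi(n, \alpha_1,\ldots,\alpha_k, \beta)$ in $\Z \subseteq R$ expressing "the polynomial whose code is $n$ evaluates to $\beta$ at the tuple $(\alpha_1, \ldots, \alpha_k)$", and dually a formula in $R$ expressing the same evaluation for $p$ itself via iterated divisibilities by $b_i - \alpha_i$. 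To cope with the negative exponents I first multiply through by $x_1^{M_1} \cdots x_k^{M_k}$ for suitable $M_i \in \N$ (definably extracted from $n$) to reduce to the polynomial case already handled in Theorem~\ref{th:z_polynomials_biint}, then undo the shift. The formula
\[
\theta(p, n) \;=\; \varphi_\Z(n) \;\wedge\; \forall \alpha_1, \ldots, \alpha_k, \beta \in \Z\ \bigl[\phi(n, \alpha_1, \ldots, \alpha_k, \beta) \leftrightarrow \text{(eval of $p$ at $\alpha$ equals $\beta$)}\bigr]
\]
then defines the desired coordinate map, since a nonzero Laurent polynomial is determined by its values at all points of $(\Z \setminus \{0\})^k$ (which is infinite in each coordinate).

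The delicate point, which is the main obstacle, is not the algebraic content but the bookkeeping needed to carry the polynomial-style divisibility evaluation over to Laurent polynomials uniformly: one must simultaneously quantify over the "shift exponents" $M_i$ so that the formula works for arbitrary $p \in R$, and verify that the choice of shift does not affect the resulting value — which amounts to a definable version of the observation that $p \cdot x^M$ determines $p$. Once this is done, items 1)--3) of Definition~\ref{def:reg} are satisfied with $\varphi, \psi$ trivial (both interpretations are absolute), and the result follows.
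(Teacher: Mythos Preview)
Your overall strategy matches the paper's (which itself just says ``similarly to Theorems~\ref{pr:zx_biint},~\ref{th:z_polynomials_biint}''): define $\Z$ inside $R$, code $R$ in $\Z$ by tuple-numbering, and verify the two coordinate maps, the nontrivial one being $p \mapsto n_p$ via evaluation and divisibility. The shift-by-$x^M$ idea to handle negative exponents is also the right move.

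There is, however, a genuine gap in your setup. You insist that both interpretations are \emph{absolute} (``$\varphi,\psi$ trivial''), and then introduce parameters $b_i = x_i + 3$ only inside the construction of $\theta$. These two choices are incompatible. By Definition~\ref{def:reg}, when both $\Gamma$ and $\Delta$ are absolute one has $\dim_{par}(\Gamma\circ\Delta)=0$, so the formula $\theta_R(\bar u, x, \bar r)$ defining the coordinate map $R \to \Delta\circ\Gamma(R)$ must have $|\bar r|=0$, i.e.\ be parameter-free. But no $0$-definable isomorphism $R \to \Delta\circ\Gamma(R)$ can exist: any ring automorphism $\sigma$ of $R$ (e.g.\ $x_1 \mapsto x_1^{-1}$, or for $k\ge 2$ the swap $x_1 \leftrightarrow x_2$) fixes the prime subring $\Z$ pointwise, hence fixes $\Delta\circ\Gamma(R)$ pointwise; a $0$-definable graph of an isomorphism $\mu$ would then force $\mu\circ\sigma = \mu$, contradicting injectivity. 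This is exactly why Theorem~\ref{pr:zx_biint} uses a \emph{regular} (not absolute) interpretation of $\Z$ in $\Z[x]$ with parameter $s$, and why Theorem~\ref{th:z_polynomials_biint} carries the $s_i$'s along. The fix is easy: package $\Gamma$ as a regular interpretation with parameters $b_1,\ldots,b_k$ (or simply $x_1,\ldots,x_k$, picked out by a suitable formula~$\varphi$), even though Proposition~\ref{pr:Z_def_in_Laurent} makes these parameters redundant for defining $\Z$ itself. Those parameters are then legitimately available inside $\theta$.

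A smaller point: your claimed equivalence ``$b_i - \alpha \mid q - \beta \iff q(x_i{=}\alpha{-}3) \equiv \beta \pmod{\alpha}$'' is not right. In $R$ one has $R/(x_i - c) \cong \Z[x_1^{\pm 1},\ldots,\widehat{x_i},\ldots][1/c]$, so the divisibility expresses $q(c)=\beta$ in that localization, with no ``mod $\alpha$''. This matters because $q(c)$ need not lie in $\Z$; your shift-by-$x^M$ is precisely what repairs this, but the formula must be set up so that the arithmetic side $\phi$ and the $R$-side divisibility agree even when $c^M q(c)\notin\Z$ (both sides should simply be false for all $\beta\in\Z$ in that case). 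Once $\Gamma$ carries the parameters $x_i$, one can definably recognize when $x^M p$ lies in the polynomial subring (e.g.\ via ``$\forall c\in\Z\setminus\{0\}\ \exists\beta\in\Z\ (x-c)\mid x^Mp - \beta$''), and then the evaluation argument from Theorem~\ref{th:z_polynomials_biint} goes through cleanly.
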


Theorems~\ref{th:z_polynomials_biint} and~\ref{th:laurent_z_biint} allows to describe nonstandard polynomials and Laurent polynomials in a fashion directly analogous to our description in Sections~\ref{se:polynomials} and~\ref{se:laurent}. If $R$ is $\mathbb Z[x_1,\ldots,x_k]$ or $\mathbb Z[x_1,x_1^{-1},\ldots,x_k,x_k^{-1}]$ and $R\cong \Gamma(\mathbb Z)$ is the respective interpretation in $\mathbb Z$, then the ring $\nonstandardmodel{R}{\nstdM}\cong\Gamma(\widetilde{\mathbb Z})$ is the the nonstandard (Laurent) polynomial ring. By Theorem~\ref{th:complete-scheme} we get the following result.

\begin{theorem}\label{th:Zpoly_desc}
Let $R$ be $\mathbb Z[x_1,\ldots,x_k]$ or $\mathbb Z[x_1,x_1^{-1},\ldots,x_k,x_k^{-1}]$. Then every ring elementarily equivalent to $R$ has the form $\nonstandardmodel{R}{\nstdM}$.
\end{theorem}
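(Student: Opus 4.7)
The plan is to derive Theorem~\ref{th:Zpoly_desc} directly from the regular bi-interpretability results already established (Theorems~\ref{th:z_polynomials_biint} and~\ref{th:laurent_z_biint}) combined with the general machinery of Section~\ref{se:inv_int_and_bi-int}. There is essentially no new work to do: the theorem is a specialization of Theorem~\ref{th:complete-scheme} once we have bi-interpretability with $\mathbb Z$ on hand. I would present it as a short deduction rather than a fresh argument.

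First, I would fix $R$ to be one of $\mathbb Z[x_1,\ldots,x_k]$ or $\mathbb Z[x_1,x_1^{-1},\ldots,x_k,x_k^{-1}]$ and invoke the appropriate theorem (Theorem~\ref{th:z_polynomials_biint} in the first case, Theorem~\ref{th:laurent_z_biint} in the second) to obtain a regular bi-interpretation $R \cong \Gamma(\mathbb Z,\varphi)$ with right inverse $\mathbb Z \cong \Delta(R,\psi)$. In particular, $(\Gamma,\varphi)$ is a regularly invertible interpretation of $R$ in $\mathbb Z$, so Theorem~\ref{th:complete-scheme} applies. That theorem gives the equivalence $\widetilde{R} \equiv R$ iff $\widetilde{R} \cong \Gamma(\widetilde{\mathbb Z},\varphi)$ for some $\widetilde{\mathbb Z} \equiv \mathbb Z$, which by Definition~\ref{de:nonstandard_model} is precisely $\widetilde{R} \cong \nonstandardmodel{R}{\widetilde{\mathbb Z}}$.

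Next, I would remark on well-definedness and uniqueness. Since $R$ is finitely generated in a finite signature and regularly interpretable in $\mathbb Z$, the analog of Theorem~\ref{th:well-def-inN} for $\mathbb Z$ (stated as the second ``Uniqueness of non-standard models'' theorem in Section~\ref{se:inv_int_and_bi-int}) guarantees that the structure $\Gamma(\widetilde{\mathbb Z},\varphi)$ does not depend on the choice of interpretation code used, so the notation $\nonstandardmodel{R}{\widetilde{\mathbb Z}}$ is unambiguous. Combining this with the ``only if'' direction of Theorem~\ref{th:complete-scheme} yields the statement of the theorem in both directions.

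Since this is a pure specialization of earlier results, there is no substantive obstacle; the only bookkeeping issue is ensuring that the regular bi-interpretations produced by Theorems~\ref{th:z_polynomials_biint} and~\ref{th:laurent_z_biint} satisfy the hypotheses of Theorem~\ref{th:complete-scheme}, namely that the interpretation of $R$ in $\mathbb Z$ is regularly invertible, which is immediate from the definition of regular bi-interpretability (Definition~\ref{def:reg}) and Definition~\ref{de:reg_invert}. I would state this as a one-paragraph proof rather than a multi-step argument.
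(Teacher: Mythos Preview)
Your proposal is correct and follows exactly the paper's own approach: the paper states the theorem immediately after Theorems~\ref{th:z_polynomials_biint} and~\ref{th:laurent_z_biint} with the one-line justification ``By Theorem~\ref{th:complete-scheme} we get the following result.'' Your write-up is in fact more detailed than the paper's, since you also spell out the well-definedness of $\nonstandardmodel{R}{\widetilde{\mathbb Z}}$ via the uniqueness-of-nonstandard-models theorem for $\mathbb Z$, which the paper leaves implicit.
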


Elements of $\nonstandardmodel{R}{\nstdM}$, that is, nonstandard (Laurent) polynomials $p(x_1,\ldots,x_k)$, are associated to tuples
\begin{equation}\label{eq:z_multivar_interpretation}
(a_1,n_{11},\ldots,n_{1k},\ a_2,n_{21},\ldots,n_{2k},\ \ldots,\ a_m,n_{m1},\ldots,n_{mk}),
\end{equation}
with $m\in\nstdM, a_i\in \widetilde{\mathbb Z}, n_{ij}\in\nstdM$, and, in the case of polynomials, $n_{ij}\ge 0$. Summation and multiplication of nonstandard (Laurent) polynomials is given by the formulas exhibited in the Sections~\ref{se:polynomials},~\ref{se:laurent}. By the same straightforward check that we performed in those sections, $p(x_1,\ldots,x_k)$ can be written as
\[
p(x_1,\ldots,x_k)=\sum_{i=1}^m a_ix_1^{n_{i1}}\cdots x_k^{n_{ik}},
\]
where $m\in\nstdM$, $a_i\in\widetilde{\mathbb Z}$, $n_{ij}\in\widetilde{\mathbb Z}$ and, in the case of polynomials, $n_{ij}\ge 0$, and the summation is understood in the sense of Lemmas~\ref{le:summation} or~\ref{le:arb_summation}.

\bibliographystyle{plain}
\bibliography{biblio}

\end{document}